\newtheoremstyle{theoremstyle}
  {10pt}      
  {5pt}       
  {\itshape}  
  {}          
  {\bfseries} 
  {:}         
  {.5em}      
  {}          
\newtheoremstyle{examplestyle}
  {10pt}      
  {5pt}       
  {}          
  {}          
  {\bfseries} 
  {:}         
  {.5em}      
  {}          
\theoremstyle{theoremstyle}
\newtheorem{theorem}{Theorem}[section]
\newtheorem*{theorem*}{Theorem}
\newtheorem{lemma}[theorem]{Lemma}
\newtheorem{proposition}[theorem]{Proposition}
\newtheorem*{proposition*}{Proposition}
\newtheorem{corollary}[theorem]{Corollary}
\newtheorem*{corollary*}{Corollary}
\theoremstyle{examplestyle}
\newtheorem{example}[theorem]{Example}
\newtheorem{definition}[theorem]{Definition}
\newtheorem*{definition*}{Definition}
\newtheorem{remark}[theorem]{Remark}
\newtheorem*{remarks*}{Remarks}
\newtheorem*{remark*}{Remark}
\newtheorem*{conjecture*}{Conjecture}
\newtheorem{convention}[theorem]{Convention}
\newcommand{\comment}[1]{}
\newcommand{\pic}{\operatorname{Pic}}
\newcommand{\sh}[1]{\mathcal{#1}}
\newcommand{\spec}{\operatorname{spec}}
\newcommand{\rk}{\operatorname{rk}}
\newcommand{\shHom}{\mathcal{H}om}
\newcommand{\Ext}{\operatorname{Ext}}
\newcommand{\shExt}{\mathcal{E}xt}
\newcommand{\stern}{{\operatorname{star}}}
\newcommand{\circuit}{{\mathcal{C}}}
\newcommand{\ocircuit}{{\mathfrak{C}}}
\newcommand{\lcm}{\operatorname{lcm}}
\newcommand{\Z}{\mathbb{Z}}
\newcommand{\Q}{\mathbb{Q}}
\newcommand{\R}{\mathbb{R}}
\newcommand{\N}{\mathbb{N}}
\newcommand{\oo}{\mathbb{O}}
\newcommand{\uc}{{\underline{c}}}
\newcommand{\on}{{[n]}}
\newcommand{\secfan}{\operatorname{SF}}
\newcommand{\nef}{{\operatorname{nef}}}
\begin{document}

\title{Divisorial cohomology vanishing on toric varieties}

\author{Markus Perling\footnote{Ruhr-Universit\"at Bochum, Fakult\"at f\"ur Mathematik,
Universit\"atsstra\ss e 150, 44780 Bochum, Germany, {\tt Markus.Perling@rub.de}}}

\date{August 2009}

\maketitle

\begin{abstract}
This work discusses combinatorial and arithmetic aspects of cohomology vanishing for
divisorial sheaves on toric varieties. We obtain a
refined variant of the Kawamata-Viehweg theorem which is slightly stronger. Moreover, we
prove a new vanishing theorem related to divisors whose inverse is nef and has small
Iitaka dimension.
Finally, we give a new criterion for divisorial sheaves for being maximal Cohen-Macaulay.
\end{abstract}

\tableofcontents

\section{Introduction}

The goal of this paper is to more thoroughly understand cohomology vanishing for
divisorial sheaves on toric varieties. The motivation for this comes from the
calculations of \cite{HillePerling06}, where a counterexample to a conjecture
of King \cite{King2} concerning the derived category $D^b(X)$ of smooth complete
toric varieties was presented.
Based on work of Bondal (see \cite{Rudakov90}, \cite{Bondal90}), it was conjectured
that on every smooth complete toric variety $X$
there exists a full strongly exceptional collection of line bundles. That is,
a collection of line bundles $\sh{L}_1, \dots, \sh{L}_n$ on $X$ which generates
$D^b(X)$ and has the property that $\Ext^k(\sh{L}_i, \sh{L}_j) = 0$ for all $k > 0$
and all $i, j$. 
Such a collection induces an equivalence of categories
$\operatorname{RHom}(\bigoplus_i \mathcal{L}_i, \, . \,): D^b(X) \longrightarrow
D^b\big(\operatorname{End}(\bigoplus_i \mathcal{L}_i)-\operatorname{mod} \big)$.
This possible generalization of Beilinson's theorem (pending the existence of a full
strongly exceptional collection) has attracted much interest, notably also in the
context of the homological mirror conjecture \cite{Kontsevich94}.
For line bundles, the problem of $\operatorname{Ext}$-vanishing can be
reformulated to a problem of cohomology vanishing for line bundles by the
isomorphisms
\begin{equation*}
\operatorname{Ext}^k(\mathcal{L}_i, \mathcal{L}_j) \cong H^k(X, \mathcal{L}_i\check{\ }
\otimes \mathcal{L}_j) = 0 \text{ for all } k \geq 0 \text{ and all } i, j.
\end{equation*}
So we are
facing a quite peculiar cohomology vanishing problem: let $n$ denote the rank of the
Grothendieck group of $X$, then we look for a certain constellation
of $n (n - 1)$ -- not necessarily distinct -- line bundles, all of which have vanishing higher cohomology groups.
The strongest general vanishing theorems so far are of the Kawamata-Viehweg type
(see \cite{Mustata1} and \cite{Fujino07}, and also \cite{Materov02} for 
Bott type formulas for cohomologies of line bundles), but it can be seen
from very easy examples, such as Hirzebruch surfaces, that these alone in general do
not suffice
to prove or disprove the existence of strongly exceptional collections by means
of cohomology vanishing. In \cite{HillePerling06}, on a certain toric surface $X$,
all line bundles $\sh{L}$ with the property that $H^i(X, \sh{L}) =
H^i(X, \sh{L}\check{\ }) = 0$ for all $i > 0$ were completely classified by
making use of a explicit toric representation of the cohomology vanishing problem
for line bundles. This approach exhibits quite complicated combinatorial as well
as number theoretic conditions for cohomology vanishing which we are going to describe
in general.

We will consider and partially answer the following more general problem. Let $D$ be a
Weil divisor on any toric variety $X$ and $V \subset X$ a torus invariant closed subscheme,
then what are necessary and sufficient conditions for the (global) local cohomology
modules $H^i_V\big(X, \sh{O}_X(D)\big)$ to vanish? Given this spectrum of cohomology
vanishing problems, we have on one extreme end the cohomology vanishing problem for
line bundles, and on another extreme end the classification problem for maximal
Cohen Macaulay (MCM) modules over semigroup rings: on an affine toric variety $X$,
the sheaf $\sh{O}_X(D)$ is MCM if and only if the local cohomologies
$H^i_x\big(X, \sh{O}_X(D)\big)$ vanish for $i \neq \dim X$, where $x \in X$ is the torus
fixed point.
These local cohomologies have been studied by Stanley \cite{Stanley82}, \cite{Stanley96}
(see also \cite{vandenbergh92} for generalizations), and Bruns and Gubeladze \cite{BrunsGubeladze}
showed that only finitely many sheaves in this class are MCM. MCM sheaves over affine toric
varieties have only been classified for some special cases
(see, for instance \cite{BuchweitzGreuelSchreyer}
and chapter 16 of \cite{Yoshino90}). Our contribution will
be to give a more explicit combinatorial characterization of MCM modules of rank one
over normal semigroup rings and their ties to the birational geometry of toric varieties.

One important aspect of our results is that, though we will also make use of $\Q$-divisors,
our vanishing results will completely be formulated in the integral setting. We will
illustrate the effect of this by the following
example. Consider the weighted projective surface $\mathbb{P}(2, 3, 5)$. Then the divisor
class group $A_1\big(\mathbb{P}(2, 3, 5)\big)$ is isomorphic to $\Z$ and, after fixing
the generator $D = 1$ of $A_1\big(\mathbb{P}(2, 3, 5)\big)$ to be $\Q$-effective, the torus invariant
irreducible divisors can be identified with the integers $2$, $3$, and $5$, and the canonical
divisor has class $-10$. By Kawamata-Viehweg we obtain that $H^2\big((\mathbb{P}(2, 3, 5),
\sh{O}(kD)\big) = 0$ for $k > -10$. However, as we will explain in more detail below, the set
of all divisors $kD$ with nontrivial second cohomology is given by all $k$ with $-k = 2r + 3s
+ 5t$ with $r, s, t$ positive integers. So, Kawamata-Viehweg misses the divisor $-11 D$.
The reason is that the toric Kawamata-Viehweg vanishing theorem tells us that the cohomology
of some divisor $D'$ vanishes if the rational equivalence class over $\Q$ of $D' -
K_{\mathbb{P}(2, 3, 5)}$ is
contained in the interior of the nef cone in $A_1\big(\mathbb{P}(2, 3, 5)\big)_\Q$. Over the
integers, the domain of cohomology vanishing thus in general is larger than over $\Q$. Below we will
see that this is a general feature of cohomology vanishing, even for smooth toric varieties,
as can be seen, for instance, by considering the strict transform of the divisor $-11D$ along some
toric blow-up $X \longrightarrow \mathbb{P}(2, 3, 5)$ such that $X$ is smooth.

\paragraph{The main results.}

The first main result will be an integral version of the Kawamata-Viehweg
vanishing theorem. Consider the nef cone $\nef(X) \subset A_{d - 1}(X)_\Q$, then
the toric Kawamata-Viehweg vanishing theorem (see Theorem \ref{KodairaKawamataViehweg})
can be interpreted such that if $D - K_X$ is contained in the interior of $\nef(X)$,
then $H^i\big(X, \sh{O}_X(D)\big) = 0$ for all $i > 0$. For our version we will
define a set $\mathfrak{A}_\nef \subset A_{d - 1}(X)$, which we call the
{\em arithmetic core} of $\nef(X)$ (see definition \ref{vanishingcoredefinition}).
The set $\mathfrak{A}_\nef$ has the
property that it contains all integral Weil divisors which map to the interior of
the cone $K_X + \nef(X)$ in $A_{d - 1}(X)_\Q$. But in general it is strictly larger,
as in the example above. We can lift the cohomology vanishing theorem for divisors
in $\nef(X)$ to $\mathfrak{A}_\nef$:

\begin{theorem*}[\ref{nefvanishing}]
Let $X$ be a complete toric variety and $D \in \mathfrak{A}_\nef$.
Then $H^i\big(X, \mathcal{O}_X(D)\big)$ $= 0$ for all $i > 0$.
\end{theorem*}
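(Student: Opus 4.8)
The plan is to pass to the fine $M$-grading, translate each graded piece of the cohomology into the reduced cohomology of an explicit subcomplex of the fan $\Delta$, and then observe that the condition $D\in\mathfrak{A}_\nef$ forces every one of these subcomplexes to be of the same combinatorial type as one already occurring for a divisor in the interior of $K_X+\nef(X)$; for the latter the needed acyclicity is exactly the content of the toric Kawamata--Viehweg theorem \ref{KodairaKawamataViehweg}.

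First I would fix a torus-invariant representative $D=\sum_{\rho\in\rays}a_\rho D_\rho$ of the class and invoke the combinatorial description of the cohomology of $\sh{O}_X(D)$ developed above: the torus action gives $H^i\big(X,\sh{O}_X(D)\big)=\bigoplus_{m\in M}H^i\big(X,\sh{O}_X(D)\big)_m$, and for each $m\in M$ one has $H^i\big(X,\sh{O}_X(D)\big)_m\cong\tilde H^{i-1}(Y_{D,m};k)$, where $d=\dim X$ and, in the simplicial case, $Y_{D,m}=\bigcup\{\,\sigma\cap S^{d-1}\ :\ \sigma\in\Delta,\ \langle m,u_\rho\rangle<-a_\rho\ \text{for all}\ \rho\in\sigma(1)\,\}\subseteq S^{d-1}\subseteq N_\R$. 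In particular $Y_{D,m}$ depends only on the \emph{deficiency set} $B_{D,m}:=\{\rho\in\rays:\langle m,u_\rho\rangle<-a_\rho\}$; let me call a subset $B\subseteq\rays$ \emph{acyclic} if the associated space $Y(B)$ is empty or has $\tilde H^{\bullet}(Y(B);k)=0$. With this terminology the theorem is equivalent to the assertion that $B_{D,m}$ is acyclic for every $m\in M$ (note that $B_{D,m}=\emptyset$ precisely when $m\in P_D$, and such $m$ contribute only to $H^0$).

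Second, I would observe that Theorem \ref{KodairaKawamataViehweg} already produces a large stock of acyclic sets: if $E$ is a Weil divisor whose class lies in the interior of $K_X+\nef(X)$ then $H^j\big(X,\sh{O}_X(E)\big)=0$ for all $j>0$, so by the description above every deficiency set $B_{E,m'}$, $m'\in M$, is acyclic. Now I would unwind Definition \ref{vanishingcoredefinition}: membership $D\in\mathfrak{A}_\nef$ says precisely that for each $m\in M$ the deficiency set $B_{D,m}$ coincides with $B_{E,m'}$ for some such divisor $E$ and some $m'\in M$. Hence every $B_{D,m}$ is acyclic, and the reduction of the first step yields $H^i\big(X,\sh{O}_X(D)\big)=0$ for all $i>0$.

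The real weight of the argument lies in this second step, i.e.\ in the fact that Definition \ref{vanishingcoredefinition} is designed so that the arithmetic enlargement of the $\Q$-cone $K_X+\nef(X)$ never introduces a deficiency pattern that is not already of Kawamata--Viehweg type. This is where the genuinely arithmetic input enters: one must control the roundings that appear when the strict inequalities $\langle m,u_\rho\rangle<-a_\rho$ are compared against the support-function values of an ample $\Q$-divisor, and exhibit for every $m$ an honest nef-shaped replacement --- and, as the $\mathbb{P}(2,3,5)$ example shows, this enlargement is sharp, so no coarser comparison can suffice. Once that combinatorial matching is in place, the grading argument of the first step and the appeal to Theorem \ref{KodairaKawamataViehweg} are purely formal.
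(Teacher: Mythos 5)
Your first step (the $M$-graded decomposition and the reduction to acyclicity of the complexes determined by the deficiency sets $B_{D,m}$) is exactly the paper's starting point. The gap is in the second step: you restate Definition \ref{vanishingcoredefinition} as saying ``precisely that for each $m\in M$ the deficiency set $B_{D,m}$ coincides with $B_{E,m'}$ for some $E$ of Kawamata--Viehweg type,'' but that is not what the definition says, and the equivalence is the entire content of the theorem. By definition $\mathfrak{A}_\nef=\bigcap_{\ocircuit\in\mathfrak{F}_\nef}F_\ocircuit$ is an intersection of circuit-wise Frobenius conditions: for each oriented circuit $\ocircuit$ in the discriminantal hull of $\nef(X)$, the class of $D$ must avoid a certain semigroup, which geometrically means that the ``simplicial'' chamber $P^\ocircuit_{\uc}$ contains no lattice point. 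To get from there to your assertion one must show that every chamber $P^I_{\uc}$ whose signature $I$ does not already occur for divisors in $\nef(X)$ is contained in some such $P^\ocircuit_{\uc}$ with $\ocircuit\in\mathfrak{F}_\nef$, hence is lattice-point free. This is what the paper does via Proposition \ref{orthantboundary} and Theorem \ref{integralvanishingtheorem} (note also that the comparison there is with rational points $m'$ for suitable multiples $kE$, not with a lattice point of a single integral $E$ as you require). Your closing paragraph explicitly defers this (``one must control the roundings \dots and exhibit for every $m$ an honest nef-shaped replacement''), so the proposal identifies where the work lies but does not do it.

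There is a second, subtler omission. The discriminantal hull $\mathfrak{F}_\nef$ excludes the oriented circuits $\ocircuit$ whose hyperplane $H_\circuit$ meets the relative interior of a face of $\nef(X)$; consequently $\mathfrak{A}_\nef$ is strictly larger than $\bigcap_i\mathfrak{A}_{S_i}$ taken over the maximal strata $S_i$ of $\nef(X)$, and the containment argument above only yields vanishing on the smaller set. The paper needs an extra argument to discard these $F_\ocircuit$: one translates along the face $R$ cut by $H_\circuit$ so that $P^\ocircuit_{\uc}$ becomes arbitrarily large and acquires lattice points, and checks that these lattice points cannot contribute cohomology for $D\in\mathfrak{A}_\nef$. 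Nothing in your proposal addresses why the Frobenius conditions for these crossing circuits may be dropped, and without that step the statement you would prove is weaker than Theorem \ref{nefvanishing}.
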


One can consider Theorem \ref{nefvanishing} as an ``augmentation'' of the standard
vanishing theorem for nef divisors to the subset $\mathfrak{A}_\nef$
of $A_{d - 1}(X)$. In general,
Theorem \ref{nefvanishing} is slightly stronger than the toric Kawamata-Viehweg vanishing
theorem and yields refined arithmetic conditions.

However, the  main goal of this paper is to find vanishing results which cannot
directly be derived from known vanishing theorems.
Let $D$ be a nef Cartier divisor whose Iitaka dimension
is positive but smaller than $d$. This class of divisors is contained in nonzero faces
of the nef cone of $X$ which are contained in the intersection of the nef cone with
the boundary of the effective cone of $X$ (see Section \ref{nonstandardvanishing}).
Let $F$ be such a face. Similarly as with $\mathfrak{A}_\nef$, we can define for the inverse
cone $-F$ an arithmetic core $\mathfrak{A}_{-F}$ (see \ref{vanishingcoredefinition}) and
associate to it a
vanishing theorem, which may be considered as the principal result of this article:

\begin{theorem*}[\ref{projectiveantinef}]
Let $X$ be a complete $d$-dimensional toric variety. Then $H^i\big(X, \sh{O}(D)\big) = 0$
for every $i$ and all $D$ which are contained in some $\mathfrak{A}_{-F}$,
where $F$ is a face of  $\nef(X)$ which
cointains nef divisors of Iitaka dimension
$0 < \kappa(D) < d$. If $\mathfrak{A}_{-F}$ is nonempty, then it contains infinitely many
divisor classes.
\end{theorem*}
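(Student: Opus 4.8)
The plan is to reduce the vanishing to the behaviour of $\sh O_X(D)$ on the fibres of the toric contraction attached to $F$, and to read off the infinitude from pulling back divisor classes along that contraction.

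First I would recall the combinatorial picture. Since $X$ is complete, $H^i\big(X,\sh O_X(D)\big)=\bigoplus_{m\in M}H^i\big(X,\sh O_X(D)\big)_m$, and by the combinatorial description of divisorial cohomology developed earlier each weight space is a reduced (co)homology group of an explicit subcomplex $C_{D,m}$ of the fan of $X$ depending only on $m$ and on the coefficients of $D$; so the first assertion is equivalent to saying that every $C_{D,m}$ is $k$-acyclic. A face $F$ of $\nef(X)$ is the pullback $F=\pi_F^*\nef(Y)$ along a toric contraction $\pi_F\colon X\to Y$, and the hypothesis that $F$ contains a nef divisor of Iitaka dimension $\kappa$ with $0<\kappa<d$ forces $\dim Y=\kappa$, so that both the base and the general fibre of $\pi_F$ have positive dimension. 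On the level of fans this is a surjection $N\twoheadrightarrow N_Y$ whose kernel $N_0$ records the fibre directions, together with a partition of the rays of $\Delta$ into base rays and fibre rays.

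The core of the proof is the observation that the condition $D\in\mathfrak A_{-F}$ of Definition~\ref{vanishingcoredefinition} unwinds to the statement that \emph{$\sh O_X(D)$ restricts to an acyclic divisorial sheaf on every torus-invariant fibre of $\pi_F$}. Indeed, each such fibre is again a complete toric variety, on which acyclicity of $\sh O_X(D)$ amounts to the restricted divisor having no sections there (killing $H^0$) and lying in the arithmetic core of that fibre's nef cone (killing the higher cohomology by Theorem~\ref{nefvanishing} applied to the fibre). Granting this, I would conclude either geometrically --- by the theorems on formal functions and base change in the torus-equivariant, hence graded, setting, fibrewise acyclicity gives $R^q(\pi_F)_*\sh O_X(D)=0$ for all $q$, so the Leray spectral sequence has vanishing abutment and $H^i\big(X,\sh O_X(D)\big)=0$ for all $i$ --- or combinatorially, by checking that fibrewise acyclicity forces each $C_{D,m}$ to retract, along the fibre-ray directions, onto a contractible (or empty, but still $k$-acyclic) subcomplex. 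The step I expect to be the main obstacle is exactly this core reduction: matching Definition~\ref{vanishingcoredefinition} with fibrewise acyclicity, and then controlling the case where $\pi_F$ is not equidimensional and its special fibres are singular toric varieties. It is on those singular fibres that the \emph{integral} --- rather than merely $\Q$-rational --- shape of the acyclicity locus enters, the same phenomenon that for $\mathbb{P}(2, 3, 5)$ produced the divisor $-11D$ in the introduction; this is why the vanishing is invisible to Kawamata-Viehweg-type theorems and must be proved through the combinatorial and arithmetic analysis rather than through positivity.

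For the infinitude, I would fix $D\in\mathfrak A_{-F}$ (assuming $\mathfrak A_{-F}\neq\emptyset$) and pick a nef divisor $B\in F$ with $\kappa(B)=\kappa$, so that $B=\pi_F^*A$ for a big and nef divisor $A$ on $Y$; after replacing $B$ and $A$ by a positive multiple we may take $A$ Cartier. Then for every $k\in\Z$ the line bundle $\sh O_X(kB)=\pi_F^*\sh O_Y(kA)$ restricts trivially to each fibre of $\pi_F$, so $\sh O_X(D+kB)$ is again fibrewise acyclic and hence $D+kB\in\mathfrak A_{-F}$. Since $B$ is big its class is non-torsion in $A_{d-1}(X)$, so the classes $[D+kB]$, $k\in\Z$, are pairwise distinct, and $\mathfrak A_{-F}$ contains infinitely many divisor classes.
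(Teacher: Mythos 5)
Your strategy is genuinely different from the paper's, but as written it has two real gaps. The central step you yourself flag --- that $D\in\mathfrak{A}_{-F}$ ``unwinds to'' fibrewise acyclicity along the contraction $\pi_F$ --- is not proved, and as an equivalence it is false. By Definition \ref{vanishingcoredefinition}, $\mathfrak{A}_{-F}=\bigcap_{\ocircuit\in\mathfrak{F}_{-F}}F_\ocircuit$ runs over \emph{all} oriented circuits whose half-spaces contain $-F$; since $-F\subset-\nef(X)$ this forces $\mathfrak{A}_{-F}\subset\mathfrak{A}_{-\nef}$, a global condition that fibrewise data cannot see. Concretely, on $\mathbb{F}_3$ with fibre class $f$ and positive section $D_4$, the divisors $kf-D_4$ restrict to $\sh{O}(-1)$ on every fibre of $\pi\colon\mathbb{F}_3\to\mathbb{P}^1$ for every $k$, yet for $k\gg 0$ they violate the Frobenius condition $F_{\ocircuit}$ attached to the blow-down circuits $\{1,2,3\}$ and $\{1,3,4\}$ and so leave $\mathfrak{A}_{2,4}$ (they land in $\mathfrak{A}_\nef$ instead). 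The same example kills your infinitude argument: it is not true that $D+kB\in\mathfrak{A}_{-F}$ for all $k\in\Z$; translation by $+kB$ eventually crosses walls $H_\circuit$ not parallel to $B$, and only a half-line survives. (Also, $B=\pi_F^*A$ has Iitaka dimension $\kappa<d$, so it is not big; its class is non-torsion for other reasons.) The implication you actually need --- $D\in\mathfrak{A}_{-F}\Rightarrow$ acyclicity --- would still have to be extracted from the combinatorics, and the Leray/base-change step is delicate for reflexive non-locally-free $\sh{O}(D)$ and non-equidimensional $\pi_F$.

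For comparison, the paper's proof avoids the fibration entirely and is purely formal: Proposition \ref{antinefcohomology} gives $H^i=0$ for $i<d$ on the interior of $-\nef(X)$ and $H^d=0$ on $-F$ itself (since $\kappa<d$); the lifting principle of Theorem \ref{integralvanishingtheorem} transports the first statement to $\mathfrak{A}_{-\nef}\supset\mathfrak{A}_{-F}$ and the second to $\mathfrak{A}_{-F}$, giving vanishing in every degree; and the infinitude is exactly Proposition \ref{asymptoticproperty}, which produces a half-line $D+rD'$ with $D'\in\overline{-F}$ (possible because $\dim F>0$). If you want to salvage your approach, the workable plan is to prove only the one implication from $\mathfrak{A}_{-F}$ to fibrewise acyclicity and then argue combinatorially degree by degree; but you should not expect the converse, and the two-sided translation invariance must be dropped.
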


This theorem cannot be an augmentation of a vanishing theorem for $-F$, as it is
not true in general that $H^i\big(X, \sh{O}_X(-D)\big) = 0$ for all $i$ for
$D$ nef of Iitaka dimension smaller than $d$. In particular, the set of
$\Q$-equivalence classes of elements in $\mathfrak{A}_{-F}$ does not intersect
$-F$.

For the case of a toric surface $X$ we show that  above vanishing theorems
combine to a nearly complete vanishing theorem for $X$. Recall that in the fan
associated to a complete toric surface $X$ every pair of opposite rays by projection
gives rise to a morphism from $X$ to $\mathbb{P}^1$ (e.g. such a pair does always
exist if $X$ is smooth and $X \neq  \mathbb{P}^2$). Correspondingly, we
obtain a family of nef divisors of Iitaka dimension $1$ on $X$ given by the
pullbacks of the sheaves $\sh{O}_{\mathbb{P}_1}(i)$ for $i > 0$. We get:

\begin{theorem*}[\ref{surfaceclassification}]
Let $X$ be a complete toric surface. Then there are only finitely many divisors
$D$ with $H^i\big(X, \sh{O}_X(D)\big) = 0$ for all $i > 0$ which are not contained
in $\mathfrak{A}_\nef \cup \bigcup_F \mathfrak{A}_{-F}$, where the union
ranges over all faces of $\nef(X)$ which correspond to pairs of opposite rays
in the fan associated to $X$.
\end{theorem*}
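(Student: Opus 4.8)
The plan is to compute $H^i\big(X, \sh{O}_X(D)\big)$ combinatorially from the fan of $X$, to analyse how these groups behave as the class of $D$ tends to infinity in $A_1(X)$, and to identify each ``direction of escape'' with one of the arithmetic cores of Theorems \ref{nefvanishing} and \ref{projectiveantinef}. Throughout I fix the fan $\Delta$ of $X$, with rays $\rho_0, \dots, \rho_{n-1}$ in cyclic order and primitive generators $u_0, \dots, u_{n-1}$, an $\R$-linear section of $\R^{\Delta(1)} \to A_1(X)_\R$ (to speak of coefficients of classes), and for a class represented by $D = \sum_i a_i D_i$ and $m \in M$ I write $\ell_i(m) = \langle m, u_i\rangle + a_i$ and $S(D, m) = \{\, i : \ell_i(m) < 0 \,\}$ for the set of ``negative'' rays. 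The combinatorial description of toric cohomology will be used in the form: $H^2\big(X, \sh{O}_X(D)\big) \neq 0$ precisely when $K_X - D$ is effective, and $H^1\big(X, \sh{O}_X(D)\big) = \bigoplus_{m \in M} H^1_m$ with $\dim H^1_m = c(D, m) - 1$, where $c(D, m)$ is the number of maximal cyclic runs of consecutive rays lying in $S(D, m)$ (and $H^1_m = 0$ if $S(D,m)$ is empty or all of $\Delta(1)$); in particular $H^1\big(X, \sh{O}_X(D)\big) = 0$ iff $S(D,m)$ is a single cyclic arc for every $m$. I will also use that $\mathfrak{A}_\nef$ contains $\nef(X)$ (it is an augmentation of the nef vanishing theorem) as well as every lattice class mapping into the interior of $K_X + \nef(X)$, and that (Theorems \ref{nefvanishing}, \ref{projectiveantinef}) $\mathfrak{A}_\nef$ and each $\mathfrak{A}_{-F}$ lie inside $G := \{\, [D] : H^i(X, \sh{O}_X(D)) = 0 \text{ for all } i > 0 \,\}$.

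I would then argue by contradiction. If $G \setminus \big(\mathfrak{A}_\nef \cup \bigcup_F \mathfrak{A}_{-F}\big)$ were infinite, then, being a set of lattice points in $A_1(X)$, it would contain a sequence $[D_k]$ with $\|D_k\| \to \infty$. The affine functions $\ell_i$ partition $A_1(X)_\R \times M_\R$ into finitely many polyhedral cells on which $(\operatorname{sign}\ell_i)_i$ is constant, so after passing to a subsequence I may assume all $D_k$ lie, at infinity, in one such cell: in particular $[D_k]/\|D_k\| \to \xi$ for a fixed unit vector $\xi$, whose (virtual) support function $\psi_\xi$ on $N_\R$ is fixed, and the bounded parts of the $D_k$ are controlled as well. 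The whole argument then turns on a case analysis for $\xi$, with the goal of showing the sequence must eventually enter one of the cores.

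In the first case $\psi_\xi$ is convex, i.e. $\xi \in \nef(X)$. If $\xi$ is interior, then $[D_k]$ maps into the interior of $K_X + \nef(X)$ for $k \gg 0$, hence $[D_k] \in \mathfrak{A}_\nef$. If $\xi$ lies on the boundary of $\nef(X)$ (necessarily a proper face, since $\nef(X)$ is pointed), a finer combinatorial argument — using the bounded data of the cell and the explicit shape of Definition \ref{vanishingcoredefinition} — should still show $[D_k] \in \mathfrak{A}_\nef$ for $k \gg 0$; this is the step where one needs $\mathfrak{A}_\nef$ to be genuinely larger than the preimage of the interior of $K_X + \nef(X)$. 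In the second case $\psi_\xi$ is not convex, and the key dichotomy is: either the failure of convexity is ``carried entirely by a pair of opposite rays'', in which case $[D_k]$ is adapted to the associated face $-F$ in the arithmetic sense of Definition \ref{vanishingcoredefinition} and lies in $\mathfrak{A}_{-F}$ for $k \gg 0$; or else one can produce non-adjacent rays $\rho_i, \rho_j$ together with rays $\rho_p, \rho_q$ strictly between them on the two cyclic arcs such that the system $\ell_i(m) < 0,\ \ell_j(m) < 0,\ \ell_p(m) > 0,\ \ell_q(m) > 0$ has an integral solution $m = m_k$ for all $k \gg 0$; then $S(D_k, m_k)$ contains $i$ and $j$ and misses $p$ and $q$, so $c(D_k, m_k) \geq 2$ and $H^1\big(X, \sh{O}_X(D_k)\big) \neq 0$, contradicting $[D_k] \in G$. (There are degenerate subcases: if $\psi_\xi$ instead forces $K_X - D_k$ to be effective then $H^2\big(X, \sh{O}_X(D_k)\big) \neq 0$ directly; and if $D_k$ and $K_X - D_k$ are both eventually non-effective one argues via Riemann--Roch, $h^1\big(\sh{O}_X(D_k)\big) = - \chi\big(\sh{O}_X(D_k)\big)$, the right side being nonzero unless $\xi$ is a null class for the intersection form and $[D_k]$ lies on one of the special affine lines along which $\chi$ vanishes identically — and those are exactly the lines governing the cores $\mathfrak{A}_{-F}$ of the opposite-ray faces, so this subcase again feeds back into Theorem \ref{projectiveantinef}.) In every case one contradicts the choice of the sequence, so $G \setminus \big(\mathfrak{A}_\nef \cup \bigcup_F \mathfrak{A}_{-F}\big)$ is bounded in $A_1(X)_\R$, hence finite.

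The hard part will be the second case of the non-convexity dichotomy: proving that whenever $\psi_\xi$ is not convex and its non-convexity is not ``concentrated on a pair of opposite rays'', one can always find, for all large $k$, the witnessing lattice point $m_k$ (or, in the degenerate subcases, the relevant effectivity or nonvanishing of $\chi$). This comes down to a complete enumeration of the ways a piecewise-linear function on a complete two-dimensional fan can fail to be convex, together with a matching of each failure to a disconnected negative set $S(D_k, m_k)$ or to the opposite-ray geometry underlying Theorem \ref{projectiveantinef}. A secondary difficulty, used in the boundary subcase of the first case and in the opposite-ray subcase, is to check that Definition \ref{vanishingcoredefinition} is calibrated precisely so that ``adapted to $-F$'' describes exactly what remains, among the $-F$-directions, after the families producing nonzero cohomology have been removed — equivalently, that $\mathfrak{A}_\nef$ and the $\mathfrak{A}_{-F}$ together exhaust the unbounded part of $G$.
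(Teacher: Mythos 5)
Your outline follows the same global strategy as the paper: reduce to asymptotic directions in $A_1(X)$, show that the only directions that can carry infinite cohomology-free families are $\nef(X)$ and the faces of $-\nef(X)$ cut out by pairs of opposite rays, and absorb the rest into a finite set. Your criterion for $H^1$ via cyclic runs of negative rays is exactly Theorem \ref{cohomtheorem} specialized to surfaces, and your dichotomy mirrors the paper's division into $\mathfrak{A}_\nef$-residual, $\mathfrak{A}_{p,q}$-residual and $0$-residual divisors. However, the proposal defers precisely the steps that constitute the proof. The central one is your claim that, when $\psi_\xi$ fails to be convex and the failure is not concentrated on a pair of opposite rays, the system $\ell_i(m_k)<0$, $\ell_j(m_k)<0$, $\ell_p(m_k)>0$, $\ell_q(m_k)>0$ ``has an integral solution for all $k\gg 0$.'' This is not automatic and is where the arithmetic difficulty lives: a two-dimensional chamber with disconnected signature can remain lattice-point free along an unbounded family of divisors, namely when the chamber is translated parallel to itself without growing — this is exactly the phenomenon the Frobenius sets $F_\ocircuit$ and the arithmetic cores are designed to capture. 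To close it one must show that outside the two distinguished kinds of directions the offending chamber grows in \emph{two} independent directions (so that only finitely many members of the family miss the lattice), which in the paper is the combined content of Proposition \ref{surfacecores} (the rational statement: every class outside $\pm\nef(X)$ has a two-dimensional chamber with disconnected $\hat\Delta_I$, proved by running over the extremal blow-down circuits) and Proposition \ref{finitezeroresiduals} (the lattice statement: a lattice-point-free polygon of fixed combinatorial type with irredundant integral facet inequalities belongs to a finite list, and redundant inequalities force residuality with respect to a positive-dimensional flat).

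The second deferred step is your boundary subcase of the nef direction and the opposite-ray subcase, where you ask that ``adapted to $-F$'' coincide with membership in $\mathfrak{A}_{-F}$ up to finitely many exceptions. The paper does not prove eventual membership in the cores along these directions; it proves the weaker but sufficient statement that the \emph{residual} divisors (those satisfying the asymptotic property of Proposition \ref{asymptoticproperty} without lying in the core) are finite in number, via explicit signature arguments: for $\mathfrak{A}_{p,q}$ one first pins down $c_p+c_q=-1$ and then rules out violations of each $F_\ocircuit$ for $\ocircuit$ supported on one side of the line through $l_p,l_q$ (Proposition \ref{nosurfaceantinefresiduals}), and for $\mathfrak{A}_\nef$ one translates along an integral vector of $H_\circuit$ back into the nef cone and watches a hyperplane cross the critical chamber (Proposition \ref{nosurfacenefresiduals}). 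Your Riemann--Roch remark for the $h^0=h^2=0$ subcase is a plausible alternative for smooth $X$, but for a general complete (possibly singular) toric surface $\chi(\sh{O}(D))$ is only a quasi-polynomial and identifying its vanishing locus with the opposite-ray lines is itself a claim of the same order of difficulty as what it replaces. As it stands, the proposal is a correct plan whose two load-bearing lemmas are named but not established.
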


Some more precise numerical characterizations on the sets $\mathfrak{A}_{-F}$ will
be given in subsection \ref{nonstandardvanishing}.
The final result is a birational characterization of MCM-sheaves of rank one.
This is a test case to see whether point of view of birational geometry might
be useful for classifying more general MCM-sheaves. The idea for this comes from the
investigation of
MCM-sheaves over surface singularities in terms of resolutions in the context of
the McKay correspondence
(see \cite{GonzalezSprinbergVerdier}, \cite{ArtinVerdier},
\cite{EsnaultKnoerrer}). For an affine toric variety $X$, in general one cannot
expect to find a similar nice correspondence. However, there is a set of
preferred partial resolutions of singularities $\pi: \tilde{X} \longrightarrow X$
which is parameterized by the secondary fan of $X$. Our result is a toric analog of
a technical criterion of loc. cit.

\begin{theorem*}[\ref{mcmtheorem}]
Let $X$ be a $d$-dimensional affine toric variety whose associated cone has simplicial
facets and let $D \in A_{d - 1}(X)$. If
$R^i\pi_* \sh{O}_{\tilde{X}}(\pi^*D) = 0$ for every regular triangulation
$\pi: \tilde{X} \longrightarrow X$, then $\sh{O}_X(D)$ is MCM. For $d = 3$
the converse is also true.
\end{theorem*}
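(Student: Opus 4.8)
The plan is to reduce the statement to a local cohomology computation and then relate local cohomology on $X$ to higher direct images on the partial resolutions $\tilde X$. Recall that $\sh{O}_X(D)$ is MCM if and only if $H^i_x\big(X, \sh{O}_X(D)\big) = 0$ for all $i \neq d$, where $x$ is the torus fixed point. The first step is to set up the Leray spectral sequence for $\pi\colon \tilde X \longrightarrow X$: writing $Z = \pi^{-1}(x)$ for the fiber over the fixed point, we get $H^p_x\big(X, R^q\pi_*\sh{O}_{\tilde X}(\pi^*D)\big) \Rightarrow H^{p+q}_Z\big(\tilde X, \sh{O}_{\tilde X}(\pi^*D)\big)$. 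Under the hypothesis that $R^q\pi_*\sh{O}_{\tilde X}(\pi^*D) = 0$ for $q > 0$ for \emph{every} regular triangulation, the spectral sequence degenerates and yields $H^i_x\big(X, \pi_*\sh{O}_{\tilde X}(\pi^*D)\big) \cong H^i_Z\big(\tilde X, \sh{O}_{\tilde X}(\pi^*D)\big)$. Since $\pi$ is an isomorphism in codimension one (a partial resolution coming from a triangulation does not alter the rays of the fan) and $X$ is normal, $\pi_*\sh{O}_{\tilde X}(\pi^*D) = \sh{O}_X(D)$, so it suffices to show that $H^i_Z\big(\tilde X, \sh{O}_{\tilde X}(\pi^*D)\big)$ vanishes for $i \neq d$ for at least one choice of triangulation.

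The second step is to compute the right-hand side combinatorially. On the toric variety $\tilde X$, the sheaf $\sh{O}_{\tilde X}(\pi^*D)$ is $M$-graded, and its local cohomology with supports in $Z$ decomposes as a direct sum over characters $m \in M$; each graded piece is computed by the (co)homology of a simplicial subcomplex of the fan of $\tilde X$ determined by $m$ and $\pi^*D$, by the standard formula (this is the local-cohomology analogue of the description used implicitly in the $\mathfrak{A}_\nef$ story above). Because the cone of $X$ has simplicial facets, for a suitable regular triangulation these subcomplexes are either empty, contractible, or homotopy equivalent to a sphere of the top dimension, forcing concentration of $H^i_Z$ in degree $i = d$. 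This is where I expect the main obstacle: one must show that \emph{some} regular triangulation makes all the relevant subcomplexes acyclic below the top degree, and it is not a priori clear that a single triangulation works for all $m$ simultaneously — one likely needs to exploit that the pullback $\pi^*D$ is itself nef-like on each maximal cone, or to run the argument fiberwise over the chambers of the secondary fan and patch. The hypothesis quantifies over \emph{all} regular triangulations precisely to have enough freedom here, but turning ``all triangulations have vanishing $R^q\pi_*$'' into ``local cohomology is concentrated in top degree'' is the technical heart.

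For the converse when $d = 3$, the plan is to argue contrapositively: suppose $R^1\pi_*\sh{O}_{\tilde X}(\pi^*D) \neq 0$ for some regular triangulation $\pi$ (note $R^q = 0$ automatically for $q \geq 2$ since the fibers of $\pi$ are at most one-dimensional in the threefold case, as $\pi$ is an isomorphism in codimension one over a $3$-dimensional base). Feeding this nonvanishing term into the same Leray spectral sequence, and using that $H^0_x$ and $H^1_x$ of a nonzero finite-length-supported module are controlled, one produces a nonzero class in $H^2_x\big(X, \sh{O}_X(D)\big)$ — the key point being that in dimension three the spectral sequence has few enough terms that no cancellation can kill the contribution of $H^1_x\big(X, R^1\pi_*(\cdots)\big)$ or $H^0_x\big(X, R^1\pi_*(\cdots)\big)$. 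Hence $\sh{O}_X(D)$ fails to be MCM. I would organize the write-up so that the spectral sequence bookkeeping is isolated in one lemma, the combinatorial sphere/ball dichotomy for the graded pieces in another, and the dimension-three degeneration in a third; the simpliciality of the facets of the cone is used to guarantee that regular triangulations exist and restrict nicely to the facets, which is what makes the boundary combinatorics tractable.
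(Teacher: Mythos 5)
Your forward direction has a genuine gap, and you have correctly located it yourself: the ``sphere/ball dichotomy for a suitable regular triangulation'' is the entire content of the implication, and nothing in your setup supplies it. Worse, the Leray reduction is circular: once $R^q\pi_*\sh{O}_{\tilde X}(\pi^*D)=0$ for $q>0$, the isomorphism $H^i_x\big(X,\sh{O}_X(D)\big)\cong H^i_Z\big(\tilde X,\sh{O}_{\tilde X}(\pi^*D)\big)$ merely renames the group you are trying to kill (one also sees this from excision, since $\tilde X\setminus Z\cong X\setminus\{x\}$), so ``it suffices to show $H^i_Z=0$ for one triangulation'' is not a reduction at all. You are also asking the wrong question when you worry whether a single triangulation handles all $m$ simultaneously. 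The paper argues the contrapositive and lets the triangulation depend on $m$: if $H^i_x\big(X,\sh{O}(D)\big)_m\neq 0$ for some $i<d$, set $I=I(m)$, pick $j\notin I$, and push the facet of $\check\sigma$ dual to $\rho_j$ inward; the inner normal fan of the resulting polytope is a regular triangulation whose maximal cones are spanned by $\rho_j$ together with the facets of $\sigma$ --- simplicial by hypothesis, which is exactly where that hypothesis enters. For this triangulation the full subcomplex $\tilde{\Delta}_I$ coincides with the boundary subcomplex $\hat{\sigma}_{V,I}$ that computes $H^\bullet_x\big(X,\sh{O}(D)\big)_m$ (Corollary \ref{localcohomcorollary}), so the nonvanishing local cohomology class reappears as a nonvanishing graded piece of some $R^q\pi_*$ with $q>0$, contradicting the hypothesis. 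Without this (or an equivalent) combinatorial lemma your first two paragraphs prove nothing.

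For the $d=3$ converse your spectral-sequence route is genuinely different from the paper's (which shows directly, via a planar cross-section, that connectedness of $\hat{\sigma}_{V,m}$ forces $H^1(\tilde{\Delta}_m;k)=0$ for every triangulation), and it can be made to work, but ``few enough terms'' is not sufficient as stated: the class in $E_2^{0,1}=H^0_x\big(X,R^1\pi_*\sh{O}_{\tilde X}(\pi^*D)\big)$ could a priori survive to $E_\infty^{0,1}\subseteq H^1_Z\big(\tilde X,\sh{O}_{\tilde X}(\pi^*D)\big)$ instead of mapping nontrivially to $H^2_x$. You must add that $H^0_Z=H^1_Z=0$, e.g.\ because $\sh{O}_{\tilde X}(\pi^*D)$ is reflexive and $Z$ has codimension at least $2$ in $\tilde X$, so the sheaf has depth at least $2$ along $Z$; only then is $d_2\colon E_2^{0,1}\to E_2^{2,0}=H^2_x\big(X,\sh{O}_X(D)\big)$ injective, and $R^1\pi_*\neq 0$ forces failure of MCM. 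With that supplement your converse is a clean alternative to the paper's combinatorial argument.
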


Note that the facets of a $3$-dimensional cone are always simplicial.

To prove our results we will require a lot of bookkeeping, combining various geometric,
combinatorial and arithmetic aspects of toric varieties. This has the unfortunate effect
that the exposition will be rather technical
and incorporate many notions (though not much theory) coming from combinatorics.
As this might be cumbersome to follow for the more geometrically inclined reader, we
will give an overview of the key structures and explain how they fit together.
From now $X$ denotes an arbitrary $d$-dimensional toric variety, $\Delta$ the fan
associated to $X$, $M \cong \Z^d$ the character group of the torus which acts
on $X$. We denote $\on := \{l_1, \dots, l_n\}$ the set of primitive vectors of the
$1$-dimensional cones in $\Delta$ and $D_1, \dots, D_n$ the corresponding
torus invariant prime divisors on $X$.

\paragraph{Cohomology and simplicial complexes.}

We will follow the standard approach for computing cohomology of torus invariant
Weil divisors, using the induced eigenspace decomposition. Let $D$ be such a divisor
on $X$ and $V \subseteq X$ a torus-invariant subscheme, then:
\begin{equation*}
H^i_V\big(X, \sh{O}_X(D)\big) \cong \bigoplus_{m \in M} H^i_V\big(X, \sh{O}_X(D)\big)_m.
\end{equation*}
We denote $\hat{\Delta}$ the simplicial model of $\Delta$, i.e. the abstract
simplicial complex on the set $\on$ such that any subset $I \subset \on$ is in
$\hat{\Delta}$ iff there
exists a cone $\sigma$ in $\Delta$ such that elements in $I$ are faces of
$\sigma$. Similarly,
we define a subcomplex $\hat{\Delta}_V$, by considering only those cones in $\Delta$
whose associated orbits in $X$ are not contained in $V$ (see also Section \ref{prelim}).
For a given torus invariant divisor $D = \sum_{i = 1}^n c_i D_i$ and any character
$m \in M$, we set $\hat{\Delta}_m$ and $\hat{\Delta}_{V, m}$ the
full subcomplexes which are supported on those $l_i$ with $l_i(m) < -c_i$. Now
the general formula for cohomology of $\sh{O}_X(D)$ is given as the relative
reduced cohomology of the complexes $\hat{\Delta}$ and $\hat{\Delta}_V$ with coefficients
in our base field $k$:

\begin{theorem*}[\ref{cohomtheorem}]
Let $D$ be a torus invariant Weil divisor on $X$. Then for
every torus invariant subscheme $V$ of $X$, every $i \geq 0$ and every $m \in M$:
\begin{equation*}
H^i_V\big(X, \mathcal{O}_X(D)\big)_m \cong H^{i - 1}(\hat{\Delta}_m, \hat{\Delta}_{V, m}; k).
\end{equation*}
\end{theorem*}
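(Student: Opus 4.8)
The plan is to reduce the computation of the eigenspace $H^i_V\big(X,\sh O_X(D)\big)_m$ to a concrete \v{C}ech-type complex built from the fan, and then identify that complex with the relative reduced cochain complex of $\hat\Delta_m$ modulo $\hat\Delta_{V,m}$. First I would fix a character $m\in M$ and recall that local cohomology with supports in the torus-invariant closed subscheme $V$ can be computed from the \v{C}ech complex associated to an affine open cover of $X\setminus V$; for a toric variety the natural choice is the cover by the affine charts $U_\sigma$, $\sigma\in\Delta$, whose orbit is not contained in $V$ — equivalently, the maximal cones of the subfan whose simplicial model is $\hat\Delta_V$. Taking the $m$-eigenspace commutes with all of this, so $H^i_V\big(X,\sh O_X(D)\big)_m$ becomes the $i$-th cohomology of the cone of a map of \v{C}ech complexes $C^\bullet(\hat\Delta;\sh F)_m \to C^\bullet(\hat\Delta_V;\sh F)_m$, where $\sh F=\sh O_X(D)$ and the shift by one records the passage to the mapping cone (this is the source of the index $i-1$ on the right-hand side).

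Next I would compute, for each cone $\sigma\in\Delta$, the $m$-graded piece $H^0\big(U_\sigma,\sh O_X(D)\big)_m$. By the standard description of divisorial sheaves on affine toric varieties, this piece is the field $k$ if $l_i(m)\ge -c_i$ for every ray $l_i$ of $\sigma$, and is $0$ otherwise. Thus the \v{C}ech complex $C^\bullet(\hat\Delta;\sh F)_m$ is, termwise, the $k$-vector space spanned by those faces $I\in\hat\Delta$ all of whose rays satisfy $l_i(m)\ge -c_i$ — that is, exactly the faces \emph{not} lying in the full subcomplex $\hat\Delta_m$ supported on the $l_i$ with $l_i(m)<-c_i$. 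The differential is the usual simplicial coboundary with signs, so $C^\bullet(\hat\Delta;\sh F)_m$ is the simplicial cochain complex of the pair $(\hat\Delta,\hat\Delta_m)$, up to the usual degree-shift and reduced-vs-unreduced bookkeeping; similarly $C^\bullet(\hat\Delta_V;\sh F)_m$ computes the cochains of $(\hat\Delta_V,\hat\Delta_{V,m})$. The key compatibility is that $\hat\Delta_{V,m}=\hat\Delta_V\cap\hat\Delta_m$, which is immediate from the definitions, so the two restriction maps fit into a commuting square of pairs of complexes.

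Finally I would assemble the pieces: the mapping cone of $C^\bullet(\hat\Delta,\hat\Delta_m)\to C^\bullet(\hat\Delta_V,\hat\Delta_{V,m})$ is, by a short diagram chase (or by the octahedral axiom), quasi-isomorphic to the relative cochain complex of $\hat\Delta_m$ modulo $\hat\Delta_{V,m}$, and taking cohomology gives $H^{i-1}(\hat\Delta_m,\hat\Delta_{V,m};k)$ with the reduced convention absorbing the degree shift. The main obstacle, and the step deserving the most care, is the bookkeeping around reduced versus unreduced cohomology and the empty simplex: one must check that the augmentation terms match up consistently for all $m$ simultaneously — in particular when $\hat\Delta_m$ is empty, or all of $\hat\Delta$, or when $V=\emptyset$ so that $\hat\Delta_V=\hat\Delta$ and the statement must reduce to the known formula $H^i\big(X,\sh O_X(D)\big)_m\cong H^{i-1}(\hat\Delta_m;k)$ for ordinary cohomology. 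Handling these degenerate cases uniformly (for instance by working throughout with the augmented/reduced complexes and the convention $\tilde H^{-1}(\emptyset)=k$) is what makes the clean statement above hold with no case distinctions.
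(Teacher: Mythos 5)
Your overall architecture (compute the $m$-eigenspace of $H^i_V$ as the mapping cone of the restriction between two \v{C}ech-type complexes, one for $X$ and one for the open toric subvariety $X\setminus V$) is sound, and the final assembly via the octahedral axiom would work. But the central identification is wrong. The faces of $\hat\Delta$ that contribute a nonzero term to the $m$-graded \v{C}ech complex are those \emph{all} of whose rays satisfy $l_i(m)\ge -c_i$, i.e.\ the faces of the full subcomplex of $\hat\Delta$ supported on $\on\setminus I(m)$. The relative cochain complex $C^\bullet(\hat\Delta,\hat\Delta_m)$ is instead spanned by the faces \emph{not contained in} $I(m)$, i.e.\ those having at least one vertex outside $I(m)$. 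These two collections of faces differ as soon as some face of $\hat\Delta$ meets both $I(m)$ and its complement (e.g.\ $X=\mathbb{P}^2$ with $I(m)=\{1\}$: the face $\{1,2\}$ is a basis element of the relative complex but contributes $0$ to the \v{C}ech complex). So the asserted termwise identification fails, the two complexes do not even have the same dimensions, and the claim that the cone computes $H^{i-1}(\hat\Delta_m,\hat\Delta_{V,m};k)$ does not follow. A second, related problem: even if one could identify the \v{C}ech complex with $C^\bullet(\hat\Delta,\hat\Delta_m)$, its cohomology equals $\tilde H^{i-1}(\hat\Delta_m)$ only when $\hat\Delta$ is acyclic, which is false in general ($\hat\Delta$ is a sphere for complete $X$).

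Passing from the complex supported on faces disjoint from $I(m)$ to the reduced cohomology of the full subcomplex $\hat\Delta_m$ is precisely the nontrivial content of the theorem of Eisenbud--Musta\c{t}\u{a}--Stillman that the paper cites; it is a duality statement, not reduced-versus-unreduced bookkeeping. The paper's own proof does not redo it: it quotes that result twice, once for $X$ (giving $H^i(X,\sh{O}(D))_m\cong H^{i-1}(\hat\Delta_m;k)$) and once for $X\setminus V$ (giving $H^i(X\setminus V,\sh{O}(D))_m\cong H^{i-1}(\hat\Delta_{V,m};k)$), and then compares the long exact local cohomology sequence of $(X,X\setminus V)$ in degree $m$ with the long exact sequence of the simplicial pair $(\hat\Delta_m,\hat\Delta_{V,m})$. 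The quickest repair of your argument is to do the same: take the two absolute isomorphisms as a black box and deduce the relative statement from the two long exact sequences (checking naturality of the connecting maps), rather than attempting a direct termwise identification.
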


This theorem is an easy consequence of the standard characterization for the case
$V = X$ (\cite{Mustata3}), which says that $H^i\big(X, \sh{O}_X(D)\big)_m \cong
H^{i - 1}(\hat{\Delta}_m; k)$. We state it explicitly for reference purposes, as
it encompasses both, the case of global and local cohomology.

\paragraph{The circuit geometry of a toric variety.}

By above theorem, for an invariant divisor $D = \sum_{i = 1}^n c_i D_i$, the eigenspaces
$H^i_V\big(X, \sh{O}_X(D)\big)_m$ depend on the simplicial complexes $\hat{\Delta}$,
$\hat{\Delta}_V$ as well as on the position of the characters $m$ with respect to the
hyperplanes $H_i^{\underline{c}} = \{m \in M_\Q \mid l_i(m) = -c_i\}$, where $M_\Q =
M \otimes_\Z \Q$. The chamber decomposition of $M_\Q$ induced by the $H_i^{\underline{c}}$
(or their intersection poset) can be interpreted as the combinatorial type of $D$.
Our strategy will be to consider the variations of combinatorial types depending
on $\underline{c} = (c_1, \dots, c_n) \in \Q^n$. The solution to this discriminantal
problem is given by the {\em discriminantal arrangement} associated to the
vectors $l_1, \dots, l_n$, which has first been considered by Crapo \cite{Crapo84}
and Manin and Schechtman \cite{ManinSchechtman89}. The discriminantal arrangement
is constructed as follows. Consider the standard short exact sequence associated to $X$:
\begin{equation}\label{standardsequence}
0 \longrightarrow M_\Q \overset{L}{\longrightarrow} \Q^n \overset{G}{\longrightarrow}
A_\Q \longrightarrow 0,
\end{equation}
where $L$ is given by $L(m) = \big(l_1(m), \dots, l_n(m)\big)$,
and $A_\Q := A_{d - 1}(X) \otimes_\Z \Q$ is the rational divisor class group of $X$.
The matrix $G$ is called the {\em Gale} transform of $L$, and its $i$-th column
$D_i$ is the Gale transform of $l_i$. The most important property of the Gale transform
is that the linear dependencies among the $l_i$ and among the $D_i$ are inverted. That
is, for any subset among the $l_i$ which forms a basis, the complementary subset
of the $D_i$ forms a basis of $A_\Q$, and vice versa. Moreover, for every {\em circuit},
i.e. a minimal linearly dependent subset, $\circuit \subset \on$ the complementary set
$\{D_i \mid l_i \notin \circuit\}$ spans a hyperplane $H_\circuit$ in $A_\Q$. Then the
discriminantal arrangement is given by the hyperplane arrangement
\begin{equation*}
\{H_\circuit \mid \circuit \subset \on \text{ circuit}\}.
\end{equation*}
The stratification of $A_\Q$ by this arrangement then is in bijection with the
combinatorial types of the arrangements given by the $H_i^{\underline{c}}$ under
variation of $\underline{c}$.
As we will see, virtually all properties of $X$ concerning its birational geometry
and cohomology vanishing
of divisorial sheaves on $X$ depend on the discriminantal arrangement. In particular,
(see Proposition \ref{chamberprop}), the discriminantal arrangement coincides with
the hyperplane arrangement generated by the facets of the secondary fan. Ubiquitous
standard constructions such as the effective cone, nef cone, and
the Picard group can easily be identified as its substructures.

Another interesting aspect is that the discriminantal arrangement by itself
(or the associated matroid, respectively) represents a combinatorial invariant
of the variety $X$, which one can refer to as its {\em circuit geometry}.
This circuit geometry refines the combinatorial information coming with the
toric variety, that is, the fan $\Delta$ and the matroid structure underlying
the $l_i$ (i.e. their linear dependencies). It depends only on the $l_i$,
and even for two combinatorially equivalent
fans $\Delta$, $\Delta'$ such that corresponding sets of primitive vectors $l_1, \dots,
l_n$ and $l'_1, \dots, l'_n$ have the same underlying linear dependencies,
their associated circuit geometries are different in general.
This already is the case for surfaces, see, for
instance, Crapo's example of a plane tetrahedral line configuration (\cite{Crapo84},
\S 4). Falk (\cite{Falk94}, Example 3.2) gives a $3$-dimensional example.

\comment{
Another ingredient are arithmetic conditions: every circuit comes with a relation
\begin{equation}\label{circuitrelation}
\sum_{l_i \in \circuit} \alpha_i l_i = 0,
\end{equation}
where the $\alpha_i$ are unique up to multiplication by a common factor and can
always be
chosen to be integral. Relations of this type are standard in toric geometry, for
instance they play an important role in Reid's description of the toric minimal
model program \cite{Reid83} and the classification theory of smooth toric varieties
(see \cite{Oda}, \S 1.6, for instance).}

\comment{
As mentioned before, many geometric properties of $X$ are determined by
its circuit geometry.
For instance, an easy observation is the fact that the discriminantal arrangement
coincides
with the arrangement generated by the walls of the secondary fan (see Proposition
\ref{chamberprop}). Also note that the relations (\ref{circuitrelation}) give rise
to curve classes in $N_1(X)$. In particular,
the classes of invariant curves on $X$ are a subset of these classes.
These classes also share some properties with Batyrev's primitive collections
\cite{Batyrev91}; in particular if $X$ is projective (but not necessarily smooth
in our case), a subset of these classes
spans the extremal rays of the Mori cone (theorem \ref{moriconetheorem}).
}

\paragraph{Toric $1$-circuit varieties and the diophantine Frobenius problem.}

A special class of simplicial toric varieties, which we call {\em toric $1$-circuit
varieties}
are those with primitive vectors $l_1, \dots, l_{d + 1}$ forming a unique circuit. Such a
circuit comes with a relation
\begin{equation}\label{circuitrelation}
\sum_{i = 1}^{d + 1} \alpha_i l_i = 0
\end{equation}
where the $\alpha_i$ are nonzero integers whose largest common divisor is one. This relation
is unique up to sign and we assume for simplicity that $\alpha_i > 0$ for at least one $i$.
For a relation as in (\ref{circuitrelation}), we denote
$\mathbb{P}(\alpha_1, \dots, \alpha_{d + 1})$ the toric variety whose fan is generated by
maximal cones spanned by the sets $\{l_j \mid i \neq j\}$ for every $i$ with $\alpha_i > 0$.
Given that at least one $\alpha_i < 0$, we can likewise consider the variety
$\mathbb{P}(-\alpha_1, \dots, -\alpha_{d + 1})$ and it is not difficult to see that these
are the only two simplicial fans supported on the primitive vectors $l_1, \dots, l_{d + 1}$.
The integers $\alpha_1, \dots, \alpha_{d + 1}$ determine $\mathbb{P}(\alpha_1, \dots,
\alpha_{d + 1})$ uniquely up to a quotient by a finite group (which we suppress for this
exposition). In particular, if $\alpha_i > 0$ for all $i$, then the toric circuit variety is
isomorphic to the weighted projective space with weights $\alpha_i$.
If $\alpha_i < 0$ for at least one $i$, the associated toric $1$-circuit variety is a local
model for a flip (or flop)
$
\xymatrix{
\mathbb{P}(\alpha_1, \dots, \alpha_{d + 1}) \ar@{-->}[r] &
\mathbb{P}(-\alpha_1, \dots, -\alpha_{d + 1}).
}
$
This kind of operation shows up in the toric minimal model program and has been well-studied
(see \cite{Reid83}), and relations of type (\ref{circuitrelation}) play an important role for
the classification of toric varieties (see \cite{Oda}, \S 1.6, for instance).

The reason for studying toric $1$-circuit varieties in isolation is that they are the building
blocks for our arithmetic conditions on cohomology vanishing. Let $\mathbb{P}(\alpha_1, \dots,
\alpha_{d + 1})$ denote a weighted projective space with reduced weights $\alpha_i$ and $D$ the
unique $\Q$-effective generator of $A_{d - 1}\big(\mathbb{P}(\alpha_1, \dots, \alpha_{d + 1})\big)$.
Then by a standard construction we get
\begin{align*}
 & \dim H^0\big(\mathbb{P}(\alpha_1, \dots, \alpha_{d + 1}),
\sh{O}_{\mathbb{P}(\alpha_1, \dots, \alpha_{d + 1})}(nD)\big) \\
= &\Big\vert \{(k_1, \dots, k_{d + 1})
\in \N^{d + 1} \mid \sum_{i = 1}^{d + 1} k_i \alpha_i= n\} \Big\vert  =:
\operatorname{VP}_{\alpha_1, \dots, \alpha_{d + 1}}(n),
\end{align*}
for every $n \in \Z$,
where $\operatorname{VP}_{\alpha_1, \dots, \alpha_{d + 1}}$ is the so-called vector partition
function or denumerant function with respect to the $\alpha_i$. The problem of determining the
zero set of $\operatorname{VP}_{\alpha_1, \dots, \alpha_{d + 1}}$ (or the maximum of this set)
is quite famously known as the diophantine Frobenius problem. This problem is hard in general
(though not necessarily in cases of practical interest) and there does not exist a general closed
expression to determine the zero set.
For a survey of the diophantine Frobenius problem we refer to the book
\cite{RamirezAlfonsin}. For more general toric $1$-circuit varieties and higher cohomology
groups, the cohomology vanishing problem can be expressed in similar terms. Accepting the
fact that the general cohomology vanishing problem for toric varieties is at least as hard
as the diophantine Frobenius problem it still makes sense to simplify the general situation
by lifting vanishing conditions from the $1$-circuit case in a suitable way.

In subsection \ref{onecircuitvarieties} we will cover the cohomology vanishing problem
for toric $1$-circuit varieties somewhat more extensively than it would strictly be
necessary for proving our main theorems. The reason is that, from the general setup we
have to provide, we can derive essentially for free a complete characterization of
divisorial cohomology vanishing for these varieties. It should be instructive to
see this kind of results explicitly for the simplest possible cases. As the class of
toric $1$-circuit varieties contains the weighted projective spaces, our treatment
can be considered as toric supplement to standard references such as
\cite{Delorme}, \cite{Dolgachev82}, \cite{BeltramettiRobbiano} and should also
serve as a reference.

\paragraph{Lifting from the $1$-circuit case to the general case.}

The basic idea here is to transport the discriminantal arrangement from $A_\Q$ to some
diophantine analog in $A_{d - 1}(X)$. For any circuit $\circuit \subset \on$ there is
a short exact sequence
\begin{equation*}
0 \longrightarrow H_\circuit \longrightarrow A_\Q \longrightarrow A_{\circuit, \Q}
\longrightarrow 0.
\end{equation*}
By choosing one of the two possible simplicial fans supported on $\circuit$ (which needs not
necessarily be realized as a subfan
of $\Delta$), we have an induced orientation on $H_\circuit$ and we can identify $A_{\circuit, \Q}
= A_\circuit \otimes_\Z \Q \cong  \Q$ with the group of $\Q$-divisors on the corresponding
$1$-circuit variety. By lifting the surjection $A_\Q \rightarrow A_{\circuit, \Q}$ to its
integral counterpart $A_{d - 1}(X) \rightarrow A_\circuit$, we lift the zero set of the
corresponding vector partition function on $A_\circuit$ to $A_{d - 1}(X)$. By doing this
for every circuit $\circuit$, we construct in $A_{d - 1}(X)$ what we call the
{\em Frobenius discriminantal arrangement}. One can consider the Frobenius
discriminantal arrangement as an
arithmetic thickening of the discriminantal arrangement. This thickening in general
is just enough to enlarge the relevant strata in the discriminantal arrangement such
that it encompasses the Kawamata-Viehweg-like theorems. To derive other vanishing
results, our analysis will mostly be concerned with analyzing the birational geometry
of $X$ and its implications on the combinatorics of the discriminantal arrangement, and
the transport of this analysis to the Frobenius arrangement.

\comment{

\paragraph{General cohomology vanishing.}

Altogether, the general cohomology vanishing problem is quite intricate. One has
to deal with the already complex structure of the discriminantal arrangement, as
well as with zero sets of vector partition functions for regions $T_I^D$, upon
varying $D$, whenever these are well-defined for some $I \subset \on$.
Our approach to deal with this is to reduce the problem somewhat by just considering the
discriminantal
arrangement and the classical diophantine Frobenius problem, as for
weighted projective space, instead of zeros of the higher dimensional vector partition
functions. The idea behind this is that any $T^D_I$ whose closure is bounded, one
can perform a simplicial decomposition, by which we mean the following. For every
circuit $\circuit \subset \on$, the equations $l_i = -c_i$ for $l_i \in \circuit$
give rise to a chamber
decomposition of $N_\Q$ the way that there exists precisely one ``simplicial''
chamber $S_\circuit$ which
has the shape of a simplex times some affine space. There are two possibilities,
either $T^D_I$ is contained in the simplicial chamber or not. Denote $K$ the set of
those circuits such that $T^D_I$ is contained in the simplicial chamber, then
one observes that $T^D_I = \bigcap_{\circuit \in K} S_\circuit$. So, if every
$S_\circuit$ is lattice point free, then $T^D_I$ also is lattice point free
(hence, we have a sufficient condition, but it is definitely not necessary).
The problem of determing whether some $S_\circuit$ is lattice point free is
precisely the classical diophantine Frobenius problem. This might at first not
be a big reduction but in fact, in many practical cases (say, involving smooth
toric varieties) the classical Frobenius problem is mostly trivial, with possibly
very few or no difficult computations to do.

Combining the discriminantal arrangement with the Frobenius problem, we arrive
at what we call the {\em Frobenius arrangement}. For every hyperplane $H_\circuit
\subset A_\Q$, we denote $F_\circuit \subset A_{d - 1}(X)$ the subset which is
defined by the property that the chamber $S_\circuit$ is lattice point-free. One can
think of the $F_\circuit$ as thickenings of the integral hyperplanes $H_\circuit
\cap A$. Any closed convex union of strata $S$ of the discriminantal arrangement
is the intersection of half spaces in $A_\Q$ whose boundaries are the $H_\circuit$,
and we can use the $F_\circuit$ to augment these half spaces to ``thickened''
half spaces in $A_{d - 1}(X)$. These arithmetically defined half spaces then yield
the arithmetic core $\mathfrak{A}_S$ as mentioned before (see Sections
\ref{frobeniusarrangements} and \ref{vanishinggeneralities}).

\paragraph{The diophantine Frobenius problem.}

As the discriminant problem is completely solvable in terms of linear algebra, the
general cohomology vanishing problem entails a much more difficult number theoretical
problem. Let $\mathcal{A}^\uc$ be as before and for some $I \subset \on$ denote $T^D_I$
the region in $M_\Q$ which is given by inequalities $l_i(m) < -c_i$ for $l_i \in I$
and $l_i(m) \geq -c_i$ for $l_i \notin I$. If $T_I^D$ is nonempty, then its closure
in $M_\Q$ is a possibly unbounded rational polyhedron. Now 
$H^i_V\big(X, \sh{O}_X(D)\big) \neq 0$ if and only if there exists some $m \in M$
and some $I = I(m)$ with $H^{i - 1}(\hat{\Delta}_m, \hat{\Delta}_{V, m}; k) \neq 0$
and $m \in T^D_I$.
So, the additional problem to determining
the nonemptyness of $T^D_I$ is to determine the nonemptyness of $T^D_I \cap M$.
Denote $e_I \in \{0, 1\}^n$ with $e_{I, i} = 1$ if $l_i \in I$ and $e_{I, i} = 0$
else, $\tilde{C}_I \subset \Q^n$ the shifted orthant given by $-e_I + \sum_{l_i \in I}
\Q_{\geq 0}(-e_i) + \sum_{l_i \notin I} \Q_{\geq 0} e_i$, where the $e_i$ are the
standard basis vectors of $\Q^n$, and $\tilde{\mathbb{O}}_I := \tilde{C}_I \cap \Z^n$.
As has been observed by Musta\c t\v a et al. in \cite{Mustata1}, \cite{Mustata3},
all those $D = \sum_{i = 1}^n c_i D_i$ such that $T^D_I \cap M$ is nonempty, are
contained in the image $\mathbb{O}_I$ of $\tilde{\mathbb{O}}_I$ under the surjection
$\Z^n \longrightarrow A_{d - 1}(X)$. Now the aforementioned number theoretical problem
comes from the fact that $\mathbb{O}_I$ in general is not saturated with respect
to its embedding into the cone which is the image $\tilde{C}_I$ under
the map $G$ of sequence (\ref{standardsequence}) (see Section\ref{frobeniusarrangements}
for a more detailed description).
Determining the set $\big(G(\tilde{C}_I) \cap A_{d - 1}(X)\big) \setminus \oo_I$
is a generalization of the famous diophantine Frobenius problem. We will illustrate
its easiest instances in our context, the weighted projective surfaces. Let $l_1,
l_2, l_3$ be primitive vectors in $N \cong \Z^2$ which generate $N$ over $\Z$ and which
form a circuit with the condition $\alpha_1 l_1 + \alpha_2 l_2 + \alpha_3 l_3 = 0$
and $\alpha_i > 0$ for $i = 1, 2, 3$. These primitive vectors span a unique complete
fan whose corresponding toric variety is isomorphic to the weighted projective space
$\mathbb{P}(\alpha_1, \alpha_2, \alpha_3)$. For any torus invariant divisor $D$,
the corresponding arrangement $\mathcal{A}^\uc$ has at most one bounded region
$T^D_I$, where either $I = \emptyset$ or $I = \{1, 2, 3\}$. For simplicity, we consider
only $T^D_\emptyset$. The function which counts lattice points in $T^D_\emptyset$ is its
{\em vector partition function} which is defined as follows. Let $D'$ be the unique
effective generator of $A_1\big(\mathbb{P}(\alpha_1, \alpha_2, \alpha_3)\big) \cong \Z$
and $D = n . D'$ for some $n \geq 0$, then:

 The problem does not go away when one restricts the
class of toric varieties under consideration to just, say, smooth varieties.
Take again the case $\mathbb{P}(2, 3, 5)$, then $1$ is the only positive
integer which
cannot be represented as a positive linear combination of $2, 3, 5$, and consequently
we have $H^0\big(\mathbb{P}(2, 3, 5), \sh{O}(D')\big) = 0$, though $D'$ is an effective
$\Q$-Cartier divisor. Similarly, we have $H^2\big(\mathbb{P}(2, 3, 5), \sh{O}(-kD')\big)
= 0$, where either $k = 11$ or $k < 10$. From this example it is quite straightforward
to construct an infinite set of smooth toric surfaces whose semigroup of effective
divisors with nonzero global sections is not saturated. One just takes $X$ smooth
such that there exists a toric morphism $X \longrightarrow \mathbb{P}(2, 3, 5)$ by
iterated toric blow-ups. Denote $l_1, l_2, l_3$ the original rays of $\mathbb{P}(2, 3, 5)$,
then we can always choose $\underline{c} \in \Z^n$ such that $2 c_1 + 3 c_2 + 5 c_3
= 1$ and the other $c_i >> 0$. This way, the region $T^D_\emptyset$ for $D =
\sum_{i = 1}^n c_i D_i$ coincides with $T^{D'}_\emptyset$, i.e. it is the same
rational simplex without lattice points, as for the
divisor $D'$ on $\mathbb{P}(2, 3, 5)$. Hence the corresponding Cartier divisor
$D$ is contained in the effective cone (and thus effective
as a $\Q$-Cartier divisor), but has no global sections.
}

\paragraph{Overview.}

Section \ref{weilcohomology} is devoted to the proof of theorem \ref{cohomtheorem}.
In section \ref{onecircuitvarieties} we give a complete characterization of
cohomology vanishing for toric $1$-circuit varieties.
In section \ref{discriminantalarrangementsandsecfans} we survey discriminantal arrangements, secondary fans, and rational aspects of cohomology vanishing.
Several technical facts will
be collected which are important for the subsequent sections. Section
\ref{arithmeticaspects} contains all the essential results of this work.
In \ref{nonstandardvanishing} we will prove our main arithmetic vanishing results.
These will be applied in \ref{nonstandardsurfacevanishing} to give a quite
complete characterization of cohomology vanishing for toric surfaces.
Section \ref{mcmsection} is devoted to maximally Cohen-Macaulay modules.

\paragraph{Acknowledgments.}

Thanks to Laurent Bonavero, Michel Brion, Lutz Hille, Vic Reiner, and
Jan Stienstra for discussion and useful hints.

\section{Toric Preliminaries}
\label{prelim}\label{weilcohomology}

In this section we first introduce notions from toric geometry which will be used
throughout the rest of the paper. As general reference for toric varieties we use
\cite{Oda}, \cite{Fulton}. We will always work over an algebraically closed
ground field $k$.

Let $\Delta$ be a fan
in the rational vector space $N_\Q := N \otimes_\Z \Q$ over a lattice
$N \cong \mathbb{Z}^d$.
Let $M$ be the lattice dual to $N$, then the elements of $N$ represent linear forms
on $M$ and we write $n(m)$ for the canonical pairing $N \times M \rightarrow \mathbb{Z}$,
where $n \in N$ and $m \in M$. This pairing extends naturally over $\Q$, $M_\Q
\times N_\Q \rightarrow \Q$. Elements of $M$ are denoted by $m$, $m'$,
etc. if written additively, and by
$\chi(m)$, $\chi(m')$, etc. if written multiplicatively, i.e. $\chi(m + m') =
\chi(m)\chi(m')$. The lattice $M$ is identified with the group of characters of the
algebraic torus $T = \operatorname{Hom}(M,k^*) \cong (k^*)^d$ which acts on the toric
variety $X = X_\Delta$ associated to $\Delta$.
Moreover, we will use the following notation:

\begin{itemize}
\item cones in $\Delta$ are denoted by small greek letters $\rho, \sigma, \tau,
\dots$, their natural partial order by $\prec$, i.e. $\rho \prec \tau$ iff $\rho
\subseteq \tau$;
\item $\vert \Delta \vert := \bigcup_{\sigma \in \Delta} \sigma$ denotes the
support of $\Delta$;
\item for $0 \leq i \leq d$ we denote $\Delta(i) \subset \Delta$ the set of
$i$-dimensional cones; for $\sigma \in \Delta$, we denote $\sigma(i)$ the set of
$i$-dimensional faces of $\sigma$;
\item $U_\sigma$ denotes the associated affine toric variety for any $\sigma \in
\Delta$;
\item $\check{\sigma} := \{m \in M_\mathbb{Q} \mid n(m) \geq 0
\text{ for all $n \in \sigma$}\}$ is the cone {\it dual} to $\sigma$;
\item $\sigma^\bot = \{m \in M_\mathbb{Q} \mid n(m) = 0 \text{ for
all } n \in \sigma \}$;
\item $\sigma_M := \check{\sigma} \cap M$ is the submonoid of $M$ associated 
to $\sigma$.
\end{itemize}

We will mostly be interested in the structure of $\Delta$ as a combinatorial
cellular complex. For this, we make a few convenient identifications. We always denote
$n$ the cardinality of $\Delta(1)$. i.e. the number of $1$-dimensional cones ({\em rays})
and $\on := \{1, \dots, n\}$.
The primitive vectors along rays are denoted $l_1, \dots, l_n$, and, by abuse of notion, we
will usually
identify the sets $\Delta(1)$, the set of primitive vectors, and $\on$.
Also, we will often identify $\sigma \in \Delta$ with the set $\sigma(1) \subset \on$.
With these identifications, and using the natural order of $\on$, we obtain a combinatorial
cellular complex with support $\on$; we may consider this complex as a combinatorial
model for $\Delta$. In the case where $\Delta$ is simplicial, this complex is just a
combinatorial simplicial complex in the usual sense. If $\Delta$ is not simplicial,
we consider the {\em simplicial cover} $\hat{\Delta}$ of $\Delta$, modelled on $\on$:
some subset $I
\subset \on$ is in $\hat{\Delta}$ iff there exists some $\sigma \in \Delta$ such that
$I \subset \sigma(1)$.
The identity on $\on$ then induces a surjective morphism $\hat{\Delta} \longrightarrow
\Delta$ of combinatorial cellular complexes. This morphism has a natural representation
in terms of fans. We can identify $\hat{\Delta}$ with the fan in $\Q^n$ which is defined
as follows. Let $e_1, \dots, e_n$ be the standard basis of $\Q^n$, then for any set
$I \subset \on$, the vectors $\{e_i\}_{i \in I}$ span a cone over $\Q_{\geq 0}$
iff there exists $\sigma \in \Delta$ with $I \subset \sigma(1)$. The associated
toric variety
$\hat{X}$ is open in $\mathbb{A}^n_k$, and the vector space homomorphism defined by mapping
$e_i \mapsto l_i$ for $i \in \on$ induces a map of fans $\hat{\Delta} \rightarrow \Delta$.
The induced morphism $\hat{X} \rightarrow X$ is the quotient presentation due to
Cox \cite{Cox}. We will not make explicit use of this construction, but it may be useful
to have it in mind.

An important fact used throughout this work is the following exact sequence which exists
for any toric variety $X$ with associated fan $\Delta$:
\begin{equation}\label{standardZsequence}
M \overset{L}{\longrightarrow} \Z^n \longrightarrow A_{d - 1}(X)
\longrightarrow 0.
\end{equation}
Here $L(m) = (l_1(m), \dots, l_n(m))$, i.e. as a matrix, the primitive
vectors $l_i$ represent the row vectors of $L$. Note that $L$ is injective
iff $\Delta$ is not contained in a proper subspace of $N_\Q$.
The sequence follows from the fact that every
Weil divisor $D$ on $X$ is rationally equivalent to a $T$-invariant Weil divisor, i.e.
$D \sim \sum_{i = 1}^n c_i D_i$, where $\underline{c} = (c_1, \dots, c_n) \in \Z^n$
and $D_1, \dots, D_n$, the $T$-invariant irreducible divisors of $X$. Moreover, any
two
$T$-invariant divisors $D$, $D'$ are rationally equivalent if and only if there exists
$m \in M$ such that $D - D' = \sum_{i = 1}^n l_i(m) D_i$. To every Weil divisor $D$,
one associates its divisorial sheaf $\sh{O}_X(D) = \sh{O}(D)$ (we will omit the
subscript $X$ whenever there is no ambiguity), which is a reflexive sheaf of rank one and
locally free if and only if $D$ is Cartier. Rational equivalence classes
of Weil divisors are in bijection with isomorphism
classes of divisorial sheaves. If $D$ is $T$-invariant, the sheaf $\sh{O}(D)$ acquires
a $T$-equivariant structure and the equivariant isomorphism classes of sheaves
$\sh{O}(D)$ are one-to-one with $\Z^n$.

Now Consider a closed $T$-invariant subscheme $V \subseteq X$. Then for any $T$-invariant
Weil divisor $D$ there are induced linear representations of $T$ on the local cohomology
groups $H^i_V\big(X, \sh{O}(D)\big)$. In particular, each such module has a natural
eigenspace decomposition
\begin{equation*}
H^i_V\big(X, \sh{O}(D)\big) \cong \bigoplus_{m \in M}H^i_V\big(X, \sh{O}(D)\big)_m.
\end{equation*}
The eigenspaces $H^i_V\big(X, \sh{O}(D)\big)_m$ can be characterized by the relative
cohomologies of certain simplicial complexes.
For any $I \subset \on$ we denote $\hat{\Delta}_I$ the maximal subcomplex of $\hat{\Delta}$
which is supported on $I$. We denote $\hat{\Delta}_V$ the simplicial cover of the
fan associated to the complement of the reduced subscheme underlying $V$ in $X$.
Correspondingly, for $I \subset \on$ we denote $\hat{\Delta}_{V, I}$ the maximal
subcomplex of $\hat{\Delta}_V$ which is supported on $I$. If $\uc \in \Z^n$ is fixed,
and $D = \sum_{i \in \on} c_i D_i$, then every $m \in M$ determines a subset
$I(m)$ of $\on$ which is given by
\begin{equation*}
I(m) = \{i \in \on \mid l_i(m) < -c_i\}.
\end{equation*}
Then we will write $\hat{\Delta}_m$ and $\hat{\Delta}_{V, m}$ instead of
$\hat{\Delta}_{I(m)}$ and $\hat{\Delta}_{V, I(m)}$, respectively. In the case
where $\Delta$ is generated by just one cone $\sigma$, we will also write
$\hat{\sigma}_m$, etc.
With respect to these notions we get:

\begin{theorem}\label{cohomtheorem}
Let $D \in \mathbb{Z}^{\Delta(1)}$ be a $T$-invariant Weil divisor on $X$. Then for
every $T$-invariant closed subscheme $V$ of $X$, every $i \geq 0$ and every $m \in M$ there
exists an isomorphism of $k$-vector spaces
\begin{equation*}
H^i_V\big(X, \mathcal{O}(D)\big)_m \cong H^{i - 1}(\hat{\Delta}_m, \hat{\Delta}_{V, m}; k).
\end{equation*}
\end{theorem}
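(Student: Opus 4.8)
The plan is to deduce the statement from the known case $V=X$, namely the isomorphism $H^i\big(X,\sh{O}(D)\big)_m\cong H^{i-1}(\hat{\Delta}_m;k)$ of \cite{Mustata3}. Fix $m\in M$; since every sheaf, scheme and morphism occurring below carries a compatible $T$-action, the whole discussion is $M$-graded and it is enough to argue in the $m$-eigenspace. Write $V_{\mathrm{red}}$ for the reduced subscheme underlying $V$ and $U:=X\setminus V_{\mathrm{red}}$ for its open complement. Then $U$ is again a toric variety: its fan is the subfan $\Delta':=\{\sigma\in\Delta\mid\orb(\sigma)\not\subseteq V\}$ of $\Delta$, which is closed under passing to faces precisely because $V$ is closed and $T$-invariant. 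By the very definition of $\hat{\Delta}_V$ in this section one has $\hat{\Delta}_V=\widehat{\Delta'}$, the restriction $\sh{O}(D)|_U$ is the divisorial sheaf on $U$ attached to $\sum_{l_i\in\Delta'(1)}c_iD_i$, and applying the construction $\widehat{(\,\cdot\,)}_m$ to $U$ with this divisor reproduces exactly $\hat{\Delta}_{V,m}$; in particular $\hat{\Delta}_{V,m}$ is a subcomplex of $\hat{\Delta}_m$.

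The main tool is the long exact sequence of local cohomology for the closed/open decomposition $V_{\mathrm{red}}\subseteq X\supseteq U$: for the quasi-coherent sheaf $\sh{O}(D)$ one has, after passing to $m$-eigenspaces,
\begin{equation*}
\cdots\to H^i_V\big(X,\sh{O}(D)\big)_m\to H^i\big(X,\sh{O}(D)\big)_m\to H^i\big(U,\sh{O}(D)|_U\big)_m\to H^{i+1}_V\big(X,\sh{O}(D)\big)_m\to\cdots.
\end{equation*}
I would then apply the case $V=X$ twice, once to $X$ and once to the toric variety $U$, to rewrite the middle and right terms as $H^{i-1}(\hat{\Delta}_m;k)$ and $H^{i-1}(\hat{\Delta}_{V,m};k)$, and compare the outcome with the long exact sequence of the simplicial pair $(\hat{\Delta}_m,\hat{\Delta}_{V,m})$ in reduced relative cohomology,
\begin{equation*}
\cdots\to H^{i-1}(\hat{\Delta}_m,\hat{\Delta}_{V,m};k)\to H^{i-1}(\hat{\Delta}_m;k)\to H^{i-1}(\hat{\Delta}_{V,m};k)\to H^i(\hat{\Delta}_m,\hat{\Delta}_{V,m};k)\to\cdots.
\end{equation*}
These two long exact sequences have the same middle and right terms (after a degree shift by one), so the asserted isomorphism $H^i_V\big(X,\sh{O}(D)\big)_m\cong H^{i-1}(\hat{\Delta}_m,\hat{\Delta}_{V,m};k)$ will follow from the five lemma, provided one knows that the restriction map $H^i\big(X,\sh{O}(D)\big)_m\to H^i\big(U,\sh{O}(D)|_U\big)_m$ goes over, under the isomorphisms of \cite{Mustata3}, into the map $H^{i-1}(\hat{\Delta}_m;k)\to H^{i-1}(\hat{\Delta}_{V,m};k)$ induced by the inclusion $\hat{\Delta}_{V,m}\hookrightarrow\hat{\Delta}_m$.

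I expect this last compatibility to be the only real obstacle, and I would settle it by going back into the proof of the case $V=X$ rather than quoting it as a black box. There the isomorphism is obtained at the level of cochain complexes: the $m$-graded piece of the \v{C}ech complex of $\sh{O}(D)$ for the standard affine cover $\{U_\sigma\}$ is, up to the degree shift, the reduced simplicial cochain complex of $\hat{\Delta}_m$. Restricting the cover to the open toric subvariety $U$ merely deletes the summands indexed by cones $\sigma\notin\Delta'$, so the \v{C}ech restriction map becomes the obvious surjection of cochain complexes $C^\bullet(\hat{\Delta}_m)\twoheadrightarrow C^\bullet(\hat{\Delta}_{V,m})$, and the comparison square commutes already over complexes. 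Phrased in the derived category, $R\Gamma_V\big(X,\sh{O}(D)\big)_m$ is the shifted mapping cone of a morphism quasi-isomorphic to $C^\bullet(\hat{\Delta}_m)\to C^\bullet(\hat{\Delta}_{V,m})$, whose shifted cone is the reduced relative cochain complex of the pair; taking $H^i$ then yields the theorem. So the substance of the argument is the formal interplay between the local-cohomology long exact sequence and the long exact sequence of a pair, together with the bookkeeping required to recognise Musta\c t\v a's \v{C}ech description as functorial for open toric immersions.
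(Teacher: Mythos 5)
Your proposal is correct and follows essentially the same route as the paper: the paper's own proof likewise quotes the $V=X$ case of \cite{Mustata3} for both $X$ and the open toric complement $X\setminus V$, and then compares the long exact local cohomology sequence with the long exact sequence of the pair $(\hat{\Delta}_m,\hat{\Delta}_{V,m})$. Your additional care in checking that the restriction map matches the map induced by the inclusion of subcomplexes (via the \v{C}ech description) is exactly the compatibility the paper leaves implicit.
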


Note that here $H^{i - 1}(\hat{\Delta}_m, \hat{\Delta}_{V, m})$ denotes the {\em reduced}
relative cohomology group of the pair $(\hat{\Delta}_m, \hat{\Delta}_{V, m})$.

\begin{proof}
For $V = X$ it follows from \cite{Mustata3}, \S 2 that $H^i\big(X, \mathcal{O}(D)\big)_m \cong
H^{i - 1}(\hat{\Delta}_m; k)$ and $H^i\big(X \setminus V, \mathcal{O}(D)\big)_m \cong
H^{i - 1}(\hat{\Delta}_{V, m}; k)$. Then the assertion follows from comparing the long exact
relative cohomology sequence of the pair $(\hat{\Delta}_m, \hat{\Delta}_{V, m})$ with the
long exact local cohomology sequence with respect to $X$ and $V$ in degree $m$.
\end{proof}

We mention a special case of this theorem, which follows from the long exact
cohomology sequence.

\begin{corollary}\label{localcohomcorollary}
Let $X = U_\sigma$ and $V$ a $T$-invariant closed subvariety of $X$ and denote $\hat{\sigma}$ the
simplicial model for the fan generated by $\sigma$. Then for every
$m \in M$ and every $i \in \Z$:
\begin{equation*}
H^i_V\big(X, \sh{O}(D)\big)_m =
\begin{cases}
0 & \text{ if }\, \hat{\sigma}_m = \emptyset, \\
H^{i - 2}(\hat{\sigma}_{V, m}; k) & \text{ else}.
\end{cases}
\end{equation*}
\end{corollary}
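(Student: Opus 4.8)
The corollary is a specialization of Theorem~\ref{cohomtheorem} to the case where $\Delta$ consists of a single maximal cone $\sigma$ together with its faces, so that $X = U_\sigma$; the point is to rewrite the relative cohomology term $H^{i-1}(\hat\Delta_m,\hat\Delta_{V,m};k)$ using the long exact sequence of the pair, exploiting the fact that $\hat\sigma = \hat\Delta$ is the simplicial cover of a single cone and hence is \emph{contractible} whenever it is nonempty. First I would observe that for $\Delta = \{\,\tau \mid \tau \prec \sigma\,\}$ the full complex $\hat\Delta$ is the simplex on the vertex set $\sigma(1)$, and $\hat\Delta_m = \hat\sigma_m$ is the full subcomplex supported on $I(m) \subseteq \sigma(1)$, which is again a simplex (possibly empty). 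Thus $\hat\sigma_m$ is either empty or a nonempty simplex, in particular contractible, so its reduced cohomology $\tilde H^\bullet(\hat\sigma_m;k)$ vanishes in all degrees when $\hat\sigma_m \neq \emptyset$.

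The main computation is then to feed this into the long exact sequence in reduced relative cohomology
\begin{equation*}
\cdots \to \tilde H^{j-1}(\hat\sigma_{V,m};k) \to H^{j}(\hat\sigma_m,\hat\sigma_{V,m};k) \to \tilde H^{j}(\hat\sigma_m;k) \to \tilde H^{j}(\hat\sigma_{V,m};k) \to \cdots
\end{equation*}
and split into the two cases. If $\hat\sigma_m = \emptyset$: then also $\hat\sigma_{V,m} = \emptyset$ (it is a subcomplex), the pair is empty, and $H^{j}(\hat\sigma_m,\hat\sigma_{V,m};k) = 0$ for all $j$; via Theorem~\ref{cohomtheorem} with $j = i-1$ this gives $H^i_V(X,\sh O(D))_m = 0$, the first case. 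If $\hat\sigma_m \neq \emptyset$: since $\tilde H^{j}(\hat\sigma_m;k) = 0$ for all $j$, the connecting maps in the long exact sequence are isomorphisms, yielding $H^{j}(\hat\sigma_m,\hat\sigma_{V,m};k) \cong \tilde H^{j-1}(\hat\sigma_{V,m};k)$ for every $j$. Setting $j = i-1$ and applying Theorem~\ref{cohomtheorem} once more gives $H^i_V(X,\sh O(D))_m \cong \tilde H^{i-3}(\hat\sigma_{V,m};k)$; re-indexing the exponent appearing in the statement (the paper writes $H^{i-2}(\hat\sigma_{V,m};k)$, using that $\tilde H^{j}(\emptyset;k)$ is placed in degree $-1$, so one must be careful about the normalization of reduced cohomology of the empty set and of the one-point space — this is exactly the $\pm 1$ bookkeeping that makes these formulas look off by one). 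I would double-check the degree convention so that the two cases match the stated ``$H^{i-2}$'' and note that for $i < 2$ the right-hand side is automatically $0$ except possibly for the contribution of $\hat\sigma_{V,m} = \emptyset$ at $i-2 = -1$.

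**Expected obstacle.** There is essentially no geometric difficulty here — everything reduces to the contractibility of a simplex — so the only real care needed is the degree/reduced-cohomology normalization: making sure that ``$\hat\sigma_m = \emptyset$'' is handled consistently with the convention that reduced cohomology of $\emptyset$ lives in degree $-1$, and that the shift from the relative group of the pair $(\hat\sigma_m,\hat\sigma_{V,m})$ to the absolute reduced group of $\hat\sigma_{V,m}$ accounts correctly for the extra $-1$ coming from the $V = X$ statement already built into Theorem~\ref{cohomtheorem}. Once the indexing is pinned down, the corollary follows immediately from the long exact sequence of the pair plus the observation that $\hat\sigma_m$, when nonempty, is a cone over a vertex and hence acyclic.
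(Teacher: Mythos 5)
Your proposal is correct and is exactly the paper's (unwritten) argument: the corollary follows from Theorem \ref{cohomtheorem} via the long exact sequence of the pair, using that $\hat\sigma_m$ is a (possibly empty) simplex and hence acyclic when nonempty. The only flaw is the arithmetic slip at the end — substituting $j=i-1$ into $H^{j}(\hat\sigma_m,\hat\sigma_{V,m};k)\cong \tilde H^{j-1}(\hat\sigma_{V,m};k)$ gives degree $i-2$, not $i-3$, so the result matches the stated formula directly and no further re-indexing is needed beyond the empty-set conventions you already flag.
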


\section{Toric $1$-Circuit Varieties}\label{onecircuitvarieties}

We now study divisorial cohomology vanishing for the simplest possible toric varieties which
are not affine. Consider primitive vectors $l_1, \dots, l_n$, which form a so-called
{\em  circuit}, i.e. a minimally linearly dependent set in $N$. Then there exists a
relation
\begin{equation*}
\sum_{i \in \on} \alpha_i l_i = 0,
\end{equation*}
which is unique up to a common multiple of the $\alpha_i$, and the $\alpha_i$ are nonzero.
For simplifying the discussion, we will assume that the $l_i$ generate a submodule $N_\on$ of finite
index in $N$, in particular, we have $n = d + 1$.
Without loss of generality, we will assume that the $\alpha_i$ are integral and $\gcd\{\vert\alpha_i\vert\}_{i \in \on} = 1$.
For a fixed choice of the $\alpha_i$, we
have a partition $\on = \ocircuit^+ \coprod \ocircuit^-$, where $\ocircuit^\pm = \{i \in \on \mid \pm \alpha_i > 0\}$.
This decomposition depends only on the signs of the $\alpha_i$; flipping the signs exchanges
$\ocircuit^+$ and $\ocircuit^-$. We want to keep track of this two possibilities and call
the choice of $\ocircuit^+ \coprod \ocircuit^- =: \ocircuit$ the {\em oriented circuit} with
underlying circuit $\on$, and $-\ocircuit := -\ocircuit^+ \coprod -\ocircuit^-$ its inverse,
where $-\ocircuit^\pm := \ocircuit^\mp$. The primitive vectors $l_i$ can support at most
two simplicial fans, each corresponding to an oriented circuit. For fixed orientation $\ocircuit$,
we denote $\Delta = \Delta_\ocircuit$ the fan whose maximal cones are generated by
$\on \setminus \{i\}$, where $i$ runs over the elements of $\ocircuit^+$. The only
exception for this procedure is the case where $\ocircuit^+$ is empty, which we leave
undefined. The associated toric variety $X_{\Delta_\ocircuit}$ is simplicial and
quasi-projective.

\begin{definition}
We call a toric variety $X = X_{\Delta_\ocircuit}$ associated to an oriented circuit a {\em toric
$1$-circuit variety}.
\end{definition}

Now let us  assume that the sublattice $N_\on$ which is generated by the $l_i$ is saturated, i.e.
$N_\on \otimes_\Z \Q \cap N = N_\on$. Then we have an exact sequence
\begin{equation}\label{circuitexact}
M \overset{L}{\longrightarrow} \Z^n \overset{G}{\longrightarrow} A \longrightarrow 0,
\end{equation}
where $L = (l_1, \dots, l_n)$ considered as a tuple of linear forms on $M$, $A \cong \Z$ and
$G =  (\alpha_1, \dots, \alpha_n)$ a $(1 \times n)$-matrix, i.e. we can consider the
$\alpha_i$ as the {\em Gale transform} of the $l_i$. Conversely, if the $\alpha_i$ are given,
then the $l_i$ are determined up to a $\Z$-linear automorphism of $M$.
We will make more extensively use of the Gale transform later on. For generalities
we refer to \cite{OdaPark} and \cite{GKZ}. 

In the case that $N_\on$ is not saturated, we can formally consider the inclusion of
$N_\on$ as the image of a saturated sublattice of an injective endomorphism $\xi$ of $N$.
The inverse images of the $l_i$ with respect to $\xi$ satisfy the same relation as the
$l_i$. Therefore, a general toric circuit variety is completely specified by $\xi$ and the
integers $\alpha_i$. More precisely, a toric $1$-circuit variety is specified by the Gale
duals $l_i$ of the $\alpha_i$ and a an injective endomorphism $\xi$ of $N$ with the
property that $\xi(l_i)$ is primitive in $N$ for every $i \in \on$.

\begin{definition}
Let $\underline{\alpha} = (\alpha_1, \dots, \alpha_n) \in \Z^n$ with $\alpha_i \neq 0$ for
every $i$ and $\gcd\{\vert\alpha_i\vert\}_{i \in \on} = 1$,
$\ocircuit$ the unique oriented circuit with $\ocircuit^+ = \{i \mid \alpha_i > 0\}$,
and $\xi : N \longrightarrow N$ an injective endomorphism of $N$ which maps
the Gale duals of the $\alpha_i$ to primitive elements $l_i$ in $N$. Then we denote
$\mathbb{P}( \underline{\alpha}, \xi)$ the toric $1$-circuit variety
associated to the fan $\Delta_\ocircuit$ spanned by the primitive vectors
$\xi(l_i)$.
\end{definition}

The endomorphism $\xi$ translates into an isomorphism
\begin{equation*}
\mathbb{P}(\underline{\alpha}, \xi) \cong \mathbb{P}(\underline{\alpha}, \operatorname{id}_N) / H,
\end{equation*}
where $H \cong \spec{k[N / N_\on]}$. Note that in positive characteristic, $H$ in general is a
group scheme rather than a proper algebraic group.

In sequence (\ref{circuitexact}), we can
identify $A$ with the divisor class group $A_{d - 1}\big(\mathbb{P}(\underline{\alpha},
\operatorname{id}_N )\big)$. Similarly, we get  $A_{d - 1}\big(\mathbb{P}(\underline{\alpha},
\xi )\big) \cong A \oplus H$ and the natural surjection from
$A_{d - 1}\big(\mathbb{P}(\underline{\alpha}, \xi )\big)$ onto
$A_{d - 1}\big(\mathbb{P}(\underline{\alpha}, \operatorname{id}_N )\big)$
just projects away the torsion part.

The $\mathbb{P}(\underline{\alpha}, \xi)$ are an important building block for general
toric varieties and therefore they will play a distinguished role in later sections.
In fact, to every
extremal curve $V(\tau)$ in some simplicial toric variety $X$, there is associated some variety
$\mathbb{P}(\underline{\alpha}, \xi)$ whose fan $\Delta_\ocircuit$ is a subfan of $\Delta$
and $\mathbb{P}(\underline{\alpha}, \xi)$ and which embeds as an open invariant subvariety of $X$.
If $\vert \ocircuit^+ \vert \notin \{n, n - 1\}$, the primitive vectors $l_i$ span a
convex polyhedral cone, giving rise to an affine toric variety $Y$ and a canonical
morphism $\pi: \mathbb{P}(\underline{\alpha}, \xi) \longrightarrow Y$ which is a partial
resolution of singularities. Sign change
$\underline{\alpha} \rightarrow -\underline{\alpha}$ then encodes the transition from
$\ocircuit$ to $-\ocircuit$ and provides a local model for well-known
combinatorial operation which called {\em bistellar operation} \cite{Reiner99} 
or {\em modification of a triangulation} \cite{GKZ}. In toric geometry usually it is
also called a {\em flip}:
\begin{equation*}
\xymatrix{
\mathbb{P}(\underline{\alpha}, \xi) \ar[rd]^\pi \ar@{-->}[rr]^{\text{flip}} & &
\mathbb{P}(-\underline{\alpha}, \xi) \ar[ld]_{\pi'} \\
& Y &
}
\end{equation*}
For $\vert \ocircuit^+ \vert = d - 1$, we identify $\mathbb{P}(-\underline{\alpha}, \xi)$
with $Y$ and just obtain a blow-down.

In the case $\alpha_i > 0$ for all $i$ and $\xi = \operatorname{id}_N$, we just recover
the usual weighted projective spaces. In many respects, the spaces
$\mathbb{P}(\underline{\alpha}, \xi)$ can be treated the same way as
has been done in the standard references for weighted projective spaces, see
\cite{Delorme}, \cite{Dolgachev82}, \cite{BeltramettiRobbiano}.
In our setting there is the slight simplification that we naturally can  assume that
$\gcd\{\vert\alpha_j\vert\}_{j \neq i} = 1$ for every $i \in \on$, which eliminates
the need to discuss reduced weights.

\begin{remark}
On  $\mathbb{P}(\underline{\alpha}, \xi)$ the sheaves $\sh{O}(D)$ for
$D \in A_{d - 1}\big( \mathbb{P}(\underline{\alpha}, \xi) \big) $ can be constructed
via the homogeneous coordinate ring. For sake of information we present the relevant data
without proof and refer to \cite{Cox} for details (see also \cite{perling1}).
The homogeneous coordinate ring is given by the polynomial ring $S := k[x_1, \dots,
x_{d + 1}]$ with grading $S = \bigoplus_{\alpha \in A} S_\alpha$, where $A :=
A_{d - 1}\big( \mathbb{P}(\underline{\alpha}, \xi) \big)$ and
$\deg_A x_i = [D_i] \in A$. The grading is induced by the action
of the group scheme $\tilde{A} = \spec k[A]$ on $\spec S =
\mathbb{A}^{d + 1}_k$. The irrelevant ideal $B \subset S$ is of the form $B =
\langle x_i \mid i \in \ocircuit^+ \rangle$ and the variety $\mathbb{P}(\underline{\alpha},
\xi)$ then is a good quotient of $\mathbb{A}^{d + 1}_k \setminus V(B)$ by
$\tilde{A}$. By sheafification, every divisorial sheaf is of the form
$\widetilde{S(\alpha)}$, for $\alpha \in A$.
If $A$ is torsion free, then $\tilde{A} \cong
k^*$ and the $\Z$-grading is given by $\deg_\Z x_i = \alpha_i$.
\end{remark}

\subsection{Singularities and Picard group}

In general, $\mathbb{P}(\underline{\alpha}, \xi)$ is not smooth and its singularities
depend on the degree of the sublattices of $N$ spanned by subsets of the $l_i$ with
respect to their saturations.

\begin{definition}
Let $l_1, \dots, l_n \in N$ be primitive vectors and let $I \subset \on$. Then we denote $N_I$
the submodule of $N$ spanned by the $l_i$ with $i \in I$,  $\bar{N}_I$ its saturation in $N$, and
$r_I$ the index of $N_I$ in $\bar{N}_I$.
\end{definition}

Up to the global torsion relative to $\xi$, the structure of its singularities is encoded in
$\underline{\alpha}$:

\begin{lemma}\label{cycliclemma}
Assume that $\xi$ is an automorphism of $N$, then $\bar{N}_I / N_I$ is cyclic and for every
proper subset $I \subset \on$, we have $r_I =
\gcd\{\vert \alpha_i \vert\}_{i \in \on \setminus I}$. Moreover, $r_{\{i\}} = 1$
for every $i \in \on$.
\end{lemma}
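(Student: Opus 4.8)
The plan is to reduce at once to the case $\xi=\operatorname{id}_N$: since $\xi$ is an automorphism it carries the Gale duals $l_i$ to the vectors $\xi(l_i)$ and conjugates each lattice $N_I$ and its saturation into the corresponding objects for the new family, so $r_I$ and the group $\bar N_I/N_I$ are unchanged; moreover an automorphism $\xi$ forces $H$ to be trivial, i.e. $N_{\on}=N$. So assume $N_{\on}=N$. The first step is to record the short exact sequence
$$
0 \longrightarrow \Z \xrightarrow{\ (\alpha_1,\dots,\alpha_n)\ } \Z^n \xrightarrow{\ \pi\ } N \longrightarrow 0,\qquad \pi(e_i)=l_i,
$$
which is essentially the dual of (\ref{circuitexact}). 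Indeed $\pi$ is surjective because $N_{\on}=N$; its kernel has rank $n-d=1$ and is saturated in $\Z^n$ (kernel of a map to a free group); it contains the vector $(\alpha_1,\dots,\alpha_n)$ because $\sum_i\alpha_i l_i=0$; and that vector is primitive since $\gcd\{\vert\alpha_i\vert\}_{i\in\on}=1$. Hence $\ker\pi=\Z\cdot(\alpha_i)_i$.

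Now fix a proper subset $I\subsetneq\on$ and set $J:=\on\setminus I\neq\emptyset$, writing $\Z^n=\Z^I\oplus\Z^J$ with $\Z^I=\bigoplus_{i\in I}\Z e_i$. By definition $N_I=\pi(\Z^I)$, so $N/N_I=\coker(\Z^I\to N)$. I would identify this cokernel by a short diagram chase against the sequence above: the composite $\Z^J\hookrightarrow\Z^n\xrightarrow{\pi}N\twoheadrightarrow N/N_I$ is surjective (as $\pi(\Z^I)=N_I$ dies in the quotient), and $x\in\Z^J$ lies in its kernel iff $\pi(x)\in\pi(\Z^I)$, i.e. iff $x-y\in\ker\pi=\Z(\alpha_i)_i$ for some $y\in\Z^I$; projecting this relation onto the $J$-coordinates gives $x=\lambda(\alpha_j)_{j\in J}$, and conversely any such $x$ is in the kernel. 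Therefore
$$
N/N_I\ \cong\ \Z^J\big/\Z\cdot(\alpha_j)_{j\in J}.
$$
This identification is the technical heart of the argument and the only place where a little care is needed.

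Finally, write $v:=(\alpha_j)_{j\in J}=g\,v'$ with $g:=\gcd\{\vert\alpha_j\vert\}_{j\in J}$ and $v'$ primitive in $\Z^J$; completing $v'$ to a $\Z$-basis of $\Z^J$ shows $\Z^J/\Z v\cong \Z/g\Z\,\oplus\,\Z^{\vert J\vert-1}$, so the torsion subgroup of $N/N_I$ is cyclic of order $g$. Since $N/\bar N_I$ is torsion-free (it is a quotient by a saturated sublattice) while $\bar N_I/N_I$ is finite, the sequence $0\to\bar N_I/N_I\to N/N_I\to N/\bar N_I\to 0$ identifies $\bar N_I/N_I$ with the torsion subgroup of $N/N_I$. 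Hence $\bar N_I/N_I$ is cyclic and $r_I=g=\gcd\{\vert\alpha_i\vert\}_{i\in\on\setminus I}$, which gives the first two assertions for all proper $I$ (the remaining case $I=\on$ is trivial since then $N_I=N=\bar N_I$). For the last statement, $N_{\{i\}}=\Z l_i$ is already saturated because $l_i$ is primitive in $N$ by hypothesis, so $r_{\{i\}}=1$; equivalently, the formula just proved shows $\gcd\{\vert\alpha_j\vert\}_{j\neq i}=1$ for every $i$.
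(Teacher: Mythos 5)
Your proof is correct, and it takes a somewhat different route from the paper's. The paper argues directly inside $N$: it sets $l_I:=\sum_{i\notin I}\alpha_i l_i\in N_I$, divides by $\lambda=\gcd\{|\alpha_i|\}_{i\notin I}$ to get $l'_I\in\bar N_I$, asserts that $\bar N_I$ is spanned by $N_I$ and $l'_I$ (justified by the phrase ``as $L$ generates $N$''), and then pins down the order of $l'_I$ in $\bar N_I/N_I$ using the uniqueness of the circuit relation. You instead dualize sequence (\ref{circuitexact}) to $0\to\Z\xrightarrow{(\alpha_i)}\Z^n\xrightarrow{\pi}N\to 0$ and compute the whole quotient $N/N_I\cong\Z^J/\Z\,(\alpha_j)_{j\in J}$, from which cyclicity and the value of $r_I$ drop out as the torsion subgroup. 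The two arguments are close in spirit (both rest on the kernel of $\pi$ being exactly $\Z\cdot(\alpha_i)_i$), but your cokernel computation is the rigorous version of the paper's ``spanned by $N_I$ and $l'_I$'' step, and it yields slightly more, namely the full structure $N/N_I\cong\Z/g\Z\oplus\Z^{|J|-1}$; the paper's version has the advantage of exhibiting the explicit generator $l'_I$ of $\bar N_I/N_I$, which is the element one actually adjoins when resolving the singularity. Your reduction to $\xi=\operatorname{id}_N$ and the final observation that primitivity of $l_i$ forces $r_{\{i\}}=1$ match the paper.
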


\begin{proof}
As $\gcd\{\vert\alpha_i\vert\}_{i \in \circuit} = 1$, the relation $\sum_{i \in \circuit}
\alpha_i l_i = 0$ is unique up to sign and $\sum_{i \in \circuit \setminus I} \alpha_i l_i$
$=: l_I \in N_I$. Let $\lambda := \gcd\{\vert\alpha_i\vert\}_{i \in \circuit \setminus I}$
and denote $l'_I \in \bar{N}_I$ with $l_I = \lambda \cdot l'_I$. Clearly, if $\lambda \neq
1$, then $l'_I$ is not contained in $N_I$.
As $L$ generates $N$, the submodule $\bar{N}_I$  is spanned by $N_I$ and $l'_I$. Thus
$\bar{N}_I / N_I$ is cyclic and generated by the image
$\bar{l}_I$ of $l'_I$ in $\bar{N}_I / N_I$ and $\lambda$ must be a multiple of the order
of $\bar{l_I}$. But $\lambda$ is the least multiple such that $\lambda l'_I \in N_I$,
as otherwise there would exist an integral relation $\sum_{i \in \circuit} \beta_i l_i = 0$
with $\gcd\{\vert\beta_i\vert\}_{i \in \circuit} = 1$ different from the original one,
contradicting its uniqueness. The last assertion follows from the fact that the $l_i$ are
primitive.
\end{proof}

Recall that the maximal cones of $\Delta_\ocircuit$ are spanned by the complementary
rays of $i$ for every $i \in \ocircuit^+$. Therefore:

\begin{corollary}\label{cycliccorollary}
The variety $\mathbb{P}(\underline{\alpha}, \operatorname{id}_N)$ has cyclic quotient
singularities of degree $\alpha_i$ for every $i \in \ocircuit^+$.
\end{corollary}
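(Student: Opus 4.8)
The plan is to localize at each maximal cone of $\Delta_\ocircuit$ and read off the answer from Lemma \ref{cycliclemma}. By construction the maximal cones of $\Delta_\ocircuit$ are the $d$-dimensional simplicial cones $\sigma_i$ generated by $\{l_j \mid j \in \on \setminus \{i\}\}$, one for each $i \in \ocircuit^+$, and $\mathbb{P}(\underline{\alpha}, \operatorname{id}_N) = \bigcup_{i \in \ocircuit^+} U_{\sigma_i}$. Hence it suffices to show that, for each $i \in \ocircuit^+$, the affine toric variety $U_{\sigma_i}$ has (at its distinguished torus-fixed point) a cyclic quotient singularity of order $\alpha_i$.

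First I would recall the standard quotient description of a full-dimensional simplicial affine toric variety: if $\sigma$ is a simplicial cone whose primitive ray generators $v_1, \dots, v_d$ span $N_\Q$, and $N' := \sum_j \Z v_j$, then the $v_j$ form a basis of $N'$, so $U_{\sigma}$ in the lattice $N'$ is $\mathbb{A}^d_k$, and the inclusion $N' \hookrightarrow N$ realizes $U_{\sigma} \cong \mathbb{A}^d_k / (N/N')$, a quotient by the finite abelian group $N/N'$. Applying this to $\sigma = \sigma_i$, whose ray generators are the $l_j$ with $j \neq i$, the relevant sublattice is $N_I$ with $I := \on \setminus \{i\}$, so $U_{\sigma_i} \cong \mathbb{A}^d_k / (N/N_I)$.

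It then remains to identify the group $N/N_I$. Since $\on$ is a circuit, any $d$ of the $d+1$ vectors $l_1, \dots, l_{d+1}$ are linearly independent; in particular the $l_j$ with $j \in I$ form a $\Q$-basis of $N_\Q$, so $N_I$ has full rank in $N$ and its saturation satisfies $\bar{N}_I = (N_I \otimes_\Z \Q) \cap N = N$. Therefore $N/N_I = \bar{N}_I / N_I$, and Lemma \ref{cycliclemma}, applied with the automorphism $\xi = \operatorname{id}_N$ to the proper subset $I$, shows that this group is cyclic of order $r_I = \gcd\{\vert \alpha_j \vert\}_{j \in \on \setminus I} = \vert \alpha_i \vert = \alpha_i$, the last equality because $i \in \ocircuit^+$. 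This is exactly the asserted cyclic quotient singularity of degree $\alpha_i$; when $\alpha_i = 1$ the group is trivial, which correctly records that $U_{\sigma_i}$ is then smooth.

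I do not anticipate a real obstacle here; the only step that genuinely needs attention is the observation $\bar{N}_I = N$, which is what lets the output of Lemma \ref{cycliclemma} be used directly as a statement about $N/N_I$ rather than merely about $\bar{N}_I / N_I$. Everything else is routine bookkeeping with the standard toric dictionary.
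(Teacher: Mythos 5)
Your proof is correct and follows exactly the route the paper intends: the paper derives the corollary in one line from the observation that the maximal cones of $\Delta_\ocircuit$ are $\sigma_i = \operatorname{cone}\{l_j \mid j \neq i\}$ for $i \in \ocircuit^+$, combined with Lemma \ref{cycliclemma} applied to $I = \on \setminus \{i\}$, which gives $r_I = \vert\alpha_i\vert = \alpha_i$. Your write-up merely makes explicit the standard quotient description of $U_{\sigma_i}$ and the (correct) observation that $\bar{N}_I = N$ because $N_I$ has full rank; this is the same argument with the routine steps spelled out.
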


In presence of a nontrivial $\xi$, there are some more factors to take into account:

\begin{definition}
Let $I \subset \ocircuit$, then we denote $s_I := r_I^{-1}\vert \bar{N}_I / N_I \vert$.
If $I = \sigma(1)$ for some
$\sigma \in \Delta_\ocircuit$, we write $s_I =: s_\sigma$, and for $I = \circuit$
we simply write $s_\on =: s$.
\end{definition}

Note that $s = \vert \det \xi \vert$. The group $\pic\big(\mathbb{P}(\underline{\alpha}, \xi)\big)$
embeds into $A_{d - 1}(\big(\mathbb{P}(\underline{\alpha}, \xi )\big)$ as a subgroup of finite
index. Therefore, if $\xi$ is an automorphism, it follows that $\pic\big(\mathbb{P}(\underline{\alpha}, \xi)\big)
\cong \Z$. In general, by the isomorphism $\mathbb{P}(\underline{\alpha}, \xi) \cong \mathbb{P}(\underline{\alpha},
\operatorname{id}_N) / H$, the group $\pic\big(\mathbb{P}(\underline{\alpha}, \xi)\big)$ embeds into
$\pic\big(\mathbb{P}(\underline{\alpha}, \operatorname{id}_n)\big)$ as a subgroup of index $s$ via pull-back.
For simplicity, it is suitable
to identify $\pic\big(\mathbb{P}(\underline{\alpha}, \xi)\big)$ with its image in 
$\pic\big(\mathbb{P}(\underline{\alpha}, \operatorname{id}_N)\big)$. We have:

\begin{proposition}\label{circuitpic}
$\pic\big(\mathbb{P}(\underline{\alpha}, \xi)\big)$ is generated by
$s \cdot \lcm\{\alpha_i\}_{i \in \ocircuit^+}$.
\end{proposition}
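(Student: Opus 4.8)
The plan is to reduce everything to the case $\xi = \operatorname{id}_N$ and then read off the generator of $\pic$ from the local triviality condition at the maximal cones. The reduction is immediate from the remarks preceding the proposition: pull-back along the quotient $\mathbb{P}(\underline{\alpha},\operatorname{id}_N) \to \mathbb{P}(\underline{\alpha},\xi)$ identifies $\pic\big(\mathbb{P}(\underline{\alpha},\xi)\big)$ with the subgroup of index $s = |\det\xi|$ of $\pic\big(\mathbb{P}(\underline{\alpha},\operatorname{id}_N)\big)$, and the latter group is infinite cyclic (since $\operatorname{id}_N$ is an automorphism) sitting inside $A \cong \Z$ from the sequence (\ref{circuitexact}). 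So if I show that $\pic\big(\mathbb{P}(\underline{\alpha},\operatorname{id}_N)\big)$ is generated by $\ell := \lcm\{\alpha_i\}_{i \in \ocircuit^+}$, then its index-$s$ subgroup is generated by $s\ell$, which is exactly the claim. (If $\ocircuit^+$ is a singleton $\{j\}$ the variety is the single affine chart $U_{\sigma_j}$, so $\pic = 0$ and $\ell = \alpha_j$ maps to $0$ in $A_{d-1}$; from now on I assume $|\ocircuit^+| \geq 2$.)

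To compute $\pic\big(\mathbb{P}(\underline{\alpha},\operatorname{id}_N)\big)$, write $\mathbb{P} := \mathbb{P}(\underline{\alpha},\operatorname{id}_N)$, whose maximal cones are $\sigma_j = \operatorname{cone}\{l_i \mid i \neq j\}$ for $j \in \ocircuit^+$, and use the identification $A_{d-1}(\mathbb{P}) = A \cong \Z$ under the degree map $(v_i) \mapsto \sum_i \alpha_i v_i$ (this is just the Gale transform $G$ of (\ref{circuitexact}); it kills $L(M)$ because $\sum_i \alpha_i l_i = 0$). For the inclusion $\pic(\mathbb{P}) \subseteq \ell\Z$: given a $T$-Cartier representative $D = \sum_i c_i D_i$, for each $j \in \ocircuit^+$ there is $m_j \in M$ with $l_i(m_j) = -c_i$ for all $i \neq j$; pairing the relation $\sum_i \alpha_i l_i = 0$ with $m_j$ gives $\alpha_j l_j(m_j) = \sum_{i \neq j} \alpha_i c_i = \deg D - \alpha_j c_j$, and since $l_j(m_j), c_j \in \Z$ this forces $\alpha_j \mid \deg D$; as this holds for every $j \in \ocircuit^+$, $\ell \mid \deg D$. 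For the reverse inclusion I must exhibit a Cartier divisor of class $\ell$: for $j \in \ocircuit^+$ the number $\ell/\alpha_j$ is a positive integer and the Weil divisor $(\ell/\alpha_j)D_j$ has degree $\ell$, hence (as $A$ is torsion-free) represents the class $\ell$; because $|\ocircuit^+| \geq 2$ the expansion $l_j = -\sum_{i\neq j}(\alpha_i/\alpha_j)l_i$ has a strictly negative coefficient, so $l_j \notin \sigma_j$, hence $D_j \cap U_{\sigma_j} = \emptyset$ and $\sh{O}_{\mathbb{P}}(\ell)|_{U_{\sigma_j}} \cong \sh{O}_{U_{\sigma_j}}$ is free. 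Since the $U_{\sigma_j}$, $j \in \ocircuit^+$, cover $\mathbb{P}$, the class $\ell$ is Cartier. Hence $\pic(\mathbb{P}) = \ell\Z$, and together with the first paragraph this proves the proposition.

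The argument is essentially formal; the one point that needs care is the reverse inclusion --- showing that the class $\ell$ is represented by a genuinely Cartier (not merely $\Q$-Cartier) divisor --- and the crucial input there is that $l_j$ lies strictly outside the cone $\sigma_j$ whenever $|\ocircuit^+| \geq 2$, so that the representative $(\ell/\alpha_j)D_j$ becomes trivial on the chart $U_{\sigma_j}$. This is also the structural reason why only the indices in $\ocircuit^+$ --- the ones indexing the maximal cones of $\Delta_\ocircuit$ --- enter the least common multiple. All the bookkeeping with $\xi$ is absorbed into the index-$s$ statement quoted from the preamble, so no separate analysis of the singularities away from $\xi = \operatorname{id}_N$ is needed.
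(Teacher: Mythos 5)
Your proof is correct and follows essentially the same route as the paper's: reduce to $\xi = \operatorname{id}_N$ via the index-$s$ identification from the preamble, then use the local Cartier data $\{m_j\}_{j\in\ocircuit^+}$ together with the circuit relation to show $\alpha_j \mid \deg D$ for each $j\in\ocircuit^+$, whence $\lcm\{\alpha_j\}_{j\in\ocircuit^+}$ divides the generator. The only difference is cosmetic: the paper normalizes one $m_i$ to zero before computing, and dismisses the converse inclusion as ``easy to see,'' whereas you verify it explicitly by checking that the class $\ell$ is represented on each chart $U_{\sigma_j}$ by $(\ell/\alpha_j)D_j$, which is disjoint from that chart.
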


\begin{proof}
It suffices to prove that $\pic\big(\mathbb{P}(\underline{\alpha}, \operatorname{id}_N)\big)$ is
generated by $\lcm\{\alpha_i\}_{i \in \ocircuit^+}$.
Let $D = \sum_{i \in \circuit} c_i D_i$ be the generator of
$\pic\big(\mathbb{P}(\underline{\alpha}, \operatorname{id}_N)\big)$. Then 
$D$ is specified by a collection $\{m_i \in M \mid i \in \ocircuit^+\}$ such that $l_i(m_j) =
c_i$ for all $j$ and all $j \neq i \in \circuit$. As changing the $m_i$ to $m_i + m$ for some $m \in M$
just changes the linearization of $\sh{O}(D)$, but not the linear equivalence class of $D$, we can
always assume that one of the $m_i$ is zero. This implies that $c_j = 0$ for all $j \neq i$, and
$l_i(m_j) = c_i $ for all $j \neq i$ and $l_k(m_j) = 0$ for all $k \neq i, j$.
Using the relation $\sum_{k \in \circuit}
\alpha_k l_k = 0$, we thus obtain $c_i = -l_i(m_j) = \frac{\alpha_j}{\alpha_i} l_j (m_j)$
for every $j \in \ocircuit^+$. Therefore $c_i D_i$ is a multiple of $\alpha_i^{-1} \cdot
\lcm\{\alpha_j\}_{j \in \ocircuit^+} \cdot D_i$ and thus, by identifying $\alpha_i$ with $D_i$
in $\pic\big(\mathbb{P}(\underline{\alpha}, \operatorname{id}_N)\big)$, a multiple of
$\lcm\{\alpha_j\}_{j \in \ocircuit^+}$. On the other hand, it is easy to see that every such multiple
yields a Cartier divisor on $\mathbb{P}(\underline{\alpha}, \operatorname{id}_N)$.
\end{proof}

\begin{remark}
It follows that a Cartier divisor has no torsion part in $A_{d - 1}\big(\mathbb{P}(\underline{\alpha}, \xi )\big)$,
i.e. the embedding
$\pic\big(\mathbb{P}(\underline{\alpha}, \xi)\big) \hookrightarrow A_{d - 1}
\big(\mathbb{P}(\underline{\alpha}, \xi )\big)$ factorizes
through the section $A_{d - 1}\big(\mathbb{P}(\underline{\alpha}, \xi )\big) / H
\rightarrow A_{d - 1}\big(\mathbb{P}(\underline{\alpha}, \xi )\big)  \cong \Z \oplus H$.
\end{remark}

As a consequence we have:

\begin{corollary}\label{primcorollary}
Let $i \in \ocircuit^+$, then the invariant prime divisor $D_i$ on
$\mathbb{P}(\underline{\alpha}, \xi)$ is Cartier if and only if $s = 1$ and
$\alpha_j = 1$ for every $j \in \ocircuit^+ \setminus \{ i \}$. In particular,
$\mathbb{P}(\underline{\alpha}, \xi)$ is smooth if and only if
$D_i$ is Cartier for every $i \in \ocircuit^+$.
\end{corollary}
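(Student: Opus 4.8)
The plan is to deduce both statements directly from Proposition \ref{circuitpic}, which tells us that $\pic\big(\mathbb{P}(\underline{\alpha}, \xi)\big)$ is generated by $s \cdot \lcm\{\alpha_j\}_{j \in \ocircuit^+}$ under the identification of $D_j$ with $\alpha_j$ inside $\pic\big(\mathbb{P}(\underline{\alpha}, \operatorname{id}_N)\big) \subset A_{d-1}\big(\mathbb{P}(\underline{\alpha}, \operatorname{id}_N)\big) \cong \Z$. First I would fix $i \in \ocircuit^+$ and observe that the prime divisor $D_i$ is Cartier if and only if, under this identification, $\alpha_i$ lies in the subgroup generated by $s \cdot \lcm\{\alpha_j\}_{j \in \ocircuit^+}$; that is, $s \cdot \lcm\{\alpha_j\}_{j \in \ocircuit^+}$ must divide $\alpha_i$. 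Since $\alpha_i$ itself divides $\lcm\{\alpha_j\}_{j \in \ocircuit^+}$, this forces $s = 1$ and $\lcm\{\alpha_j\}_{j \in \ocircuit^+} = \alpha_i$, hence $\alpha_j \mid \alpha_i$ for all $j \in \ocircuit^+$.

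Next I would show this is equivalent to the stated condition that $s = 1$ and $\alpha_j = 1$ for every $j \in \ocircuit^+ \setminus \{i\}$. The non-obvious direction is that $\lcm\{\alpha_j\}_{j \in \ocircuit^+} = \alpha_i$ together with $s=1$ implies $\alpha_j = 1$ for $j \neq i$ in $\ocircuit^+$. Here I would invoke the simplification established earlier in Section \ref{onecircuitvarieties}, namely that we may assume $\gcd\{\vert\alpha_j\vert\}_{j \neq k} = 1$ for every $k \in \on$. Applying this with $k = i$: we have $\gcd\{\alpha_j \mid j \neq i\} = 1$, and in particular $\gcd\{\alpha_j \mid j \in \ocircuit^+ \setminus \{i\}\}$ divides this, so any common factor of the $\alpha_j$ with $j \in \ocircuit^+\setminus\{i\}$ is trivial; but from $\alpha_j \mid \alpha_i$ for all such $j$ and the assumption $\gcd\{\vert\alpha_j\vert\}_{j \neq i} = 1$ I want to conclude each individual $\alpha_j = 1$. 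The cleanest route is: since $\lcm\{\alpha_j\}_{j \in \ocircuit^+} = \alpha_i$, the prime factorization of $\alpha_i$ controls all the $\alpha_j$ with $j \in \ocircuit^+$; if some $\alpha_{j_0} > 1$ with $j_0 \in \ocircuit^+ \setminus \{i\}$, pick a prime $p \mid \alpha_{j_0}$, then $p \mid \alpha_i$, and I must derive a contradiction with $\gcd\{\alpha_j\}_{j \neq i} = 1$ — this requires knowing that $p$ divides some $\alpha_k$ with $k \neq i$, which is immediate since $p \mid \alpha_{j_0}$ and $j_0 \neq i$. Wait, that is consistent, not contradictory; so instead I should use $\gcd\{\alpha_j\}_{j\neq j_0}=1$ (the simplification applied to $k=j_0$): then $\gcd\{\alpha_j \mid j \in \ocircuit^+, j \neq j_0\} = 1$, yet $\alpha_i$ is a member of this family and $p \mid \alpha_i$ — still not a contradiction. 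The genuinely correct observation is that $\lcm\{\alpha_j\}_{j\in\ocircuit^+} = \alpha_i$ means every $\alpha_j$ with $j \in \ocircuit^+$ divides $\alpha_i$, so in particular $\gcd\{\alpha_j \mid j \in \ocircuit^+\} $ combined with the relation $\sum \alpha_k l_k = 0$ and Lemma \ref{cycliclemma} pins things down; alternatively, $\gcd\{\alpha_j\}_{j \in \on, j \neq i} = 1$ and each $\alpha_j \mid \alpha_i$ for $j \in \ocircuit^+\setminus\{i\}$ — I would combine these by noting that if all these $\alpha_j$ share no common factor yet each divides $\alpha_i$, that alone does not force them to be $1$, so the argument must also use the $j \in \ocircuit^-$ divisors, or more simply: $D_i$ Cartier forces, via the local description of $\pic$ at each maximal cone not containing $l_i$, that $\alpha_j = 1$ for $j \in \ocircuit^+\setminus\{i\}$ directly, which is essentially re-deriving Proposition \ref{circuitpic} locally. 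I expect this bookkeeping to be the main obstacle, and the honest fix is to run the argument cone-by-cone rather than through the global lcm.

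For the ``In particular'' clause, I would argue as follows. If $D_i$ is Cartier for every $i \in \ocircuit^+$, then in particular $D_{i_1}$ and $D_{i_2}$ are both Cartier for two distinct indices $i_1, i_2 \in \ocircuit^+$ (using $\vert \ocircuit^+ \vert \geq 2$, which holds unless $\mathbb{P}(\underline{\alpha},\xi)$ is affine or a point — in the excluded degenerate case the statement is trivial or vacuous); applying the first part with $i = i_1$ gives $\alpha_j = 1$ for $j \in \ocircuit^+\setminus\{i_1\}$, hence $\alpha_{i_2} = 1$, and symmetrically $\alpha_{i_1} = 1$, so in fact $\alpha_j = 1$ for all $j \in \ocircuit^+$ and $s = 1$. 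Together with Lemma \ref{cycliclemma} and Corollary \ref{cycliccorollary}, which identify the singularities of $\mathbb{P}(\underline{\alpha},\operatorname{id}_N)$ as cyclic quotient singularities of degree $\alpha_i$ along the cone omitting $l_i$, and with $s = \vert\det\xi\vert = 1$ meaning $\xi$ is an automorphism so $\mathbb{P}(\underline{\alpha},\xi) \cong \mathbb{P}(\underline{\alpha},\operatorname{id}_N)$, the vanishing of all quotient degrees $\alpha_i$ shows every maximal cone is smooth, hence $\mathbb{P}(\underline{\alpha},\xi)$ is smooth. Conversely, if $\mathbb{P}(\underline{\alpha},\xi)$ is smooth, then every Weil divisor is Cartier, so in particular each $D_i$ is Cartier, completing the equivalence.
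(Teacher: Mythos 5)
Your reduction to Proposition \ref{circuitpic} is exactly the paper's strategy, and the first half of your argument is sound: $D_i$ is Cartier iff its class $\alpha_i$ lies in the subgroup generated by $s\cdot\lcm\{\alpha_j\}_{j\in\ocircuit^+}$, and since $\alpha_i$ divides this lcm one gets $s=1$ and $\lcm\{\alpha_j\}_{j\in\ocircuit^+}=\alpha_i$. The step you then wrestle with --- upgrading this to $\alpha_j=1$ for all $j\in\ocircuit^+\setminus\{i\}$ --- is a genuine gap, and none of your attempted repairs can succeed, because the implication is false: the lcm condition only gives $\alpha_j\mid\alpha_i$ for $j\in\ocircuit^+$, and the normalization $\gcd\{\vert\alpha_j\vert\}_{j\neq k}=1$ constrains common divisors, not the lcm. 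A concrete counterexample to the corollary itself is $\mathbb{P}(1,1,2,2)$ (here $s=1$, $\ocircuit^+=\on$, $\pic$ is generated by $\lcm\{1,1,2,2\}=2=\alpha_3$, so $D_3$ is Cartier although $\alpha_4=2\neq 1$); one checks directly that local data $m_\sigma\in M$ exist for $D_3$ on every maximal cone. Similarly, for $d=3$ and $\underline{\alpha}=(2,2,-1,-3)$, realized by $l_1=(1,0,0)$, $l_2=(0,1,0)$, $l_3=(2,2,-3)$, $l_4=(0,0,1)$ with $\xi=\operatorname{id}_N$, both $D_1$ and $D_2$ are Cartier (e.g.\ $m=(1,-1,0)$ works for $D_1$ on $U_{\langle l_1,l_3,l_4\rangle}$) while both charts have index $2$, so this also defeats the ``in particular'' clause. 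What Proposition \ref{circuitpic} actually yields is: $D_i$ is Cartier iff $s=1$ and $\alpha_j\mid\alpha_i$ for every $j\in\ocircuit^+$.

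You should know that the paper's own proof makes precisely the leap you could not justify: from $D_i=\bigl(\tfrac{s}{\alpha_i}\lcm\{\alpha_j\}_{j\in\ocircuit^+}\bigr)\cdot D_i$ it concludes ``thus $s=1$ and $\alpha_j=1$ for all $i\neq j\in\ocircuit^+$'' with no further argument, so your inability to close the step reflects a defect of the corollary, not of your write-up. Your ``cone-by-cone'' fallback would in fact lead you to the counterexamples above rather than to a proof. Two minor remarks: for the sufficiency direction the paper argues geometrically (by Corollary \ref{cycliccorollary} the charts $U_{\sigma_j}$ for $j\in\ocircuit^+\setminus\{i\}$ are smooth, where $D_i$ is automatically Cartier, and $D_i$ restricts to zero on $U_{\sigma_i}$), which is cleaner than your divisibility argument but equivalent; and your treatment of the smoothness clause via Corollary \ref{cycliccorollary} is fine in itself but sits downstream of the broken step, and its ``only if'' half genuinely fails, as the $(2,2,-1,-3)$ example shows.
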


\begin{proof}
Let first $s = 1$ and $\alpha_j = 1$ for every $j \in \ocircuit^+ \setminus \{ i \}$.
Then by Corollary \ref{cycliccorollary}, the union of the $U_{\sigma_j}$, where
$j \in \ocircuit^+ \setminus \{ i \}$, is smooth and $D_i$ restricted to this union
is Cartier. As $D_i$ is trivial on $U_{\sigma_i}$, it extends as Cartier divisor with
trivial restriction on $U_{\sigma_i}$.
Now let $D_i$ Cartier, then from proposition \ref{circuitpic} we conclude
$D_i = (\frac{s}{\alpha_i} \lcm\{\alpha_j\}_{j \in \ocircuit^+}) \cdot D_i$,
and thus $s = 1$ and $\alpha_j = 1$ for all $i \neq j \in \ocircuit^+$.
\end{proof}

\subsection{General cohomology vanishing.}
\label{onecircuitgeneralcohomologyvanishing}

In light of Theorem \ref{cohomtheorem}, for cohomology vanishing on a toric $1$-circuit
variety, we have to consider the reduced cohomology of simplicial complexes associated
to its fan:

\begin{lemma}\label{circtoplemma}
Let $I \subset \on$, such that $I \neq \ocircuit^+$, then
$H^i((\hat{\Delta}_\ocircuit)_I; k) = 0$ for all $i$. Moreover,
\begin{equation*}
(\hat{\Delta}_\ocircuit)_{\ocircuit^+} \cong S^{\vert\ocircuit^+\vert - 2} \quad \text{ and }
\quad (\hat{\Delta}_\ocircuit)_{\ocircuit^-} \cong B^{\vert\ocircuit^-\vert - 1},
\end{equation*}
where $B^k$ is the $k$-ball, with $B^{-1} := \emptyset$.
\end{lemma}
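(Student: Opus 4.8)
The plan is to reduce everything to an elementary description of the face poset of $\hat{\Delta}_\ocircuit$, which, since $\Delta_\ocircuit$ is simplicial, coincides with the combinatorial model of $\Delta_\ocircuit$ itself. Recall that the maximal cones of $\Delta_\ocircuit$ are spanned by $\on \setminus \{i\}$ as $i$ ranges over $\ocircuit^+$. Hence a subset $J \subseteq \on$ is a face of $\hat{\Delta}_\ocircuit$ if and only if $J \subseteq \on \setminus \{i\}$ for some $i \in \ocircuit^+$, that is, if and only if $\ocircuit^+ \not\subseteq J$. Consequently, for any $I \subseteq \on$ the full subcomplex $(\hat{\Delta}_\ocircuit)_I$ has as its faces exactly the sets $J \subseteq I$ with $\ocircuit^+ \not\subseteq J$. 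Every claim in the lemma will follow by inspecting this description.

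For the first assertion I would split into two cases according to whether $\ocircuit^+$ is contained in $I$. If $\ocircuit^+ \not\subseteq I$, then no subset of $I$ contains $\ocircuit^+$, so $(\hat{\Delta}_\ocircuit)_I$ is the full simplex on the vertex set $I$ (and the void complex when $I = \emptyset$, consistent with the conventions below); in either case its reduced cohomology vanishes in all degrees. If $\ocircuit^+ \subseteq I$, then $I \neq \ocircuit^+$ forces the existence of some $j \in I \setminus \ocircuit^+$ (necessarily $j \in \ocircuit^-$). Since $j \notin \ocircuit^+$, adjoining $j$ to any face $J$ of $(\hat{\Delta}_\ocircuit)_I$ cannot produce a set containing $\ocircuit^+$, so $J \cup \{j\}$ is again a face; thus $(\hat{\Delta}_\ocircuit)_I$ is a cone with apex $j$, hence contractible, and again $H^i\big((\hat{\Delta}_\ocircuit)_I;k\big) = 0$ for all $i$.

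For the second assertion I would substitute $I = \ocircuit^+$ and $I = \ocircuit^-$ into the same description. With $I = \ocircuit^+$, a set $J \subseteq \ocircuit^+$ is a face precisely when $J \neq \ocircuit^+$, so the faces are exactly the proper subsets of $\ocircuit^+$; this is the boundary complex of the $(\vert \ocircuit^+ \vert - 1)$-simplex, i.e. $S^{\vert \ocircuit^+ \vert - 2}$ (with $S^{-1} = \{\emptyset\}$ when $\vert \ocircuit^+ \vert = 1$). With $I = \ocircuit^-$, note $\ocircuit^+ \cap \ocircuit^- = \emptyset$ and $\ocircuit^+ \neq \emptyset$, so no $J \subseteq \ocircuit^-$ can contain $\ocircuit^+$; hence every subset of $\ocircuit^-$ is a face and $(\hat{\Delta}_\ocircuit)_{\ocircuit^-}$ is the full $(\vert \ocircuit^- \vert - 1)$-simplex $B^{\vert \ocircuit^- \vert - 1}$, the degenerate case $\ocircuit^- = \emptyset$ matching $B^{-1} := \emptyset$.

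I do not expect a serious obstacle here: the entire content is the combinatorial equivalence ``$J$ a face $\iff \ocircuit^+ \not\subseteq J$,'' after which one only invokes the two standard facts that a simplicial cone (in particular a full simplex) is contractible and that the proper faces of a simplex form a sphere. The only point requiring genuine care is the bookkeeping of the degenerate cases — in particular fixing the convention that the full subcomplex on the empty vertex set is the void complex, so that the stated identifications (and $B^{-1} = \emptyset$) remain consistent when $\ocircuit^-$ is empty or $\vert\ocircuit^+\vert = 1$.
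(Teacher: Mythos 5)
Your proposal is correct and follows essentially the same route as the paper: identify the faces of $\hat{\Delta}_\ocircuit$ as exactly the sets not containing $\ocircuit^+$, deduce contractibility when $\ocircuit^+\not\subseteq I$ or when $I$ properly contains $\ocircuit^+$, and read off the sphere and ball for $I=\ocircuit^+$ and $I=\ocircuit^-$. Your cone-with-apex-$j$ argument for the case $\ocircuit^+\subsetneq I$ is a slightly cleaner substitute for the paper's description of that subcomplex as a triangulated ball, but the substance is identical.
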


\begin{proof}
It is easy to see that $(\hat{\Delta}_\ocircuit)_{\ocircuit^+}$ corresponds to the boundary
of the $(\vert\ocircuit^+\vert - 1)$-simplex, so it is homeomorphic to
$S^{\vert\ocircuit^+\vert - 2}$. Similarly, $\{l_i\}_{i \in \ocircuit^+}$ span a simplicial
cone in $\Delta_\ocircuit$ and thus $(\hat{\Delta}_\ocircuit)_{\ocircuit^-} \cong
B^{\vert\ocircuit^-\vert - 1}$.
Now assume there exists $i \in \ocircuit^+ \setminus I$, then $I$ is a face of the cone
$\sigma_i$ and $(\hat{\Delta}_\ocircuit)_I$ is contractible. On the other hand, if
$\ocircuit^+$ is a proper subset of $I$, the set $I \cap \ocircuit^-$ spans a cone $\tau$
in $\Delta_\ocircuit$. The simplicial complex $\hat{\Delta}_I$ then is homeomorphic to
a simplicial decomposition of the $(\vert \ocircuit^+\vert - 1)$-ball with center $\tau$
and boundary $(\hat{\Delta}_\ocircuit)_{\ocircuit^+}$.
\end{proof}

By sequence (\ref{standardZsequence}), the cohomology of a divisor $D$ depends only on the choice
of a $T$-invariant representative $D = \sum_{i \in \on} c_i D_i$ with $c_i \in \Z$. This choice is
unique up to a twist by a character in $M$, i.e. any divisor of the form $\sum_{i \in \on}
l_i(m) D_i$ for some $m \in M$ is trivial.
This can be interpreted more geometrically in terms of hyperplane
arrangements in $M_\Q$. For a given choice of $\underline{c} = (c_1, \dots, c_n) \in \Z^n$ we set
\begin{equation*}
H_i^{\underline{c}} := \{m \in M_\Q \mid l_i(m) = -c_i\}.
\end{equation*}
Then, replacing $c_i$ by $c_i + l_i(m)$ for some $m \in M$ then corresponds to an integral
translation of the hyperplane arrangement $\{H_i^{\underline{c}}\}_{i \in \on}$ by $-m$.
This hyperplane arrangement induces a chamber decomposition of $M_\Q$. If $D \sim 0$,
then the maximal chambers are all unbounded. If $D \nsim 0$, then we get precisely one additional
chamber which is bounded (recall that in this section $n = d + 1$ and the $l_i$ form a circuit).
For this chamber, there are two possibilities. Either it is given by points $m \in M_\Q$ such
that $l_i(m) \leq -c_i$ for $i \in \ocircuit^-$ and  $l_i(m) \geq -c_i$ for $i \in \ocircuit^-$, or
vice versa. Let us say in the first case that this chamber has {\em signature} $\ocircuit^+$and
in the second case it has signature $\ocircuit^-$ (we will define signatures more generally in
section \ref{circuitsdiscriminantal}).

To determine cohomology vanishing we have to determine its signature and whether it
contains lattice points. As already explained in the introduction, the number of lattice points
in a bounded chamber is given by the vector partition function. Similarly, the number of
lattice points $m$ such that $l_i(m) \geq -c_i$ for $i \in \ocircuit^+$ and $l_i(m) < -c_i$
for $i \in \ocircuit^-$ coincides with the cardinality of the following set:
\begin{equation*}
\{k_1, \dots, k_{d + 1} \in \N^{d + 1} \mid k_i > 0 \text{ for } i \in
\ocircuit^- \text{ and }\sum_{i \in \ocircuit^+} k_i D_i - \sum_{i \in \ocircuit^-} k_i D_i = D\}.
\end{equation*}
The set of rational divisor classes in $A_{d - 1}\big(\mathbb{P}(\underline{\alpha}, \xi)\big)_\Q
\cong \Q$ corresponding to torus
invariant divisors whose associated bounded chamber has signature either $\ocircuit^+$
or $\ocircuit^-$ corresponds precisely to the two open intervals $(-\infty, 0)$ and
$(0, \infty)$, respectively, in  $A_{d - 1}\big(\mathbb{P}(\underline{\alpha}, \xi)\big)_\Q$.
In the integral case, we can consider {\em arithmetic thickenings} of these intervals as follows:

\begin{definition}\label{fcdef}
We denote $F_\ocircuit \subset A_{d - 1}\big(\mathbb{P}(\underline{\alpha}, \xi)\big)$
the complement of the semigroup of the form
$\sum_{i \in \ocircuit^-} c_i  D_i - \sum_{i \in \ocircuit^+} c_i D_i$, where $c_i \in \N$ for all $i$
with $c_i > 0$ for $i \in \ocircuit^+$.
\end{definition}

The set $F_\ocircuit$ is the complement of the set of classes whose associated chamber
has signature $\ocircuit^-$ and contains a lattice point.
With this we can give a complete characterization of global cohomology vanishing:

\begin{proposition}\label{circuitglobalcohom}
Let $\mathbb{P}(\underline{\alpha}, \xi)$ be as before with associated fan $\Delta_\ocircuit$
and $D \in A_{d - 1}\big(\mathbb{P}(\underline{\alpha}, \xi)\big)$, then:
\begin{enumerate}[(i)]
\item $H^i\big(\mathbb{P}(\underline{\alpha}, \xi), \sh{O}(D)\big) = 0$ for
$i \neq 0, \vert \ocircuit^+ \vert - 1$;
\item $H^{\vert \ocircuit^+ \vert - 1}\big(\mathbb{P}(\underline{\alpha}, \xi),
\sh{O}(D)\big) = 0$ iff $D \in F_\ocircuit$;
\item if $\ocircuit^+ \neq \circuit$, then $H^0\big(\mathbb{P}(\underline{\alpha}, \xi),
\sh{O}(D)\big) \neq 0$;
\item if $\ocircuit^+ = \circuit$, then $H^0\big(\mathbb{P}(\underline{\alpha}, \xi),
\sh{O}(D)\big) = 0$ iff $D \in F_{-\ocircuit}$.
\end{enumerate}
\end{proposition}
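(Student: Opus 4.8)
The plan is to compute each eigenspace $H^i\big(\mathbb{P}(\underline{\alpha},\xi),\sh{O}(D)\big)_m$ by means of Theorem \ref{cohomtheorem} (with $V=X$, so the pair collapses to a single complex and $H^i\big(X,\sh{O}(D)\big)_m\cong\tilde H^{i-1}(\hat\Delta_m;k)$) and then to assemble the global cohomology from the pieces. Fix a $T$-invariant representative $D=\sum_{i\in\on}c_iD_i$. For $m\in M$ the complex $\hat\Delta_m$ is the full subcomplex $(\hat\Delta_\ocircuit)_{I(m)}$ on $I(m)=\{i\mid l_i(m)<-c_i\}$. By Lemma \ref{circtoplemma}, $(\hat\Delta_\ocircuit)_I$ is acyclic (indeed contractible or empty) for every $I\neq\ocircuit^+$, so the only characters $m$ that can contribute have $I(m)=\ocircuit^+$, in which case $\hat\Delta_m\cong S^{\vert\ocircuit^+\vert-2}$ and the unique nonzero reduced cohomology sits in degree $\vert\ocircuit^+\vert-2$, giving a contribution to $H^{\vert\ocircuit^+\vert-1}$. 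That already yields (i): for $i\neq 0,\vert\ocircuit^+\vert-1$ every eigenspace vanishes. One has to treat $i=0$ separately since $\tilde H^{-1}(\emptyset;k)=k$: the eigenspace $H^0\big(X,\sh{O}(D)\big)_m$ is nonzero exactly when $I(m)=\emptyset$, i.e. $l_i(m)\geq -c_i$ for all $i$.

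Next I would translate the two surviving conditions into the semigroup language of Definition \ref{fcdef}. For $i=\vert\ocircuit^+\vert-1$: the eigenspace is nonzero iff there is $m\in M$ with $l_i(m)<-c_i$ for $i\in\ocircuit^+$ and $l_i(m)\geq -c_i$ for $i\in\ocircuit^-$. Using the exact sequence (\ref{circuitexact}) and the relation $\sum\alpha_il_i=0$, the existence of such an $m$ is equivalent — after replacing $\uc$ by a translate, which does not change the class of $D$ — to writing $D$ in the form $\sum_{i\in\ocircuit^-}c_iD_i-\sum_{i\in\ocircuit^+}c_iD_i$ with $c_i\in\N$ and $c_i>0$ for $i\in\ocircuit^+$; this is precisely the statement that $D\notin F_\ocircuit$. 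Hence $H^{\vert\ocircuit^+\vert-1}=0$ iff $D\in F_\ocircuit$, which is (ii). The bookkeeping here is the crux: one must check carefully that the strict/non-strict inequalities match the $c_i>0$ versus $c_i\geq 0$ conditions, and that the passage between "$\exists m\in M$ with prescribed inequalities on $l_i(m)$" and "$D$ lies in the stated semigroup" is exactly the surjection $\Z^n\to A_{d-1}(X)$ applied to the shifted orthant, as in the general discussion of the Frobenius arrangement. The sign $\ocircuit^-$ versus $\ocircuit^+$ has to be tracked consistently with the orientation convention fixing $\Delta=\Delta_\ocircuit$.

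For the $H^0$ statement I would argue similarly but now the relevant characters are those with $I(m)=\emptyset$, i.e. $l_i(m)\geq -c_i$ for all $i\in\on$. If $\ocircuit^+\neq\circuit$, i.e. $\ocircuit^-\neq\emptyset$, the cone dual to the maximal cone $\sigma_i$ (for any $i\in\ocircuit^+$) is full-dimensional and one can always find arbitrarily deep lattice points $m$ satisfying all these inequalities — concretely, pick $m$ far inside $\check\sigma_i$ translated appropriately — so $H^0\neq 0$ always; this is (iii). If $\ocircuit^+=\circuit$ (the weighted-projective-space case, all $\alpha_i>0$), then $\{l_i\}_{i\in\on}$ spans a complete fan, the region $\{l_i(m)\geq -c_i\ \forall i\}$ is bounded (it is the bounded chamber of the arrangement, of signature $\ocircuit^+=\circuit$ now playing the role of $-(-\ocircuit)^-$), and it contains a lattice point iff $D$ can be written as $\sum_i c_iD_i$ with $c_i\in\N$, $c_i>0$ — applying Definition \ref{fcdef} to the inverse orientation $-\ocircuit$, for which $(-\ocircuit)^-=\ocircuit^+=\circuit$ and $(-\ocircuit)^+=\emptyset$, this says exactly $D\notin F_{-\ocircuit}$. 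Thus $H^0=0$ iff $D\in F_{-\ocircuit}$, giving (iv).

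The main obstacle I anticipate is purely notational rather than conceptual: getting the orientation conventions and the strict-versus-nonstrict inequalities to line up so that the two "arithmetic thickening" sets $F_\ocircuit$ and $F_{-\ocircuit}$ come out on exactly the right side. The topological input (Lemma \ref{circtoplemma}) and the cohomology formula (Theorem \ref{cohomtheorem}) do all the real work and immediately force cohomology into the two degrees $0$ and $\vert\ocircuit^+\vert-1$; after that everything reduces to the elementary observation that "a shifted orthant in $\Z^n$ contains a lattice point" pushes forward under $G$ to "$D$ lies in a certain subsemigroup of $A_{d-1}(X)$", exactly the kind of statement Definition \ref{fcdef} was designed to package. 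I would also remark that since $\xi$ only contributes a torsion summand $H$ to $A_{d-1}$ and $F_\ocircuit$ is defined directly in $A_{d-1}\big(\mathbb{P}(\underline\alpha,\xi)\big)$, no separate treatment of nontrivial $\xi$ is needed — the argument is uniform.
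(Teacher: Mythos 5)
Your proof is correct and follows essentially the same route as the paper's: Theorem \ref{cohomtheorem} together with Lemma \ref{circtoplemma} reduces everything to the two signatures $\emptyset$ and $\ocircuit^+$, and the resulting lattice-point conditions are exactly the complements of $F_{-\ocircuit}$ and $F_{\ocircuit}$ as packaged in Definition \ref{fcdef}. Two small slips to repair: in (iii) the cone you should move deep into is the dual of $\vert\Delta_\ocircuit\vert=\operatorname{cone}(l_1,\dots,l_{d+1})$, which is full-dimensional precisely because $\ocircuit^+$ and $\ocircuit^-$ are both nonempty --- on $\check{\sigma}_i$ alone the functional $l_i$ changes sign, so an arbitrary deep point of a translate of $\check{\sigma}_i$ need not satisfy $l_i(m)\ge -c_i$; and in (iv) the effectivity condition is $c_i\ge 0$ rather than $c_i>0$, consistent with $(-\ocircuit)^+=\emptyset$ in Definition \ref{fcdef}.
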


\begin{proof}
The proof is immediate. Just observe that the simplicial complex $(\hat{\Delta}_\ocircuit)_m$,
for $m$ an element in the bounded chamber, coincides either with $(\hat{\Delta}_\ocircuit)_{\ocircuit^+}$
or $(\hat{\Delta}_\ocircuit)_{\ocircuit^-}$.
\end{proof}

Another case of interest is where $\ocircuit^+ \neq \circuit$ and $V = V(\tau)$, where
$\tau$ is the cone spanned by the
$l_i$ with $i \in \ocircuit^-$, i.e. $V$ is the unique maximal complete torus invariant subvariety
of $\mathbb{P}(\underline{\alpha}, \xi)$.

\begin{proposition}\label{circlocprop}
Consider $\mathbb{P}(\underline{\alpha}, \xi)$ such that $\alpha_i < 0$ for at least one $i$,
$D \in A_{d - 1}\big(\mathbb{P}(\underline{\alpha}, \xi)\big)$ and $V$ the maximal complete
torus invariant subvariety of $\mathbb{P}(\underline{\alpha}, \xi)$, then:
\begin{enumerate}[(i)]
\item\label{circlocpropi} $H^d_V\big(\mathbb{P}(\underline{\alpha}, \xi),
\sh{O}(D)\big) \neq 0$;
\item\label{circlocpropii} $H_V^{\vert\ocircuit^-\vert}\big(
\mathbb{P}(\underline{\alpha}, \xi), \sh{O}(D)\big) = 0$ iff $D \in
F_{\ocircuit}$;
\item\label{circlocpropiii} $H_V^{i}\big(\mathbb{P}(\underline{\alpha}, \xi),
\sh{O}(D)\big) = 0$ for all $i \neq d$, $\vert\ocircuit^-\vert$.
\end{enumerate}
\end{proposition}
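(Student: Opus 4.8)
The plan is to reduce the local cohomology computation to the relative simplicial cohomology of Corollary \ref{localcohomcorollary} — or rather its analogue in the non-affine setting encoded in Theorem \ref{cohomtheorem} — and then to identify the relevant simplicial pairs using Lemma \ref{circtoplemma}. First I would set $X = \mathbb{P}(\underline{\alpha}, \xi)$, $\Delta = \Delta_\ocircuit$, and note that $V = V(\tau)$ with $\tau$ the cone spanned by $\{l_i \mid i \in \ocircuit^-\}$; the complement $X \setminus V$ is the union of the affine charts $U_{\sigma_i}$ for $i \in \ocircuit^+$, so $\hat{\Delta}_V$ is the subcomplex of $\hat{\Delta}_\ocircuit$ consisting of those faces of $\hat\Delta_\ocircuit$ not containing all of $\ocircuit^-$; concretely $\hat{\Delta}_{V}$ is $\hat{\Delta}_\ocircuit$ with the (unique) maximal face $\ocircuit^-$ (and nothing else, since $\ocircuit^-$ is the only face of $\Delta_\ocircuit$ whose orbit lies in $V$) removed. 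By Theorem \ref{cohomtheorem} we then have $H^i_V(X, \sh{O}(D))_m \cong H^{i-1}(\hat{\Delta}_m, \hat{\Delta}_{V,m}; k)$, and it remains to run over all $m \in M$, classifying the subsets $I(m) \subset \on$.

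The combinatorial heart is the case analysis on $I = I(m)$. For $m$ outside the closure of the bounded chamber, one of $\hat{\Delta}_m$, $\hat{\Delta}_{V,m}$ is the full simplicial complex or the pair is acyclic by Lemma \ref{circtoplemma} (any $I \neq \ocircuit^+, \ocircuit, \emptyset$ gives a contractible or full complex, and a routine inspection of the hyperplane arrangement $\{H_i^{\uc}\}$ shows that the only $I(m)$ that can occur for lattice points $m$ are $\emptyset$, $\ocircuit^-$, and — when $m$ lies in the bounded chamber of signature $\ocircuit^-$ — $\ocircuit^+$; note $\ocircuit$ itself cannot occur since $\ocircuit^+ \neq \ocircuit$ forces the bounded chamber to have signature $\ocircuit^+$ or $\ocircuit^-$, and signature $\ocircuit^+$ corresponds to $I(m) = \ocircuit^-$). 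So the only $m$ contributing nonzero eigenspaces are: (a) those with $I(m) = \ocircuit^-$, for which $\hat\Delta_m = (\hat\Delta_\ocircuit)_{\ocircuit^-} \cong B^{|\ocircuit^-|-1}$ and $\hat\Delta_{V,m}$ is its boundary sphere $S^{|\ocircuit^-|-2}$ — since removing the top face $\ocircuit^-$ from a ball leaves its boundary — giving $H^{i-1}(B^{|\ocircuit^-|-1}, S^{|\ocircuit^-|-2}; k) = k$ for $i = |\ocircuit^-|$ and $0$ otherwise; and (b) those with $I(m) = \ocircuit^+$ (signature-$\ocircuit^-$ bounded chamber), for which $\hat\Delta_m = (\hat\Delta_\ocircuit)_{\ocircuit^+} \cong S^{|\ocircuit^+|-2}$ while $\hat\Delta_{V,m} = \hat\Delta_m$ unchanged (since $\ocircuit^+$ does not contain $\ocircuit^-$, no face is removed), so the relative cohomology vanishes; and (c) those with $I(m) = \emptyset$, giving the full pair, which is acyclic unless... — here one checks $\hat\Delta_\emptyset = \emptyset$ so this contributes nothing. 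Wait: I should be careful — for $d$-dimensional nonzero contributions we need the chamber of signature $\ocircuit^-$, i.e. $I(m) = \ocircuit$, but as noted that does not occur; instead the top-degree contribution in (\ref{circlocpropi}) comes from a different source, so let me re-examine: actually $d = |\ocircuit^+| + |\ocircuit^-| - 1 = n - 1$, and $H^d_V$ picks up $H^{d-1}(\hat\Delta_m, \hat\Delta_{V,m})$, which is $H^{n-2}$ of a pair on $n$ vertices — nonzero precisely when $(\hat\Delta_m,\hat\Delta_{V,m}) = (\text{full}, \partial)$, i.e. $I(m) = \ocircuit$ and the face $\ocircuit^-$ is removed from the full simplex, giving $H^{n-2}(\Delta^{n-1}, \Delta^{n-1}\setminus\{\text{top}\}) = H^{n-2}(\text{ball}, \text{horn}) \cong k$; such $m$ exist (the chamber $\{l_i(m) \leq -c_i \text{ for } i \in \ocircuit^-,\ l_i(m) \geq -c_i \text{ for } i \in \ocircuit^+\}$ always contains lattice points for suitable representative $\uc$, since after translating we may take all but one $c_i$ large), proving (i).

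Thus I would organize the proof as: (1) identify $\hat\Delta_V$ and $\hat\Delta_{V,m}$ explicitly; (2) enumerate the possible $I(m)$ via the hyperplane arrangement $\{H_i^{\uc}\}$, noting there are at most four relevant types $\emptyset, \ocircuit^+, \ocircuit^-, \ocircuit$ of which $\ocircuit$ occurs iff the unbounded chamber geometry allows — and in fact $\ocircuit$ occurs only for $m$ in a shifted orthant that is always lattice-nonempty, while $\ocircuit^+$ occurs iff $D$'s bounded chamber has signature $\ocircuit^-$, i.e. iff $D \notin F_\ocircuit$; (3) compute each relative cohomology group via Lemma \ref{circtoplemma}, recording the degree ($|\ocircuit^-|$ for the $\ocircuit^-$-type, $d$ for the $\ocircuit$-type, and nothing else); (4) read off (i), (ii), (iii). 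Part (ii) then says $H^{|\ocircuit^-|}_V = 0$ iff no lattice $m$ has $I(m) = \ocircuit^-$ with the boundary removed — but wait, $\ocircuit^-$ is always a cone of $\Delta_\ocircuit$, so $\hat\Delta_{V,m}$ removes it, giving $H^{i-1}(B, \partial B) = k$ for $i = |\ocircuit^-|$ — this would seem to say it's always nonzero; the resolution is that the $m$ with $I(m) = \ocircuit^-$ that matter are those in the region $l_i(m) < -c_i$ for $i \in \ocircuit^-$, $l_i(m) \geq -c_i$ for $i \in \ocircuit^+$, and whether such lattice points exist is exactly the condition $D \notin F_\ocircuit$ by Definition \ref{fcdef}. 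I expect the main obstacle to be exactly this bookkeeping: correctly matching which shifted-orthant / chamber of the arrangement $\{H_i^{\uc}\}$ corresponds to which $I(m)$, and then to which element of $\{F_\ocircuit, F_{-\ocircuit}\}$, keeping the signs and the roles of $\ocircuit^+$ versus $\ocircuit^-$ straight — the topology via Lemma \ref{circtoplemma} is then immediate, as the author's proof of Proposition \ref{circuitglobalcohom} already indicates.
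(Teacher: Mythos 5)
Your strategy is the same as the paper's: apply Theorem \ref{cohomtheorem} with $V = V(\tau)$ for $\tau$ the cone on $\ocircuit^-$, enumerate the signatures $I(m)$, observe via Lemma \ref{circtoplemma} that only $I = \circuit$ and $I = \ocircuit^-$ yield non-acyclic pairs $(\hat{\Delta}_I, \hat{\Delta}_{V,I})$ (contributing in degrees $d$ and $\vert\ocircuit^-\vert$ respectively, while $I = \ocircuit^+$ gives $\hat{\Delta}_I = \hat{\Delta}_{V,I}$ and everything else is contractible), and then translate lattice-point existence in the relevant chambers into the condition of Definition \ref{fcdef}. Your final organized version (1)--(4) reaches exactly the paper's conclusions.

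There is, however, a concrete slip in your justification of (i). The degree-$d$ contribution comes from lattice points $m$ with $I(m) = \circuit$, i.e.\ $l_i(m) < -c_i$ for \emph{all} $i \in \on$; this region always contains lattice points because it is a full-dimensional translated open cone --- the $l_i$ span a strictly convex cone precisely because $\ocircuit^+$ and $\ocircuit^-$ are both nonempty, as any nonnegative dependency would have to be a multiple of $(\alpha_i)$. The chamber you actually write down in the parenthesis, $\{m : l_i(m) \le -c_i \text{ for } i \in \ocircuit^-,\ l_i(m) \ge -c_i \text{ for } i \in \ocircuit^+\}$, is the chamber governing part (ii), and the claim that it ``always contains lattice points for suitable representative $\uc$'' is false: its possible lattice-point-freeness is exactly the Frobenius phenomenon, and replacing $\uc$ by $\uc + L(m)$ only translates the arrangement, so it cannot create lattice points. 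Relatedly, your first pass asserts that $I(m) = \circuit$ cannot occur; in a written-up proof this case must be present from the start, since it is the sole source of (i). Finally, on the step you yourself identify as the main obstacle: matching the chamber $I(m) = \ocircuit^-$ literally against the semigroup in Definition \ref{fcdef} produces the condition $D \notin F_{-\ocircuit}$, not $D \notin F_\ocircuit$ (compare with Proposition \ref{circuitglobalcohom}(ii), where $I(m) = \ocircuit^+$ corresponds to $D \notin F_\ocircuit$). The paper's sign conventions around signatures and $F_\ocircuit$ are not internally consistent here, so this identification should be carried out explicitly rather than asserted.
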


\begin{proof}
Consider first $I = \circuit$, then $(\hat{\Delta}_\ocircuit)_I =
\hat{\Delta}_\ocircuit \cong B^{d  - 1}$ and $(\hat{\Delta}_\ocircuit)_{V, I} =
(\hat{\Delta}_\ocircuit)_V \cong S^{d - 2}$. It follows that
$H^i\big(\hat{\Delta}_\ocircuit; k\big) = 0$ for all $i$ and
$H^{i - 1}\big(\hat{\Delta}_\ocircuit, (\hat{\Delta}_\ocircuit)_V; k\big) \cong
H^{i - 2}\big((\hat{\Delta}_\ocircuit)_V; k\big)$. As by assumption,
$\ocircuit^+ \neq \circuit$, so the associated hyperplane arrangement contains
an unbounded chamber such that $l_i(m) \geq -c_i$ for all $i \in \circuit$ and all
$m$ in this chamber. Hence (\ref{circlocpropi}) follows.
As in the proof of lemma \ref{circtoplemma}, it follows that $\hat{\Delta}_I$ is
contractible whenever $\ocircuit^+ \cap I \neq \emptyset$ and $\ocircuit^- \cap I
\neq \emptyset$. So in that case $H^i(\hat{\Delta}_I) = 0$ for all $i$ and
$H^{i - 1}(\hat{\Delta}_I, \hat{\Delta}_{V, I}; k) = H^{i - 2}(\hat{\Delta}_{V, I};
k)$ for all $i$.

Now let $I = \ocircuit^+$; then $(\hat{\Delta}_\ocircuit)_I =
(\hat{\Delta}\ocircuit)_{V, I} \cong S^{\ocircuit^+ - 2}$, so
$H^i\big((\hat{\Delta}_\ocircuit)_I, (\hat{\Delta}\ocircuit)_{V, I}; k\big) = 0$
for all $i$.
For $I =
\ocircuit^-$, then $(\hat{\Delta}_\ocircuit)_I \cong B^{\vert \ocircuit^- \vert - 1}$
and $(\hat{\Delta}_\ocircuit)_{V, I} \cong S^{\vert \ocircuit^- \vert - 2}$, the former
by Lemma \ref{circtoplemma}, the latter by Lemma \ref{circtoplemma} and the fact that
$(\hat{\Delta}_\ocircuit)_{V, I}$ has empty intersection with $\stern(\tau)$.
This implies (\ref{circlocpropii}) and consequently (\ref{circlocpropiii}).
\end{proof}

\subsection{Nef cone and Kawamata-Viehweg vanishing, intersection numbers}
\label{onecircuitnefkawamataviehweg}

The nef cone of $\mathbb{P}(\underline{\alpha}, \xi)$ is given by the half line
$[0, \infty)$ in $A_{d - 1}\big(\mathbb{P}(\underline{\alpha}, \xi)\big)_\Q$,
the ample cone by its interior.

\begin{definition}
We denote $K_\ocircuit := - \sum_{i \in \ocircuit^+} D_i$ the {\em minimal}
divisor and $A_\nef := K_\ocircuit + (0, \infty) = (-\sum_{i \in \ocircuit^+} D_i, \infty)$.
\end{definition}

If we identify $K_\ocircuit$ with its class $\sum_{i \in \ocircuit^+} \alpha_i$
in $A_{d - 1}\big(\mathbb{P}(\underline{\alpha}, \xi)\big)_\Q$, we obtain that
$K_\ocircuit \leq D$ for every $D = \sum_{i \in \circuit}
e_i D_i$ for with $-1 \leq e_i < 0$ for all $i \in \circuit$. In particular,
$K_{\mathbb{P}(\underline{\alpha}, \xi)} = -\sum_{i \in \circuit} D_i \geq K_\ocircuit$
with equality if and only if $\ocircuit^+ = \circuit$.
This is the Kawamata-Viehweg theorem for toric $1$-circuit varieties:

\begin{proposition}\label{circuitkawamataviehweg}
Let $D := D' + E$ be a Weil divisor on $\mathbb{P}(\underline{\alpha}, \xi)$, where $D'$
is $\Q$-ample and $E = \sum_{i \in \circuit} e_i D_i$ with $-1 \leq e_i < 0$ for all
$i \in \circuit$. Then $H^i\big(\mathbb{P}(\underline{\alpha}, \xi), \sh{O}(D)\big) = 0$
for all $i > 0$.
\end{proposition}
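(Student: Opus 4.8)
The plan is to strip off all cohomological degrees but one using Proposition~\ref{circuitglobalcohom}, and then to settle that one by a short numerical computation after the divisor class group is collapsed onto $\Z$. By Proposition~\ref{circuitglobalcohom}(i) we have $H^i\big(\mathbb{P}(\underline{\alpha}, \xi), \sh{O}(D)\big) = 0$ for every $i \notin \{0, \vert\ocircuit^+\vert - 1\}$; in particular all higher cohomology vanishes at once when $\vert\ocircuit^+\vert = 1$, so we may assume $\vert\ocircuit^+\vert \geq 2$ and are reduced to showing $H^{\vert\ocircuit^+\vert - 1}\big(\mathbb{P}(\underline{\alpha}, \xi), \sh{O}(D)\big) = 0$. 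By Proposition~\ref{circuitglobalcohom}(ii) this is equivalent to $D \in F_\ocircuit$, so the whole statement comes down to proving that $D = D' + E$ lies in $F_\ocircuit$.

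The next step is to rephrase membership in $F_\ocircuit$ as an inequality between classes. Let $A_{d - 1}\big(\mathbb{P}(\underline{\alpha}, \xi)\big) \twoheadrightarrow A_{d - 1}\big(\mathbb{P}(\underline{\alpha}, \operatorname{id}_N)\big) \cong \Z$ be the projection killing the torsion subgroup $H$; under it $D_i \mapsto \alpha_i$, so $K_\ocircuit = -\sum_{i \in \ocircuit^+} D_i$ maps to $-\sum_{i \in \ocircuit^+}\alpha_i$. By Definition~\ref{fcdef}, a class fails to lie in $F_\ocircuit$ exactly when it can be written as $\sum_{i \in \ocircuit^-} c_i D_i - \sum_{i \in \ocircuit^+} c_i D_i$ with all $c_i \in \N$ and $c_i \geq 1$ for $i \in \ocircuit^+$; since $\alpha_i < 0$ on $\ocircuit^-$ and $\alpha_i > 0$ on $\ocircuit^+$, the image in $\Z$ of any such class is at most $-\sum_{i \in \ocircuit^+}\alpha_i$, i.e. at most the image of $K_\ocircuit$. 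Hence any divisor whose class in $A_{d - 1}\big(\mathbb{P}(\underline{\alpha}, \xi)\big)_\Q \cong \Q$ is \emph{strictly greater} than that of $K_\ocircuit$ must already lie in $F_\ocircuit$.

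It then remains to check that $D = D' + E$ satisfies this. Since $D'$ is $\Q$-ample, its class lies in the interior of the nef half-line $[0,\infty)$, hence is strictly positive in $A_{d - 1}\big(\mathbb{P}(\underline{\alpha}, \xi)\big)_\Q$. For $E = \sum_{i \in \circuit} e_i D_i$ with $-1 \leq e_i < 0$ one gets $e_i \alpha_i \geq -\alpha_i$ for $i \in \ocircuit^+$ and $e_i \alpha_i > 0$ for $i \in \ocircuit^-$, so the class of $E$ is at least $-\sum_{i \in \ocircuit^+}\alpha_i$, that is, at least the class of $K_\ocircuit$ (the inequality already recorded before the proposition). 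Adding, the class of $D = D' + E$ is strictly greater than that of $K_\ocircuit$, so $D \in F_\ocircuit$, which completes the argument.

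There is no genuine obstacle here; the only step carrying content is the translation of $F_\ocircuit$-membership into the numerical inequality, and the one thing to watch is that strict positivity of the class of $D'$ — i.e. true $\Q$-ampleness, not merely nefness — is essential: the minimal divisor $K_\ocircuit$ itself lies \emph{outside} $F_\ocircuit$, so $H^{\vert\ocircuit^+\vert - 1}\big(\mathbb{P}(\underline{\alpha}, \xi), \sh{O}(K_\ocircuit)\big) \neq 0$, and it is exactly the strict inequality that pushes $D$ into $F_\ocircuit$.
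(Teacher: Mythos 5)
Your proposal is correct and is essentially the paper's own argument written out in full: the paper likewise reduces everything to Proposition \ref{circuitglobalcohom} together with the observation (stated just before the proposition via the inequality $K_\ocircuit \leq E$) that every integral class mapping into $A_\nef = (K_\ocircuit, \infty)$ already lies in $F_\ocircuit$. Your explicit verification that the complement of $F_\ocircuit$ projects into $(-\infty, K_\ocircuit]$ in $A_{d-1}(\mathbb{P}(\underline{\alpha},\xi))_\Q \cong \Q$ is exactly the content of that remark, so there is nothing to add.
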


\begin{proof}
The assertion follows from \ref{circuitglobalcohom}. Note that every integral divisor class
which maps to $A_\nef$ is already contained in $F_\ocircuit$.
\end{proof}

We point out that the converse of \ref{circuitkawamataviehweg} is not true
in general. As already illustrated in the introduction, there might be Weil
divisors which do not map to a class in $A_\nef$ and which have no cohomology.
As the theorem is stated over $\Q$, it only captures the offsets of the
diophantine Frobenius problem. However, the theorem is sharp if $L$ contains
a $\Z$-basis of $N$, in particular if $\mathbb{P}(\underline{\alpha}, \xi)$
is smooth. Also note that $\sh{O}(K_\ocircuit)$ by Proposition \ref{circuitglobalcohom}
has nonvanishing cohomology.

\begin{definition}
let $D = \sum_i c_i D_i$ be a $T$-invariant divisor of $\mathbb{P}(\underline{\alpha}, \xi)$. Then
we denote
\begin{equation*}
P_D := \{m \in M_\Q \mid l_i(m) \geq -c_i\}.
\end{equation*}
\end{definition}

Let $D$ be an effective invariant Cartier divisor on $\mathbb{P}(\underline{\alpha},
\xi)$ and let $\tau \in \Delta_\ocircuit(d - 1)$ an inner wall. The intersection
product $D . V(\tau)$ can be read off the polyhedron $P_D$. Namely, let $P_{D, \tau}
\subset P_D$ the one-dimensional face of $P_D$ corresponding to $\tau$. Then $P_{D,
\tau}$ is a bounded interval in $M_\Q$ and it is easy to see that the number of
lattice points on $P_{D, \tau}$ coincides with $\dim H^0\big(V(\tau), \sh{O}(D)
\vert_{V(\tau)} \big) = 1 + \deg \sh{O}(D)\vert_{V(\tau)}$. Let $\{i, j\} = \circuit
\setminus \tau(1)$. If we first restrict to the open subvariety $U :=
U_{\sigma_i} \cup U_{\sigma_j}$ of $\mathbb{P}(\underline{\alpha}, \xi)$, we obtain
analogously to proposition \ref{circuitpic} that $\pic(U)$ is generated by
$\frac{s}{s_\tau} \alpha_i^{-1} \cdot \lcm\{\alpha_i, \alpha_j\} \cdot D_i$,
which corresponds to the class in $\pic(U)$ such
that $P_{D, \tau}$ has lattice length $1$. Over $\Q$, we can identify $D_i$
with its class $\alpha_i$ in $A_{d - 1}\big(\mathbb{P}(\underline{\alpha}, \xi)\big)_\Q
\cong \Q$, and we obtain:
\begin{equation*}
D_i . V(\tau) = \frac{s_\tau}{s} \alpha_j^{-1} \gcd\{\alpha_i, \alpha_j\}.
\end{equation*}
Note that here, and for the rest of this work, we are only interested in intersections
of curves with divisors. For simplicity, we will not distinguish between cycles
and their degree.

The inclusion $U \hookrightarrow \mathbb{P}(\underline{\alpha}, \xi)$ does not
change the intersection product, so above formula also holds on
$\mathbb{P}(\underline{\alpha}, \xi)$.
For any other $k \in \circuit$, we then obtain by linearity that
$D_k . V(\tau) = \frac{\alpha_k}{\alpha_i} D_i . V(\tau)$. So we get a handy formula
for the linear form on $A_{d - 1}\big(\mathbb{P}(\underline{\alpha}, \xi)\big)_\Q$
associated to $\tau$:

\begin{definition}
We set
\begin{equation*}
t_{\ocircuit, \tau} := t_{\underline{\alpha}, \xi, \tau} :=
\frac{s_\tau}{s} \frac{\gcd\{\alpha_i, \alpha_j\}}{\alpha_i \alpha_j}
= \frac{s_\tau}{s} \lcm\{\alpha_i, \alpha_j\}^{-1}. 
\end{equation*}
\end{definition}

\subsection{Fujita's ampleness theorem for toric 1-circuit varieties.}

Recall that a Cartier divisor $D$ is ample if $\Delta$ coincides with the
inner normal fan of $P_D$. The divisor $D$ is very ample if it is ample and
moreover, for every $\sigma \in \Delta$,
the semigroup $\sigma_M$ is generated by $(P_D \cap M) - m_\sigma$, where $m_\sigma$
is the corner of $P_D$ corresponding to $\sigma$. We call a $\Q$-divisor ample, if some
integral multiple of it is ample.
We want to give an ampleness criterion similar to proposition
\ref{circuitkawamataviehweg} in terms of intersection numbers.
First we give an analog to a theorem of Fujino \cite{Fujino03} (see also
\cite{payne06a}):

\begin{proposition}\label{circuitfujino}
Let $E = \sum_{i \in \circuit} e_i D_i$, where $-1 \leq e_i \leq 0$ for all $i \in
\circuit$ and $D$ an invariant $\Q$-divisor on $\mathbb{P}(\underline{\alpha}, \xi)$.
If for every inner
wall $\tau$ of $\Delta_\ocircuit$ we have $D . V(\tau) \geq \vert \ocircuit^+ \vert
- x$, where $x = 0$ if $\mathbb{P}(\underline{\alpha}, \xi)$ is smooth and $x = 1$
otherwise, then $D + E$ is nef.
\end{proposition}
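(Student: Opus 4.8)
The plan is to detect nefness of $D+E$ through its class in $A_{d-1}\big(\mathbb{P}(\underline{\alpha},\xi)\big)_\Q\cong\Q$. Since $\nef\big(\mathbb{P}(\underline{\alpha},\xi)\big)=[0,\infty)$ and, by the intersection computation of subsection \ref{onecircuitnefkawamataviehweg}, pairing with an inner wall $\tau$ is multiplication by the \emph{positive} rational number $t_{\ocircuit,\tau}=\frac{s_\tau}{s}\lcm\{\alpha_i,\alpha_j\}^{-1}$ (where $\{i,j\}=\circuit\setminus\tau(1)\subseteq\ocircuit^+$), a $\Q$-divisor on $\mathbb{P}(\underline{\alpha},\xi)$ is nef precisely when it meets $V(\tau)$ non-negatively for one — hence every — inner wall $\tau$. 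So it suffices to produce a single inner wall $\tau_0$ with $(D+E)\cdot V(\tau_0)\ge 0$; if there is no inner wall the variety is affine and the assertion is vacuous.

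Next I would estimate the contribution of $E$. Recall $D_k\cdot V(\tau)=\alpha_k\,t_{\ocircuit,\tau}$, so this number has the sign of $\alpha_k$: positive for $k\in\ocircuit^+$, negative for $k\in\ocircuit^-$. Writing $E-K_\ocircuit=\sum_{i\in\ocircuit^+}(e_i+1)D_i+\sum_{i\in\ocircuit^-}e_iD_i$, the hypotheses $-1\le e_i\le 0$ give $e_i+1\ge 0$ on $\ocircuit^+$ and $e_i\le 0$ on $\ocircuit^-$, so every summand of $(E-K_\ocircuit)\cdot V(\tau)$ is non-negative. Hence $E\cdot V(\tau)\ge K_\ocircuit\cdot V(\tau)=-\big(\sum_{i\in\ocircuit^+}\alpha_i\big)t_{\ocircuit,\tau}$ for every inner wall $\tau$, and combining with the assumption $D\cdot V(\tau)\ge|\ocircuit^+|-x$ yields
\[
(D+E)\cdot V(\tau)\;\ge\;\big(|\ocircuit^+|-x\big)\;-\;\Big(\sum_{i\in\ocircuit^+}\alpha_i\Big)\,t_{\ocircuit,\tau}.
\]

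It then remains to choose $\tau_0$ so that $\frac{s}{s_{\tau_0}}\lcm\{\alpha_i,\alpha_j\}$ is maximal — equivalently $t_{\ocircuit,\tau_0}$ minimal, equivalently $(-K_\ocircuit)\cdot V(\tau_0)$ minimal — and to verify $\big(\sum_{i\in\ocircuit^+}\alpha_i\big)t_{\ocircuit,\tau_0}\le|\ocircuit^+|-x$. This last numerical step is the real content and the place where smoothness enters. When $\mathbb{P}(\underline{\alpha},\xi)$ is smooth, Corollary \ref{primcorollary} gives $s=1$ and $\alpha_i=1$ for all $i\in\ocircuit^+$, so $t_{\ocircuit,\tau}\equiv 1$ and $(-K_\ocircuit)\cdot V(\tau)=|\ocircuit^+|$, matching $x=0$ with equality (which also shows sharpness, in parallel with Proposition \ref{circuitkawamataviehweg}). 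When it is singular one has $|\ocircuit^+|\ge 2$ together with $s>1$ or some weight in $\ocircuit^+$ exceeding $1$, and here I would run an elementary estimate on $\{\alpha_i\}_{i\in\ocircuit^+}$, using the normalization $\gcd\{|\alpha_j|\}_{j\ne i}=1$ to rule out all the $\ocircuit^+$-weights being simultaneously large, so that some pair $\{i,j\}$ gives an $\lcm\{\alpha_i,\alpha_j\}$ large enough relative to $\sum_{i\in\ocircuit^+}\alpha_i$ to absorb the extra unit recorded by $x=1$. The main obstacle is exactly this weight-and-lattice-index inequality; everything before it is formal manipulation of the intersection formula of subsection \ref{onecircuitnefkawamataviehweg} and of the identification $A_{d-1}\big(\mathbb{P}(\underline{\alpha},\xi)\big)_\Q\cong\Q$.
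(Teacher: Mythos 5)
Your reduction is sound as far as it goes, and it is essentially the same reduction the paper makes: identify $A_{d-1}\big(\mathbb{P}(\underline{\alpha},\xi)\big)_\Q$ with $\Q$, check $E\cdot V(\tau)\geq K_\ocircuit\cdot V(\tau)$ summand by summand, and boil everything down to the arithmetic inequality $\big(\sum_{i\in\ocircuit^+}\alpha_i\big)\,t_{\ocircuit,\tau_0}\leq|\ocircuit^+|-x$ for a wall $\tau_0$ minimizing $t_{\ocircuit,\tau_0}$. But the step you defer as ``an elementary estimate on $\{\alpha_i\}_{i\in\ocircuit^+}$'' is not a routine verification that can be left open: it is false in general, and so is the proposition as stated. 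Take $N=\Z^2$, $l_1=(1,0)$, $l_2=(0,1)$, $l_3=(1,2)$, so $\underline{\alpha}=(1,2,-1)$ and $\ocircuit^+=\{1,2\}$; the fan $\Delta_\ocircuit$ has the two maximal cones $\langle l_2,l_3\rangle$, $\langle l_1,l_3\rangle$ and the single inner wall $\tau=\Q_{\geq 0}\,l_3$. The variety is singular (the cone $\langle l_1,l_3\rangle$ has multiplicity $2$), so $x=1$ and the hypothesis reads $D\cdot V(\tau)\geq 1$; since $t_{\ocircuit,\tau}=\lcm(1,2)^{-1}=1/2$, this only forces the class of $D$ to be $\geq 2$. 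Nefness of $D+E$, however, requires the class of $D$ to be at least $-K_\ocircuit=\alpha_1+\alpha_2=3$: with $D=2D_1$ and $E=-D_1-D_2$ the hypothesis holds with equality, yet $D+E$ has class $-1$ and $(D+E)\cdot V(\tau)=-1/2<0$. For $|\ocircuit^+|=2$ your target inequality is $\lcm(\alpha_i,\alpha_j)\geq\alpha_i+\alpha_j$ with $\{i,j\}=\ocircuit^+$, which fails for instance whenever one weight is $1$ and the other exceeds $1$ — so the obstruction is not ``all weights simultaneously large,'' as your sketch suggests, but the opposite regime, and the normalization $\gcd\{|\alpha_j|\}_{j\neq i}=1$ does not rule it out.

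For what it is worth, the paper's own proof founders at exactly this point: after deriving $D\geq\frac{s}{s_\tau}\lcm\{\alpha_i,\alpha_j\}\cdot(|\ocircuit^+|-x)$ it asserts without justification that $\max\{\alpha_i\}_{i\in\ocircuit^+}\cdot(|\ocircuit^+|-x)\geq\sum_{i\in\ocircuit^+}\alpha_i$, which is the same (indeed an even weaker) false claim. So you have correctly isolated where the real content lies; the honest conclusion is that the argument cannot be completed without changing the statement — e.g.\ assuming directly that $D\cdot V(\tau)\geq(-K_\ocircuit)\cdot V(\tau)$ for every inner wall, or restricting to situations (such as $\ocircuit^+=\circuit$ with $d\geq 2$, where primitivity forces enough coprimality among the weights) in which the $\lcm$ inequality actually holds.
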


\begin{proof}
For every inner wall $\tau$, we have $D . V(\tau) = D . t_{\ocircuit, \tau} =
 D \cdot \frac{s_\tau}{s}
\lcm\{\alpha_i, \alpha_j\}^{-1} \geq \vert \ocircuit^+ \vert - x$, where
$\{i, j\} = \circuit \setminus \tau(1)$. Thus $D \geq \frac{s}{s_\tau}
\lcm\{\alpha_i, \alpha_j\} \cdot (\vert \ocircuit^+ \vert - x)$. Hence
$D \geq \max\{\alpha_i\}_{i \in \ocircuit^+} \cdot (\vert \ocircuit^+ \vert - x)
\geq K_\ocircuit$, with equality if and only if $\mathbb{P}(\underline{\alpha}, \xi)$
is smooth.
\end{proof}

We obtain some criteria for ampleness in the spirit of the toric Fujita ampleness
theorem (see also \cite{Mustata1}, \cite{payne06a}):

\begin{corollary}\label{circuitfujita}
Let $\mathbb{P}(\underline{\alpha}, \xi)$ and $E$, $D$ be as before, such that
$D + E$ Cartier. If for every inner wall $\tau$ of $\Delta_\ocircuit$
\begin{enumerate}[(i)]
\item\label{circuitfujitaii} $D . V(\tau) \geq \vert \ocircuit^+ \vert + 1$ and
$\mathbb{P}(\underline{\alpha}, \xi)$ is smooth, then $D + E$ is very ample;
\item\label{circuitfujitai} $D . V(\tau) \geq d + 1$ and
$\mathbb{P}(\underline{\alpha}, \xi)$ is not smooth, then $D + E$ is very ample;
\item\label{circuitfujitaiii} $D . V(\tau) \geq \vert \ocircuit^+ \vert$ and
$\mathbb{P}(\underline{\alpha}, \xi)$ is not smooth, then $D + E$ is ample.
\end{enumerate}
\end{corollary}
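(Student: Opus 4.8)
The plan is to reduce all three cases to Proposition \ref{circuitfujino} together with the standard toric criterion for (very) ampleness, exactly as one does in the classical Fujita/Mustață setting. The key point is that $D+E$ being Cartier is assumed, so we only need to bound intersection numbers against the inner walls to control the lattice-point geometry of the polytope $P_{D+E}$. First I would recall that $D+E$ is ample precisely when $\Delta_\ocircuit$ is the inner normal fan of $P_{D+E}$, and very ample when in addition, for every maximal cone $\sigma$, the monoid $\sigma_M$ is generated by $(P_{D+E}\cap M) - m_\sigma$. On a toric $1$-circuit variety there are $|\ocircuit^+|$ maximal cones $\sigma_i$ ($i\in\ocircuit^+$), and each inner wall $\tau$ has a corresponding edge $P_{D+E,\tau}$ of $P_{D+E}$ whose lattice length is $(D+E).V(\tau) = D.V(\tau) + E.V(\tau)$.

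The main estimate I would establish is a lower bound for $E.V(\tau)$: since $E=\sum e_i D_i$ with $-1\le e_i\le 0$ and $V(\tau)$ is an extremal curve, linearity of the intersection product and the explicit formula $D_k.V(\tau) = t_{\ocircuit,\tau}\alpha_k$ from the previous subsection give $E.V(\tau) \ge -(|\ocircuit^+| - x')$ for the appropriate correction $x'$ (zero in the smooth case, one otherwise) — this is the same mechanism already used in the proof of Proposition \ref{circuitfujino}, and I would simply invoke that proposition: under hypothesis $D.V(\tau)\ge |\ocircuit^+| - x$ we get $D+E$ nef. For the three ampleness statements I would then upgrade ``nef'' to ``ample'' and ``very ample'' by the extra slack in the hypotheses. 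Concretely, for (iii), $D.V(\tau)\ge|\ocircuit^+|$ in the non-smooth case means $(D+E).V(\tau)\ge 1 > 0$ on every inner wall, so every edge of $P_{D+E}$ has positive lattice length, the normal fan is genuinely $\Delta_\ocircuit$ (not a coarsening), hence $D+E$ is ample. For (i) and (ii), the additional $+1$ in $D.V(\tau)$ forces $(D+E).V(\tau)\ge 2$ in case (i) (smooth) and, in case (ii) where one uses the cruder bound $D.V(\tau)\ge d+1$, likewise a uniform gap; one then checks the semigroup-generation condition at each corner $m_{\sigma_i}$. In the smooth case this is automatic once each edge at a smooth vertex has lattice length $\ge 1$ and the vertex is unimodular, which is the content of the classical toric very-ampleness lemma; in the non-smooth case one needs lattice length $\ge \dim = d$ along the edges through a vertex to guarantee that the truncated polytope's lattice points generate $\sigma_M$ — this is why the bound jumps to $d+1$.

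I expect the main obstacle to be the verification of the semigroup-generation (very ampleness) condition in the non-smooth case (ii): one must show that $D.V(\tau)\ge d+1$, equivalently that each edge of $P_{D+E}$ emanating from the corner $m_{\sigma_i}$ contains at least $d+1$ lattice points, suffices for $(P_{D+E}\cap M)-m_{\sigma_i}$ to generate the (possibly non-saturated, cyclic-quotient-singular) monoid $\sigma_{i,M}$. I would handle this by the standard argument that for a simplicial cone $\sigma$ of multiplicity $e$, lattice points along the edges up to multiplicity $\dim$ already generate $\sigma_M$ (this is where $d+1$ rather than $|\ocircuit^+|+1$ is needed, since $|\ocircuit^+|$ can be as small as $2$ while the cones are $d$-dimensional), citing \cite{Mustata1} or \cite{payne06a} for the precise toric statement rather than reproving it. Everything else — the reduction to edge lengths, the nef-to-ample passage, and case (iii) — is a direct consequence of Proposition \ref{circuitfujino} and the intersection formula already in hand, so those steps are routine.
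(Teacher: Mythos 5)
Your proposal follows essentially the same route as the paper: reduce (very) ampleness to the lattice lengths of the edges of $P_{D+E}$ at the corners $m_{\sigma_i}$, dispose of the smooth case via nef plus positivity plus ample $\Rightarrow$ very ample, and in the singular case verify the semigroup-generation condition at each vertex by an edge-length criterion (the paper uses exactly the Ewald--Wessels statement you allude to). The one caution is your intermediate bound $E.V(\tau)\ge -(\vert\ocircuit^+\vert - x')$: term-by-term summation does not give this, since $D_k.V(\tau)=t_{\ocircuit,\tau}\alpha_k$ can exceed $1$ (e.g.\ on $\mathbb{P}(1,1,5)$), so the comparison must be made, as in the paper, in the one-dimensional class group $A_{d-1}\otimes_\Z\Q\cong\Q$ against $K_\ocircuit$ and the minimal divisor of edge length one --- but since you ultimately invoke Proposition \ref{circuitfujino} as a black box and only need the class of $D+E$ to strictly exceed the nef threshold, this does not break your argument.
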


\begin{proof}
For (\ref{circuitfujitaii}) note that all $D_i$ with $i \in \ocircuit^+$ are linearly
equivalent and $D_i . V(\tau) = 1$ for every inner wall $\tau$ and all $i \in \ocircuit^+$.
Thus $D > \sum_{i \in \ocircuit^+} D_i = -K_\ocircuit$ and thus $D + E$ is effective and
nef, and therefore ample by smoothness of $\mathbb{P}(\underline{\alpha}, \xi)$.

If $\mathbb{P}(\underline{\alpha}, \xi)$ is not minimal, we need sufficient conditions
for $D + E$ to be ample or very ample. To show that $D + E$ is
very ample, we have to verify that for every $i \in \ocircuit^+$ with associated maximal cone
$\sigma_i$, the shifted polytope $P_{D + E} - m_{\sigma_i}$ generates the semigroup
$\sigma_{i, M}$, where $m_{\sigma_i} \in M$ is the corner of $P_{D + E}$ corresponding to
$\sigma_i$. Let $p_1, \dots, p_d$ be the primitive vectors of the rays of the dual cone
$\check{\sigma}_i$.
Due to a criterion of Ewald and Wessels \cite{EwaldWessels},
it suffices to show that the simplex spanned by $0$ and $(d - 1)
\cdot p_j$ is contained in $P_{D + E} - m_{\sigma_i}$.
Note that the $p_j$, where $j \in \ocircuit^-$, span a subcone of the recession
cone of $P_{D + E}$, i.e. $\N . p_j \subset P_{D + E} - m_{\sigma_i}$. So we
have only to check the $p_j$ with $j \in \ocircuit^+$.
As we have seen before, the lattice distance of $p_j$ is given by $\frac{s}{s_\tau}
\alpha_i^{-1} \lcm(\alpha_i, \alpha_j)$. Let $\tau_{i, j} := \sigma_i \cap \sigma_j$, then
we obtain $D . V(\tau_{i, j}) \geq \frac{s}{s_\tau} \lcm(\alpha_i, \alpha_j) \cdot
(d + 1)$ and thus $D$ is at least the factor $\alpha_i \cdot
(d + 1)$ larger than the minimal divisor $D_{ij}$ such that
$P_{D_{ij}, \tau}$ has lattice length at least one. In fact, $D \geq \max\{\alpha_i \cdot
(d + 1) \cdot D_{ij}\}_{j \in \ocircuit^+ \setminus \{i\}}$.
Thus we get that the lattice length of $P_{D + E, \tau_{ij}} \geq d + 1$ and
(\ref{circuitfujitai}) follows. Assertion (\ref{circuitfujitaiii}) follows at once
as from the estimate follows that $p_i \in P_{D + E} - m_\sigma$ for every $i \in \circuit$.
\end{proof}

\subsection{Cohomology and resolutions of singularities.}

It is instructive to see the local cohomology vanishing in the context of classification
of maximal Cohen-Macaulay modules. Assume that $\vert\ocircuit^-\vert \notin \{0, 1\}$,
then the
$l_i$ span a strictly convex cone which gives rise to an affine toric variety $Y$.
Recall that there is a natural map $\pi: \mathbb{P}(\underline{\alpha}, \xi)
\longrightarrow  Y$ which is a small resolution of singularities.
Likewise, by flipping we obtain a
second resolution $\pi': \mathbb{P}(-\underline{\alpha}, \xi)) \longrightarrow Y$.
We have two natural isomorphisms $A_{d - 1}(Y) \overset{\pi^{-1}}{\longrightarrow}
A_{d - 1}(\mathbb{P}(\underline{\alpha}, \xi))$ and $A_{d - 1}(Y) \overset{(\pi')^{-1}}{\longrightarrow}
A_{d - 1}(\mathbb{P}(\underline{\alpha}, \xi))$, which both are
induced by the identity on $\Z^n$. These isomorphisms map any
Weil divisor $D$ on $Y$ to its strict transforms $\pi^{-1} D$ or $(\pi')^{-1} D$, respectively,
on $\mathbb{P}(\underline{\alpha}, \xi))$ or $\mathbb{P}(-\underline{\alpha}, \xi))$, respectively.

Now, the question whether $\sh{O}(D)$ is a maximal Cohen-Macaulay
sheaf can be decided directly on $Y$ or, equivalently, on the resolutions:

\begin{theorem}
Let $Y$ be an affine toric variety whose associated cone $\sigma$ is spanned by a circuit
$\circuit$ and denote $\mathbb{P}(\underline{\alpha}, \xi)$ and $\mathbb{P}(-\underline{\alpha}, \xi)$
the two canonical small resolution of singularities.
Then the sheaf $\sh{O}(D)$ is maximal Cohen-Macaulay if and only if $R^i\pi_* \sh{O}
(\pi^{-1}D) = R^i\pi'_* \sh{O}((\pi')^{-1}D) = 0$ for all $i > 0$.
\end{theorem}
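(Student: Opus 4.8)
The plan is to collapse the two-sided equivalence onto a single combinatorial condition on the divisor class: that $[D]$ lie in $F_\ocircuit\cap F_{-\ocircuit}$, where $\ocircuit$ is the oriented circuit realized by the triangulation of $\sigma$ with $\mathbb{P}(\underline{\alpha},\xi)=X_{\Delta_\ocircuit}$ (so that $\mathbb{P}(-\underline{\alpha},\xi)=X_{\Delta_{-\ocircuit}}$). I will use throughout that the three divisor class groups $A_{d-1}(Y)$, $A_{d-1}(\mathbb{P}(\underline{\alpha},\xi))$ and $A_{d-1}(\mathbb{P}(-\underline{\alpha},\xi))$ are one and the same group $\operatorname{coker}(L\colon M\to\Z^n)$, since this group depends only on the $l_i$, which are common to all three; under this identification the classes of $\pi^{-1}D$, $(\pi')^{-1}D$ and $D$ all agree, and I write $[D]$ for this common class. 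Note that, since $\sigma$ has $d+1$ extremal rays and both $\pi$ and $\pi'$ are nontrivial, $2\le|\ocircuit^+|,|\ocircuit^-|\le d-1$.

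\emph{Step 1 (the vanishing of $R^i\pi_*$ and $R^i\pi'_*$).} Since $\pi$ is proper and $Y$ is affine, each $R^i\pi_*\sh{O}(\pi^{-1}D)$ is coherent and the Leray spectral sequence for $\pi$ degenerates, whence $\Gamma\big(Y,R^i\pi_*\sh{O}(\pi^{-1}D)\big)\cong H^i\big(\mathbb{P}(\underline{\alpha},\xi),\sh{O}(\pi^{-1}D)\big)$; as a coherent sheaf on an affine scheme vanishes iff it has no sections, $R^i\pi_*\sh{O}(\pi^{-1}D)=0$ for all $i>0$ iff $H^i\big(\mathbb{P}(\underline{\alpha},\xi),\sh{O}(\pi^{-1}D)\big)=0$ for all $i>0$. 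By Proposition \ref{circuitglobalcohom} the only higher cohomology that can be nonzero here sits in degree $|\ocircuit^+|-1\ge 1$, and it vanishes exactly when $[D]\in F_\ocircuit$. Running the same argument for $\pi'$ (whose oriented circuit is $-\ocircuit$) shows that the right-hand side of the theorem is equivalent to $[D]\in F_\ocircuit\cap F_{-\ocircuit}$.

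\emph{Step 2 (the MCM condition).} By the local-cohomology criterion for maximal Cohen--Macaulayness on an affine toric variety recalled in the introduction, $\sh{O}_Y(D)$ is MCM iff $H^i_x(Y,\sh{O}_Y(D))=0$ for $i<d$, where $x$ is the torus fixed point. By Corollary \ref{localcohomcorollary} this local cohomology is $\bigoplus_m H^{i-2}\big((\hat\sigma_x)_{I(m)};k\big)$, the sum over $m\in M$ with $I(m)\neq\emptyset$, where $\hat\sigma_x$ is the simplicial model of the fan of $Y\setminus\{x\}$, i.e. of the proper faces of $\sigma$. Because the facets of the circuit cone $\sigma$ are the cones on $\on\setminus\{i,j\}$ with $i\in\ocircuit^+$, $j\in\ocircuit^-$, one gets $\hat\sigma_x=\{J\subseteq\on\mid J\not\supseteq\ocircuit^+ \text{ and } J\not\supseteq\ocircuit^-\}$, and a direct computation (each $(\hat\sigma_x)_I$ is a join of boundaries of simplices with a full simplex) yields: $(\hat\sigma_x)_I$ is acyclic unless $I\in\{\ocircuit^+,\ocircuit^-,\on\}$, in which cases it is homotopy equivalent to $S^{|\ocircuit^+|-2}$, $S^{|\ocircuit^-|-2}$, $S^{d-2}$ respectively. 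Since $|\ocircuit^+|,|\ocircuit^-|<d$, the $I=\on$ contribution lands only in degree $d$ and is harmless, so $\sh{O}_Y(D)$ is MCM iff there is no $m\in M$ with $I(m)=\ocircuit^+$ and none with $I(m)=\ocircuit^-$. Finally, the inequalities $l_i(m)<-c_i$ for $i\in\ocircuit^+$ and $l_i(m)\ge -c_i$ for $i\in\ocircuit^-$, via the substitution $k_i=-(l_i(m)+c_i)$ for $i\in\ocircuit^+$ and $k_i=l_i(m)+c_i$ for $i\in\ocircuit^-$, show that such an $m$ exists exactly when $[D]$ lies in the semigroup of Definition \ref{fcdef}, i.e. iff $[D]\notin F_\ocircuit$; symmetrically $I(m)=\ocircuit^-$ is realized iff $[D]\notin F_{-\ocircuit}$. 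Hence $\sh{O}_Y(D)$ is MCM iff $[D]\in F_\ocircuit\cap F_{-\ocircuit}$, which by Step 1 is precisely the right-hand side, completing the proof.

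The step I expect to be the real work is the homotopy-type analysis of the full subcomplexes $(\hat\sigma_x)_I$: one must first identify $\hat\sigma_x$ correctly from the facet structure of the circuit cone, and then argue — carefully respecting the conventions for reduced cohomology in degree $-1$ and for the void complex versus $\{\emptyset\}$ — that cohomology appears only for $I=\ocircuit^+,\ocircuit^-,\on$. The remaining ingredients (the Leray/affine reduction, the dictionary between the arrangement chambers $\{I(m)=\ocircuit^{\pm}\}$ and the semigroups defining $F_{\pm\ocircuit}$, and the identification of the three class groups) are routine bookkeeping.
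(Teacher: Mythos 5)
Your proof is correct and follows essentially the same route as the paper: both sides of the equivalence are collapsed onto the single condition $[D]\in F_{\ocircuit}\cap F_{-\ocircuit}$, with the $R^i\pi_*$ side handled exactly as in the paper via the affine base and Proposition \ref{circuitglobalcohom}. The only cosmetic difference is on the MCM side, where the paper identifies the relevant subcomplexes of the boundary complex of $\sigma$ with those computing $H^i_V$ on the two resolutions and then cites Proposition \ref{circlocprop}, whereas you recompute the homotopy types of the full subcomplexes $(\hat{\sigma}_{x})_I$ directly; the combinatorial content is identical.
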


\begin{proof}
This toric variety corresponds to the toric subvariety of $Y$ which is the complement
of its unique fixed point, which we denote $y$. We have to show that $H^i_y\big(Y, \sh{O}(D)\big)
= 0$ for all $i < d$. By Corollary \ref{localcohomcorollary}, we have
\begin{equation*}
H^i_y\big(Y, \sh{O}(D)\big)_m = H^{i - 2}(\hat{\sigma}_{y, m}; k)
\end{equation*}
for every $m \in M$, where $\hat{\sigma}_y$ denotes the simplicial model for the fan
associated to $Y \setminus \{y\}$.
Denote $\tau$ and $\tau'$ the cones corresponding to the minimal orbits of
$\mathbb{P}(\underline{\alpha}, \xi)$ and $\mathbb{P}(-\underline{\alpha}, \xi)$,
respectively.
We observe that $(\hat{\Delta}_\ocircuit)_{V(\tau)} = (\hat{\Delta}_\ocircuit)_{V(\tau')}$
both coincide with the subfan of $\sigma$ generated by its facets. It follows that
the simplicial complexes relevant for computing the isotypical decomposition
of $H^i_y\big(Y, \sh{O}(D)\big)$ coincide with the simplicial complexes
relevant for computing the $H^i_V\big(\mathbb{P}(\underline{\alpha}, \xi), \sh{O}(\pi^{-1}D)\big)$
and $H^i_{V'}\big(\mathbb{P}(-\underline{\alpha}, \xi), \sh{O}((\pi')^{-1}D)\big)$,
respectively, where $V, V'$ denote the exceptional sets of the morphisms $\pi$ and
$\pi'$, respectively. By Proposition \ref{circlocprop} the corresponding cohomologies
vanish for $i < d$
iff $D \in F_\ocircuit \cap F_{-\ocircuit}$. Now we observe that $\Gamma\big(Y, R^i\pi_*
\sh{O}(\pi^{-1}D)\big) = H^i\big(\mathbb{P}(\underline{\alpha}, \xi), \sh{O}(\pi^{-1}D)\big)$
and $\Gamma\big(Y, R^i\pi'_* \sh{O}((\pi')^{-1}D)\big) = H^i\big(\mathbb{P}(-\underline{\alpha},
\xi), \sh{O}((\pi')^{-1}D)\big)$. By Proposition \ref{circuitglobalcohom}, both cohomologies
vanish for $i > 0$ iff $D \in F_\ocircuit \cap F_{-\ocircuit}$.
\end{proof}

\begin{remark}
The relation between maximal Cohen-Macaulay modules and the diophantine Frobenius
problem has also been discussed in \cite{Stanley96}. See \cite{Yoshino90} for a
discussion of MCM-finiteness of toric $1$-circuit varieties.
\end{remark}

\begin{remark}
The fiber of $\pi$ over the exceptional locus again is a toric $1$-circuit variety,
 a finite quotient of
a weighted projective space. This variety is given
by $\mathbb{P}(\underline{\alpha}^+, \xi^+)$, whose associated fan is contained in
$N / \overline{N}_{\ocircuit^-}$. Here, $\xi^+: N / \overline{N}_{\ocircuit^-} \longrightarrow
N / \overline{N}_{\ocircuit^-}$ is the morphism induced by $\xi$, and
for $\alpha^+$, the $\alpha^+_i$ equal to $\alpha_i$ divided by the greatest
common divisor of all $\lcm\{\alpha_j\}_{j \in I}$, where $I$ runs over all maximal
proper subsets of $\ocircuit^+$ which contain $i$.
\end{remark}

\section{Discriminants and combinatorial aspects cohomology vanishing}\label{discriminantalarrangementsandsecfans}

In this section we will concentrate on aspects of toric geometry which are related to its underlying
linear algebra. A toric variety $X$ is specified by the set of primitive vectors $l_1, \dots, l_n \in N$ and the
fan $\Delta$ supported on these vectors. We can separate three properties which
govern the geometry of $X$ and are relevant for cohomology vanishing problems:
\begin{enumerate}[(i)]
\item\label{aspecti} the linear algebra given by the vectors $l_1, \dots, l_n$ and their linear dependencies
as $\Q$-vectors;
\item\label{aspectii} arithmetic properties, which are also determined by the $l_i$, but considered as integral vectors;
\item\label{aspectiii} its combinatorics, which is given by the fan $\Delta$.
\end{enumerate}

In the case of toric $1$-circuit varieties it was possible to study the arithmetic aspects in isolation. For
general toric varieties, this is no longer possible, because the three properties interact in much more complicated
ways, which we have to keep track of. In this section we will describe how linear algebraic and combinatorial
aspects are combined. In the sequel it will be convenient to consider the $l_i$ as matrix.
So we define:
\begin{definition}
For any given set of vectors $l_1, \dots, l_n \in N_\Q$ we denote $L$ the matrix whose rows are given
by the $l_i$. For any subset $I$ of $\on$ we denote $L_I$ the submatrix of $L$ whose rows are given
by the $l_i$ with $i \in I$.
\end{definition}
We will also frequently make use of the following abuse of notion:
\begin{convention}
We will usually identify subsets $I \subset \on$ with the corresponding
subsets of $\{l_1, \dots, l_n\}$. In particular, if $\circuit \subset \on$ such that the
set $\{l_i\}_{i \in \circuit}$ forms a circuit, then we will also call $\circuit$ a circuit.
Also, we will in general not distinguish between $\{l_i\}_{i \in I}$ and $L_I$.
\end{convention}

\begin{definition}
We say that a fan $\Delta$ is {\em supported on} $l_1, \dots, l_n$ if $\Delta(1)$ coincides with
the set of rays generated by a subset of the $l_i$.
\end{definition}

Moreover, note that
circuits are not required to span $M_\Q$. If some circuit $\circuit \subset \on$
generates a subvector space of codimension $r$ in $N_\Q$, then
for some orientation $\ocircuit$ of $\circuit$ the variety $X(\Delta_\ocircuit)$
is isomorphic to $\mathbb{P}(\underline{\alpha}, \xi) \times (k^*)^r$ for some
appropriate $\underline{\alpha}$ and $\xi$. If $\ocircuit^+ = \circuit$,
the fan $\Delta_{-\ocircuit}$ is empty. By convention, in that case we define
$X(\Delta_{-\ocircuit}) := (k^*)^r$ as the associated toric variety.
The relevant facts from section
\ref{onecircuitvarieties} can straightforwardly be adapted to this situation.

\begin{definition}
We denote $\circuit(L)$ the set of circuits of $L$ and $\ocircuit(L)$ the set of oriented circuits
of $L$, i.e. the set of all orientations $\ocircuit, -\ocircuit$ for $\circuit \in \circuit(L)$.
\end{definition}

In subsection \ref{circuitsdiscriminantal} we consider circuits of the matrix $L$ and the
induced stratification of $A_{d - 1}(X)_\Q$. In subsection \ref{secondaryfans}
we will collect some well-known material on secondary fans from \cite{GKZ}, \cite{OdaPark},
and \cite{BilleraFillimanSturmfels} and explain their relation to
discriminantal arrangements. Subsection \ref{birat} then applies this to certain
statements about the birational geometry of toric varieties and cohomology vanishing.

\subsection{Circuits and discriminantal arrangements}\label{circuitsdiscriminantal}

Recall that for any torus invariant divisor $D = \sum_{i \in \on} c_i D_i$, the isotypical components
$H^i_V\big(X, \sh{O}(D)\big)_m$ for some cohomology group depend on simplicial complexes
$\hat{\Delta}_I$, where $I = I(m) = \{i \in \on \mid l_i(m) < -c_i\}$. So, the set of all possible
subcomplexes $\hat{\Delta}_I$ depends on the chamber decomposition of $M_\Q$ which is induced
by the hyperplane arrangement which is given by hyperplanes $H_1, \dots, H_n$, where
\begin{equation*}
H_i^\uc := \{m  \in M_Q \mid l_i(m) = -c_i\}.
\end{equation*}
The set of all relevant $I \subset \on\}$ is determined by the map
\begin{equation*}
\mathfrak{s}^{\underline{c}} : M_\Q \longrightarrow 2^\on, \quad m \mapsto \{i \in \on \mid  l_i(m) < -c_i\}.
\end{equation*}

\begin{definition}
For $m \in M_\Q$,  we call $\mathfrak{s}^{\underline{c}}$ the {\em signature} of $m$.
We call the image of $M_\Q$ in $2^\on$ the {\em combinatorial type} of $\uc$.
\end{definition}

\begin{remark}
The combinatorial type encodes what in combinatorics is known as {\em oriented matroid}
(see \cite{BLSWZ}). We will not make use of this kind of structure, but we will find
it sometimes convenient to borrow some notions.
\end{remark}

So, given $l_1, \dots, l_n$, we would like to classify all possible combinatorial types, depending
on $\underline{c} \in \Q^n$. The natural parameter space for all hyperplane arrangements up
to translation by some element $m \in M_Q$  is given by the set $A_\Q \cong \Q^n / M_\Q$,
which is given by following short exact sequence:
\begin{equation*}
0 \longrightarrow M_\Q \overset{L}{\longrightarrow} \Q^n \overset{D}{\longrightarrow}
A_{d - 1}(X)_\Q = A_\Q \longrightarrow 0.
\end{equation*}
Then the $D_1, \dots, D_n$ are the images of the standard basis vectors of $\Q^n$.
This procedure of constructing the $D_i$ from the $l_i$ is often called {\em Gale transformation},
and the $D_i$ are the {\em Gale duals} of the $l_i$.

Now, a hyperplane arrangement $H^\uc_i$ for some $\uc \in \Q^n$,
is considered in general position if the hyperplanes $H_i^\uc$ intersect in the smallest possible dimension.
When varying $\uc$ and passing from one arrangement in general position to another with of
a different combinatorial type, this necessarily implies that has to take place some specialization
for some $\uc \in \Q^n$, i.e. the corresponding hyperplanes $H_i^\uc$ do not intersect in the
smallest possible dimension. So we see that the combinatorial types of hyperplane arrangements
with fixed $L$ and varying induce a stratification of $A_\Q$, where the maximal strata correspond
to hyperplane arrangements in general position. The determination of this stratification is the
{\em discriminant problem} for hyperplane arrangements. To be more precise, let $I \subset \on$
and denote
\begin{equation*}
H_I := \{\uc + M_\Q \in A_\Q \mid \bigcap_{i \in I} H_i^\uc \neq 0\},
\end{equation*}
i.e. $H_I$ represents the set of all hyperplane arrangements (up to translation) such that the hyperplanes
$\{H_i\}_{i \in I}$ have nonempty intersection. The sets $H_I$ can be described straightforwardly by
the following commutative exact diagram:
\begin{equation}\label{HIdefdiagram}
\UseComputerModernTips
\xymatrix{
& & & H_I \ar@^{{(}->}[d] \\
0 \ar[r] & M_\Q \ar[r]^L \ar@{=}[d] & \Q^n \ar[r]^D \ar@{-{>>}}[d] & A_\Q \ar[r] \ar@{-{>>}}[d] &
0 \\
& M_\Q \ar[r]^{L_I} & \Q^I \ar[r]^{D_I} &
A_{I, \Q} \ar[r]& 0.
}
\end{equation}
In particular, $H_I$ is a subvector space of $A_\Q$. Moreover, we immediately read off
diagram (\ref{HIdefdiagram}):

\begin{lemma}
\label{dimlemma}
\begin{enumerate}[(i)]
\item\label{dimlemmaiv} $H_I$ is generated by the $D_i$ with $i \in \on \setminus I$.
\item\label{dimlemmai} $\dim H_I = n - \vert I \vert - \dim ( \ker L_I)$.
\item\label{dimlemmaii} If $J \subseteq I$ then $H_I \subseteq H_J$.
\item\label{dimlemmaiii} Let $I, J \subset \on$, then $H_{I \cup J} \subset H_I \cap H_J$.
\end{enumerate}
\end{lemma}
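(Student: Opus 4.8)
The plan is to deduce all four assertions directly from the commutative diagram~(\ref{HIdefdiagram}), whose rows are exact and whose right-hand vertical maps are the coordinate projection $\Q^n \twoheadrightarrow \Q^I$ and the map $A_\Q \twoheadrightarrow A_{I, \Q}$ it induces. The one point that needs a genuine argument is the translation of the defining condition for $H_I$ into linear algebra. For $\uc = (c_1, \dots, c_n) \in \Q^n$ the affine hyperplanes $\{H_i^{\uc}\}_{i \in I}$ have a common point in $M_\Q$ precisely when the system $l_i(m) = -c_i$, $i \in I$, is solvable, i.e. when $(-c_i)_{i \in I}$ lies in $\operatorname{im} L_I$; since $\operatorname{im} L_I$ is a linear subspace the sign is irrelevant, and since $L_I$ is the composite $M_\Q \overset{L}{\to} \Q^n \twoheadrightarrow \Q^I$ this says $\uc \in L(M_\Q) + \Q^{\on \setminus I}$ (no injectivity of $L_I$ is needed). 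Taking classes modulo $L(M_\Q)$ and using commutativity of~(\ref{HIdefdiagram}), this is exactly the condition $\uc + M_\Q \in \ker\big(A_\Q \twoheadrightarrow A_{I, \Q}\big)$. Hence $H_I$ equals this kernel, equivalently the image in $A_\Q$ of the coordinate subspace spanned by $\{e_i\}_{i \in \on \setminus I}$; in particular $H_I$ is a linear subspace. This already gives~(\ref{dimlemmaiv}), since that image is spanned by the $D_i$ with $i \in \on \setminus I$.

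For~(\ref{dimlemmai}) I would read the dimension off the short exact sequence $0 \to H_I \to A_\Q \to A_{I, \Q} \to 0$ obtained above, namely $\dim H_I = \dim A_\Q - \dim A_{I, \Q}$. Exactness of the top row of~(\ref{HIdefdiagram}) gives $\dim A_\Q = n - d$; and $A_{I, \Q} = \coker L_I$ with $\rk L_I = d - \dim \ker L_I$, so $\dim A_{I, \Q} = \vert I \vert - d + \dim \ker L_I$. Subtracting yields $\dim H_I = n - \vert I \vert - \dim \ker L_I$. (One can equally argue from~(\ref{dimlemmaiv}): $H_I$ is the image of $\Q^{\on \setminus I}$ under $\Q^n \twoheadrightarrow A_\Q$, and the kernel of that restricted map is $\Q^{\on \setminus I} \cap L(M_\Q) = L(\ker L_I)$, which has dimension $\dim \ker L_I$ since $L$ is injective.)

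Parts~(\ref{dimlemmaii}) and~(\ref{dimlemmaiii}) are then formal consequences. If $J \subseteq I$ then $\on \setminus I \subseteq \on \setminus J$, so by~(\ref{dimlemmaiv}) the spanning set $\{D_i\}_{i \in \on \setminus I}$ of $H_I$ is contained in the spanning set $\{D_i\}_{i \in \on \setminus J}$ of $H_J$, whence $H_I \subseteq H_J$; equivalently, a common point of the larger family $\{H_i^{\uc}\}_{i \in I}$ is a common point of the subfamily $\{H_i^{\uc}\}_{i \in J}$. Applying this to the inclusions $I \subseteq I \cup J$ and $J \subseteq I \cup J$ gives $H_{I \cup J} \subseteq H_I \cap H_J$, which is~(\ref{dimlemmaiii}). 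I do not expect any real obstacle in this lemma; the only step requiring care is the bookkeeping in the first paragraph, i.e. reconciling the hyperplane-arrangement description of $H_I$ with its description as $\ker\big(A_\Q \twoheadrightarrow A_{I, \Q}\big)$, which is precisely what diagram~(\ref{HIdefdiagram}) is set up to do.
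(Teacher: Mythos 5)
Your proof is correct and is exactly the argument the paper intends: the paper gives no written proof (it says the lemma is "immediately read off" diagram~(\ref{HIdefdiagram})), and your write-up simply makes explicit the identification $H_I = \ker\big(A_\Q \twoheadrightarrow A_{I,\Q}\big) = \operatorname{span}\{D_i\}_{i \in \on \setminus I}$ from which all four parts follow. The dimension count and the formal deductions of (iii) and (iv) are all fine.
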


Note that in (\ref{dimlemmaiii}), the reverse inclusion in
general is not true. It follows that the hyperplanes among the $H_I$ are
determined by the formula:
\begin{equation*}
\vert I \vert = \rk L_I + 1.
\end{equation*}
By Lemma \ref{dimlemma} (\ref{dimlemmaii}), we can always consider the
minimal linearly dependent sets $I$, i.e. circuits, fulfilling this condition.
It turns out that the hyperplane $H_\circuit$ suffice to completely
describe the discriminants of $L$:

\begin{lemma}
\label{intersectionproperty}
Let $I \subset \on$, then
\begin{equation*}
H_I = \bigcap_{\circuit \subset I \text{ circuit}} H_\circuit,
\end{equation*}
where, by convention, the right hand side equals $A_\Q$, if the $l_i$ with $i \in I$ are linearly independent.
\end{lemma}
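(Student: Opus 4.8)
The plan is to prove the two inclusions separately, using Lemma~\ref{dimlemma} as the main tool. For the inclusion $H_I \subseteq \bigcap_{\circuit \subset I} H_\circuit$, I would invoke Lemma~\ref{dimlemma}~(\ref{dimlemmaii}): every circuit $\circuit$ contained in $I$ is a subset of $I$, so $H_I \subseteq H_\circuit$, and intersecting over all such $\circuit$ gives the claim. (If there are no circuits inside $I$, i.e.\ the $l_i$, $i\in I$, are linearly independent, the right-hand side is $A_\Q$ by convention and the inclusion is trivial.) This direction is essentially immediate.

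The substantive direction is the reverse inclusion $\bigcap_{\circuit \subset I} H_\circuit \subseteq H_I$. Here I would argue via the characterization in Lemma~\ref{dimlemma}~(\ref{dimlemmaiv}): $H_J$ is the subspace of $A_\Q$ spanned by $\{D_i \mid i \in \on \setminus J\}$. Dually, using the bottom row of diagram~(\ref{HIdefdiagram}), a linear functional on $A_\Q$ vanishes on $H_I$ precisely when it comes from a linear relation $\sum_{i\in I}\lambda_i D_i = 0$ among the Gale duals indexed by $I$ — equivalently (by the defining property of the Gale transform, which inverts linear dependencies) a functional $m \in M_\Q$ with $l_i(m)=\lambda_i$ supported on $I$, i.e.\ $l_i(m)=0$ for $i\notin I$. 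Concretely, $H_I^\perp \subset A_\Q^\vee$ is spanned by the Gale-dual relations supported on $I$. So it suffices to show: every linear dependence $\sum_{i\in I}\lambda_i D_i = 0$ is a $\Q$-linear combination of dependences $\sum_{i\in\circuit}\alpha_i D_i = 0$ coming from circuits $\circuit \subseteq \operatorname{supp}(\lambda) \subseteq I$. This is the standard fact that the circuits of a matroid (here, the linear matroid on the $D_i$) span the space of all linear dependences, and moreover that a dependence with support $S$ is a combination of circuits contained in $S$ — proved by induction on $|\operatorname{supp}(\lambda)|$: pick any circuit $\circuit$ inside $\operatorname{supp}(\lambda)$, subtract a scalar multiple of its relation to kill one coordinate, and recurse on the strictly smaller support.

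Translating back: if $c \in \bigcap_{\circuit\subset I} H_\circuit$, then every circuit relation supported in $I$ annihilates (the class of) $c$; since these relations span $H_I^\perp$, we get $c \in (H_I^\perp)^\perp = H_I$. Combining the two inclusions gives $H_I = \bigcap_{\circuit\subset I}H_\circuit$.

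The main obstacle is bookkeeping the duality correctly: one must be careful that $H_I$ — defined in~(\ref{HIdefdiagram}) as the \emph{kernel} of $A_\Q \twoheadrightarrow A_{I,\Q}$, equivalently the span of the $D_i$ with $i\notin I$ — has annihilator exactly the span of circuit relations with support inside $I$, rather than some larger or smaller space. The cleanest way around this is to phrase everything in terms of the exact diagram~(\ref{HIdefdiagram}) and the elementary matroid fact (circuits span the dependency space, support-respectingly), rather than manipulating the hyperplanes $H_i^{\underline c}$ directly; once that lemma is in hand the argument is short. One should also double-check the degenerate cases — $I$ independent (RHS $=A_\Q=H_I$ since $\ker L_I = 0$ forces $\dim H_I = n - |I| = \dim A_\Q$... wait, this needs $I$ independent to give $H_I = A_\Q$, which follows from (\ref{dimlemmaiv}): the complementary $D_i$ still span) and $I = \on$ — to confirm the stated convention makes the identity hold on the nose.
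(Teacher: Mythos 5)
The paper states Lemma~\ref{intersectionproperty} without proof, so your argument has to stand on its own. Its skeleton is the natural one and surely the intended one: the inclusion $H_I \subseteq \bigcap_{\circuit \subseteq I} H_\circuit$ is immediate from the monotonicity statement of Lemma~\ref{dimlemma}, and the reverse inclusion should come from dualizing and from the support-respecting spanning fact for circuit relations, for which your subtract-and-recurse induction is correct.

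The gap is precisely in the duality bookkeeping that you yourself flag as the crux: you carry it out in the wrong matroid. Dualizing $0 \to M_\Q \to \Q^n \to A_\Q \to 0$ gives $A_\Q\check{\ } = \{\lambda \in \Q^n \mid \sum_{i} \lambda_i l_i = 0\}$ with $\lambda(D_i) = \lambda_i$, so $H_I^\perp = \big(\operatorname{span}\{D_i \mid i \notin I\}\big)^\perp$ is the space of linear dependences \emph{among the $l_i$} supported on $I$, and $H_\circuit^\perp$ is the line spanned by the circuit relation $\sum_{i \in \circuit}\alpha_i l_i = 0$. You instead identify $H_I^\perp$ with the relations $\sum_{i \in I}\lambda_i D_i = 0$ among the Gale duals, i.e.\ with $\operatorname{im}(L) \cap \Q^I$, parametrized by those $m \in M_\Q$ with $l_i(m) = 0$ for $i \notin I$ --- a different space altogether --- and you then invoke the circuits of the matroid on the $D_i$, which are the \emph{cocircuits}, not the circuits, of $L$. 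Concretely, for a circuit $\circuit$ of the lemma the relation $\sum_{i \in \circuit}\alpha_i D_i = 0$ that you propose to take combinations of is false in general: for $\mathbb{P}^2$ one has $l_1 + l_2 + l_3 = 0$ but $D_1 = D_2 = D_3 \neq 0$ in $A_\Q \cong \Q$, so $\sum_i \alpha_i D_i = 3D_1 \neq 0$. The repair is mechanical: run your induction in the matroid of the $l_i$, concluding that every $\lambda \in H_I^\perp$ is a $\Q$-combination of circuit relations of circuits $\circuit \subseteq \operatorname{supp}(\lambda) \subseteq I$, hence $H_I^\perp = \sum_{\circuit \subseteq I} H_\circuit^\perp = \big(\bigcap_{\circuit \subseteq I} H_\circuit\big)^\perp$; your final ``translating back'' paragraph is then correct as written. (A minor further slip: $\ker L_I = 0$ gives $\dim H_I = n - \vert I\vert$, which equals $\dim A_\Q = n - d$ only when $\vert I \vert = d$; for general independent $I$ one has $\dim \ker L_I = d - \vert I \vert$, and the dimension formula of Lemma~\ref{dimlemma} again yields $\dim H_I = n - d$, so the convention is consistent --- or argue, as you do, that the complementary $D_i$ still span because $I$ extends to a basis among the $l_i$.)
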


Hence, the stratification of $A_\Q$ which we were looking for is completely determined
by the hyperplanes $H_\circuit$.

\begin{definition}
We denote the set $\{H_\circuit \mid \circuit \subset \on \text{ a circuit}\}$
the {\em discriminantal arrangement} of $L$.
\end{definition}

\begin{remark}
The discriminantal arrangement carries a natural matroid structure. This structure
can be considered as another combinatorial invariant of $L$ (or the toric variety $X$,
respectively), its {\em circuit geometry}.
Discriminantal arrangements seem to have been appeared first in \cite{Crapo84},
where the notion of 'circuit geometry' was coined. The notion of discriminantal
arrangements stems from \cite{ManinSchechtman89}. Otherwise, this subject seems
to have been studied explicitly only in very few places, see for instance \cite{Falk94},
\cite{BayerBrandt}, \cite{Athanasiadis99}, \cite{Reiner99}, \cite{Cohen00},
\cite{CohenVarchenko},
though it is at least implicit in the whole body of literature on secondary fans.
Above references are mostly concerned with genericity properties of discriminantal
arrangements. Unfortunately, in toric geometry, the most interesting cases (such as
smooth projective toric varieties, for example) virtually never give rise to
discriminantal
arrangements in general position. Instead, we will focus on certain properties of
nongeneric circuit geometries, though we will not undertake a thorough combinatorial
study of these.
\end{remark}

Virtually all problems related to cohomology vanishing on a toric variety
$X$ must depend on the associated discriminantal arrangement and therefore on the
circuits of $L$. In subsection \ref{secondaryfans} we will see that the discriminantal
arrangement is tightly tied to the geometry of $X$.

As we have seen in section \ref{onecircuitvarieties}, to every circuit $\circuit \subset \on$ we
can associate two oriented circuits. These correspond to the signature of the bounded chamber
of the subarrangement in $M_\Q$ given by the $H_i^\uc$ with $i \in \circuit$ (or better to
the bounded chamber in $M_\Q / \ker L_I$, as we do no longer require that the $l_i$ with $i
\in \circuit$ span $M_\Q$). Lifting this to $A_\Q$, this corresponds to the half spaces
in $A_\Q$ which are bounded by $H_\circuit$.

\begin{definition}
Let $\circuit \subset \on$ be a circuit, then we denote $H_\ocircuit$ the half space in $A_\Q$
bounded by $H_\circuit$ corresponding to the orientation $\ocircuit$.
\end{definition}

We obtain immediately:

\begin{lemma}
Let $\circuit$ be a circuit of $L$ and $\ocircuit$ an orientation of $\circuit$. Then the
hyperplane $H_{\circuit}$ is {\em separating}, i.e. for every $i \in \on$ one of the
following holds:
\begin{enumerate}[(i)]
\item $i \in \on \setminus \circuit$ iff $D_i \in H_\circuit$;
\item if $i \in \ocircuit^+$, then $D_i \in H_\ocircuit \setminus H_\circuit$;
\item if $i \in \ocircuit^-$, then $D_i \in H_{-\ocircuit} \setminus H_\circuit$.
\end{enumerate}
\end{lemma}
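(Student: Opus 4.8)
The plan is to exhibit a single linear functional on $A_\Q$ whose kernel is $H_\circuit$ and whose values on the $D_i$ encode all three assertions simultaneously. Write the circuit relation of $\circuit$ as $\sum_{i \in \circuit} \alpha_i l_i = 0$ with all $\alpha_i \neq 0$, normalized so that (as prescribed by the orientation $\ocircuit$) $\alpha_i > 0$ exactly for $i \in \ocircuit^+$ and $\alpha_i < 0$ exactly for $i \in \ocircuit^-$, and put $\alpha_j := 0$ for $j \in \on \setminus \circuit$. Since $\sum_{j \in \on} \alpha_j l_j = 0$, the row vector $\alpha = (\alpha_1,\dots,\alpha_n)$ annihilates the subspace $L(M_\Q) \subset \Q^n$; hence it factors through the Gale map $\Q^n \twoheadrightarrow A_\Q$ and defines a linear functional $\phi \colon A_\Q \to \Q$ with $\phi(D_i) = \alpha_i$ for every $i \in \on$, because $D_i$ is the image of the standard basis vector $e_i$. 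As $\alpha \neq 0$ the functional $\phi$ is nonzero, so $\ker \phi$ is a hyperplane in $A_\Q$; moreover $\ker \phi \supseteq H_\circuit$, because $H_\circuit$ is spanned by $\{D_i : i \in \on \setminus \circuit\}$ (Lemma~\ref{dimlemma}(\ref{dimlemmaiv})) and $\phi(D_i) = \alpha_i = 0$ for those $i$. Since $H_\circuit$ is itself a hyperplane, $\ker \phi = H_\circuit$.

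Assertion (i) is now immediate and covers both implications at once: $D_i \in H_\circuit = \ker\phi$ if and only if $\alpha_i = 0$, i.e. if and only if $i \in \on \setminus \circuit$.

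For (ii) and (iii) it remains to match the two closed half spaces $\{\phi \geq 0\}$ and $\{\phi \leq 0\}$ bounded by $\ker\phi = H_\circuit$ with $H_\ocircuit$ and $H_{-\ocircuit}$. The sign of $\phi$ has already been fixed by $\ocircuit$ through the normalization of the $\alpha_i$, and this is precisely the normalization underlying the signature convention of Section~\ref{onecircuitvarieties}: for a $T$-invariant divisor $D = \sum_{i \in \circuit} c_i D_i$ supported on $\circuit$, multiplying each of the defining (non-strict) inequalities of the bounded chamber of $\{H_i^{\underline c}\}_{i \in \circuit}$ in $M_\Q$ by $\alpha_i$ (which reverses the inequality exactly when $\alpha_i < 0$, i.e. for $i \in \ocircuit^-$) and summing over $i \in \circuit$ gives $0 = \big(\sum_{i} \alpha_i l_i\big)(m)$ on one side and $-\sum_i \alpha_i c_i = -\phi([D])$ on the other, so that the sign of $\phi([D])$ records which of the two orientations the bounded chamber realizes. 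Unwinding this identifies $H_\ocircuit = \{x \in A_\Q : \phi(x) \geq 0\}$. Consequently $i \in \ocircuit^+$ gives $\phi(D_i) = \alpha_i > 0$, hence $D_i \in H_\ocircuit \setminus H_\circuit$, which is (ii); and $i \in \ocircuit^-$ gives $\phi(D_i) = \alpha_i < 0$, hence $D_i \in H_{-\ocircuit} \setminus H_\circuit$, which is (iii).

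The routine part is the identity $\phi(D_i) = \alpha_i$, from which (i), (ii) and (iii) all drop out; the one place requiring care is the sign bookkeeping in the last paragraph, namely verifying that the half space the paper labels $H_\ocircuit$ is $\{\phi \geq 0\}$ rather than $\{\phi \leq 0\}$ — a single sign in the "multiply each inequality by $\alpha_i$ and sum" computation. A minor caveat: the argument uses that $H_\circuit$ is an honest hyperplane in $A_\Q$, which holds under the running assumption that $L$ is injective (in particular for complete $X$), or after splitting off the $(k^\ast)^r$-factor in the case where $\circuit$ does not span $N_\Q$.
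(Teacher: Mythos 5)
Your proof is correct, and it supplies exactly the argument the paper leaves implicit: the lemma is stated there with no proof at all (``We obtain immediately:''), the intended justification being precisely your observation that the circuit relation $\sum_{i\in\circuit}\alpha_i l_i=0$ descends through the Gale sequence to a linear functional $\phi$ on $A_\Q$ with $\phi(D_i)=\alpha_i$ and kernel $H_\circuit$. Your explicit sign check identifying $H_\ocircuit=\{\phi\geq 0\}$ is a worthwhile addition, since the paper's definition of $H_\ocircuit$ via signatures of bounded chambers is informal (and its sign conventions are not entirely consistent between Section \ref{onecircuitvarieties} and Proposition \ref{orthantboundary}), so pinning the convention to the one forced by the lemma's statement, as you do, is the right resolution.
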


Now we are going to borrow some terminology from combinatorics. Consider any subvector
space $U$ of $A_\Q$ which is the intersection of some of the $H_\circuit$. Then the set
$\mathcal{F}_U$ of all $\circuit \in \circuit(L)$ such that $H_\circuit$ contains $U$ is called a
{\em flat}. The subvector space is uniquely determined by the flat and vice versa. We can
do the same for the actual strata rather than for subvector spaces. For this, we just need
to consider instead the oriented circuits and their associated half spaces in $A_\Q$: any
stratum $S$ of the discriminantal arrangement uniquely determines a finite set $\mathfrak{F}_S$
of oriented circuits $\ocircuit$ such that $S \subset H_\ocircuit$. From the set $\mathfrak{F}_S$
we can reconstruct the closure of $S$:
\begin{equation*}
\overline{S} = \bigcap_{\ocircuit \in \mathfrak{F}_\mathcal{S}} H_\ocircuit,
\end{equation*}
We give a formal definition:

\begin{definition}
For any subvector space $U \subset A_\Q$ which is a union of strata of the discriminantal
arrangement, we denote $\mathcal{F}_U := \{\circuit \in \circuit(L) \mid U \subset H_\circuit\}$
the associated {\em flat}.
For any single stratum $S \subset A_\Q$ of the discriminantal arrangement, we denote
$\mathfrak{F}_S := \{\ocircuit \in \ocircuit(L) \mid U \subset H_\ocircuit\}$ the
associated  {\em oriented flat}.
\end{definition}

The notion of flats gives us some flexibility in handling strata. Note that flats
reverse inclusions, i.e. $S \subset T$ iff $\mathfrak{F}_T \subset \mathfrak{F}_S$.
Moreover, if a stratum $S$
is contained in some $H_\circuit$, then its oriented flat contains both $H_\ocircuit$
and $H_{-\ocircuit}$, and vice versa. So from the oriented flat we can reconstruct
$\mathcal{F}_\circuit$ and thus the subvector space of $A_\Q$ generated by $S$.

\begin{definition}
Let $\mathcal{S} := \{S_1, \dots, S_k\}$ be a collection of strata of the discriminantal
arrangement. We call
\begin{equation*}
\mathfrak{F}_\mathcal{S} := \bigcap_{i = 1}^k \mathfrak{F}_{S_i} 
\end{equation*}
the {\em discriminantal hull} of $\mathcal{S}$.
\end{definition}

The discriminantal hull defines a closed cone in $A_\Q$ which is given by the intersection
$\bigcap_{\ocircuit \in \mathfrak{F}_\mathcal{S}} H_\ocircuit$. This cone contains the
union of the closures $\overline{S}_i$, but is bigger in general.

\begin{lemma}\label{stratalemma}
\begin{enumerate}[(i)]
\item\label{stratalemmai} Let $\mathcal{S} = \{S_1, \dots, S_k\}$ be a collection of
discriminantal strata whose union is a closed cone in $A_\Q$. then
$\mathfrak{F}_\mathcal{S} = \bigcap_{i = 1}^k \mathfrak{F}_{S_i}$.
\item\label{stratalemmaii} Let $\mathcal{S} = \{S_1, \dots, S_k\}$ be a collection of
discriminantal strata and $U$ the subvector space of $A_\Q$ generated by the
$S_i$. Then the forgetful map $\mathfrak{F}_\mathcal{S} \rightarrow \mathcal{F}_U$
is surjective iff $\mathfrak{F}_\mathcal{S} = \mathfrak{F}_{S_i}$ for some $i$.
\end{enumerate}
\end{lemma}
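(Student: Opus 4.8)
The proof will be a direct unwinding of the definitions of oriented flat and discriminantal hull, using the separating property of the hyperplanes $H_\circuit$ and the correspondence between strata and their defining half-space data. For part~(\ref{stratalemmai}), the point is purely formal: $\mathfrak{F}_\mathcal{S}$ was \emph{defined} as $\bigcap_{i=1}^k \mathfrak{F}_{S_i}$, so the content of the statement is that when $\bigcup_i S_i$ happens to be a closed cone $C$, this set of oriented circuits is exactly the one that ``cuts out'' $C$, i.e. $C = \bigcap_{\ocircuit \in \mathfrak{F}_\mathcal{S}} H_\ocircuit$. First I would observe that $C \subseteq \bigcap_{\ocircuit \in \mathfrak{F}_\mathcal{S}} H_\ocircuit$ is immediate, since each $S_i$ lies in every $H_\ocircuit$ with $\ocircuit \in \mathfrak{F}_{S_i} \supseteq \mathfrak{F}_\mathcal{S}$, and $H_\ocircuit$ is closed. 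For the reverse inclusion, suppose $x \notin C$; since $C$ is a closed cone which is a union of discriminantal strata, $x$ lies in some stratum $T$ not among the $S_i$, and because $C$ is convex there is at least one hyperplane $H_\circuit$ of the arrangement separating (the interior of) $C$ from $T$ — here one uses that the arrangement's strata refine $C$ and that a convex union of strata is an intersection of the closed half-spaces $H_\ocircuit$ containing it, exactly as remarked before Definition~\ref{fcdef}'s analogue in this section. That separating $\ocircuit$ lies in every $\mathfrak{F}_{S_i}$ but puts $x$ outside $H_\ocircuit$, so $x \notin \bigcap_{\ocircuit \in \mathfrak{F}_\mathcal{S}} H_\ocircuit$.

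For part~(\ref{stratalemmaii}), recall the forgetful map sends an oriented circuit $\ocircuit$ to its underlying circuit $\circuit$; on $\mathfrak{F}_\mathcal{S}$ this lands in $\mathcal{F}_U$ because if $S_i \subset H_\ocircuit$ for all $i$ then $U \subset H_\circuit$. I would prove the equivalence in two directions. If $\mathfrak{F}_\mathcal{S} = \mathfrak{F}_{S_i}$ for some $i$, then surjectivity onto $\mathcal{F}_U$ follows from the observation made right after the definition of oriented flat: for a single stratum $S_i$, whenever $U' := \langle S_i\rangle \subset H_\circuit$ one has \emph{both} orientations $\ocircuit, -\ocircuit$ in $\mathfrak{F}_{S_i}$ — wait, more carefully: $S_i \subset H_\circuit$ forces $S_i$ into one of the two closed half-spaces (in fact into $H_\circuit$ itself if $S_i$ is contained in the hyperplane), so at least one orientation is in $\mathfrak{F}_{S_i}$, and since $U = \langle S_i \rangle \subseteq \langle S_j\rangle$ isn't quite automatic I should instead argue: $U$ is generated by the $S_i$, and if $\mathfrak{F}_\mathcal{S} = \mathfrak{F}_{S_{i_0}}$ then $S_{i_0} \supseteq$ spans $U$ up to the flat data, so for each $\circuit \in \mathcal{F}_U$ the stratum $S_{i_0}$ lies in $H_\circuit$, hence some orientation $\ocircuit$ of $\circuit$ lies in $\mathfrak{F}_{S_{i_0}} = \mathfrak{F}_\mathcal{S}$ and maps to $\circuit$. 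Conversely, if the forgetful map is surjective, I would show the collection $\mathcal{S}$ ``behaves like a single stratum'': surjectivity means that for every $\circuit$ with $U \subset H_\circuit$, at least one orientation $\ocircuit$ lies in all the $\mathfrak{F}_{S_i}$ simultaneously, which pins down a common half-space choice for every relevant circuit; this common choice is realized by any $S_i$ whose span is all of $U$, and for such $i$ one gets $\mathfrak{F}_{S_i} \subseteq \mathfrak{F}_\mathcal{S}$, hence equality.

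\textbf{Main obstacle.} The delicate point is part~(\ref{stratalemmaii}): the claim hinges on the fact that an \emph{oriented} flat records strictly more than the (unoriented) flat only through the half-space choices, and that surjectivity of the forgetful map is precisely the condition forcing all members of $\mathcal{S}$ to sit on a consistent side of every hyperplane through $U$. I expect the technical friction to be in handling circuits $\circuit$ with $U \subset H_\circuit$ exactly (not just $U$ in one of the open half-spaces): for such $\circuit$, \emph{both} $\ocircuit$ and $-\ocircuit$ belong to $\mathfrak{F}_{S_i}$ for every $S_i \subset H_\circuit$, so these contribute nothing to distinguishing the $S_i$ and must be carefully quarantined from the argument; the real content concerns those $\circuit$ with $S_i$ in the interior of a half-space, where surjectivity is a genuine constraint. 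Once this bookkeeping is set up, the proof is a short formal deduction; I would keep the exposition at the level of ``flats reverse inclusions and a convex union of strata is an intersection of half-spaces'', invoking the already-established separating property of the $H_\circuit$ rather than redoing any linear algebra.
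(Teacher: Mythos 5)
Part (\ref{stratalemmai}) of your argument is fine and is essentially the paper's: as literally written the identity is the definition of the discriminantal hull, and the real content is that the closed cone $C=\bigcup_i S_i$ is recovered as $\bigcap_{\ocircuit\in\mathfrak{F}_\mathcal{S}}H_\ocircuit$; your separation argument spells out what the paper dispatches in one sentence (``a closed cone which is a union of strata must be an intersection of some $H_\ocircuit$'').

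Part (\ref{stratalemmaii}) is where there is a genuine gap. First, your own computation undercuts your reading of ``surjective'': since every $S_j\subset U$, any $\circuit\in\mathcal{F}_U$ satisfies $S_j\subset U\subset H_\circuit\subset H_{\ocircuit}\cap H_{-\ocircuit}$, so \emph{both} orientations of every $\circuit\in\mathcal{F}_U$ lie in every $\mathfrak{F}_{S_j}$ and hence in $\mathfrak{F}_\mathcal{S}$. Under your interpretation surjectivity is therefore unconditional, and the ``only if'' direction of the equivalence could not hold (take two maximal strata on opposite sides of a wall). This should have signalled that the circuits carrying the actual information are exactly the ones you propose to ``quarantine'': those whose hyperplane does \emph{not} contain $U$. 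Second, your argument for ``surjective $\Rightarrow$ $\mathfrak{F}_\mathcal{S}=\mathfrak{F}_{S_i}$'' rests on ``any $S_i$ whose span is all of $U$'' --- such an $S_i$ need not exist (e.g.\ two rays spanning a plane) --- and even when it exists the inclusion $\mathfrak{F}_{S_i}\subseteq\mathfrak{F}_\mathcal{S}$ does not follow: $\mathfrak{F}_{S_i}$ contains orientations $\ocircuit$ with $U\not\subset H_\circuit$ and $S_i\subset H_\ocircuit$, and nothing forces the other $S_j$ into $H_\ocircuit$. The mechanism the paper uses, and which is missing from your proposal, is geometric: set $C:=\bigcap_{\ocircuit\in\mathfrak{F}_\mathcal{S}}H_\ocircuit$, which contains the convex hull of the $\overline{S}_i$. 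A circuit escapes the image of the forgetful map precisely when $H_\circuit$ meets the relative interior of $C$ without containing it; on the other hand $\mathfrak{F}_\mathcal{S}=\mathfrak{F}_{S_i}$ forces $C=\overline{S}_i$, whose relative interior is a single stratum and hence meets no hyperplane of the arrangement. Rebuilding the equivalence around the relative interior of $C$ is the step you need to supply.
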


\begin{proof}
For (\ref{stratalemmai}) just note that because $\bigcup_{i = 1}^k S_k$ is a closed
cone, it must be an intersection of some $H_\ocircuit$.
For (\ref{stratalemmaii}): the set $\bigcap_{\ocircuit \in \mathfrak{F}_\mathcal{S}}
H_\ocircuit$ is a cone which contains the convex hull of all the $\overline{S}_i$.
If some $\circuit$ is not in the image of the forgetful map, then the hyperplane
$H_\circuit$ must intersect the relative interior of this cone. So the assertion follows.
\end{proof}

\subsection{Secondary Fans}
\label{secondaryfans}

For any $\uc \in \Q^n$ the arrangement $H_i^\uc$ induces a chamber decomposition of $M_\Q$, where
the closures of the chambers are given by
\begin{equation*}
P_\uc^I := \{m \in M_\Q \mid l_i(m) \leq -c_i \text{ for } i \in I \text{ and } l_i(m) \geq -c_i \text{ for } i \notin I\}
\end{equation*}
for every $I \subset \on$ which belongs to the combinatorial type of $\uc$. In particular, $\uc$ represents
an element $D \in A_\Q$  with
\begin{equation*}
D \in \bigcap_{I \in \mathfrak{s}^\uc(M_\Q)} C_I,
\end{equation*}
where $C_I$ is the cone in $A_\Q$ which is generated by the $-D_i$ for $i \in I$ and the $D_i$ with $i \notin I$
for some $I \subset \on$.
For an invariant divisor $D = \sum_{i \in \on} c_i D_i$ we will also write
$P_D^I$ instead of $P_\uc^I$. If $I = \emptyset$, we will occasionally omit the index $I$.

The faces of the $C_I$ can be read off directly from the signature:

\begin{proposition}
\label{orthantboundary}
Let $I \subset \on$, then $C_I$ is an nonredundant intersection of the $H_\ocircuit$
with $\ocircuit^- \subset I$ and $\ocircuit^+ \cap I = \emptyset$.
\end{proposition}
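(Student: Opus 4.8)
The plan is to describe $C_I$ as an intersection of half-spaces $H_\ocircuit$ and then check that exactly those with $\ocircuit^- \subseteq I$ and $\ocircuit^+ \cap I = \emptyset$ occur irredundantly. First I would recall that $C_I \subset A_\Q$ is by definition the cone generated by $\{-D_i \mid i \in I\} \cup \{D_j \mid j \notin I\}$, and that this is a (possibly non-full-dimensional, possibly non-pointed) polyhedral cone. A supporting hyperplane of $C_I$ is of the form $H_\circuit$ for some circuit $\circuit$: indeed, the facet-defining hyperplanes of $C_I$ are spanned by subsets of the generating rays, hence (using Lemma \ref{dimlemma}(\ref{dimlemmaiv}), which identifies $H_\circuit$ as the span of $\{D_i \mid i \notin \circuit\}$) each facet hyperplane equals $H_\circuit$ for the complementary circuit $\circuit$, whenever the generators lying on that facet, together with the remaining generators, realize the circuit structure. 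The separating property established in the Lemma just before Definition of $H_\ocircuit$ is the main tool: for a circuit $\circuit$ with orientation $\ocircuit$, the generator $-D_i$ (for $i \in I$) or $D_j$ (for $j \notin I$) lies in the closed half-space $H_\ocircuit$, or on $H_\circuit$, or strictly outside, according to the membership of $i$ (resp. $j$) in $\ocircuit^+$, $\ocircuit^-$, or $\circuit^c$.

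The key computation is the following. Fix a circuit $\circuit$ and an orientation $\ocircuit$. A generator $-D_i$ of $C_I$ (with $i \in I$) lies in $H_\ocircuit$ iff either $i \notin \circuit$ (then $-D_i \in H_\circuit \subset \overline{H_\ocircuit}$) or $i \in \ocircuit^-$ (since flipping sign sends $D_i \in H_{-\ocircuit}$ to $-D_i \in H_\ocircuit$); it lies strictly on the wrong side iff $i \in \ocircuit^+$. Symmetrically, a generator $D_j$ with $j \notin I$ lies in $H_\ocircuit$ iff $j \notin \circuit$ or $j \in \ocircuit^+$, and on the wrong side iff $j \in \ocircuit^-$. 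Therefore all generators of $C_I$ lie in $\overline{H_\ocircuit}$ precisely when no $i \in I$ lies in $\ocircuit^+$ and no $j \notin I$ lies in $\ocircuit^-$, i.e. $\ocircuit^+ \cap I = \emptyset$ and $\ocircuit^- \subseteq I$. This shows $C_I \subseteq \bigcap H_\ocircuit$ over exactly that set of oriented circuits, and conversely that no other $H_\ocircuit$ contains $C_I$ (some generator violates it), so the intersection cannot be enlarged by adding more half-spaces while still containing $C_I$; combined with the fact that facet hyperplanes of $C_I$ are discriminantal hyperplanes $H_\circuit$, we get equality $C_I = \bigcap_{\ocircuit^- \subseteq I,\ \ocircuit^+ \cap I = \emptyset} H_\ocircuit$.

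Finally I would verify nonredundancy: each $H_\ocircuit$ in this list actually meets $C_I$ in a facet, i.e. contributes a face of codimension one. For this, note that $H_\ocircuit$ passes through all generators $-D_i$ with $i \in I \setminus \circuit$ and $D_j$ with $j \notin I \cup \circuit$, together with $-D_i$ for $i \in \ocircuit^- \subseteq I$ and $D_j$ for $j \in \ocircuit^+$ with $j \notin I$ (which is all of $\ocircuit^+$ since $\ocircuit^+\cap I = \emptyset$); these generators span $H_\circuit$, which has codimension one in $A_\Q$ by the hyperplane criterion $\vert \circuit \vert = \rk L_\circuit + 1$, so the corresponding face of $C_I$ is a genuine facet and the inequality $H_\ocircuit$ is not implied by the others. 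The main obstacle I anticipate is the bookkeeping in the degenerate situations — when $C_I$ is not full-dimensional or not pointed, or when several circuits yield the same hyperplane $H_\circuit$ with both orientations appearing; in those cases "nonredundant" must be read at the level of the hyperplane arrangement restricted to the span of $C_I$, and one should invoke Lemma \ref{intersectionproperty} to reduce the intersection of all $H_I$-type conditions to circuits, and Lemma \ref{stratalemma} to control which oriented circuits survive. Granting these lemmas the argument is otherwise a direct sign-chase.
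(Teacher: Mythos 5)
Your proof is correct and follows essentially the same route as the paper's: the containment $C_I \subseteq \bigcap H_\ocircuit$ is a sign-chase with the separating lemma, and nonredundancy is verified exactly as in the paper by observing that the generators $-D_i$ ($i \in I \setminus \circuit$) and $D_j$ ($j \notin I \cup \circuit$) span the codimension-one subspace $H_\circuit$ and hence cut out a genuine facet of $C_I$ (note only these boundary generators should be said to span $H_\circuit$ — the generators indexed by $\ocircuit^\pm$ lie strictly inside the half-space and must be left out of that spanning set). Your sign analysis also confirms that the conditions as stated in the proposition ($\ocircuit^- \subseteq I$, $\ocircuit^+ \cap I = \emptyset$) are the right ones; the paper's own proof writes them with the orientation inverted, which is a typo.
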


\begin{proof}
First of all, it is clear that $C_I$ coincides with the intersection of
half spaces
\begin{equation*}
C_I = \bigcap_{\substack{\ocircuit^+ \subset I\\ \ocircuit^- \cap I = \emptyset}}
H_\ocircuit.
\end{equation*}
For any $H_\ocircuit$ in the intersection let $H_\circuit$ its boundary. Then
$H_\circuit$ contains a cone of codimension $1$ in $A_\Q$ which is spanned by
$D_i$ with $i \in \on \setminus (\circuit \cup I)$ and by $-D_i$ with $i \in
I \setminus \circuit$ which thus forms a proper facet of $C_I$.
\end{proof}

Recall that the secondary fan of $L$ is a fan in $A_\Q$ whose maximal cones are in one-to-one correspondence
with the regular simplicial fans which are supported on the $l_i$. That is, if $\uc$ is chosen sufficiently
general, then the polyhedron $P_\uc^\emptyset$ is simplicial and its inner normal fan is a simplicial fan
which is supported on the $l_i$.
Wall crossing in the secondary fan then corresponds to a
transition $\Delta_\ocircuit \longrightarrow \Delta_{-\ocircuit}$ as in section \ref{onecircuitvarieties}.
Clearly, the secondary fan is a substructure of the discriminantal arrangement in the sense that its
cones are unions of strata of the discriminantal arrangements. However, the secondary fan in general
is much coarser than the discriminantal arrangement, as it only
keeps track of the particular chamber $P^\emptyset_\uc$. In particular, the secondary
fan is only supported on $C_\emptyset$ which in general does not coincide with $A_\Q$.
Of course, there is no reason to consider only one particular type of chamber --- we can consider
secondary fans for every $I \subset \on$ and every type of chamber $P^I_\uc$.
For this, observe first that, if
$\mathcal{B} $ is a subset of $\on$  such that the $l_i$ with $i \in \mathcal{B}$ form a basis of
$M_\Q$, then the complementary Gale duals $\{D_i\}_{i \notin \mathcal{B}}$ form a basis of $A_\Q$.
Then we set:

\begin{definition}
Let $I \subset \on $ and $\mathcal{B} \subset \on$ such that the $l_i$ with $i \in \mathcal{B}$
form a basis of $M_\Q$, then we denote $K^I_\mathcal{B}$ the cone in $A_\Q$ which is generated
by $-D_i$ for $i \in I \setminus \mathcal{B}$ and by $D_i$ for $i \in \on \setminus (I \cup \mathcal{B})$.
The secondary fan $\secfan(L, I)$ of $L$ with respect to $I$ is the fan whose
cones are the intersections of the $K^I_\mathcal{B}$, where $\mathcal{B}$ runs over all
bases of $L$.
\end{definition}

Note that $\secfan(L, \emptyset)$ is just the secondary fan as usually defined. Clearly, the chamber
structure of the discriminantal arrangement still refines the chamber structure induced by all secondary
arrangements. But now we have sufficient data to even get equality:

\begin{proposition}\label{chamberprop}
The following induce identical chamber decompositions of $A_\Q$:
\begin{enumerate}[(i)]
\item the discriminantal arrangement,
\item the intersection of all secondary fans $\secfan(L, I)$,
\item the intersection of the $C_I$ for all $I \subset \on$.
\end{enumerate}
\end{proposition}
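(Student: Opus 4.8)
The plan is to prove the chain of equalities by showing that each of the three chamber decompositions refines the next, going around the cycle (i) $\Rightarrow$ (ii) $\Rightarrow$ (iii) $\Rightarrow$ (i). Throughout, "refines" means: every chamber of the finer decomposition is contained in a chamber of the coarser one, equivalently every wall of the coarser arrangement is a union of walls of the finer one. Since all three are polyhedral fan-like decompositions of (subsets of) $A_\Q$ determined by finite hyperplane data, it suffices to compare the sets of hyperplanes that cut them out.

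First I would treat (iii) $\Rightarrow$ (i), which is essentially Lemma \ref{intersectionproperty} together with Proposition \ref{orthantboundary}. The chamber decomposition induced by "the intersection of the $C_I$ for all $I \subset \on$" is cut out by the collection of hyperplanes spanned by the facets of the various $C_I$. By Proposition \ref{orthantboundary}, each facet of each $C_I$ lies on some $H_\circuit$; conversely, for any circuit $\circuit$ one can realize $H_\circuit$ as the affine span of a facet of $C_{\ocircuit^-}$ (take $I = \ocircuit^-$, so that $\ocircuit^+ \cap I = \emptyset$ and $\ocircuit^- \subseteq I$, and the corresponding bounding hyperplane is exactly $H_\circuit$). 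Hence the set of hyperplanes arising as facet-spans of the $C_I$ is precisely the discriminantal arrangement $\{H_\circuit\}$, and the two decompositions coincide; in particular (iii) refines (i) and vice versa.

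Next, (i) $\Rightarrow$ (ii): I claim the discriminantal arrangement refines the common refinement of all $\secfan(L,I)$. Every cone $K^I_\mathcal{B}$ is generated by a subset of the $\pm D_i$, and by Lemma \ref{dimlemma}(\ref{dimlemmaiv}) its facets are spanned by Gale duals $D_i$ with $i$ ranging over complements of circuits — concretely, a facet of $K^I_\mathcal{B}$ is obtained by dropping one generator, and the linear dependence governing which hyperplane this facet spans is exactly a circuit relation, so the facet lies on some $H_\circuit$. Thus each $\secfan(L,I)$ is a coarsening of the discriminantal arrangement, and so is their common refinement. For (ii) $\Rightarrow$ (iii) I would argue that the common refinement of the secondary fans is at least as fine as the common refinement of the $C_I$: this is immediate once one observes that $C_I = \bigcup_{\mathcal{B}} K^I_\mathcal{B}$, where $\mathcal{B}$ runs over all bases — indeed the cones $K^I_\mathcal{B}$ for fixed $I$ triangulate $C_I$ (this is the content of the secondary fan construction relative to $I$, a standard fact from \cite{GKZ}, \cite{BilleraFillimanSturmfels}), so the wall structure induced by $\secfan(L,I)$ on $A_\Q$ contains, as a sub-collection of its walls, the boundary $\partial C_I$ together with $H_\circuit$-type internal walls. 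Taking the intersection over all $I$ then clearly refines the intersection of the plain orthant-cones $C_I$.

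Closing the cycle (i) $\to$ (ii) $\to$ (iii) $\to$ (i) with all arrows being "refines" forces all three to be equal. The main obstacle I anticipate is not any single implication but the bookkeeping in (ii) $\Rightarrow$ (iii): one must be careful that the secondary fan $\secfan(L,I)$ genuinely subdivides $C_I$ and not some larger or smaller cone, and that taking intersections over all $I$ does not accidentally lose the internal $H_\circuit$-walls — this is where the precise definitions of $K^I_\mathcal{B}$ and the fact that the $\{D_i\}_{i \notin \mathcal{B}}$ form a basis whenever $\{l_i\}_{i \in \mathcal{B}}$ does (Gale duality) have to be invoked cleanly. A clean way to sidestep delicacy is to phrase everything in terms of hyperplane sets: show (a) the set of walls of each of (i), (ii), (iii) is a subset of $\{H_\circuit : \circuit \text{ a circuit}\}$, and (b) each $H_\circuit$ actually occurs as a wall in each of the three; then all three decompositions are literally "the arrangement $\{H_\circuit\}$" and the proposition is immediate.
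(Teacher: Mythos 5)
Your overall strategy (a cycle of refinements among the three decompositions) is the same as the paper's, and your steps (i)$\Rightarrow$(ii) and (ii)$\Rightarrow$(iii) are essentially sound. The genuine gap is in (iii)$\Rightarrow$(i), and more fundamentally in your proposed ``clean way to sidestep delicacy'': it is \emph{not} sufficient to check that every $H_\circuit$ occurs as the linear span of some wall. The decomposition induced by a family of closed cones has as its walls only the actual facets of those cones, not the full hyperplanes they span. A single orthant $\Q_{\geq 0}^2$ in the plane induces a two-chamber decomposition (inside and outside), even though its two facets span both coordinate axes, whose arrangement has four chambers. So exhibiting one facet of $C_{\ocircuit^-}$ whose span is $H_\circuit$ does not show that the common refinement of the $C_I$ cuts along all of $H_\circuit$; your criterion (b), ``each $H_\circuit$ actually occurs as a wall in each of the three,'' does not imply the decompositions equal the arrangement $\{H_\circuit\}$.

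What is actually needed --- and what the paper supplies in the corresponding step of its proof --- is a covering argument: for a fixed circuit $\circuit$ with orientation $\ocircuit$, let $I$ range over all sets of the form $\ocircuit^- \cup J$ with $J \subseteq \on\setminus\circuit$. By Proposition \ref{orthantboundary} the facet of $C_I$ lying in $H_\circuit$ is the cone generated by $\{D_i\}_{i\in\on\setminus(\circuit\cup J)}$ together with $\{-D_i\}_{i\in J}$, and since $\{D_i\}_{i\notin\circuit}$ spans $H_\circuit$, the union of these signed cones over all $J$ is all of $H_\circuit$. (The paper runs the analogous sign-varying argument with facets of secondary cones to show that the intersection of the secondary fans refines the discriminantal arrangement.) With this insertion your cycle closes correctly; without it, the concluding claim that all three decompositions ``are literally the arrangement $\{H_\circuit\}$'' is unjustified.
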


\begin{proof}
Clearly, the facets of every orthant $C_I$ span a hyperplane which is part of
the discriminantal arrangement, so the chamber decomposition induced by the secondary
fan is a refinement of the intersection of the $C_I$'s. The $C_I$ induce a refinement
of the secondary fans as follows. Without loss of generality, it suffices to show
that every $K_\mathcal{B}^\emptyset$ is the intersection of some $C_I$. We have
\begin{equation*}
K_\mathcal{B}^\emptyset \subseteq \bigcap_{I \subset \mathcal{B}} C_I.
\end{equation*}
On the other hand,
for every facet of $K_\mathcal{B}^\emptyset$, we choose $I$ such that $C_I$ shares
this face and $K_\mathcal{B}^\emptyset$ is contained in $C_I$. This can always be
achieved by choosing $I$ so that every generator of $C_I$ is in the same half space
as $K_\mathcal{B}^\emptyset$. The intersection of these $C_I$ then is contained in
$K^\emptyset_\mathcal{B}$.

Now it remains to show that the intersection of the secondary fans refines the
discriminantal arrangement. This actually follows from the fact, that for every
hyperplane $H_\circuit$, one can choose a minimal generating set which we can
complete to a basis of $A_\Q$ from the $D_i$, where $i \notin \circuit$. By varying
the signs of this generating set, we always get a simplicial cone whose generators
are contained in some secondary fan, and this way $H_\circuit$ is covered by a set of facets
of secondary cones.
\end{proof}

The maximal cones in the secondary fan $\secfan(L, \emptyset)$ correspond to regular
simplicial fans supported on $l_1, \dots, l_n$. More precisely, if $\Delta$ denotes such
a fan, then the corresponding cone is given by $\bigcap_\mathcal{B} K_\mathcal{B}^\emptyset$,
where $\mathcal{B}$ runs over all bases among the $l_i$ which span a maximal cone
in $\Delta$. This definition makes sense for any fan $\Delta$ supported on the $l_i$, we
can single out a specific cone in $\secfan(L, \emptyset)$. Choosing a simplicial model
$\hat{\Delta}$ for $\Delta$, we set:

\begin{definition}
Let $\Delta$ be a fan supported on $L$, then we set:
\begin{equation*}
\nef(\Delta) := \bigcap_{\substack{\mathcal{B} \in \hat{\Delta}\\ \mathcal{B}
\text{ basis in } L}} K_\mathcal{B}^\emptyset
\end{equation*}
and denote $\mathfrak{F}_\nef = \mathfrak{F}_{\nef(\Delta)}$ the discriminantal hull of
$\nef(\Delta)$.
\end{definition}

Note that by our conventions we identify $\mathcal{B} \in \hat{\Delta}$ with the set
of corresponding primitive vectors, or the corresponding rows of $L$, respectively.
Of course, $\nef(\Delta)$ is just the nef cone of the toric variety associated to $\Delta$.

\begin{proposition}\label{realnefgeneral}
We have:
\begin{equation*}
\nef(\Delta) = \bigcap_{\hat{\Delta} \cap (\Delta_\ocircuit)_{\max} \neq \emptyset}
H_\ocircuit.
\end{equation*}
\end{proposition}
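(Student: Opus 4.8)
The plan is to realise each cone $K_\mathcal{B}^\emptyset$ appearing in the definition of $\nef(\Delta)$ as an explicit intersection of half spaces $H_\ocircuit$, in the spirit of Proposition \ref{orthantboundary}, and then to check that the half spaces produced this way are precisely those indexed on the right hand side.

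First I would analyse a single cone $K_\mathcal{B}^\emptyset$ for a basis $\mathcal{B}$ of $L$. Since $\{D_i\}_{i \in \on \setminus \mathcal{B}}$ is then a basis of $A_\Q$, the cone $K_\mathcal{B}^\emptyset$ is simplicial and full-dimensional, so it equals the intersection of the closed half spaces supported on its facets. For $j \in \on \setminus \mathcal{B}$ the set $\mathcal{B} \cup \{j\}$ contains a unique circuit $\circuit_j$ (the \emph{fundamental circuit} of $j$), with $j \in \circuit_j$ and $\circuit_j \setminus \{j\} \subseteq \mathcal{B}$. The facet of $K_\mathcal{B}^\emptyset$ opposite $D_j$ is spanned by the $D_i$ with $i \in \on \setminus (\mathcal{B} \cup \{j\})$, so by Lemma \ref{dimlemma}(\ref{dimlemmaiv}) together with $\circuit_j \subseteq \mathcal{B} \cup \{j\}$ it lies in $H_{\circuit_j}$; moreover $D_j \notin H_{\circuit_j}$ since $j \in \circuit_j$. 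Letting $\ocircuit_j$ be the orientation of $\circuit_j$ with $K_\mathcal{B}^\emptyset \subseteq H_{\ocircuit_j}$, I obtain
\begin{equation*}
K_\mathcal{B}^\emptyset = \bigcap_{j \in \on \setminus \mathcal{B}} H_{\ocircuit_j}.
\end{equation*}
Because $H_{\circuit_j}$ is separating and $D_j \in H_{\ocircuit_j} \setminus H_{\circuit_j}$, this forces $j \in \ocircuit_j^+$, so $\circuit_j \setminus \{j\}$ is a maximal cone of $\Delta_{\ocircuit_j}$; and $\circuit_j \setminus \{j\} \subseteq \mathcal{B} \in \hat\Delta$ shows this cone lies in $\hat\Delta$. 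Hence every $\ocircuit_j$ is among the orientations on the right, and intersecting over all bases $\mathcal{B} \in \hat\Delta$ of $L$ yields $\bigcap_{\hat\Delta \cap (\Delta_\ocircuit)_{\max} \neq \emptyset} H_\ocircuit \subseteq \nef(\Delta)$.

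For the reverse inclusion I would start from an oriented circuit $\ocircuit$ and an element $i_0 \in \ocircuit^+$ with $\sigma_* := \circuit \setminus \{i_0\} \in \hat\Delta$, and aim to exhibit a basis $\mathcal{B} \in \hat\Delta$ of $L$ with $\sigma_* \subseteq \mathcal{B}$ and $i_0 \notin \mathcal{B}$. Granting this, the fundamental circuit of $i_0$ relative to $\mathcal{B}$ is $\circuit$, so by the first step $H_\circuit$ is the facet hyperplane of $K_\mathcal{B}^\emptyset$ opposite $D_{i_0}$; since $i_0 \in \ocircuit^+$, the separating property places $D_{i_0}$ — hence $K_\mathcal{B}^\emptyset$ — on the $H_\ocircuit$ side, giving $\nef(\Delta) \subseteq K_\mathcal{B}^\emptyset \subseteq H_\ocircuit$. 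To produce $\mathcal{B}$ I would pick a maximal cone $\sigma \in \Delta$ with $\sigma_* \subseteq \sigma(1)$; the circuit relation $\sum_{i \in \circuit} \alpha_i l_i = 0$ with $\alpha_{i_0} \neq 0$ gives $l_{i_0} \in \operatorname{span}(\sigma_*)$, so $\operatorname{span}(\sigma(1) \setminus \{i_0\}) = \operatorname{span}(\sigma(1)) = N_\Q$, and any completion of the independent set $\sigma_*$ to a basis of $L$ inside $\sigma(1) \setminus \{i_0\}$ works. Letting $\ocircuit$ range over all admissible oriented circuits then gives $\nef(\Delta) \subseteq \bigcap_{\hat\Delta \cap (\Delta_\ocircuit)_{\max} \neq \emptyset} H_\ocircuit$, and together with the first step this proves the proposition.

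The main thing to get right is the matching of orientations: one must be sure that the orientation $\ocircuit$ for which $\circuit \setminus \{i_0\}$ is a maximal cone of $\Delta_\ocircuit$ is the same orientation for which $K_\mathcal{B}^\emptyset$ lies inside $H_\ocircuit$. This is handled uniformly by the separating property of $H_\circuit$ together with the description of the maximal cones of $\Delta_\ocircuit$ through $\ocircuit^+$. A minor technical point is that constructing $\mathcal{B}$ uses a $d$-dimensional cone of $\Delta$ containing $\sigma_*$, which is available under the standing assumption that the maximal cones of $\Delta$ have dimension $d$ (in particular whenever $\Delta$ is complete), the case of interest here.
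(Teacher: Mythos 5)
Your proof is correct and follows essentially the same route as the paper: decompose each simplicial cone $K_\mathcal{B}^\emptyset$ into the half spaces $H_\ocircuit$ supported on its facets, identify those facet hyperplanes with the fundamental circuits of the elements $j \in \on \setminus \mathcal{B}$, and match orientations via the separating property so that $\circuit \setminus \{j\}$ with $j \in \ocircuit^+$ is a maximal cone of $\Delta_\ocircuit$ lying in $\hat{\Delta}$. You additionally spell out the converse step that the paper compresses into ``and the assertion follows,'' namely that any $F \in \hat{\Delta} \cap (\Delta_\ocircuit)_{\max}$ extends inside $\hat{\Delta}$ to a basis $\mathcal{B}$ avoiding the distinguished element of $\ocircuit^+$, which is a worthwhile clarification.
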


\begin{proof}
For some basis $\mathcal{B} \subset \on$, the cone $K_\mathcal{B}^\emptyset$
is simplicial, and for every $i \in \on \setminus \mathcal{B}$, the facet of
$K_\mathcal{B}^\emptyset$ which is spanned by the $D_j$ with $j \notin \mathcal{B}
\cup \{i\}$, spans a hyperplane $H_\circuit$ in $P$. This hyperplane
corresponds to the unique circuit $\circuit \subset \mathcal{B} \cup \{i\}$.
As we have seen before, a maximal cone in $\Delta_\ocircuit$
is of the form $\circuit \setminus \{j\}$ for some $j \in \ocircuit^+$. So we
have immediately:
\begin{equation*}
K_\mathcal{B} = \bigcap_{\substack{\exists F \in (\Delta_\ocircuit)_{\max}\\
\text{with } F \subset \mathcal{B}}} H_\ocircuit
\end{equation*}
and the assertion follows.
\end{proof}

\begin{remark}
If $\Delta = \hat{\Delta}$ is a regular simplicial fan, then
$\nef(\Delta)$ is a maximal cone in the secondary fan.
Let $\ocircuit$ be an oriented circuit such that $\Delta$ is supported on
$\Delta_\ocircuit$ in the sense of \cite{GKZ}, \S 7, Def. 2.9,
and denote $\Delta'$ the fan resulting in the
bistellar operation by changing $\Delta_\ocircuit$ to $\Delta_{-\ocircuit}$.
 Then, by \cite{GKZ}, \S7, Thm. 2.10, the hyperplane
$H_\circuit$ is a proper wall of $\nef(\Delta)$ iff $\Delta'$ is regular, too.
\end{remark}

\subsection{Birational toric geometry, rational divisors,  and vanishing theorems.}\label{birat}

Circuits and their related numerical properties are an important tool in toric
geometry, in particular in the context of the toric minimal model program (see
\cite{Reid83} and \cite{Matsuki}, Chapter 14) and the classification of smooth
toric varieties (see \cite{Oda}, \S 1.6, for instance).
The purpose of this subsection is to clarify the relation of some standard
constructions with the intrinsic circuit geometry of a toric variety. Moreover,
we will give new proof of some standard vanishing theorems from this
point of view. In this
section $\Delta$ denotes a fan associated to a toric variety $X$. $L$ denotes the
row matrix of primitive vectors of rays in $\Delta$. We always assume that
the support of $\Delta$ in $N_\Q$ coincides with the positive span of the $l_i$.
Note that this in particular implies that $\pic(X)$ is torsion free
and $N^1(X) = \pic(X)$.

\paragraph{Some remarks on $\Q$-Cartier divisors on toric varieties}

Recall that a $\Q$-divisor on $X$ is $\Q$-Cartier if an integral multiple is Cartier
in the usual sense. A torus invariant Weil divisor $D = \sum_{i \in \on} c_i D_i$ is
$\Q$-Cartier iff for every $\sigma \in \Delta$ there exists some $m_\sigma \in M_\Q$
such that $c_i = l_i(m)$ for all $i \in \sigma(1)$.

\begin{proposition}\label{QCartierimpliesMCM}
Let $D \in A_{d - 1}(X)$ be a Weil divisor, which is $\Q$-Cartier. Then $\sh{O}(D)$ is
maximally Cohen-Macaulay.
\end{proposition}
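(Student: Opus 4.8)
The plan is to reduce to an affine toric chart and there realise $\sh{O}(D)$ as a direct summand of the structure sheaf of a finite toric cover along which $D$ becomes principal. Since being maximally Cohen--Macaulay is a local property, it suffices to treat $X = U_\sigma$ for a single cone $\sigma$; and since $\sh{O}(D)$ depends only on the class of $D$, we may assume $D = \sum_{i \in \sigma(1)} c_i D_i$ is torus invariant. By the criterion recalled just above the proposition, the hypothesis that $D$ be $\Q$-Cartier means precisely that $c_i = l_i(m_\sigma)$ for all $i \in \sigma(1)$ and some $m_\sigma \in M_\Q$; that is, $D$ is principal ``over $\Q$'' on $U_\sigma$. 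Choose $r \geq 1$ with $r m_\sigma \in M$ and put $M' := M + \Z m_\sigma$, a finite cyclic overlattice of $M$, with dual sublattice $N' := (M')^\vee \subseteq N$. The cone $\check\sigma$ is rational for $M'$ as well, so $U' := \spec k[\check\sigma \cap M']$ is again a normal affine toric variety, and the inclusion $M \hookrightarrow M'$ induces a finite surjective toric morphism $\pi\colon U' \to U_\sigma$ along which $\pi^{*}D = \operatorname{div}\bigl(\chi(m_\sigma)\bigr)$ becomes an honest principal divisor.

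Next I would make the direct summand explicit. From the standard description of sections of a divisorial sheaf, $\Gamma\bigl(U_\sigma, \sh{O}(D)\bigr) = \bigoplus_{m \in M,\; m + m_\sigma \in \check\sigma} k\,\chi(m)$, and multiplication by $\chi(m_\sigma)$ identifies this, as a module over $k[\sigma_M]$, with $k\bigl[\check\sigma \cap (M + m_\sigma)\bigr]$. On the other hand $\pi_* \sh{O}_{U'} = k[\check\sigma \cap M']$, and grading its underlying $k$-vector space by the residue class of the exponent in $M'/M$ produces a decomposition
\begin{equation*}
k[\check\sigma \cap M'] \;=\; \bigoplus_{\bar m \in M'/M} k\bigl[\check\sigma \cap (M + m_{\bar m})\bigr]
\end{equation*}
of $k[\sigma_M]$-modules, where $m_{\bar m}$ is any representative of $\bar m$: multiplication by $\chi(m)$ with $m \in M$ preserves each summand, so this is a genuine internal direct sum. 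Taking $\bar m$ to be the class of $m_\sigma$ exhibits $\sh{O}(D)|_{U_\sigma}$ as a direct summand of $\pi_* \sh{O}_{U'}$.

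Finally I would note that $\pi_* \sh{O}_{U'}$ is maximally Cohen--Macaulay over $\sh{O}_{U_\sigma}$: the variety $U'$, being a normal toric variety, is Cohen--Macaulay, and for the finite surjective morphism $\pi$ between varieties of equal dimension one has, at every point $y \in U_\sigma$,
\begin{equation*}
\depth_{\sh{O}_{U_\sigma, y}} (\pi_* \sh{O}_{U'})_y \;=\; \min_{\pi(z) = y}\, \depth_{\sh{O}_{U', z}} \sh{O}_{U', z} \;=\; \dim \sh{O}_{U_\sigma, y}.
\end{equation*}
Since the depth of a finite direct sum is the minimum of the depths of its summands, $\sh{O}(D)_y$ is maximally Cohen--Macaulay at every $y$, which proves the proposition.

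The only point that requires care is the middle step -- that the coset grading really splits $\pi_* \sh{O}_{U'}$ as a $k[\sigma_M]$-module -- but this needs no reductivity or separatedness of the covering group scheme $\spec k[M'/M]$, since $M$ fixes every coset; this is precisely where positive characteristic causes no trouble. (One could instead argue entirely within the present framework via Corollary \ref{localcohomcorollary}: it is enough to see $\widetilde{H}^{\,j}(\hat{\sigma}_{x,m}; k) = 0$ for $j < d - 2$ and all $m$, and the $\Q$-Cartier condition forces each index set $I(m) = \{\, i : l_i(m) < -c_i \,\}$ to consist of the rays of $\sigma$ lying strictly on one side of a single linear hyperplane through the origin, whence the relevant full subcomplex of $\hat{\sigma}_x$ is either contractible or a $(d-2)$-sphere; I find the covering argument cleaner because it avoids any combinatorial analysis of non-simplicial cones.)
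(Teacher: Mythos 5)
Your argument is correct, but it follows a genuinely different route from the paper's. The paper stays entirely inside its combinatorial framework: after the same reduction to a chart $U_\sigma$ and to an invariant representative, it observes that the $\Q$-Cartier hypothesis makes the hyperplane arrangement $\{H_i^{\uc}\}$ a translate by $m_\sigma \in M_\Q$ of the arrangement $\{H_i^{\underline{0}}\}$ for the trivial divisor, so the two divisors produce the same set of signatures; since every nontrivial stratum of the trivial arrangement is an unbounded cone and hence contains lattice points, the MCM-ness of $\sh{O}_X$ already certifies (via Theorem \ref{cohomtheorem} and Corollary \ref{localcohomcorollary}) that all simplicial complexes $\hat{\sigma}_I$ occurring for $D$ are acyclic in the relevant degrees, whence $\sh{O}(D)$ is MCM. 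You instead identify $\Gamma\big(U_\sigma, \sh{O}(D)\big)$ with the conic module $k[\check{\sigma} \cap (M + m_\sigma)]$, exhibit it as a coset-graded direct summand of the normal semigroup ring $k[\check{\sigma} \cap M']$ of a finite toric cover, and conclude from Hochster's theorem together with the standard behaviour of depth under finite morphisms and under direct summands. Both proofs ultimately rest on the same base fact, the Cohen--Macaulayness of normal semigroup rings; what yours buys is independence from the simplicial-complex machinery and, in effect, a slightly stronger statement (every conic module $k[\check{\sigma} \cap (M + v)]$, $v \in M_\Q$, is MCM, not only those arising from $\Q$-Cartier classes), while the paper's version is shorter given the formalism of Section 2 and keeps the bookkeeping uniform with the arguments used throughout Section 5. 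All the commutative-algebra inputs you invoke (the depth formula for finite morphisms, going-down to equate the local dimensions, the splitting being one of $k[\sigma_M]$-modules because $\sigma_M$ acts with trivial $M'/M$-degree) are standard and correctly applied, so there is no gap.
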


\begin{proof}
The MCM property is a local property. So, without loss of generality, it suffices to
consider the restriction of $D$ to some $U_\sigma$. Because $D$ is $\Q$-Cartier,
the hyperplane arrangement $H_i^\uc$ coincides with $H_i^{\underline{0}}$ up to
a translation by $m_\sigma$. Therefore $\mathfrak{s}^\uc(M_\Q)$ coincides with
$\mathfrak{s}^{\underline{0}}(M_\Q)$. Also, all strata of the hyperplane arrangement
$H_i^{\underline{0}}$, except possibly the trivial stratum, are unbounded, and thus
contain lattice points. So, because the structure sheaf  is MCM, it follows that
all simplicial complexes $\hat{\sigma}_I$ are acyclic for $\on \neq I \in \mathfrak{s}^{\underline{0}}
= \mathfrak{s}^\uc$, hence $\sh{O}(D)$ is MCM.
\end{proof}

\begin{remark}
See also \cite{BrunsGubeladze} for another proof of Proposition \ref{QCartierimpliesMCM}.
\end{remark}

The MCM-property is useful, as it allows to replace the $\Ext$-groups by
cohomologies in Serre duality:

\begin{proposition}
\label{dualityprop}
Let $X$ be a normal variety with dualizing sheaf $\omega_X$ and $\sh{F}$ a coherent
sheaf on $X$ such that for every $x \in X$, the stalk $\sh{F}_x$ is MCM over
$\sh{O}_{X, x}$. Then for every $i \in \Z$ there exists an isomorphism
\begin{equation*}
\Ext^i_X\big(\sh{F}, \omega_X\big) \cong H^i\big(X, \shHom(\sh{F}, \omega_X)
\big).
\end{equation*}
\end{proposition}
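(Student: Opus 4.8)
The plan is to deduce the statement from the local-to-global spectral sequence for $\Ext$,
\begin{equation*}
E_2^{p,q} = H^p\big(X, \shExt^q(\sh{F}, \omega_X)\big) \Longrightarrow \Ext^{p + q}_X(\sh{F}, \omega_X),
\end{equation*}
by showing that $\shExt^q(\sh{F}, \omega_X) = 0$ for all $q > 0$. Given this, the spectral sequence collapses onto the line $q = 0$, and its edge homomorphisms yield, for every $i$, the isomorphism $\Ext^i_X(\sh{F}, \omega_X) \cong H^i\big(X, \shExt^0(\sh{F}, \omega_X)\big) = H^i\big(X, \shHom(\sh{F}, \omega_X)\big)$.

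The vanishing of the higher $\shExt$ sheaves is a local question. Since $X$ is Noetherian and $\sh{F}$ coherent, at each $x \in X$ we have $\shExt^q(\sh{F}, \omega_X)_x \cong \Ext^q_R(\sh{F}_x, \omega_{X, x})$ with $R := \sh{O}_{X, x}$, where $R$ is Cohen-Macaulay and $\omega_{X, x}$ is a canonical module of $R$ (both facts being part of what it means for $\omega_X$ to be a dualizing sheaf; in the toric setting of this paper $R$ is Cohen-Macaulay by Hochster's theorem). So it suffices to prove the purely local statement: if $R$ is a Cohen-Macaulay local ring of dimension $d$ with maximal ideal $\mathfrak{m}$ and canonical module $\omega_R$, and $M$ is a maximal Cohen-Macaulay $R$-module, then $\Ext^q_R(M, \omega_R) = 0$ for $q > 0$. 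This follows at once from local duality, $\Ext^i_R(N, \omega_R) \cong H^{d - i}_{\mathfrak{m}}(N)^{\vee}$ for finitely generated $N$: since $H^j_{\mathfrak{m}}(M) = 0$ for $j < \depth M = d$, taking $i > 0$ forces $\Ext^i_R(M, \omega_R) = 0$. (Equivalently, this is Theorem 3.3.10 of Bruns-Herzog; a direct induction on $d$ also works --- for $d = 0$ the module $\omega_R$ is injective, and for $d > 0$ one picks $t \in \mathfrak{m}$ that is simultaneously a nonzerodivisor on $R$, on $M$ and on $\omega_R$, which is available because all three are maximal Cohen-Macaulay, so $\mathfrak{m}$ is not contained in the union of their finitely many associated primes; then Rees' change-of-rings formula, the identification of $\omega_R/t\omega_R$ with the canonical module of $R/tR$, and Nakayama's lemma reduce the claim to dimension $d - 1$.)

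The only points demanding any care are the identification of the stalk $\omega_{X, x}$ with a canonical module of $\sh{O}_{X, x}$ and the Cohen-Macaulayness of the stalks; both are standard and, for the normal toric varieties under consideration here, entirely unproblematic. Once the local statement is in hand, the collapse of the spectral sequence is immediate, so there is no substantial obstacle beyond this bookkeeping.
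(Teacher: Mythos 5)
Your proposal is correct and follows essentially the same route as the paper: the local-to-global $\Ext$ spectral sequence, reduction to stalks, and vanishing of $\Ext^q_{\sh{O}_{X,x}}(\sh{F}_x,\omega_{X,x})$ for $q>0$ when $\sh{F}_x$ is MCM. You in fact supply more detail than the paper on that last vanishing step (via local duality), which the paper simply asserts.
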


\begin{proof}
For any two $\sh{O}_X$-modules $\sh{F}, \sh{G}$ there exists the following spectral
sequence
\begin{equation*}
E_2^{pq} = H^p\big(X, \shExt_{\sh{O}_X}^q(\sh{F}, \sh{G})\big) \Rightarrow
\Ext_{\sh{O}_X}^{p + q}(\sh{F}, \sh{G}).
\end{equation*}
We apply this spectral sequence to the case $\sh{G} = \omega_X$. For every closed
point $x \in X$ we have the following identity of stalks:
\begin{equation*}
\shExt^q_{\sh{O}_X} (\sh{F}, \omega_X)_x \cong
\Ext_{\sh{O}_{X, x}}^q(\sh{F}_x, \omega_{X, x}).
\end{equation*}
As $\sh{F}$ is maximal Cohen-Macaulay, the latter vanishes for all $q > 0$, and thus
the sheaf
$\shExt^q_{\sh{O}_X}(\sh{F},$ $\omega_X)$ is the zero sheaf for all
$q > 0$. So the above spectral sequence degenerates and we obtain an isomorphism
\begin{equation*}
H^p(X, \shHom(\sh{F}, \omega_X)) \cong \Ext_X^p(\sh{F}, \omega_X)
\end{equation*}
for every $p \in \Z$.
\end{proof}

In the case where $X$ a toric variety, we have $\omega_X \cong \sh{O}(K_X)$,
where $K_X = -\sum_{i \in \on} D_i$. Then, if $\sh{F} = \sh{O}(D)$ for some $D \in A$,
we can identify $\shHom(\sh{O}(D), \omega_X)$ with $\sh{O}(K_X -D)$:

\begin{corollary}
Let $X$ be a toric variety and $D$ a Weil divisor such that $\sh{O}(D)$ is an MCM
sheaf. Then there is an isomorphism:
\begin{equation*}
\Ext^i_X\big(\sh{O}(D), \omega_X\big) \cong H^i\big(X, \sh{O}_X(K_X - D)\big).
\end{equation*}
\end{corollary}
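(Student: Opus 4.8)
The plan is to obtain the corollary as an immediate specialization of Proposition \ref{dualityprop}, combined with the standard description of the sheaf $\shHom$ between divisorial sheaves on a normal toric variety. First I would observe that the hypothesis of the corollary is exactly what is needed to apply Proposition \ref{dualityprop} to the coherent sheaf $\sh{F} = \sh{O}(D)$: saying that $\sh{O}(D)$ is MCM means precisely that the stalk $\sh{O}(D)_x$ is a maximal Cohen-Macaulay module over $\sh{O}_{X,x}$ for every $x \in X$. Hence Proposition \ref{dualityprop} gives, for every $i \in \Z$, an isomorphism $\Ext^i_X\big(\sh{O}(D), \omega_X\big) \cong H^i\big(X, \shHom(\sh{O}(D), \omega_X)\big)$.

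Second, I would identify the sheaf $\shHom(\sh{O}(D), \omega_X)$ with $\sh{O}_X(K_X - D)$. On a normal toric variety $X$ one has $\omega_X \cong \sh{O}(K_X)$ with $K_X = -\sum_{i \in \on} D_i$ (as recalled just before the corollary). More generally, for two Weil divisors $D$ and $E$ on $X$ the reflexive sheaf $\shHom(\sh{O}(D), \sh{O}(E))$ agrees on the smooth locus (whose complement has codimension at least two, $X$ being normal) with the invertible sheaf $\sh{O}(E - D)$; since both $\shHom(\sh{O}(D), \sh{O}(E))$ and $\sh{O}(E - D)$ are reflexive of rank one and coincide in codimension one, they are isomorphic. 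Applying this with $E = K_X$ yields $\shHom(\sh{O}(D), \omega_X) \cong \sh{O}_X(K_X - D)$, and substituting this into the isomorphism of the first step produces the assertion.

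The only point that requires a small amount of care — and it is the nearest thing to an obstacle — is the identification $\shHom(\sh{O}(D), \sh{O}(K_X)) \cong \sh{O}_X(K_X - D)$, which rests on the fact that divisorial sheaves are stable under $\shHom$ and that this operation realizes subtraction of divisor classes. This is routine given the discussion of divisorial sheaves and the bijection between rational equivalence classes of Weil divisors and isomorphism classes of divisorial sheaves in Section \ref{prelim}, but it should be stated explicitly rather than left implicit. Everything else is a direct substitution, so no further computation is needed.
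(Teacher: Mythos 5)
Your proposal is correct and follows exactly the route the paper takes: apply Proposition \ref{dualityprop} to $\sh{F} = \sh{O}(D)$ and then identify $\shHom(\sh{O}(D), \omega_X)$ with $\sh{O}_X(K_X - D)$, an identification the paper asserts without comment and which you justify by the standard reflexive-rank-one-agreeing-in-codimension-one argument. No gaps.
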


And by Grothendieck-Serre duality:

\begin{corollary}\label{mcmserreduality}
If $X$ is a complete toric variety and $D$ a Weil divisor such that $\sh{O}(D)$
is an MCM sheaf, then
\begin{equation*}
H^i\big(X, \mathcal{O}(D)\big) \cong H^{d - i}\big(X, \sh{O}(K_X - D)\big)\check{\ }.
\end{equation*}
\end{corollary}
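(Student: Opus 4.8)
The plan is to combine the preceding corollary with Grothendieck--Serre duality on the proper scheme $X$. The one structural fact needed beyond what has been set up is that a normal toric variety is Cohen--Macaulay and of pure dimension $d$; consequently its dualizing complex is concentrated in a single cohomological degree, $\omega_X^\bullet \cong \omega_X[d]$, with $\omega_X \cong \sh{O}(K_X)$ as already recorded. Inserting this into the general duality statement for a proper $k$-scheme, $H^i(X, \sh{F})\check{\ } \cong \Ext_X^{-i}(\sh{F}, \omega_X^\bullet)$, one gets for every coherent sheaf $\sh{F}$ and every $i$ an isomorphism
\begin{equation*}
H^i(X, \sh{F})\check{\ } \cong \Ext_X^{d - i}(\sh{F}, \omega_X).
\end{equation*}

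Next I would specialize to $\sh{F} = \sh{O}(D)$. Since $\sh{O}(D)$ is assumed MCM, the preceding corollary applies and identifies $\Ext_X^{d - i}\big(\sh{O}(D), \omega_X\big)$ with $H^{d - i}\big(X, \sh{O}_X(K_X - D)\big)$. Chaining the two isomorphisms yields
\begin{equation*}
H^i\big(X, \sh{O}(D)\big)\check{\ } \cong H^{d - i}\big(X, \sh{O}_X(K_X - D)\big).
\end{equation*}
Because $X$ is complete, all these cohomology groups are finite-dimensional $k$-vector spaces, so one may dualize once more to obtain the asserted isomorphism $H^i\big(X, \sh{O}(D)\big) \cong H^{d - i}\big(X, \sh{O}_X(K_X - D)\big)\check{\ }$.

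I do not anticipate a genuine obstacle here; the only point requiring care is the bookkeeping of the degree shift in Grothendieck duality, which is controlled precisely by the Cohen--Macaulayness of $X$, ensuring that $\omega_X^\bullet$ is $\omega_X$ placed in cohomological degree $-d$. If one prefers to avoid dualizing complexes altogether, an alternative is to invoke Serre duality directly in the Cohen--Macaulay form $H^i(X, \sh{F})\check{\ } \cong \Ext^{d-i}_X(\sh{F}, \omega_X)$, valid for proper Cohen--Macaulay schemes of pure dimension $d$, and then argue exactly as above.
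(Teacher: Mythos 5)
Your argument is exactly the paper's: the corollary is stated there as an immediate consequence of the preceding corollary (identifying $\Ext^{d-i}_X(\sh{O}(D),\omega_X)$ with $H^{d-i}(X,\sh{O}(K_X-D))$ via the MCM hypothesis) combined with Grothendieck--Serre duality on the proper Cohen--Macaulay variety $X$. Your bookkeeping of the degree shift and the final dualization of finite-dimensional vector spaces is correct.
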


\paragraph{The Picard group.}

The Picard group in a natural way coincides with a flat of the discriminantal
arrangement:

\begin{theorem}[see \cite{Eikelberg}, Theorem 3.2]\label{picardgroup}
Let $X$ be any toric variety, then:
\begin{equation*}
\pic(X) _\Q = \bigcap_{\sigma \in \Delta_{\max}} H_{\sigma(1)} =
\bigcap_{\substack{\circuit \in \circuit(L_{\sigma(1)}) \\ \sigma \in \Delta_{\max}}}
H_\circuit.
\end{equation*}
\end{theorem}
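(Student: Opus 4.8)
## Proof proposal for Theorem \ref{picardgroup}

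The plan is to identify $\pic(X)_\Q$ inside $A_\Q$ directly from the characterization of $\Q$-Cartier divisors, and then recognize the resulting subspace as a flat of the discriminantal arrangement. Recall from the standard exact sequence (\ref{standardZsequence}) that a class $D \in A_\Q$, represented by $\uc = (c_1, \dots, c_n) \in \Q^n$, lies in $\pic(X)_\Q$ precisely when $D = \sum_{i \in \on} c_i D_i$ is $\Q$-Cartier, i.e. (as recalled just before Proposition \ref{QCartierimpliesMCM}) when for every $\sigma \in \Delta$ there is some $m_\sigma \in M_\Q$ with $c_i = l_i(m_\sigma)$ for all $i \in \sigma(1)$. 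Since every cone is a face of a maximal cone, it suffices to require this for $\sigma \in \Delta_{\max}$. The key observation is that the condition ``there exists $m_\sigma \in M_\Q$ with $l_i(m_\sigma) = -c_i$ for all $i \in \sigma(1)$'' is, after the sign convention, exactly the statement $\bigcap_{i \in \sigma(1)} H_i^{\uc} \neq \emptyset$, which by the defining diagram (\ref{HIdefdiagram}) says precisely that the class of $\uc$ lies in $H_{\sigma(1)}$.

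First I would make this translation precise: for a fixed $\sigma \in \Delta_{\max}$, unwinding diagram (\ref{HIdefdiagram}) with $I = \sigma(1)$ shows that $[\uc] \in H_{\sigma(1)}$ if and only if $\uc|_{\sigma(1)}$ lies in the image of $L_{\sigma(1)} : M_\Q \to \Q^{\sigma(1)}$, which is the $\Q$-Cartier condition at $\sigma$ up to the sign of $\uc$ (and the sign is irrelevant since $H_{\sigma(1)}$ is a subspace). Intersecting over all maximal cones then gives
\begin{equation*}
\pic(X)_\Q = \bigcap_{\sigma \in \Delta_{\max}} H_{\sigma(1)}.
\end{equation*}
For the second equality I would invoke Lemma \ref{intersectionproperty}, which expresses each $H_{\sigma(1)}$ as $\bigcap_{\circuit \subset \sigma(1)} H_\circuit$ over circuits $\circuit$ contained in $\sigma(1)$ (equivalently, circuits of the submatrix $L_{\sigma(1)}$); combining over all $\sigma \in \Delta_{\max}$ yields the indexed intersection in the statement. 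This simultaneously shows that $\pic(X)_\Q$ is cut out by hyperplanes of the discriminantal arrangement, hence is a flat in the sense of the preceding subsection.

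The main point requiring care — and the only genuine obstacle — is the reduction ``$\Q$-Cartier at every $\sigma \in \Delta$'' $\Leftrightarrow$ ``$\Q$-Cartier at every $\sigma \in \Delta_{\max}$'': one must check that an $m_\sigma$ working for a maximal cone automatically restricts to work on its faces, which is immediate since $\tau \prec \sigma$ implies $\tau(1) \subseteq \sigma(1)$, together with the compatibility of the $m_\sigma$ across cones (this compatibility is not needed for membership in the subspace $\pic(X)_\Q$, only the pointwise existence of each $m_\sigma$, so it causes no trouble here). One should also note, as the excerpt already does, that because the support of $\Delta$ spans $N_\Q$ we have $\pic(X)$ torsion free and $N^1(X) = \pic(X)$, so passing to $\Q$-coefficients loses nothing and the statement is exactly the rational span of the integral Picard group. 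I expect the proof to be short once the dictionary between the $\Q$-Cartier condition and the subspaces $H_I$ via diagram (\ref{HIdefdiagram}) is spelled out; essentially all the work has been front-loaded into Lemmas \ref{dimlemma} and \ref{intersectionproperty}.
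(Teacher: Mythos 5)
Your proposal is correct and follows essentially the same route as the paper: the paper likewise translates the $\Q$-Cartier condition ``for every $\sigma$ the hyperplanes $H_i^{\underline{c}}$ with $i \in \sigma(1)$ have a common point'' into membership in $H_{\sigma(1)}$, and then obtains the second equality from Lemma \ref{intersectionproperty}. Your additional remarks on the reduction to maximal cones and on the sign convention only make explicit what the paper's terser argument leaves implicit.
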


\begin{proof}
As remarked before, a $\Q$-Cartier divisor is specified by a collection $\{m_\sigma\}_{\sigma \in \Delta}
\subset M_\Q$. In particular, all for every $\sigma \in \Delta$, the hyperplanes $H_i^\uc$ with
$i \in \sigma(1)$ have nonempty intersection. So the first equality follows. The second
equality follows from Lemma \ref{intersectionproperty}.
\end{proof}

\paragraph{Nef and Mori cone.}

The nef cone $\nef(X)$ is nothing but $\nef(\Delta)$ as defined in subsection
\ref{secondaryfans}. Let us denote $\mathfrak{F}'_\nef
\subset \mathfrak{F}_\nef$ the subset of those $\ocircuit$ such that $\Delta_\ocircuit \cap
\hat{\Delta} \neq \emptyset$. Moreover, in the case that
$\Delta$ is simplicial, denote $\mathfrak{F}_\nef'' \subset \mathfrak{F}_\nef$ the subset
such that $\Delta$ is supported on $\Delta_\circuit$ in the sense of \cite{GKZ}, \S 7, Def. 2.
Then the following is a consequence of
Proposition \ref{realnefgeneral}:

\begin{theorem}\label{nefcone}
\label{nefconecharacterization}
The nef cone of $X$ is given by the following intersection in $A_\Q$:
\begin{equation*}
\operatorname{nef}(X) = \bigcap_{\ocircuit \in \mathfrak{F}_\nef} H_\ocircuit
= \bigcap_{\ocircuit \in \mathfrak{F}'_\nef} H_\ocircuit.
\end{equation*}
\end{theorem}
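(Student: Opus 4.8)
The plan is to deduce Theorem \ref{nefcone} almost directly from Proposition \ref{realnefgeneral}, which already gives the first equality once we observe that $\nef(X)=\nef(\Delta)$ by definition and that the index set $\{\ocircuit \mid \hat{\Delta}\cap(\Delta_\ocircuit)_{\max}\neq\emptyset\}$ is exactly $\mathfrak{F}'_\nef$: an oriented circuit $\ocircuit$ has a maximal cone of $\Delta_\ocircuit$ lying in $\hat{\Delta}$ precisely when $\Delta_\ocircuit\cap\hat{\Delta}\neq\emptyset$ in the sense used to define $\mathfrak{F}'_\nef$. So the real content is the second equality, namely that enlarging the intersection from $\mathfrak{F}'_\nef$ to the full discriminantal hull $\mathfrak{F}_\nef$ does not shrink the cone: $\bigcap_{\ocircuit\in\mathfrak{F}_\nef}H_\ocircuit=\bigcap_{\ocircuit\in\mathfrak{F}'_\nef}H_\ocircuit$.

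One inclusion is trivial, since $\mathfrak{F}'_\nef\subseteq\mathfrak{F}_\nef$ means the left-hand intersection is contained in the right-hand one. For the reverse inclusion, recall that $\mathfrak{F}_\nef=\mathfrak{F}_{\nef(\Delta)}$ is by definition the discriminantal hull of the cone $\nef(\Delta)$, i.e. the set of \emph{all} oriented circuits $\ocircuit$ with $\nef(\Delta)\subset H_\ocircuit$. Thus every half-space appearing in $\bigcap_{\ocircuit\in\mathfrak{F}_\nef}H_\ocircuit$ already contains $\nef(\Delta)$, so that intersection contains $\nef(\Delta)=\bigcap_{\ocircuit\in\mathfrak{F}'_\nef}H_\ocircuit$. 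Combined with the first paragraph this gives equality of all three expressions. The one point that needs a line of justification is that $\bigcap_{\ocircuit\in\mathfrak{F}'_\nef}H_\ocircuit$ really equals $\nef(\Delta)$ and not something larger: this is exactly the statement of Proposition \ref{realnefgeneral}, which identifies $\nef(\Delta)=\bigcap_{\hat{\Delta}\cap(\Delta_\ocircuit)_{\max}\neq\emptyset}H_\ocircuit$, so there is nothing further to prove.

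I expect the only genuine obstacle to be bookkeeping: making sure that the index set in Proposition \ref{realnefgeneral} is transcribed correctly as $\mathfrak{F}'_\nef$, and that the definition of ``discriminantal hull'' is being applied to the cone $\nef(\Delta)$ in the way the statement intends (in particular that $\mathfrak{F}_\nef$ is a set of oriented circuits, consistent with how $H_\ocircuit$ is indexed). Once these identifications are pinned down, the proof is a two-line formal manipulation. I would therefore write:

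\begin{proof}
By definition $\nef(X)=\nef(\Delta)$, and $\mathfrak{F}'_\nef$ is precisely the set of oriented circuits $\ocircuit$ with $\hat{\Delta}\cap(\Delta_\ocircuit)_{\max}\neq\emptyset$, so Proposition \ref{realnefgeneral} gives
\begin{equation*}
\nef(X)=\bigcap_{\ocircuit\in\mathfrak{F}'_\nef}H_\ocircuit.
\end{equation*}
Since $\mathfrak{F}'_\nef\subseteq\mathfrak{F}_\nef$, we have $\bigcap_{\ocircuit\in\mathfrak{F}_\nef}H_\ocircuit\subseteq\bigcap_{\ocircuit\in\mathfrak{F}'_\nef}H_\ocircuit=\nef(X)$. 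Conversely, $\mathfrak{F}_\nef=\mathfrak{F}_{\nef(\Delta)}$ consists of all oriented circuits $\ocircuit$ with $\nef(\Delta)\subset H_\ocircuit$, hence $\nef(X)=\nef(\Delta)\subseteq\bigcap_{\ocircuit\in\mathfrak{F}_\nef}H_\ocircuit$. Combining the two inclusions yields
\begin{equation*}
\nef(X)=\bigcap_{\ocircuit\in\mathfrak{F}_\nef}H_\ocircuit=\bigcap_{\ocircuit\in\mathfrak{F}'_\nef}H_\ocircuit,
\end{equation*}
as claimed.
\end{proof}
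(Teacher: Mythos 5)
Your proof is correct and is exactly the argument the paper intends: the paper states the theorem as an immediate consequence of Proposition \ref{realnefgeneral}, and you have simply spelled out the sandwich $\nef(X)\subseteq\bigcap_{\ocircuit\in\mathfrak{F}_\nef}H_\ocircuit\subseteq\bigcap_{\ocircuit\in\mathfrak{F}'_\nef}H_\ocircuit=\nef(X)$. The only cosmetic caveat is that the paper defines $\mathfrak{F}'_\nef$ by the condition $\Delta_\ocircuit\cap\hat{\Delta}\neq\emptyset$ rather than $(\Delta_\ocircuit)_{\max}\cap\hat{\Delta}\neq\emptyset$, so the two index sets need not literally coincide; but since the latter set is contained in $\mathfrak{F}'_\nef\subseteq\mathfrak{F}_\nef$, your chain of inclusions closes up regardless.
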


Let $\Delta = \hat{\Delta}$ be a simplicial fan supported on $L$.
Then every inner facet $\tau \in
\Delta(d - 1)$, has a canonically  associated circuit.
Namely, $\tau$ is contained in precisely two maximal cones $\sigma,
\sigma' \in \Delta(d)$, and the set $\sigma(1) \cup \sigma'(1)$ contains precisely
$d + 1$ elements, and $\sigma(1)$ as well as $\sigma'(1)$ form a basis of $N_\Q$.
Therefore, the set $\sigma(1) \cup \sigma'(1)$ contains a unique circuit.

\begin{definition}
Let $\tau$ be an inner facet of a simplicial fan $\Delta$. Then we denote $\circuit(\tau)$
its canonically associated circuit.
\end{definition}

Any fan of the form $\Delta_\ocircuit$ for some oriented circuit $\ocircuit$ is simplicial.
We can make use of the calculations of section \ref{onecircuitvarieties} to define linear
forms on $\pic(X)_\Q$. Let $\ocircuit \in \ocircuit(L)$ be any oriented circuit,
then by the short exact sequence
\begin{equation*}
0 \longrightarrow H_\circuit \longrightarrow A_{d - 1}(X)_\Q \longrightarrow A_{\circuit, \Q}
\longrightarrow 0,
\end{equation*}
we can lift the linear forms $t_{\ocircuit, \tau}$, where $\tau$ an inner wall of
$\Delta_\ocircuit$, to linear forms on $A_{d - 1}(X)_\Q$: for every $D \in A_{d - 1}(X)_\Q$
we denote $\bar{D}$
its image in $A_{\circuit, \Q}$ and set $t_{\circuit, \tau}(D) := t_{\circuit,
\tau}(\bar{D})$. It follows easily that any $t_{\ocircuit, \tau}$
and $t_{\ocircuit, \tau'}$ are proportional to each other by the factors
$\frac{s_\tau}{s_{\tau'}} \frac{\lcm\{\alpha_i, \alpha_j\}}{\lcm\{\alpha_p,
\alpha_q\}}$, where $\{i, j\} = \circuit \setminus \tau(1)$ and $\{p, q\} = \circuit
\setminus \tau'(1)$ and the defining relation for $\ocircuit$ is $\sum_{i \in \circuit}
\alpha_i l_i = 0$.

\begin{remark}
Note that in the case that $X$ is nonsimplicial, the form $t_{\ocircuit, \tau}$ should
not be identified with an actual curve class. We only have a linear form on $A_{d - 1}(X)_\Q$.
We can think of the $t_{\ocircuit, \tau}$ as 'virtual' curve classes on $X$.
Also note that the facet $\tau$ in general is not realized as a facet in $\Delta$.
\end{remark}

If $X$ is complete, by the nondegenerate pairing
\begin{equation*}
N_1(X)_\Q \otimes_\Q \pic(X)_\Q \longrightarrow \Q
\end{equation*}
we can identify $N_1(X)_\Q$ with a quotient vector space of the dual vector space
$A_{d - 1}(X)\check{\ }_\Q$ and in fact the $t_{\ocircuit, \tau}$ become rational
equivalence classes of curves after projection to $N_1(X)_\Q$. Denote these projections
$\bar{t}_{\ocircuit, \tau}$, then we have by the duality between the nef cone
and the Mori cone:

\begin{theorem}\label{moriconetheorem}
Let $X$ be a complete toric variety, then the Mori cone of $X$ in $N_1(X)_\Q$ is
generated by the $\bar{t}_{\ocircuit, \tau}$, where $\ocircuit \in \mathfrak{F}'_\nef$.
\end{theorem}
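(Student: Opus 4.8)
The plan is to read off Theorem~\ref{moriconetheorem} by dualizing the description of the nef cone given in Theorem~\ref{nefconecharacterization}, and then matching the resulting extremal rays with the virtual curve classes $\bar{t}_{\ocircuit,\tau}$. Throughout I use the standing hypotheses of this subsection: $\pic(X)$ is torsion free and $N^1(X)_\Q = \pic(X)_\Q$, and the nondegenerate pairing $N_1(X)_\Q \otimes_\Q \pic(X)_\Q \to \Q$ identifies $N_1(X)_\Q$ with the dual space of $\pic(X)_\Q$. Dually to the inclusion $\pic(X)_\Q \hookrightarrow A_{d-1}(X)_\Q$ of Theorem~\ref{picardgroup}, the surjection $A_{d-1}(X)\check{\ }_\Q \to N_1(X)_\Q$ is restriction of linear forms, so $\bar{t}_{\ocircuit,\tau}$ is precisely the restriction of $t_{\ocircuit,\tau}$ to $\pic(X)_\Q$. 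Since $X$ is complete, the Mori cone $\overline{NE}(X)$ is the dual cone $\nef(X)^\vee$ inside $N_1(X)_\Q$.

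First I would rewrite Theorem~\ref{nefconecharacterization} inside $\pic(X)_\Q$: for $\ocircuit \in \mathfrak{F}'_\nef$ put $\bar{H}_\ocircuit := H_\ocircuit \cap \pic(X)_\Q$, which is a closed half-space of $\pic(X)_\Q$ (or all of $\pic(X)_\Q$, in case $\pic(X)_\Q \subseteq H_\circuit$). Because $\nef(X) = \bigcap_{\ocircuit \in \mathfrak{F}'_\nef} H_\ocircuit$ already lies in $\pic(X)_\Q$, it also equals $\bigcap_{\ocircuit \in \mathfrak{F}'_\nef} \bar{H}_\ocircuit$. Dualizing a finite intersection of closed convex cones, $\bigl(\bigcap_i C_i\bigr)^\vee = \sum_i C_i^\vee$ (the sum being closed, as it is finitely generated), gives
\begin{equation*}
\overline{NE}(X) = \nef(X)^\vee = \sum_{\ocircuit \in \mathfrak{F}'_\nef} \bar{H}_\ocircuit^\vee .
\end{equation*}
It then remains to identify each summand: I claim $\bar{H}_\ocircuit^\vee = \R_{\geq 0}\,\bar{t}_{\ocircuit,\tau}$ for any inner wall $\tau$ of $\Delta_\ocircuit$. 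If $\pic(X)_\Q \subseteq H_\circuit$, both sides are $\{0\}$, since $t_{\ocircuit,\tau}$ is pulled back from $A_{\circuit,\Q}$ and vanishes on $H_\circuit$. Otherwise $\bar{H}_\ocircuit$ is a proper half-space with a well-defined inward normal ray $\R_{\geq 0}\ell_\ocircuit = \bar{H}_\ocircuit^\vee$; since $t_{\ocircuit,\tau}$ vanishes exactly on $H_\circuit$, its restriction $\bar{t}_{\ocircuit,\tau}$ lies on the line $\R\ell_\ocircuit$, and it points into $\bar{H}_\ocircuit^\vee$ because the surjection $A_{d-1}(X)_\Q \to A_{\circuit,\Q}$ carries $H_\ocircuit$ onto $\nef\bigl(\mathbb{P}(\underline{\alpha},\xi)\bigr) = [0,\infty)$ (Proposition~\ref{realnefgeneral} applied to the $1$-circuit variety $X(\Delta_\ocircuit)$), while $t_{\ocircuit,\tau}$, being intersection with the curve $V(\tau)$ on that variety, is nonnegative on its nef half-line. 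The choice of $\tau$ is immaterial: for fixed $\ocircuit$ the forms $t_{\ocircuit,\tau}$ are positive multiples of one another, as recorded in the discussion preceding the theorem. Summing over $\ocircuit \in \mathfrak{F}'_\nef$ then yields the stated description of $\overline{NE}(X)$.

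I expect the only genuinely delicate point to be this orientation bookkeeping in the last step — verifying that the combinatorially defined half-space $H_\ocircuit$ lies on the nef side of $H_\circuit$, so that the sign of the nef-positive form $t_{\ocircuit,\tau}$ agrees with the inward normal of $\bar{H}_\ocircuit$. Once this compatibility is pinned down, the remainder is just the formal duality between the nef and Mori cones of a complete variety, together with the already-established Theorems~\ref{picardgroup} and~\ref{nefconecharacterization} and the $1$-circuit computations of Section~\ref{onecircuitvarieties}.
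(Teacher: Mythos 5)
Your proposal is correct and follows essentially the same route as the paper, which offers no separate proof but derives the theorem from Theorem \ref{nefconecharacterization} purely ``by the duality between the nef cone and the Mori cone''; your write-up just supplies the details of that dualization. The two points you add explicitly --- that $(\bigcap_\ocircuit \bar{H}_\ocircuit)^\vee = \sum_\ocircuit \bar{H}_\ocircuit^\vee$ for finitely many polyhedral half-spaces, and that $H_\ocircuit$ is the preimage of the nef side $[0,\infty)$ of $A_{\circuit,\Q}$ so that $\bar{t}_{\ocircuit,\tau}$ generates the inward normal ray --- are exactly the content the paper leaves implicit, and they check out against the conventions of Sections \ref{onecircuitvarieties} and \ref{circuitsdiscriminantal}.
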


\begin{remark}
Classes of circuits on which the fan $\Delta$ is supported have been considered earlier in
\cite{Casagrande03} and were called ``contractible classes''.
A contractible class is extremal iff
the result of the associated flip is a projective toric variety again. See also
\cite{Bonavero00} for examples. If $X$ is simplicial and
projective, the Mori cone is a strictly convex
polyhedral cone in $N_1(X) = A_{d - 1}(X)\check{\ }_\Q$ and by the theorem, $\mathfrak{F}'_\nef$
contains the set of its extremal rays. The Mori cone is generated by the $t_{\ocircuit,
\tau}$ such that $V(\tau)$ is an extremal curve. However, in general, $\mathfrak{F}'_\nef$
is strictly bigger than the set of extremal curve classes.
 \end{remark}

\paragraph{The Iitaka dimension of a nef divisor and Kawamata-Viehweg vanishing.}

Let $D$ be a Cartier divisor on some normal variety $X$, and denote $N(X, D) :=
\{k \in \N \mid H^0\big(X, \sh{O}(kD)\big) \neq 0\}$. Then the Iitaka dimension
of $D$ is defined as
\begin{equation*}
\kappa(D) := \max_{k \in N(X, D)}\{\dim \phi_k(X)\},
\end{equation*}
where $\phi_k : \xymatrix{X \ar@{-->}[r] & \mathbb{P} |kD|}$
is the family of morphisms given by the linear series $|kD|$.

In the case where $X$ is a toric variety and $D = \sum_{i \in \on} c_i D_i$ invariant,
the Iitaka dimension of $D$ is just the dimension of $P_{kD}$ for $k >> 0$.
For a $\Q$-Cartier divisor $D$, we define its Iitaka dimension by $\kappa(D) :=
\kappa(rD)$ for $r > 0$ such that $rD$ is Cartier.

If $D$ is a nef divisor, then the morphism $\phi : X \longrightarrow \mathbb{P}
|D|$ is torus equivariant, its image is a projective toric variety of dimension $\kappa(D)$
whose associated fan is the inner normal fan of $P_D$. If $\kappa(D) < d$,
then necessarily $D$ is contained in some hyperplane $H_\circuit$ such that
$\ocircuit^+ = \circuit$ for some orientation $\ocircuit$ of $\circuit$. The toric
variety associated to $\ocircuit$ is isomorphic to a finite cover of a weighted
projective space. This kind of circuit will play an important role later on, so that
we will give it a distinguished name:

\begin{definition}
We call a circuit $\circuit$ such that $\circuit = \ocircuit^+$ for one of its orientations,
{\em fibrational}.
\end{definition}

By Proposition
\ref{orthantboundary}, this implies that $D$ is contained in the intersection of
$\nef(X)$ with the effective cone of $X$, which we identify with $C_\emptyset$.
More precisely, it follows from linear algebra that $D$ is contained in all $H_\circuit$
where $\circuit$ is fibrational and $l_i(P_D) = 0$ for all $i \in \circuit$.

\begin{definition}
Let $D \in A_{d - 1}(X)_\Q$, then we denote $\operatorname{fib}(D) \subset \circuit(L)$
the set of fibrational circuits such that $D \in H_\circuit$.
\end{definition}

The fibrational circuits of a nef divisor $D$ tell us immediately about its Iitaka dimension:

\begin{proposition}
Let $D$ be a nef $\Q$-Cartier divisor. Then $\kappa(D) = d - \rk L_{T}$, where
$T := \bigcup_{\circuit \in \operatorname{fib}(D)} \circuit$.
\end{proposition}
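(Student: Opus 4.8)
The plan is to reduce the computation of $\kappa(D)$ to a dimension count on the polyhedron $P_{kD}$ for $k \gg 0$, and then identify that dimension with $d - \rk L_T$ using the structure of the fibrational circuits through which $D$ passes. Recall from the discussion preceding the statement that for a nef $\Q$-Cartier divisor, after replacing $D$ by a Cartier multiple $rD$ (which changes neither $\kappa$ nor the set $\operatorname{fib}(D)$ nor $T$), the morphism $\phi: X \to \mathbb{P}|D|$ is torus equivariant with image a projective toric variety of dimension equal to $\dim P_D$, and this dimension stabilizes, so $\kappa(D) = \dim P_{kD}$ for $k \gg 0$. Thus it suffices to show $\dim P_{D'} = d - \rk L_T$ for $D' = kD$ with $k$ large, writing $D' = \sum_i c_i D_i$.

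The key observation is the following. Since $P_{D'} = \{m \in M_\Q \mid l_i(m) \geq -c_i \text{ for all } i\}$ is a nonempty polytope (nef, so $\Delta$ is the inner normal fan), its affine hull is a translate of a linear subspace $W \subseteq M_\Q$. A facet direction $l_i$ is ``active'' on all of $P_{D'}$ — i.e. $l_i(m) = -c_i$ identically on $P_{D'}$ — precisely when $l_i \in W^\perp$; the remaining $l_i$ take a range of values on $P_{D'}$. The first step is to show that $\{i : l_i \equiv -c_i \text{ on } P_{D'}\}$ equals $T = \bigcup_{\circuit \in \operatorname{fib}(D)}\circuit$ for $k \gg 0$. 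One inclusion is essentially the statement quoted just before the proposition: $D \in H_\circuit$ for $\circuit$ fibrational with $l_i(P_D) = 0$ for all $i \in \circuit$; conversely, if $l_i$ is constant on $P_{D'}$, then by nefness that face structure forces, for each circuit $\circuit$ containing $i$ that lies among the rays with $l_j$ constant on $P_{D'}$, the divisor $D'$ to lie on $H_\circuit$ with $\circuit = \ocircuit^+$ (using Proposition \ref{orthantboundary} and the fact that being in the interior of a face of $\nef(X)$ on the boundary of $C_\emptyset$ is exactly the fibrational condition). The point is that the set of such $i$ is closed under the circuit structure in the right way, so it is exactly a union of fibrational circuits on which $D$ sits, i.e. $T$. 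Here I would lean on Proposition \ref{realnefgeneral} / Theorem \ref{nefconecharacterization} to control which $H_\ocircuit$ contain $D$.

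Granting that, the affine span of $P_{D'}$ is cut out exactly by the equations $l_i(m) = -c_i$ for $i \in T$, i.e. its direction space is $\{m \in M_\Q \mid l_i(m) = 0 \ \forall i \in T\} = \ker L_T$, which has dimension $d - \rk L_T$. Hence $\dim P_{D'} = \dim \ker L_T = d - \rk L_T$, giving $\kappa(D) = d - \rk L_T$. The main obstacle I expect is the careful justification that the set of rays on which $D'$ is "tight" is precisely $T$ and not something larger or smaller: one direction needs that no extra $l_i$ becomes constant for large $k$ (a stabilization/genericity argument — for $k \gg 0$ the polytope $P_{kD}$ has the "maximal" face structure compatible with $D$ lying in its face of $\nef(X)$), and the other direction needs that every fibrational circuit through which $D$ passes genuinely forces all its rays to be tight, which is the linear-algebra content of the remark that $D \in H_\circuit$ with $\ocircuit^+ = \circuit$ implies $l_i(P_D) = \text{const}$ for $i \in \circuit$. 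Both are bookkeeping with the discriminantal arrangement rather than deep, but they are where the real work lies.
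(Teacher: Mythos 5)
Your overall strategy is the same as the paper's (whose own proof is a one-line remark): for $D$ nef, $\kappa(D)=\dim P_D$, and this dimension is $d$ minus the rank of the set of rays on which $P_D$ is ``tight'', which one then identifies with $T$. Two corrections to the easy parts first. For $D$ nef one has $P_{kD}=kP_D$, so nothing stabilizes with $k$: the tight set is the same for all $k>0$, and the ``no extra $l_i$ becomes constant for large $k$'' issue you worry about does not exist in that form. Also, the final step (``the affine span is cut out \emph{exactly} by $l_i(m)=-c_i$, $i\in T$'') is immediate once the tight set $T'$ is identified: pick $m_0$ in the relative interior of $P_D$; the non-tight inequalities are strict at $m_0$, so for any $w$ with $l_i(w)=0$ for all $i\in T'$ one has $m_0+tw\in P_D$ for small $t>0$. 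Hence the direction space $W$ of the affine hull equals $\ker L_{T'}$ and $\dim P_D=d-\rk L_{T'}$.

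The genuine gap is the inclusion $T'\subseteq T$, i.e.\ that every tight ray lies in a \emph{fibrational} circuit $\circuit\subseteq T'$ with $D\in H_\circuit$. ``Closed under the circuit structure in the right way'' is precisely the assertion to be proved, not an argument: a priori a ray could be tight without participating in any all-positive relation among tight rays, and your sketch contains no mechanism ruling this out. What closes it is a geometric input you do not mention. Since $X$ is complete and $D$ is nef, $\Delta$ refines the normal fan of $P_D$, whose lineality space is $W^\perp$. For each maximal $\sigma\in\Delta$ contained in the normal cone of a vertex $v$, the functionals $l\mapsto l(m-v)$ with $m\in P_D$ are nonnegative on $\sigma$, so $\sigma\cap W^\perp$ is a face of $\sigma$; hence $W^\perp$ is a union of cones of $\Delta$, and the tight rays (those lying in $W^\perp$) \emph{positively} span $W^\perp$. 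Now for $i\in T'$ write $-l_i$ as a nonnegative combination of tight rays; decomposing the resulting positive linear dependence conformally into circuits yields a circuit $\circuit\subseteq T'$ containing $i$ with all coefficients of one sign, i.e.\ fibrational, and then $D\in H_\circuit$ because $\sum_{j\in\circuit}\alpha_jc_j=-\sum_{j\in\circuit}\alpha_jl_j(m)=0$ for any $m\in P_D$. Together with the easy inclusion $T\subseteq T'$, which you do state correctly ($\sum_{j\in\circuit}\alpha_j\bigl(l_j(m)+c_j\bigr)=0$ with all terms nonnegative forces each to vanish), this gives $T=T'$ and the formula.
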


\begin{proof}
We just remark that $\rk L_{T}$ is the dimension of the subvector space of $M_\Q$
which is generated by the $l_i$ which are contained in a fibrational circuit.
\end{proof}

Recall from section \ref{onecircuitgeneralcohomologyvanishing} that for a toric
$1$-circuit variety $\mathbb{P}(\underline{\alpha}, \xi)$, cohomology vanishing
is determined by the set $F_\ocircuit \subset A_{d - 1}\big(\mathbb{P}(\underline{\alpha}, \xi)\big)$.
The image of $F_\ocircuit$ in $A_{d - 1}\big(\mathbb{P}(\underline{\alpha}, \xi)\big)_\Q \cong \Q$
contains all classes $D > K_\ocircuit$, where $K_\ocircuit = -\sum_{i \in \ocircuit^+} D_i$ (see
section \ref{onecircuitnefkawamataviehweg}), i.e. all classes which are contained in the
open interval $(K_\ocircuit, \infty)$.
In particular, in the case where $\circuit$ is {\em not} fibrational, it also contains the canonical divisor
$K_{\mathbb{P}(\underline{\alpha}, \xi)} = -\sum_{i \in \circuit} D_i$.

\begin{proposition}\label{antinefcohomology}
Let $X$ be a complete toric variety and $D$ a nef divisor, then
$H^i\big(X, \sh{O}(-D)\big) = 0$ for $i \neq \kappa(D)$.
\end{proposition}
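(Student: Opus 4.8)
The plan is to reduce the statement to the $1$-circuit case via the Cohomology Theorem \ref{cohomtheorem}, by analyzing the simplicial complexes $\hat{\Delta}_m$ for characters $m$ in the various chambers of the hyperplane arrangement $H_i^{\underline c}$ attached to $-D$. First I would invoke $\kappa(D) = d - \rk L_T$ from the preceding proposition, where $T = \bigcup_{\circuit \in \operatorname{fib}(D)} \circuit$, so that $\rk L_T = d - \kappa(D)$; write $\kappa := \kappa(D)$. Since $D$ is nef, the polytope $P_D$ is the moment polytope of the equivariant morphism $\phi\colon X \to \mathbb{P}|D|$ onto a $\kappa$-dimensional projective toric variety, and $P_{-D}$ is (up to translation) the reflection $-P_D$. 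The key geometric input is that the recession directions and the lineality of the arrangement for $-D$ are governed exactly by the fibrational circuits in $\operatorname{fib}(D)$: the linear span of $\{l_i \mid i \in T\}$ is the subspace along which $P_D$ has positive extent, and $M_\Q$ splits (rationally) as this $(d-\kappa)$-dimensional subspace times a $\kappa$-dimensional complement on which $-D$ imposes no bounded constraints coming from $T$.

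Next I would carry out the eigenspace analysis. By Theorem \ref{cohomtheorem} with $V = X$, $H^i(X, \sh{O}(-D))_m \cong \tilde H^{i-1}(\hat{\Delta}_m; k)$, where $\hat{\Delta}_m$ is the full subcomplex on $\{i \mid l_i(m) < c_i\}$ (here $-D = \sum_i (-c_i) D_i$, so the inequality for $-D$ reads $l_i(m) < c_i$). For $m$ in an unbounded chamber the complex $\hat{\Delta}_m$ is contractible or empty by the usual star-contractibility argument (as in the proof of Lemma \ref{circtoplemma} and Proposition \ref{QCartierimpliesMCM}), contributing nothing; so the only contributions come from characters lying in the bounded chamber, which is exactly $-P_D^{\circ}$ shifted. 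For such $m$, the set $\{i \mid l_i(m) < c_i\}$ is the union of the fibrational circuits $\circuit$ with $\circuit = \ocircuit^+$, i.e. precisely the sets on which $\hat{\Delta}$ restricts to a sphere $S^{|\ocircuit^+| - 2}$ by Lemma \ref{circtoplemma}. Since the $\circuit \in \operatorname{fib}(D)$ all have $\ocircuit^+ = \circuit$ and together their $l_i$'s span an $(d-\kappa)$-dimensional space, the relevant subcomplex is a join of such boundary-of-simplex spheres, hence homotopy equivalent to a single sphere of dimension $(d-\kappa) - 1$ (a join $S^{a} * S^{b} \simeq S^{a+b+1}$, and the dimensions of the constituent simplices add up, after subtracting one for the lineality, to $d - \kappa$). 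Therefore $\tilde H^{i-1}(\hat{\Delta}_m;k)$ is nonzero only for $i - 1 = (d-\kappa) - 1$, i.e. $i = d - \kappa$ — and one must check this is $\kappa$, not $d-\kappa$; this is the point where I would be most careful, and I expect the correct reading is that the bounded chamber has signature corresponding to $\ocircuit^-$ for the orientation with $\ocircuit^+ = \circuit$, so that the relevant complex is the \emph{ball} $B^{|\ocircuit^-|-1}$ pieces glued along the complementary spheres, giving the sphere of the right dimension $\kappa - 1$ after the join computation; in any case the cohomology is concentrated in a single degree.

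The main obstacle is the bookkeeping in the join/dimension computation: correctly identifying which of the two complexes $(\hat\Delta_\ocircuit)_{\ocircuit^+} \cong S^{|\ocircuit^+|-2}$ or $(\hat\Delta_\ocircuit)_{\ocircuit^-} \cong B^{|\ocircuit^-|-1}$ shows up for $-D$ (as opposed to $D$), handling the lineality subspace $\ker L_T$ correctly so that the ambient dimension count yields exactly $d - \rk L_T = \kappa$, and making the join-of-spheres argument rigorous when several fibrational circuits overlap in $T$ rather than being disjoint. Once the single nonzero degree is pinned down to $\kappa(D)$, the vanishing $H^i(X,\sh{O}(-D)) = 0$ for $i \neq \kappa(D)$ follows by summing over $m \in M$. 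I would organize the proof so that the overlap case is reduced to the disjoint case by a change of basis of $M$ adapted to the flat $\bigcap_{\circuit \in \operatorname{fib}(D)} H_\circuit$, or alternatively by directly identifying $\hat{\Delta}_m$, for $m$ in the bounded chamber, with the simplicial model of the fan of the fibre $\mathbb{P}(\underline\alpha^+,\xi^+)$-type variety of the contraction $\phi$, whose cohomology is already known from Proposition \ref{circuitglobalcohom}.
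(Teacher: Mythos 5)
Your overall plan --- eigenspace decomposition via Theorem \ref{cohomtheorem}, showing that only the bounded chamber of the arrangement attached to $-D$ contributes, and identifying the bounded-chamber complex as a sphere whose dimension fixes the cohomological degree --- is exactly the paper's, but both nontrivial steps have gaps. The first is your dismissal of the unbounded chambers: you assert that $\hat{\Delta}_m$ is ``contractible or empty by the usual star-contractibility argument,'' but that argument only applies when the signature is contained in a single cone of $\Delta$, which fails for generic unbounded chambers of a complete fan (on $\mathbb{P}^1\times\mathbb{P}^1$ with $D=D_1+D_3$ one already meets signatures such as $\{1,2,4\}$, whose full subcomplex is a path, not the star of anything). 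The paper's argument is genuinely two-step: for the arrangement of $D$ itself every nonempty signature complex $\hat{\Delta}_J$ is contractible because it is the portion of $\partial P_D$ visible from $m$ (this is where nefness of $D$ enters); the signatures for $-D$ at $-m$ are then the complements $\on\setminus J$, and since $\hat{\Delta}\simeq S^{d-1}$, Alexander duality yields acyclicity (not contractibility) of $\hat{\Delta}_{\on\setminus J}$. Without Alexander duality or an equivalent device this step is simply missing from your proposal.

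The second gap is in the bounded chamber, and here your join computation is run on the wrong vertex set. For $-m$ with $m$ in the relative interior of $P_D$, the signature for $-D$ is $\{i \mid l_i(m) > -c_i\}$, i.e.\ the set of rays on which $l_i$ is \emph{not} constant on $P_D$ --- the complement of the span-closure of $T=\bigcup_{\circuit\in\operatorname{fib}(D)}\circuit$, not $T$ itself. Concretely, on $X=\mathbb{P}^1\times\mathbb{P}^2$ with $D$ the pullback of $\sh{O}(2)$ from $\mathbb{P}^1$ (so $\kappa(D)=1$), $\operatorname{fib}(D)$ consists of the $\mathbb{P}^2$-circuit, the bounded-chamber signature is the pair of $\mathbb{P}^1$-rays, and the complex is $S^0=S^{\kappa(D)-1}$, whereas the join of the circuits in $\operatorname{fib}(D)$ would give $S^1$ and place the cohomology in degree $2$. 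You noticed this $\kappa$ versus $d-\kappa$ tension and hedged, but resolving it is precisely the content of the proposition. (For what it is worth, the paper's own proof says ``$d-\kappa(D)-1$-sphere'' at this point, which the same example shows must be a slip for $\kappa(D)-1$.) Your closing suggestion --- identifying $\hat{\Delta}_m$ for bounded $m$ with the simplicial model of the complete $\kappa(D)$-dimensional fan of the image of $\phi$ --- is the correct way to pin down the sphere dimension, but it is the needed argument rather than an optional alternative, and it is not carried out.
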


\begin{proof}
Consider the hyperplane arrangement given by the $H_i^\uc$ in $M_\Q$. Let $m \in M_\Q$
and $I = \mathfrak{s}^\uc(m)$. Then the simplicial complex $\hat{\Delta}_I$ can be
characterized as follows. Consider $Q \subset P_D$ the union of the set faces of $P_D$
which are contained in any $H_i^\uc$ with $i \in I$. This is precisely the portion of $P_D$,
which the the point $m$ ``sees'', and therefore contractible, where the convex hull of $Q$
and $m$ provides the homotopy between $Q$ and $m$. Therefore, every $\hat{\Delta}_I$
is contractible with an exception for $I = \emptyset$, because $\hat{\Delta}_\emptyset
= \emptyset$, which is not acyclic with respect to reduced cohomology. Now we pass to
the inverse, i.e. we consider the signature of $-m$ with respect to $H_i^{-\uc}$. Then for
any such $-m$ which does not sit in the relative interior of the polytope $P^\on_{-\uc}$,
there exists $m' \in M_\Q$ with signature $\mathfrak{s}^\uc(m') =: J$ such that
$\hat{\Delta}_J$ is contractible and $\mathfrak{s}^{-\uc}(m) = \on \setminus J$.
As $\hat{\Delta}$ is homotopic to a ${d - 1}$-sphere, we can apply Alexander duality
and thus the simplicial complex $\hat{\Delta}_{\on \setminus J}$ is acyclic.
Thus there remain only the elements in the relative interior of $P^\on_{-\uc}$.
Let $m$ be such an element with signature $I$, then $\hat{\Delta}_I$ is isomorphic
to a $d - \kappa(D) - 1$-sphere, and the assertion follows.
\end{proof}

This proposition implies the toric Kodaira and Kawamata-Viehweg vanishing
theorems (see also \cite{Mustata1}):

\begin{theorem}[Kodaira \& Kawamata-Viehweg]\label{KodairaKawamataViehweg}
Let $X$ be a complete toric variety and $D$, $E$ $\Q$-divisors with $D$ nef and
$E = \sum_{i \in \on} e_i D_i$ with $-1 < e_i < 0$ for all $i \in \on$. Then:
\begin{enumerate}[(i)]
\item\label{kodaira} if $D$ is integral, then $H^i\big(X, \sh{O}(D + K_X)\big) = 0$ for all
$i \neq 0, d - \kappa(D)$;
\item\label{kawamataviehweg} if $D + E$ is a Weil divisor, then
$H^i\big(X, \sh{O}(D + E)\big) = 0$ for all $i > 0$.
\end{enumerate}
\end{theorem}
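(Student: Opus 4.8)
The plan is to deduce both parts from Proposition~\ref{antinefcohomology} and the combinatorial description of cohomology in Theorem~\ref{cohomtheorem}: part (ii) will come essentially for free from the ``contractibility'' half of the proof of Proposition~\ref{antinefcohomology}, and part (i) from Proposition~\ref{antinefcohomology} combined with the graded form of Serre duality. I would fix an invariant representative $D = \sum_{i\in\on} c_i D_i$ (with $c_i\in\Q$, since $D$ is only $\Q$-Cartier) and use throughout that, for any invariant $D'=\sum_{i\in\on} c'_i D_i$ and any $m\in M$, Theorem~\ref{cohomtheorem} gives $H^i\big(X,\sh{O}(D')\big)_m \cong H^{i-1}(\hat{\Delta}_m;k)$ with $\hat{\Delta}_m$ the full subcomplex of $\hat{\Delta}$ on $\{\,i\in\on\mid l_i(m)<-c'_i\,\}$.

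For part (ii) I would first note that, since $D+E$ is integral with $-1<e_i<0$, each $c_i$ is non-integral and $c_i+e_i=\lfloor c_i\rfloor$, so $D+E=\sum_{i\in\on}\lfloor c_i\rfloor D_i$. For $m\in M$ the conditions $l_i(m)<-\lfloor c_i\rfloor$ and $l_i(m)<-c_i$ define the same subset of $\on$ (both sides of the first are integers and $\lfloor c_i\rfloor<c_i$), so the signature of $m$ with respect to $D+E$ equals $\mathfrak{s}^{\uc}(m)$. Thus the simplicial complexes computing $H^\bullet\big(X,\sh{O}(D+E)\big)_m$ are exactly the complexes $\hat{\Delta}_{\mathfrak{s}^{\uc}(m)}$ that occur, for the nef divisor $D$, in the proof of Proposition~\ref{antinefcohomology}; that proof shows they are contractible whenever $\mathfrak{s}^{\uc}(m)\neq\emptyset$ and empty otherwise, so $H^{i-1}(\hat{\Delta}_m;k)=0$ for all $i>0$ in either case. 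Summing over $m$ then gives $H^i\big(X,\sh{O}(D+E)\big)=0$ for $i>0$.

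For part (i), $D$ is integral and nef, so Proposition~\ref{antinefcohomology} already yields $H^j\big(X,\sh{O}(-D)\big)=0$ for $j\neq\kappa(D)$; I would then identify $H^i\big(X,\sh{O}(D+K_X)\big)$ with $H^{d-i}\big(X,\sh{O}(-D)\big)\check{\ }$ via the graded Serre duality $H^i\big(X,\sh{O}(D')\big)_m\cong H^{d-i}\big(X,\sh{O}(K_X-D')\big)_{-m}\check{\ }$, which holds for every Weil divisor $D'$ on a complete toric variety. To prove this I would apply Theorem~\ref{cohomtheorem} to both sides and check that the two relevant full subcomplexes of $\hat{\Delta}$ sit on complementary vertex sets of $\on$ (here using that $D'$ is integral), and then invoke Alexander duality on $\hat{\Delta}\simeq S^{d-1}$ (valid for $X$ complete) together with universal coefficients over $k$ --- the very same duality already used in the proof of Proposition~\ref{antinefcohomology}. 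Taking $D'=D+K_X$ (so $K_X-D'=-D$) gives $H^i\big(X,\sh{O}(D+K_X)\big)=0$ for all $i\neq d-\kappa(D)$, which in particular contains the asserted vanishing for $i\notin\{0,d-\kappa(D)\}$; I would note in passing that this also forces $H^0=0$ unless $\kappa(D)=d$.

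I expect no serious obstacle: the two non-formal ingredients --- contractibility of the complexes $\hat{\Delta}_I$ for a nef divisor, and Alexander duality on the sphere --- are already established in the proof of Proposition~\ref{antinefcohomology}, so the real work is done. The points that need care are the integrality rounding in (ii) (matching the signature of an integral character with respect to $\uc$ with that with respect to $\lfloor\uc\rfloor$) and, in (i), using the version of Serre duality valid for arbitrary divisorial sheaves, proved combinatorially, rather than Corollary~\ref{mcmserreduality}, since $\sh{O}(D+K_X)$ is in general not maximal Cohen--Macaulay.
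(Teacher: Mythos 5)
Your argument is correct and reduces both parts to Proposition \ref{antinefcohomology}, which is also the paper's strategy, but the two steps are carried out differently. For (i), the paper does not need a Serre duality valid for arbitrary divisorial sheaves: it applies Corollary \ref{mcmserreduality} to the Weil divisor $-D$, whose sheaf \emph{is} maximal Cohen--Macaulay by Proposition \ref{QCartierimpliesMCM} because $-D$ is $\Q$-Cartier; this gives $H^{i}\big(X,\sh{O}(K_X+D)\big)\cong H^{d-i}\big(X,\sh{O}(-D)\big)\check{\ }$ directly, so your stated reason for avoiding that corollary (that $\sh{O}(D+K_X)$ need not be MCM) is moot --- the duality is invoked from the other side. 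Your substitute, a combinatorial graded duality via Alexander duality on $\hat{\Delta}\simeq S^{d-1}$ together with the complementarity of the vertex sets of $\hat{\Delta}_m$ for $D'$ and $\hat{\Delta}_{-m}$ for $K_X-D'$, is valid and is the same device the paper already uses inside the proof of Proposition \ref{antinefcohomology}; it just requires a little care when $\Delta$ is not simplicial, since $\hat{\Delta}$ is then only homotopy equivalent to a sphere rather than a triangulation of one. For (ii), the paper argues in $A_{d-1}(X)_\Q$, observing that the class of $D+E$ lies in the interior of every shifted half-space $K_X+H_\ocircuit$ with $\ocircuit\in\mathfrak{F}_\nef$ and leaving the rest implicit, whereas you work directly in $M$: the rounding identity $c_i+e_i=\lfloor c_i\rfloor$ shows that every lattice character has the same signature with respect to $D+E$ as with respect to the nef $\Q$-divisor $D$, and the contractibility of the complexes $\hat{\Delta}_I$ established in the proof of Proposition \ref{antinefcohomology} then kills all higher cohomology. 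Your version of (ii) is more explicit and self-contained than the paper's one-line justification, and both routes are sound.
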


\begin{proof}
By Proposition \ref{QCartierimpliesMCM} we can apply Serre duality (Corollary \ref{mcmserreduality})
and obtain $H^i\big(X, \sh{O}(D + K_X)\big) \cong H^{d - i}\big(X, \sh{O}(-D\big)$ and
(\ref{kodaira}) follows from Proposition \ref{antinefcohomology}.

For (\ref{kawamataviehweg}):  $D + E$ is contained the interior
of every half space $K_X + H_\ocircuit$ for $\ocircuit \in \mathfrak{F}_\nef$,
and the result follows.
\end{proof}

\section{Arithmetic aspects of cohomology vanishing}\label{arithmeticaspects}

In this section we are going to combine aspects from sections \ref{onecircuitvarieties} and
\ref{discriminantalarrangementsandsecfans}. In particular, we want to derive vanishing
results for integral divisors which cannot directly be derived from the setting of $\Q$-divisors.
From now on we assume that the $l_i$ are integral.
Recall that for any integral divisor $D = \sum_{i \in \on} c_i D_i$ and any torus invariant closed
subvariety $V$ of $X$, vanishing of $H_V^i\big(X, \sh{O}(D)\big)$ depends on two things:
\begin{enumerate}[(i)]
\item whether the set of signatures $\mathfrak{s}^\uc(M_\Q)$ consists of $I \subset \on$
such that the relative cohomology groups $H^{i - 1}(\hat{\Delta}_I, \hat{\Delta}_{V, I}; k)$ vanish, and,
\item if $H^{i - 1}(\hat{\Delta}_I, \hat{\Delta}_{V, I}; k)$ for one such $I$, whether the
corresponding polytope $P_\uc^I$ contain lattice points $m$ with $\mathfrak{s}^\uc(m) = I$.
\end{enumerate}
In the Gale dual picture, the signature $\mathfrak{s}^\uc(M_\Q)$ coincides with the set of $I \subset \on$
such that the class of $D$ in $A_{d - 1}(X)_\Q$ is contained in $C_I$. For fixed $I$, the classes
of divisors $D$  in $A_{d - 1}(X)$ such that the equation $l_i(m) < -c_i$ for $i \in I$ and
$l_i(m) \geq -c_i$ for $i \notin I$ is satisfied, is counted by the {\em generalized partition function}.
That is, by the function
\begin{equation*}
D \mapsto \Big| \{ (k_1, \dots, k_n) \in \N^n \mid \sum_{i \in \on \setminus I} k_i D_i - \sum_{i \in I} k_i D_i = D
\text{ where } k_i > 0 \text{ for } i \in I\} \Big|.
\end{equation*}
So, in the most general picture, we are looking for $D$ lying in the common zero set of the
vector partition function for all relevant signatures $I$ of $D$. In general, this is a difficult
problem to determine these zero sets,
and it is hardly necessary for practical purposes.

\begin{remark}\label{frobeniusremark}
The diophantine Frobenius problem (also known as money change problem or
denumerant problem) is a classical problem of number theory and so far it is
unsolved, though for fixed $n$, there exist polynomial time algorithms to
determine the zeros of  the vector partition function. Though there do exist
explicit formulas for the Ehrhart quasipolynomials (see for instance \cite{ChenTuraev}),
a general closed solution is only known
for the case $n = 2$. For a general overview we refer to the book
\cite{RamirezAlfonsin}. Vector partition functions play an
important role in the combinatorial theory of rational polytopes and have been
considered, e.g. in \cite{Sturmfels95}, \cite{BrionVergne97} (see also references
therein). In \cite{BrionVergne97} closed expressions in terms of residue formulas
have been obtained. Moreover it was shown that the vector partition function
is a piecewise
quasipolynomial function, where the domains of quasipolynomiality are
chambers (or possibly unions of chambers) of the secondary fan. In particular, for
if $P_\uc^\emptyset$ is a rational bounded polytopy, then the values of the
vector partition function for $P_{k \cdot \uc}^\emptyset$ for
$k \geq 0$, is just the Ehrhart quasipolynomial.
\end{remark}

 A first
--- trivial --- approximation is given by the observation that the divisors $D$ where the vector
partition function takes a nontrivial value map to the cone $C_I$, shifted by the {\em offset}
$e_I := -\sum_{i \in I} e_i$. This offset is essentially the same as the offset $K_\ocircuit$ of
section \ref{onecircuitvarieties}.

\begin{definition}
We denote $\mathbb{O}'(L, I)$ the saturation of  cone generated the $-D_i$ for $i \in I$ and the
$D_i$ for $i \notin I$ and $\mathbb{O}(L, I) := e_I + \mathbb{O}'(L, I)$. Moreover, we denote
$\Omega(L, I)$ the zero set in $\mathbb{O}(L, I)$ of the vector partition function as defined above.
\end{definition}

In the next step we want to approximate the sets $\Omega(L, I)$ by reducing to the classical
diophantine Frobenius problem. For this, fix some $I \subset \on$ and consider some polytope
$P_\uc^I$. It follows from Proposition \ref{orthantboundary} that $D$ is contained in the
intersection of half spaces $H_\ocircuit$ for $\ocircuit \in \ocircuit(L)$ such that $\ocircuit^- =
\circuit \cap I$. In the polytope picture, we can interpret this as follows. For every $\ocircuit$
and its underlying circuit $\circuit$, we set
\begin{equation*}
P^\ocircuit_\uc := \{m \in M_\Q \mid l_i(m) \leq -c_i \text{ for } i \in \ocircuit^- \text{ and }
l_i(m) \geq -c_i \text{ for } i \in \ocircuit^+\}.
\end{equation*}
Consequently, we get
\begin{equation*}
P_\uc^I = \bigcap_{\ocircuit} P^\ocircuit_\uc,
\end{equation*}
where the intersection runs over all $\ocircuit \in \ocircuit(L)$ with $\ocircuit^- = \circuit \cap I$.
It follows that if there exists a compatible oriented circuit $\ocircuit$ such that  $P_\uc^\ocircuit$
does not contain a lattice point, then $P_\uc^I$ also does not contain a lattice point. We want
to capture this by considering an arithmetic analogue of the discriminantal arrangement in
$A_{d - 1}(X)$ rather than in $A_{d - 1}(X)_\Q$. Using the integral pendant to diagram
(\ref{HIdefdiagram})  (compare definition \ref{fcdef}):

\begin{definition}
Consider the morphism $\eta_I: A_{d - 1}(X) \twoheadrightarrow A_I$. Then we denote
$Z_I$ its kernel. For $I = \circuit$ and $\ocircuit$ some orientation of $\circuit$ we denote
by $F_\ocircuit$ the preimage in $A_{d - 1}(X)$ of the complement
of the semigroup consisting of elements $\sum_{i \in \ocircuit^-} c_i D_i - \sum_{i \in \ocircuit^+} c_i D_i $,
where $c_i \geq 0$ for $i \in \ocircuit^-$ and $c_i > 0$ for $i \in \ocircuit^+$.
We set $F_\circuit := F_\ocircuit \cap F_{-\ocircuit}$.
\end{definition}

So, there are two candidates for a discriminantal arrangement in $A_{d - 1}(X)$, the $Z_\circuit$
on the one hand, and the $F_\circuit$ on the other.

\begin{definition}
We denote:
\begin{itemize}
\item $\{Z_\circuit\}_{\circuit \in \circuit(L)}$ the {\em integral} discriminantal arrangement, and
\item $\{F_\circuit\}_{\circuit \in \circuit(L)}$ the {\em Frobenius} discriminantal arrangement.
\end{itemize}

\end{definition}

The integral discriminantal arrangement has similar properties as the
$H_I$, as they give a solution to the integral discriminant problem (compare Lemma \ref{intersectionproperty}):

\begin{lemma}\label{zintersectionproperty}
Let $I \subset \on$, then
\begin{equation*}
Z_I = \bigcap_{\circuit \in \circuit(L_I)} Z_\circuit.
\end{equation*}
\end{lemma}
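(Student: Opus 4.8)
The plan is to reinterpret $Z_I$ and the $Z_{\circuit}$ as kernels of maps induced on cokernels by coordinate projections, and to reduce the asserted identity to the injectivity of a single comparison map. Restricting the standard sequence (\ref{standardZsequence}): for any $J\subseteq\on$ the projection $\Z^{\on}\to\Z^J$ carries $L(M)$ into $L_J(M)$, hence induces the surjection $\eta_J\colon A_{d-1}(X)\twoheadrightarrow A_J$ whose kernel is $Z_J$; and if $\circuit\subseteq I$, the further projection $\Z^I\to\Z^{\circuit}$ induces a map $A_I\to A_{\circuit}$ through which $\eta_{\circuit}$ factors, so that $Z_I=\ker\eta_I\subseteq\ker\eta_{\circuit}=Z_{\circuit}$. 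This already gives the trivial inclusion $Z_I\subseteq\bigcap_{\circuit\subseteq I}Z_{\circuit}$. For the reverse inclusion it suffices to show that the natural map $A_I\longrightarrow\prod_{\circuit\subseteq I\ \text{circuit}}A_{\circuit}$ is injective: an element of $A_{d-1}(X)$ dying in every $A_{\circuit}$ then already dies in $A_I$, i.e. lies in $Z_I$. (When $\{l_i\}_{i\in I}$ is independent there are no circuits inside $I$, $A_I$ is trivial, and both sides equal $A_{d-1}(X)$, matching the convention used for Lemma \ref{intersectionproperty}.)

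Next I would split the injectivity of this comparison map into its rational and its torsion parts. Writing an element of $A_I$ as $[v]$ with $v\in\Z^I$ — possible since $A_{d-1}(X)\to A_I$ is onto — its image in $A_{\circuit}$ is $[\rho_{\circuit}(v)]$, and the hypothesis is $\rho_{\circuit}(v)\in L_{\circuit}(M)$ for every circuit $\circuit\subseteq I$. The rational content of this is supplied directly by Lemma \ref{intersectionproperty}: since $[v]$ lies in every hyperplane $H_{\circuit}$ with $\circuit\subseteq I$, it lies in $H_I$, which on the level of representatives says $v\in L_I(M_\Q)\cap\Z^I$, the saturation of $L_I(M)$ in $\Z^I$. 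Equivalently, one can say this without reference to $H_I$: $\rho_{\circuit}(v)\in L_{\circuit}(M)$ forces $\rho_{\circuit}(v)$ to be orthogonal to the primitive circuit relation $\underline{\alpha}^{(\circuit)}$, and since these relations span over $\Q$ the full space of relations $R_I$ among $\{l_i\}_{i\in I}$ (again Lemma \ref{intersectionproperty}), the integer vector $v$ is orthogonal to $R_I$, i.e. lies in $L_I(M_\Q)\cap\Z^I$. So the only thing still to rule out is a residual class in the finite group $\bigl(L_I(M_\Q)\cap\Z^I\bigr)/L_I(M)\cong\bar N_I/N_I$, where $N_I=\langle l_i\mid i\in I\rangle$ and $\bar N_I$ is its saturation.

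The step I expect to be the main obstacle is precisely showing that this residual class is forced to vanish — equivalently, that the restriction maps assemble to an injection $\bar N_I/N_I\hookrightarrow\prod_{\circuit\subseteq I}\bar N_{\circuit}/N_{\circuit}$, so that the saturation defect of a spanning subconfiguration is detected on its circuits. This is the genuinely integral refinement of the linear-algebra statement $H_I=\bigcap_{\circuit\subseteq I}H_{\circuit}$, and I would prove it by the same sort of bookkeeping used in Lemma \ref{cycliclemma}: a nonzero class in $\bar N_I/N_I$ is represented by some $l'\in\bar N_I$ with $\lambda l'\in N_I$ for a minimal $\lambda>1$; expressing $\lambda l'$ through a minimal dependency supported in $I$ produces a circuit $\circuit\subseteq I$ for which $l'$ is not already integral relative to $N_{\circuit}$, and by uniqueness of the circuit relation this contradicts $\rho_{\circuit}(v)\in L_{\circuit}(M)$. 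Concretely one patches the integral witnesses $m_{\circuit}\in M$ (with $l_i(m_{\circuit})=v_i$ for $i\in\circuit$) against a rational $m_0\in M_\Q$ realizing $v$ on all of $I$; the obstruction to gluing them into a single element of $M$ is exactly the image of $v$ in $\bar N_I/N_I$, which the circuit witnesses kill. Granting this, the comparison map $A_I\to\prod_{\circuit\subseteq I}A_{\circuit}$ is injective and the identity $Z_I=\bigcap_{\circuit\in\circuit(L_I)}Z_{\circuit}$ follows.
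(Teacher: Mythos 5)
The paper states this lemma with no proof at all (it is used immediately afterwards for the integral Picard group theorem), so there is nothing to compare your argument against; judged on its own terms, your reduction of the nontrivial inclusion to injectivity of $A_I\to\prod_{\circuit\subseteq I}A_\circuit$, and the rational half of that injectivity via Lemma \ref{intersectionproperty}, are correct. The gap sits exactly at the step you flag as the main obstacle: the claim that the residual class in $\bigl(L_I(M_\Q)\cap\Z^I\bigr)/L_I(M)\cong\operatorname{Tors}(A_I)$ is detected on circuits is false, and no bookkeeping in the style of Lemma \ref{cycliclemma} can rescue it. Already your parenthetical assertion that $A_I$ is trivial when $\{l_i\}_{i\in I}$ is independent is wrong: independence only gives $A_{I,\Q}=0$, while $A_I=\Z^I/L_I(M)$ is the finite group $\bar{N}_I/N_I$, which need not vanish (for $l_1=(1,0)$, $l_2=(1,2)$ in $N=\Z^2$ one gets $A_{\{1,2\}}\cong\Z/2\Z$, no circuits, so $Z_{\{1,2\}}$ is a proper subgroup while the empty intersection on the right is all of $A_{d-1}(X)$). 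Nor is this only a boundary phenomenon: on the complete toric surface with rays $l_1=(1,0)$, $l_2=(-1,0)$, $l_3=(1,2)$, $l_4=(-1,-2)$ and $I=\on$, the only circuits are $\{1,2\}$ and $\{3,4\}$, every index lies in one of them, and $v=(0,0,1,-1)$ represents a nonzero $2$-torsion class of $A_{d-1}(X)=A_I$ that lies in $Z_{\{1,2\}}\cap Z_{\{3,4\}}$ (witnessed by $m=0$ and $m=(1,0)$) but not in $Z_{\on}=0$. Your proposed repair breaks down concretely here: $2\cdot(0,1)=l_3-l_1$ is supported on the independent set $\{1,3\}$, so ``expressing $\lambda l'$ through a minimal dependency'' produces no circuit; the uniqueness of the circuit relation that drives Lemma \ref{cycliclemma} is unavailable once $I$ carries more than one relation.

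So the statement you were asked to prove is false as written, and the defect is the paper's, not yours (it also invalidates the second equality in the integral Picard group theorem that follows: for a simplicial but singular $X$ the sets $\sigma(1)$ contain no circuits, yet $\pic(X)\neq A_{d-1}(X)$). What your rational argument does prove is the saturated version: $[v]$ lies in the saturation of $Z_I$ in $A_{d-1}(X)$ if and only if it lies in the saturation of every $Z_\circuit$ with $\circuit\in\circuit(L_I)$ — that is, Lemma \ref{intersectionproperty} read modulo torsion. To get an honest integral statement one must either assume each $A_I$ is torsion free (e.g.\ $X$ smooth) or add the torsion of $A_I$ to the right-hand side by hand. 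Your instinct about where the difficulty lies was exactly right; the error is in asserting it can be overcome.
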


As in the rational case, we can use this to locate the integral Picard group in
$A_{d - 1}(X)$:

\begin{theorem}[see \cite{Eikelberg}, Theorem 3.2]
Let $X$ be any toric variety, then:
\begin{equation*}
\pic(X) = \bigcap_{\sigma \in \Delta_{\max}} Z_{\sigma(1)} =
\bigcap_{\substack{\circuit \in \circuit(L_{\sigma(1)}) \\ \sigma \in \Delta_{\max}}}
Z_\circuit.
\end{equation*}
\end{theorem}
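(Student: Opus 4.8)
The plan is to reproduce over $\Z$ the proof of the rational statement Theorem~\ref{picardgroup}. Recall the standard local criterion for torus-invariant Cartier divisors: a $T$-invariant Weil divisor $D=\sum_{i\in\on}c_iD_i$ is Cartier if and only if its restriction to $U_\sigma$ is principal for every maximal cone $\sigma\in\Delta_{\max}$, which --- since $\pic(U_\sigma)=0$ and the $U_\sigma$ with $\sigma$ maximal cover $X$ --- is the case precisely when for each such $\sigma$ there is an element $m_\sigma\in M$ (and not merely $m_\sigma\in M_\Q$, as in the $\Q$-Cartier case recalled in subsection~\ref{birat}) with $l_i(m_\sigma)=c_i$ for all $i\in\sigma(1)$. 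Since every line bundle on a toric variety admits a $T$-equivariant structure (see \cite{Oda}), we identify $\pic(X)$ with the subgroup of $A_{d-1}(X)$ consisting of classes of $T$-invariant Cartier divisors, so that the asserted identities are equalities of subgroups of $A_{d-1}(X)$.

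First I would translate ``$D$ is Cartier on $U_\sigma$'' into the Gale-dual picture via the integral analogue of diagram~(\ref{HIdefdiagram}), namely the commutative diagram with exact rows
\begin{equation*}
\xymatrix{
M \ar[r]^{L} \ar@{=}[d] & \Z^n \ar[r] \ar@{->>}[d] & A_{d-1}(X) \ar[r] \ar@{->>}[d]^{\eta_{\sigma(1)}} & 0 \\
M \ar[r]^{L_{\sigma(1)}} & \Z^{\sigma(1)} \ar[r] & A_{\sigma(1)} \ar[r] & 0
}
\end{equation*}
obtained from sequence~(\ref{standardZsequence}) by restricting to the coordinates indexed by $\sigma(1)$; here the middle vertical map is the coordinate projection and the bottom row is exact because $A_{\sigma(1)}$ is, by definition, $\coker(L_{\sigma(1)})$. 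A short diagram chase shows that the class $[D]$, being the image of $\uc\in\Z^n$, lies in $Z_{\sigma(1)}=\ker\eta_{\sigma(1)}$ if and only if $\uc|_{\sigma(1)}\in\Z^{\sigma(1)}$ lies in the image of $L_{\sigma(1)}$, that is, if and only if there is $m\in M$ with $l_i(m)=c_i$ for all $i\in\sigma(1)$. By the criterion of the previous paragraph this is exactly the condition that $D$ be Cartier on $U_\sigma$, so $[D]\in\pic(X)$ if and only if $[D]\in Z_{\sigma(1)}$ for every $\sigma\in\Delta_{\max}$; this gives the first equality.

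The second equality is then immediate from Lemma~\ref{zintersectionproperty}: substituting $Z_{\sigma(1)}=\bigcap_{\circuit\in\circuit(L_{\sigma(1)})}Z_\circuit$ and intersecting over all $\sigma\in\Delta_{\max}$ yields precisely the right-hand side. The one point that genuinely requires care --- and the place where the argument departs from the rational case --- is the diagram chase above: the solvability of $L_{\sigma(1)}(m)=\uc|_{\sigma(1)}$ is demanded over $\Z$, so torsion in the finitely generated group $A_{\sigma(1)}$ may obstruct it even when a rational solution exists. This is exactly the distinction between $Z_{\sigma(1)}$ and the rational flat $H_{\sigma(1)}$, hence between Cartier and $\Q$-Cartier; everything else is bookkeeping already carried out in the proofs of Theorem~\ref{picardgroup} and Lemma~\ref{zintersectionproperty}.
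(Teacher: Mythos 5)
Your proof is correct and follows essentially the same route as the paper: the paper's (much terser) proof likewise characterizes Cartier divisors by the existence of integral points $m_\sigma\in M$ with $l_i(m_\sigma)=c_i$ on each maximal cone, identifies this with membership in $Z_{\sigma(1)}$, and then invokes Lemma \ref{zintersectionproperty} for the second equality. Your elaboration of the diagram chase and of the role of torsion in $A_{\sigma(1)}$ simply makes explicit what the paper leaves implicit.
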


\begin{proof}
A Cartier divisor is specified by a collection $\{m_\sigma\}_{\sigma \in \Delta} \subset M$
such that the hyperplanes $H_i^\circuit$ with $i \in \sigma(1)$ intersect in
integral points. So the first equality follows.
The second equality follows from Lemma \ref{zintersectionproperty}.
\end{proof}

The Frobenius discriminantal arrangement is not as straightforward. First, we note
the following properties:

\begin{lemma}\label{movedawaylemma}
Let $\circuit \in \circuit(L)$, then:
\begin{enumerate}[(i)]
\item\label{movedawaylemmai} $F_\circuit$ is nonempty;
\item\label{movedawaylemmaii}  the saturation of $Z_\circuit$ in $A_{d - 1}(X)$ is contained in
$F_\circuit$ iff $\circuit$ is not fibrational.
\end{enumerate}
\end{lemma}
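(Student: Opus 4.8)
The plan is to reduce both assertions to the $1$-circuit picture through the surjection $\eta_\circuit : A_{d-1}(X) \twoheadrightarrow A_\circuit$ together with the explicit semigroup descriptions of $F_\ocircuit$ and $F_{-\ocircuit}$. First I would fix notation: since $\circuit$ is a circuit, $A_\circuit$ has rank one; let $T$ be its torsion subgroup and $q : A_\circuit \to A_\circuit/T \cong \Z$ the projection onto the free part, so that (by sequence (\ref{circuitexact}) applied to the $1$-circuit variety attached to $\circuit$) $q([D_i]) = \alpha_i$, hence $q([D_i])>0$ exactly for $i\in\ocircuit^+$ and $q([D_i])<0$ exactly for $i\in\ocircuit^-$; I fix the orientation so that $\ocircuit^+\neq\emptyset$, which is harmless because $F_\circuit = F_\ocircuit\cap F_{-\ocircuit}$ is symmetric in $\ocircuit\leftrightarrow-\ocircuit$. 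Next, the saturation of $Z_\circuit=\ker\eta_\circuit$ in $A_{d-1}(X)$ is $\overline{Z}_\circuit=\eta_\circuit^{-1}(T)$ — the saturation of the kernel of a surjection of abelian groups is the preimage of the torsion subgroup of the target — and in particular $\eta_\circuit(\overline{Z}_\circuit)=T$. Finally, unwinding the definition preceding the lemma together with Definition \ref{fcdef}, $F_\ocircuit=\eta_\circuit^{-1}(A_\circuit\setminus\Sigma_\ocircuit)$ and $F_\circuit=\eta_\circuit^{-1}\big(A_\circuit\setminus(\Sigma_\ocircuit\cup\Sigma_{-\ocircuit})\big)$, where $\Sigma_{\ocircuit'}$ denotes the semigroup of classes $\sum_{i\in\ocircuit'^-}c_i[D_i]-\sum_{i\in\ocircuit'^+}c_i[D_i]$ in $A_\circuit$ with $c_i\in\N$ and $c_i>0$ for $i\in\ocircuit'^+$.

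The one substantive step is to compute the sign of $q$ on these semigroups. Applying $q$ to the general element of $\Sigma_\ocircuit$ yields $\sum_{i\in\ocircuit^-}c_i\alpha_i-\sum_{i\in\ocircuit^+}c_i\alpha_i<0$, since $\alpha_i<0$ on $\ocircuit^-$ while $\alpha_i>0$ and $c_i>0$ on the nonempty set $\ocircuit^+$; moreover, when $\ocircuit^-=\emptyset$ this value is $\le-\sum_{i\in\circuit}\alpha_i$. For $\Sigma_{-\ocircuit}$, since $(-\ocircuit)^+=\ocircuit^-$ and $(-\ocircuit)^-=\ocircuit^+$, the corresponding value of $q$ is $\sum_{i\in\ocircuit^+}c_i\alpha_i-\sum_{i\in\ocircuit^-}c_i\alpha_i\ge0$; it is $>0$ as soon as $\ocircuit^-\neq\emptyset$, whereas if $\ocircuit^-=\emptyset$ — i.e. exactly when $\circuit$ is fibrational — then $\Sigma_{-\ocircuit}=\{\sum_{i\in\circuit}c_i[D_i]:c_i\in\N\}$, which contains $0\in A_\circuit$. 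This dichotomy is what makes the fibrational case special and is really the only content of the lemma; everything else is bookkeeping.

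From this, (ii) follows: $\overline{Z}_\circuit\subseteq F_\circuit$ iff $T=\eta_\circuit(\overline{Z}_\circuit)$ is disjoint from $\Sigma_\ocircuit\cup\Sigma_{-\ocircuit}$; as every $t\in T$ has $q(t)=0$, disjointness from $\Sigma_\ocircuit$ is automatic, and disjointness from $\Sigma_{-\ocircuit}$ holds iff $q$ never vanishes on $\Sigma_{-\ocircuit}$, i.e. iff $\circuit$ is not fibrational (if it is, $0\in T\cap\Sigma_{-\ocircuit}$, so $0\in\overline{Z}_\circuit\setminus F_\circuit$). For (i): if $\circuit$ is not fibrational then $0\notin\Sigma_\ocircuit\cup\Sigma_{-\ocircuit}$, so $\emptyset\neq Z_\circuit=\eta_\circuit^{-1}(0)\subseteq F_\circuit$; if $\circuit$ is fibrational, choose $c\in A_\circuit$ with $q(c)=-1$ (possible since $q$ is onto), and note $-1<0$ but $-1>-\sum_{i\in\circuit}\alpha_i$ because $\sum_{i\in\circuit}\alpha_i\ge|\circuit|\ge2$; hence $c$ lies in neither $\Sigma_\ocircuit$ nor $\Sigma_{-\ocircuit}$, so $\emptyset\neq\eta_\circuit^{-1}(c)\subseteq F_\circuit$. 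The only places calling for care are the identification of the saturation of $Z_\circuit$ with $\eta_\circuit^{-1}(T)$ and keeping the sign conventions on the $\alpha_i$ (hence on $q$) consistent throughout.
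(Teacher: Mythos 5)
Your proof is correct and follows essentially the same route as the paper's: both reduce to the rank-one quotient $A_\circuit$, where $[D_i]\mapsto\alpha_i$ on the free part, and observe that the two semigroups defining $F_{\ocircuit}$ and $F_{-\ocircuit}$ take strictly negative, resp.\ nonnegative, values there, with $0$ lying in the second semigroup exactly in the fibrational case. If anything, your explicit treatment of the torsion subgroup of $A_\circuit$ in part (ii) is more careful than the paper's one-line remark, which only addresses the image in $A_{\circuit,\Q}$.
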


\begin{proof}
The first assertion follows because $F_\circuit$ contains all elements which map
to the open interval $(K_\ocircuit, K_{-\ocircuit})$ in $A_{\circuit, \Q}$.
For the second assertion, note that the set $\{m \in M \mid l_i(m) = 0 \text{ for all }
i \in \circuit\}$ is in $F_\circuit$ iff $\ocircuit^+ \neq \circuit$ for either orientation
$\ocircuit$ of $\circuit$.
\end{proof}

Lemma \ref{movedawaylemma} shows that the $F_\circuit$ are thickenings of
the $Z_\circuit$ with one notable exception, where $\circuit$ is fibrational.
In this case, $F_\circuit$ can be considered as parallel to, but slightly shifted
away from $Z_\circuit$. In the sequel we will not make any explicit use of the
$Z_\circuit$ anymore, but these facts should be kept in mind.

Regarding the Frobenius discriminantal arrangement, we want also to consider
integral versions of the discriminantal strata:

\begin{definition}\label{vanishingcoredefinition}
Let $\ocircuit \in \ocircuit(L)$ and let $\mathfrak{F}_\mathcal{S}$ be a
discriminantal hull of $\mathcal{S} = \{S_1, \dots, S_k\}$, then we denote
\begin{equation*}
\mathfrak{A}_\mathcal{S} := \bigcap_{\ocircuit \in \mathfrak{F}_\mathcal{S}} F_\ocircuit.
\end{equation*}
the {\em arithmetic  core} of $\mathfrak{F}_\mathcal{S}$.
In the special case $\mathfrak{F}_\mathcal{S} = \mathfrak{F}_\nef$ we write
$\mathfrak{A}_\nef$.
\end{definition}

\begin{remark}
The notion {\em core} refers to the fact that we consider {\em all} $F_\ocircuit$, instead
of a non-redundant subset describing the set $S$ as a convex cone.
\end{remark}

We will use arithmetic cores to derive arithmetic versions of known vanishing theorems
formulated in the setting of $\Q$-divisors and to get refined conditions on cohomology
vanishing. This principle is reflected in the following theorem:

\begin{theorem}\label{integralvanishingtheorem}
Let $V$ be a $T$-invariant closed subscheme of $X$ and $S$ a discriminantal
stratum in $A_{d - 1}(X)_\Q$. If $H^i_V\big(X, \sh{O}(D)\big) = 0$ for some $i$
and for all integral divisors $D \in S$, then also $H^i_V\big(X, \sh{O}(D)\big) = 0$
for all $D \in \mathfrak{A}_S$.
\end{theorem}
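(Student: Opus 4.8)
The plan is to reduce everything to counting lattice points in the polyhedra $P^I_{\underline c}$ and then to transport the hypothesis from $S$ to $\mathfrak A_S$ using the defining property of the Frobenius discriminantal arrangement together with a scaling argument. Fix an invariant representative $D=\sum_{i\in\on}c_iD_i$. By Theorem~\ref{cohomtheorem}, $H^i_V\big(X,\sh{O}(D)\big)=\bigoplus_{m\in M}H^{i-1}(\hat\Delta_{I(m)},\hat\Delta_{V,I(m)};k)$ with $I(m)=\mathfrak s^{\underline c}(m)$, and since the relative complex depends only on the index set, $H^i_V\big(X,\sh{O}(D)\big)=0$ is equivalent to the statement that \emph{no} $i$-exceptional index set $I$ --- meaning one with $H^{i-1}(\hat\Delta_I,\hat\Delta_{V,I};k)\neq0$ --- is realized as $\mathfrak s^{\underline c}(m)$ for some $m\in M$. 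Take $D\in\mathfrak A_S$ and suppose, for contradiction, that some $i$-exceptional $I$ is realized by $m_0\in M$; after replacing $\underline c$ by $\underline c+L(m_0)$ we may assume $m_0=0$, so $c_i\le-1$ for $i\in I$ and $c_i\ge0$ for $i\notin I$. I will construct an integral class $D'\in S$ for which the same $I$ is realized, contradicting the hypothesis.

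The key step is to prove $S\subseteq\operatorname{relint}(C_I)$. Let $\circuit$ be any circuit whose sign partition is $(\circuit\setminus I,\circuit\cap I)$, and let $\ocircuit$ be the orientation with $\ocircuit^+=\circuit\cap I$ and $\ocircuit^-=\circuit\setminus I$; by Proposition~\ref{orthantboundary} the half-spaces $H_{-\ocircuit}$ obtained in this way are exactly the nonredundant defining half-spaces of $C_I$. Since $m_0=0$ realizes the signature $I$, the representative $\underline c$ has $c_j<0$ for $j\in\ocircuit^+$ and $c_j\ge0$ for $j\in\ocircuit^-$; unwinding the integral pendant of diagram~(\ref{HIdefdiagram}) used in the definition of $F_\ocircuit$, this says exactly that $\eta_\circuit(D)$ lies in the semigroup $\{\sum_{j\in\ocircuit^-}a_jD_j-\sum_{j\in\ocircuit^+}a_jD_j\mid a_j\ge0,\ a_j>0\text{ for }j\in\ocircuit^+\}$, i.e. $D\notin F_\ocircuit$. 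As $D\in\mathfrak A_S=\bigcap_{\ocircuit'\in\mathfrak F_S}F_{\ocircuit'}$, it follows that $\ocircuit\notin\mathfrak F_S$; and because a discriminantal stratum lies either on $H_\circuit$ or strictly inside one of its two open sides, $S$ must lie strictly on the side of $H_\circuit$ opposite to $\ocircuit$, i.e. in the open half-space $H_{-\ocircuit}\setminus H_\circuit$. Intersecting over all such $\circuit$ and invoking the facet description just recalled yields $S\subseteq\operatorname{relint}(C_I)$.

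It remains to run the scaling argument. For a class $D'$ lying in $\operatorname{relint}(C_I)$ none of the inequalities defining $P^I_{\underline c'}$ is forced to an equality (this is the polyhedral form of $D'$ lying in the interior rather than on a wall of $C_I$), so $P^I_{\underline c'}$ is full-dimensional in $M_\Q$. Pick an integral $D'\in S$ --- such a class exists because $S$ is a relatively open rational cone; in the exceptional case $S=\{0\}$ the previous paragraph forces $C_I=A_\Q$, and then $I$ is already realized for $\underline c=0$, contradicting the hypothesis directly. Since $S$ is scale-invariant, $kD'\in S$ for every $k\ge1$, and $P^I_{k\underline c'}=k\,P^I_{\underline c'}$; as $P^I_{\underline c'}$ is full-dimensional, for $k$ large enough the interior of $k\,P^I_{\underline c'}$ contains a lattice point $m$, which then satisfies $\mathfrak s^{k\underline c'}(m)=I$. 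Consequently $H^i_V\big(X,\sh{O}(kD')\big)_m\cong H^{i-1}(\hat\Delta_I,\hat\Delta_{V,I};k)\neq0$, contradicting $H^i_V\big(X,\sh{O}(kD')\big)=0$ for the integral divisor $kD'\in S$. Hence no $i$-exceptional $I$ is realized by $D$, and $H^i_V\big(X,\sh{O}(D)\big)=0$.

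The step I expect to be the real obstacle is the middle one: keeping the orientation bookkeeping straight --- which oriented circuit $\ocircuit$ the lattice point $m_0$ witnesses $D\notin F_\ocircuit$ for, which of the two open sides of $H_\circuit$ this forces $S$ onto, and how Proposition~\ref{orthantboundary} repackages precisely those half-spaces into the facet description of $C_I$ --- so that the arithmetic condition $D\in\mathfrak A_S$ is converted into the clean polyhedral statement $S\subseteq\operatorname{relint}(C_I)$. Everything afterwards (the passage through Theorem~\ref{cohomtheorem} and the Ehrhart-type inflation of the polytope) is routine, as is the reduction at the start.
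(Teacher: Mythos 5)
Your proof is correct and follows the same route as the paper's: reduce to lattice\hbox{-}point realizability of signatures via Theorem \ref{cohomtheorem}, handle the signatures occurring over $S$ by scaling an integral class of $S$, and use the Frobenius conditions $D\in F_{\ocircuit}$ for $\ocircuit\in\mathfrak{F}_S$ to exclude lattice points with any other signature. Your middle step (that $S\subseteq\operatorname{relint}(C_I)$ whenever $I$ is realized by a lattice point for some $D\in\mathfrak{A}_S$) is precisely the contrapositive of the one-sentence claim with which the paper's proof concludes, worked out in full detail.
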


\begin{proof}
Without loss of generality we can assume that $\dim S > 0$.
Consider some nonempty $P_\uc^I$ for some $I \subset \on$. Then for any
such $I$, we can choose some multiple of $k D$ such that $P_{k\uc}^I$
contains a lattice point. But if $H^i_V\big(X, \sh{O}(D)\big) = 0$, then
also $H^i_V\big(X, \sh{O}(kD)\big) = 0$, hence $H^{i - 1}(\hat{\Delta}_I,
\hat{\Delta}_{V, I}; k) = 0$. Now, any divisor $D' \in \mathfrak{A}_S$
which does not map to $S$, is contained in $F_{\ocircuit}$ for all
$\ocircuit \in \mathfrak{F}_S$ and therefore for any $I$ which is in
the signature for $D'$ but not for $D$, the equations $l_i(m) < -c_i'$
for $i \in I$ and $l_i(m) \geq -c_i'$ for $i \notin I$ cannot have any
integral solution.
\end{proof}

We apply Theorem  \ref{integralvanishingtheorem} to $\mathfrak{A}_\nef$:

\begin{theorem}[Arithmetic version of Kawamata-Viehweg vanishing]\label{nefvanishing}
Let $X$ be a complete toric variety. Then $H^i\big(X, \sh{O}(D)\big) = 0$
for all $i > 0$ and all $D \in \mathfrak{A}_\nef$.
\end{theorem}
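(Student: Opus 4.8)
The plan is to derive Theorem~\ref{nefvanishing} as a direct application of Theorem~\ref{integralvanishingtheorem}, with $V = X$ and with $S$ taken to be the relative interior of the nef cone $\nef(X) = \nef(\Delta)$ (relative to its own span; if $\nef(X) = \{0\}$ the argument below just degenerates to $S = \{0\}$ and the base case $H^i\big(X,\sh{O}_X\big) = 0$). The first step is to observe that $\mathfrak{A}_S = \mathfrak{A}_\nef$. Indeed, by Theorem~\ref{nefcone} we have $\nef(\Delta) = \bigcap_{\ocircuit \in \mathfrak{F}_\nef} H_\ocircuit$, so an oriented circuit belongs to $\mathfrak{F}_\nef = \mathfrak{F}_{\nef(\Delta)}$ exactly when its closed half space $H_\ocircuit$ contains $\nef(\Delta)$; since $H_\ocircuit$ is closed this is the same as containing the relative interior $S$, and hence the oriented flat of $S$ is $\mathfrak{F}_\nef$ and $\mathfrak{A}_S = \bigcap_{\ocircuit\in\mathfrak{F}_\nef} F_\ocircuit = \mathfrak{A}_\nef$.

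The second step is to check the hypothesis of Theorem~\ref{integralvanishingtheorem} for this $S$: every integral divisor class $D$ mapping into $S$ has $H^i\big(X,\sh{O}(D)\big) = 0$ for all $i > 0$. This is the toric Demazure vanishing theorem, and it can be read off Theorem~\ref{cohomtheorem} directly: such a $D$ is $\Q$-Cartier and nef, so the support function of $P_D$ is convex, and for each $m\in M$ the set $I(m)$ records the facets of $P_D$ lying on the near side of $m$, whose union is either empty or a ball; thus each $\hat{\Delta}_{I(m)}$ is either empty (contributing only in degree $0$) or contractible, which kills all higher eigenspaces. When $X$ is projective one may instead argue by duality: for $D$ ample, $\sh{O}(D)$ is MCM by Proposition~\ref{QCartierimpliesMCM}, so Corollary~\ref{mcmserreduality} gives $H^i\big(X,\sh{O}(D)\big)\cong H^{d-i}\big(X,\sh{O}(K_X-D)\big)\check{\ }$, and $-(K_X-D) = D-K_X$ is again $\Q$-ample (its polytope is $P_D$ pushed out along every facet), so it has Iitaka dimension $d$ and Proposition~\ref{antinefcohomology} forces the right-hand side to vanish for $i>0$.

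Given these two steps, Theorem~\ref{integralvanishingtheorem} propagates the vanishing from the integral points of $S$ to all of $\mathfrak{A}_S = \mathfrak{A}_\nef$, which is the claim. The point I expect to require the most care — although it is already absorbed into the proof of Theorem~\ref{integralvanishingtheorem} — is the following bookkeeping. When the discriminantal arrangement properly subdivides $\nef(\Delta)$, the set $S$ is a relatively open union of several discriminantal strata rather than a single one, so Theorem~\ref{integralvanishingtheorem} has to be used in the mild variant where $S$ is any such union closed under multiplication by positive integers; its proof is unaffected, since it uses only this scaling stability (to make each nonempty $P^I_\uc$ acquire a lattice point after passing to a multiple $kD$) together with Proposition~\ref{orthantboundary}, which identifies the oriented circuits $\ocircuit$ whose half spaces cut out the orthant $C_I$. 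For any signature $I$ that occurs for a class of $\mathfrak{A}_\nef$ but not for a class of $S$, at least one such $\ocircuit$ lies in $\mathfrak{F}_\nef$, and membership of $D$ in the corresponding $F_\ocircuit$ says exactly that the shifted orthant $P^I_\uc$ contains no lattice point; by Theorem~\ref{cohomtheorem} this annihilates the offending eigenspace of $H^i\big(X,\sh{O}(D)\big)$. So the geometric input is all contained in Theorems~\ref{cohomtheorem}, \ref{nefcone} and~\ref{integralvanishingtheorem}, and what is left is to match the Frobenius gaps packaged in the sets $F_\ocircuit$ against the relative cohomology computation.
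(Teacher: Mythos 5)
Your overall strategy is the paper's: verify Demazure vanishing for integral nef classes (your two verifications of this base case are both fine, and more detailed than the paper, which simply cites the fact) and then propagate it through Theorem~\ref{integralvanishingtheorem}. The gap is in the last paragraph, precisely at the point you yourself flag as delicate. You claim that for a signature $I$ occurring for some $D \in \mathfrak{A}_\nef$ but for no class in $S = \operatorname{relint}\nef(X)$, ``at least one such $\ocircuit$ lies in $\mathfrak{F}_\nef$.'' Unwinding the definitions, this asserts that one of the circuit hyperplanes $H_\circuit$ cutting out a facet of $C_I$ separates $C_I$ from $\nef(X)$, i.e.\ has all of $\nef(X)$ weakly on the opposite side. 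Proposition~\ref{orthantboundary} does not give this: it only tells you which $H_\ocircuit$ contain $C_I$. Two polyhedral cones meeting only at the origin need not be separated by a facet hyperplane of either one (already in the plane: take $C_I$ the first quadrant and the other cone a half-plane whose boundary line passes through the interiors of the second and fourth quadrants). And this is exactly the situation you cannot exclude here: when the discriminantal arrangement subdivides $\nef(X)$ — the case you single out — a hyperplane $H_\circuit$ supporting a facet of $C_I$ may pass through the relative interior of $\nef(X)$ or of one of its faces, in which case \emph{neither} orientation of $\circuit$ belongs to $\mathfrak{F}_\nef$, so membership in $\mathfrak{A}_\nef$ imposes no Frobenius condition along $\circuit$ at all, and your mechanism for killing the lattice points of signature $I$ is simply unavailable.

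This is not a technicality: it is the half of the theorem that the paper's proof is actually about. The paper first applies Theorem~\ref{integralvanishingtheorem} stratum by stratum to the maximal discriminantal strata $S_1,\dots,S_k$ of $\nef(X)$; for a \emph{single} stratum the separation you want is automatic (a stratum is never crossed by a circuit hyperplane, so for every non-signature $I$ of $S_j$ some defining $\ocircuit$ of $C_I$ has its inverse in $\mathfrak{F}_{S_j}$). This yields vanishing only on the smaller set $\bigcap_j \mathfrak{A}_{S_j} = \bigcap_{\ocircuit \in \bigcup_j \mathfrak{F}_{S_j}} F_\ocircuit$. The remaining work — ``getting rid of'' the conditions $F_\ocircuit$ for circuits whose hyperplane meets the relative interior of a face $R$ of $\nef(X)$ — is a separate geometric argument: one moves to nef classes in $R$ far from $H_\circuit$, so that $P^\ocircuit_\uc$ acquires lattice points, and argues that the signatures these lattice points can take for classes of $\mathfrak{A}_\nef$ are the same ones already realized by nef divisors, hence correspond to acyclic complexes $\hat{\Delta}_I$. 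You need to supply an argument of this kind (or some other proof that a ``bad'' $I$ forces genuine separation); as written, your ``mild variant'' of Theorem~\ref{integralvanishingtheorem} for unions of strata silently assumes the conclusion of that argument.
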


\begin{proof}
We know that the assertion is true if $D$ is nef. Therefore we can apply Theorem
\ref{integralvanishingtheorem} to the maximal strata $S_1, \dots, S_k$ of $\nef(X)$.
Therefore the assertion  is true for $D \in \bigcap_{i = 1^k} \mathfrak{A}_{S_i}$. To
prove the theorem, we have to get rid of the $F_\ocircuit$, where $H_\circuit$ intersects
the relative interior of a face of $\nef(X)$. Let $\circuit$ be such a circuit and $R$ the
face. Without loss of generality, $\dim R > 0$. Then we can choose elements $D'$ in
$R$ at an arbitrary distance from $H_\ocircuit$, i.e. such that the polytope $P^\ocircuit_\uc$
becomes arbitrarily big and finally contains a lattice point. Now, if we move outside
$\nef(X)$, but stay inside $\mathfrak{A}_\nef$, the lattice points of $P^\ocircuit_\uc$
cannot acquire any cohomology and the assertion follows.
\end{proof}

One can imagine an analog of the set $\mathfrak{A}_S$ in $A_{d - 1}(X)_\Q$
as the intersection of shifted half spaces
\begin{equation*}
\bigcap_{\ocircuit \in \mathfrak{F}_S} K_\ocircuit + H_\ocircuit.
\end{equation*}
The main difference here is that one would picture the proper facets of this convex
polyhedral set as ``smooth'', whereas the proper ``walls'' of $\mathfrak{A}_S$
have ``ripples'', which arise both from the fact that the groups $A_\circuit$ may have torsion,
and that we use Frobenius conditions to determine the augmentations of our half spaces.

In general, the set $\mathfrak{F}_S$ is highly redundant, when it comes
to determine $\overline{S}$, which implies that above intersection does not yield
a cone but rather a polyhedron, whose recession cone corresponds to $\overline{S}$.
In the integral situation we do not quite have a recession cone, but a similar
property holds:

\begin{proposition}\label{asymptoticproperty}
Let $V \subset X$ be a closed invariant subscheme and $\mathcal{S} = \{S_1, \dots, S_k\}$ a
collection of discriminantal stata different from zero such that
$H^i_V\big(X, \sh{O}(D)\big) = 0$ for $D \in \mathfrak{A}_S$. Then for any nonzero face
of its discriminantal hull $\overline{S}$ there exists the class
of an integral divisor $D' \in \overline{S}$ such that the intersection of the half line $D + r D'$
for $0 \leq r \in \Q$ with $\mathfrak{A}_\mathcal{S}$ contains infinitely many classes of integral divisors.
\end{proposition}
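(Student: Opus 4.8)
The plan is to extract from the hypothesis a uniform ``direction of escape'' inside the discriminantal hull and to show that translating along that direction keeps us inside every relevant $F_\ocircuit$. Fix a nonzero face $R$ of the discriminantal hull $\overline{S} = \bigcap_{\ocircuit \in \mathfrak{F}_\mathcal{S}} H_\ocircuit$. Since $R$ is a nonzero face, it contains the class of some integral divisor $D'$; this is the candidate direction. The point is that $R$, being a face of the hull, lies in $\overline{H_\ocircuit}$ for \emph{every} $\ocircuit \in \mathfrak{F}_\mathcal{S}$, and in fact $R$ lies on the boundary hyperplane $H_\circuit$ for those $\circuit$ cutting it out, while for the remaining $\ocircuit$ the face $R$ lies in the open half space $H_\ocircuit \setminus H_\circuit$. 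By rescaling $D'$ we may assume $D'$ lies in $R$ and, after possibly replacing $D'$ by a positive multiple, that for each $\ocircuit \in \mathfrak{F}_\mathcal{S}$ with $D' \notin H_\circuit$ the point $D'$ sits strictly on the $\ocircuit^+$-side, i.e. $D'$ contributes positively to the half space condition defining $F_\ocircuit$.

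Next I would control the two types of $\ocircuit$ separately. For $\ocircuit$ with $D' \in H_\circuit$ (the ``parallel'' circuits): here $D + rD'$ and $D$ have the same image in $A_{\circuit, \Q}$, hence the same image in $A_\circuit$ up to the torsion subgroup, so $D + rD' \in F_\ocircuit$ for all $r$ whenever $D \in F_\ocircuit$ — modulo the torsion in $A_\circuit$ one may need to pass to $D'$ a suitable multiple of the chosen generator so that its image in $A_\circuit$ is genuinely zero, which is possible since $D' \in H_\circuit \cap A_{d-1}(X)$ maps into the kernel of $A_{d-1}(X)_\Q \to A_{\circuit, \Q}$. For $\ocircuit$ with $D' \notin H_\circuit$ (the ``transversal'' circuits): the image of $D + rD'$ in $A_{\circuit, \Q} \cong \Q$ moves monotonically to $+\infty$ (in the $\ocircuit$-orientation) as $r \to \infty$, so by Lemma \ref{movedawaylemma}(i) and the description of $F_\ocircuit$ as the preimage of the complement of a one-sided semigroup, once the image passes the offset $K_\ocircuit$ it stays in $F_\ocircuit$ forever. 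Finiteness of $\mathfrak{F}_\mathcal{S}$ gives a single threshold $r_0$ beyond which $D + rD' \in F_\ocircuit$ for all $\ocircuit \in \mathfrak{F}_\mathcal{S}$ simultaneously, hence $D + rD' \in \mathfrak{A}_\mathcal{S}$ for all $r \geq r_0$; since $D'$ is integral this is an infinite set of integral classes, and the vanishing $H^i_V\big(X, \sh{O}(D + rD')\big) = 0$ holds for all of them by hypothesis.

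The step I expect to be the main obstacle is the ``parallel'' case, i.e. handling the circuits $\circuit$ with $D' \in H_\circuit$ in the presence of torsion in $A_\circuit$: one must verify that $D'$ can be chosen so that its image in the \emph{integral} group $A_\circuit$ — not merely in $A_{\circuit,\Q}$ — lies in the subgroup that does not disturb membership in $F_\ocircuit$. This requires using that $\mathfrak{F}_S$ is determined by the oriented flat of $S$ and that $R$ being a face of the hull forces $R \subset H_\circuit$ as a \emph{rational} subspace, so that $H_\circuit \cap A_{d-1}(X)$ is a genuine sublattice; one then picks $D'$ inside $R \cap \big(\bigcap_{\text{parallel }\circuit} Z_\circuit\big)$, which is nonempty because $R$ is a nonzero face and the $Z_\circuit$ are precisely the integral hyperplanes underlying the $H_\circuit$ (Lemma \ref{zintersectionproperty}). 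With $D'$ so chosen, $D' \in Z_\circuit$ means the image of $D'$ in $A_\circuit$ is exactly zero, and the parallel case becomes trivial. The rest is the monotonicity bookkeeping of the previous paragraph, which is routine.
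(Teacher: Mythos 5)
Your proposal is correct and follows essentially the same route as the paper's proof: you split $\mathfrak{F}_\mathcal{S}$ into the circuits whose hyperplanes contain the face (where translation along $D'$ leaves $F_\ocircuit$ unchanged, the torsion in $A_\circuit$ being exactly the reason the paper restricts to ``certain, though not necessarily all, $D'$'' and which you handle cleanly by choosing $D'$ in the $Z_\circuit$) and the transversal circuits (where the image in $A_{\circuit,\Q}$ eventually exceeds the offset $K_\ocircuit$ and stays in $F_\ocircuit$). Your write-up is in fact more explicit than the paper's on both points; no gap.
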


\begin{proof}
Let $R \subset \overline{S}$ be any face of $S$, then the vector space spanned by $R$ is
given by an intersection $\bigcap_{\circuit \text{ with } \ocircuit \in K} H_\circuit$ for a certain
subset $K \subset \mathfrak{F}_\mathcal{S}$. We assume that $K$ is maximal with this property. The intersection
$\bigcap_{\ocircuit \in K} F_\ocircuit$ is invariant with respect to translations along certain
(though not necessarily all) $D' \in R$. This implies that the line (or any half line, respectively),
generated by $D'$ intersects $\bigcap_{\ocircuit \in K} F_\ocircuit$ in infinitely many points.
As $K$ is maximal, there is no other $\ocircuit \in \mathfrak{F}_\circuit$ parallel to $R$ and
the assertion follows.
\end{proof}

The property of Proposition \ref{asymptoticproperty} is necessary for elements in $\mathfrak{A}_\mathcal{S}$,
but not sufficient. This leads to the following definition:

\begin{definition}
Let $\mathcal{S} = \{S_1, \dots, S_k\}$ be a collection of nonzero discriminantal strata and
$D \in A_{d - 1}(X)$ such that the property of
Proposition \ref{asymptoticproperty} holds. If $S$ is not contained in $\mathfrak{A}_\mathcal{S}$,
then we call $D$ $\mathfrak{A}_S$-residual. If $S = 0$, then we write
$0$-residual instead of $\mathfrak{A}_0$-residual.
\end{definition}

Note that, by definition, every divisor outside $\bigcap_{\ocircuit \in \ocircuit(L)} F_\ocircuit$ is
$0$-residual.

In the next subsections we will consider several special cases of interest for cohomology vanishing,
which are not directly related to Kawamata-Viehweg vanishing theorems. In subsection \ref{nonstandardvanishing}
we will consider global cohomology vanishing for divisors in the inverse nef cone. In subsection
\ref{nonstandardsurfacevanishing} we will present a more explicit determination of this type of
cohomology vanishing for toric surfaces. Finally, in subsection \ref{mcmsection}, we will give
a geometric criterion for determing maximally Cohen-Macaulay modules.

\subsection{Nonstandard Cohomology Vanishing}\label{nonstandardvanishing}

In this subsection we want to give a qualitative description of cohomology vanishing which
is related to divisors which are {\em inverse} to nef divisors of Iitaka dimension $0 < \kappa(D)
< d$. 
We show the following theorem:

\begin{theorem}\label{projectiveantinef}
Let $X$ be a complete $d$-dimensional toric variety. Then $H^i\big(X, \sh{O}(D)\big) = 0$
for every $i$ and all $D$ which are contained in some $\mathfrak{A}_{-F}$,
where $F$ is a face of  $\nef(X)$ which
cointains nef divisors of Iitaka dimension
$0 < \kappa(D) < d$. If $\mathfrak{A}_{-F}$ is nonempty, then it contains infinitely many
divisor classes.
\end{theorem}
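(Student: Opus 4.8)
The plan is to reduce the statement to the two ingredients already assembled: the general vanishing theorem for the arithmetic core (Theorem \ref{integralvanishingtheorem}) applied to a suitable discriminantal stratum, and the asymptotic property (Proposition \ref{asymptoticproperty}) for infinitude of classes. The key observation is that if $F$ is a face of $\nef(X)$ containing a nef divisor $D_0$ with $0 < \kappa(D_0) < d$, then the relative interior of $F$ is itself a discriminantal stratum $S$, and for the inverse cone $-F$ the discriminantal hull $\mathfrak{F}_{-F}$ consists of the oriented circuits $-\ocircuit$ for $\ocircuit \in \mathfrak{F}_{F}$. So $\mathfrak{A}_{-F} = \bigcap_{\ocircuit \in \mathfrak{F}_F} F_{-\ocircuit}$. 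The crux is therefore to show that for every integral divisor $D$ lying in the stratum $S = -\operatorname{relint}(F)$ we have $H^i\big(X, \sh{O}(D)\big) = 0$ for all $i$; then Theorem \ref{integralvanishingtheorem} lifts this from $S$ to all of $\mathfrak{A}_{-F}$.

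To establish the vanishing on $S$ itself, I would argue as follows. For $D$ a nef divisor lying in the relative interior of $F$, Proposition \ref{antinefcohomology} gives $H^i\big(X, \sh{O}(-D)\big) = 0$ for $i \neq \kappa(D)$, and because $D_0$ has Iitaka dimension $\kappa(D_0)$ equal to the rank-theoretic quantity $d - \rk L_T$ (with $T = \bigcup_{\circuit \in \operatorname{fib}(D_0)}\circuit$), every divisor in $\operatorname{relint}(F)$ shares this Iitaka dimension; call it $\kappa$. So the only potentially nonzero cohomology of $\sh{O}(-D)$ for $D\in F$ nef sits in degree $\kappa$. The point is now that this single surviving group $H^\kappa\big(X, \sh{O}(-D)\big)$ also vanishes for a \emph{generic} integral $D$ deep inside $F$: tracing through Theorem \ref{cohomtheorem}, the eigenspace $H^\kappa\big(X, \sh{O}(-D)\big)_m$ is nonzero only for $m$ in the relative interior of the polytope $P^{\on}_{-\uc}$ (this is exactly the last paragraph of the proof of Proposition \ref{antinefcohomology}), and for $D$ in the face $F$ that relative-interior region is a product of a lower-dimensional polytope with a positive-dimensional affine subspace (coming from the fibrational circuits), hence contains no lattice points of $M$ once $D$ is chosen in the appropriate offset region. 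This is where the hypothesis $0 < \kappa < d$ is essential: $\kappa > 0$ forces a genuine fibrational circuit so that $-F$ is disjoint from $F$ over $\Q$ (as already remarked after the theorem statement), while $\kappa < d$ is what makes $F$ a proper face rather than the whole nef cone.

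The remaining two steps are bookkeeping. First, the lift: given that $H^i_X\big(X,\sh{O}(D)\big) = 0$ for all integral $D \in S$, Theorem \ref{integralvanishingtheorem} with $V = X$ immediately yields $H^i\big(X, \sh{O}(D)\big) = 0$ for all $D \in \mathfrak{A}_S = \mathfrak{A}_{-F}$; one only has to check $\dim S > 0$, which holds since $F$ being a nonzero face forces $\dim F \geq 1$. Second, the infinitude: if $\mathfrak{A}_{-F}$ is nonempty, pick $D \in \mathfrak{A}_{-F}$; the discriminantal hull of $\{S\}$ has $\overline{S} = -F$ as a nonzero face (indeed as the recession direction), so Proposition \ref{asymptoticproperty} supplies an integral class $D' \in -F$ such that the half-line $D + r D'$ meets $\mathfrak{A}_{-F}$ in infinitely many integral classes.

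I expect the genuine obstacle to be the middle step --- proving that the one surviving cohomology group $H^\kappa$ vanishes on the \emph{integral} stratum $S$. Over $\Q$ this group is generically nonzero (it is the pullback of cohomology of a weighted projective space of dimension $\kappa$), so the vanishing on $S$ is a genuinely arithmetic phenomenon: it relies on the polytope region that carries the cohomology being a bounded polytope times a positive-dimensional \emph{irrational} direction, so that suitable integral translates avoid $M$ altogether. Making the choice of integral representative $\uc$ precise --- i.e. showing the stratum $S$, as a subset of $A_{d-1}(X)$ rather than $A_{d-1}(X)_\Q$, really does consist entirely of classes whose $P^{\on}_{-\uc}$ has lattice-point-free interior --- will require unwinding the offset $e_I$ and the Frobenius conditions $F_{-\ocircuit}$ along the fibrational circuits in $\operatorname{fib}(D_0)$, and this is the technical heart of the argument.
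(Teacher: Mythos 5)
Your reduction hinges on the claim that $H^i\big(X,\sh{O}(D)\big)=0$ for \emph{all} $i$ and \emph{all} integral divisors $D$ in the stratum $S=-\operatorname{relint}(F)$, so that Theorem \ref{integralvanishingtheorem} can be applied to $S$. That claim is false, and the paper says so explicitly right after stating the theorem in the introduction: it is not true in general that $H^i\big(X,\sh{O}(-D)\big)=0$ for all $i$ when $D$ is nef of Iitaka dimension $0<\kappa<d$; indeed the $\Q$-classes of elements of $\mathfrak{A}_{-F}$ do not meet $-F$ at all. Concretely, for such a $D$ the one surviving group (degree $d-\kappa$ in the bookkeeping of the proof of Proposition \ref{antinefcohomology}) is computed by the lattice points in the relative interior of the bounded $\kappa$-dimensional polytope $-P_D$ --- not, as you assert, a region of the form ``polytope times a positive-dimensional affine subspace'' --- and since $\kappa\geq 1$ this region acquires lattice points as soon as $D$ is sufficiently positive along $F$ (already $X=\mathbb{P}^1\times\mathbb{P}^1$, $D=\sh{O}(a,0)$ with $a\geq 2$, gives $H^1\big(X,\sh{O}(-a,0)\big)\neq 0$). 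So the hypothesis of Theorem \ref{integralvanishingtheorem} fails for $S$ in the critical degree, and your lift cannot start.

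The paper routes around this by never lifting from $-F$: it applies Theorem \ref{integralvanishingtheorem} to the maximal strata of the \emph{interior} of $-\nef(X)$, where $\kappa=d$ and Proposition \ref{antinefcohomology} gives vanishing in every degree $i<d$ for every integral divisor; this yields vanishing in degrees $i<d$ on $\mathfrak{A}_{-\nef}$, hence on $\mathfrak{A}_{-F}\subset\mathfrak{A}_{-\nef}$. The remaining top degree $i=d$ is then killed not by a vanishing statement on $-F$ but by the Frobenius conditions $F_{\ocircuit}$ attached to the fibrational circuits $\circuit$ with $F\subset H_\circuit$ (these exist precisely because $\kappa<d$, and both orientations lie in $\mathfrak{F}_{-F}$): an eigenspace $H^d\big(X,\sh{O}(D')\big)_m\neq 0$ forces $l_i(m)<-c'_i$ for all $i\in\circuit$, which places the image of $D'$ in $A_\circuit$ exactly in the semigroup excluded by $F_\ocircuit$. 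This is also why, by Lemma \ref{movedawaylemma}(\ref{movedawaylemmaii}), $\mathfrak{A}_{-F}$ is shifted off the hyperplanes $Z_\circuit$ rather than containing the integral points of $-F$. Your treatment of the infinitude claim via Proposition \ref{asymptoticproperty} and the positive dimension of $F$ does agree with the paper.
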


\begin{proof}
Recall that such a divisor, as a $\Q$-divisor,  is contained in the intersection
$\bigcap_{\circuit \in \operatorname{fib}(D)} H_\circuit$ and therefore it is in the intersection
of the nef cone with the boundary of the effective cone of $X$ by Proposition \ref{orthantboundary}.
Denote this intersection by $F$. Then we claim that $H^i\big(X, \sh{O}(D')\big) = 0$ for
all $D' \in \mathfrak{A}_{-F}$. By Corollary \ref{antinefcohomology} we know that
$H^i\big(X, \sh{O}(E)\big) = 0$ for $0 \leq i < d$ for any divisor $E$ in the interior of
the inverse nef cone. This implies that $H^i\big(X, \sh{O}(E)\big) = 0$ for any $E
\in \mathfrak{A}_{-nef}$ and hence $H^i\big(X, \sh{O}(D')\big) = 0$ for any $D'
\in \mathfrak{A}_{-F}$, because $\mathfrak{A}_{-F}  \subset \mathfrak{A}_{-\nef}$.
The latter assertion follows from the fact that the assumption on the Iitaka
dimension implies that the face $F$ has positive dimension.
\end{proof}

Note that criterion is not very strong, as it is not clear in general whether
the set $\mathfrak{A}_{-F}$ is nonempty. However, this is the case in a
few interesting cases, in particular for toric surfaces, as we will see in the
next subsection. The following remark shows that our condition indeed is
rather weak in general:

\begin{remark}\label{nefremark}
The inverse of any big and nef divisor $D$ with the property that $P_D$ does not contain
any lattice point in its interior has the property that $H^i\big(X, \sh{O}(D)\big) = 0$
for all $i$. This follows directly from the standard fact in toric geometry that the
Euler characteristics $\chi(-D)$ counts the inner lattice points of the lattice polytope
$P_D$.
\end{remark}

\subsection{The case of complete toric surfaces.}\label{nonstandardsurfacevanishing}

Let $X$ be a complete toric surface. We assume that the $l_i$ are circularly ordered.
We consider the integers $\on$ as system of representatives for $\Z / n \Z$,
i.e. for some $i \in \on$ and $k \in \Z$, the sum $i + k$ denotes the unique element
in $\on$ modulo $n$.

\begin{proposition}\label{surfacenefstratum}
Let $X$ be a complete toric surface. Then $\nef(X) = \overline{S}$, where $S$ is
a single stratum of maximal dimension of the discriminantal arrangement.
\end{proposition}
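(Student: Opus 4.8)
The plan is to identify $\nef(X)$ as the intersection $\bigcap_{\ocircuit \in \mathfrak{F}'_\nef} H_\ocircuit$ via Theorem \ref{nefconecharacterization}, and then argue that this intersection is exactly the closure of one full-dimensional chamber of the discriminantal arrangement. First I would recall that for a complete toric surface the class group $A_1(X)_\Q$ has dimension $n-2$, and that every face of $\nef(X)$ of codimension one lies on a hyperplane $H_\circuit$ for some circuit $\circuit$; conversely, by Proposition \ref{realnefgeneral}, $\nef(X)$ is cut out by the half-spaces $H_\ocircuit$ for oriented circuits $\ocircuit$ with $\hat{\Delta} \cap (\Delta_\ocircuit)_{\max} \neq \emptyset$. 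The key point to establish is that $\nef(X)$ has nonempty interior in $A_1(X)_\Q$ — this is standard since $X$ is projective? No: a complete toric surface need not be projective, but it \emph{is} quasi-projective-free only in pathological cases; in dimension two every complete toric variety is in fact projective (this is a classical fact — nonprojective complete toric varieties first occur in dimension three), so $\nef(X)$ is full-dimensional. Hence $\nef(X)$, being full-dimensional and an intersection of half-spaces each of whose bounding hyperplanes is a member of the discriminantal arrangement, contains at least one chamber $S$ of that arrangement in its interior.

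The heart of the argument is then to show $\nef(X) = \overline{S}$ for a \emph{single} such chamber, i.e. that the interior of $\nef(X)$ is connected as a union of discriminantal chambers and in fact consists of just one. Here I would use that the discriminantal arrangement refines the secondary fan (Proposition \ref{chamberprop}) and that $\nef(X) = \nef(\Delta)$ is by definition a single cone $\bigcap_{\mathcal{B}} K_\mathcal{B}^\emptyset$ of the secondary fan $\secfan(L,\emptyset)$, corresponding to the regular triangulation $\hat{\Delta}$. So the real content is: \emph{inside a single maximal cone of the secondary fan lying in the nef chamber, no discriminantal hyperplane $H_\circuit$ passes through the interior.} For surfaces this should follow because the circuits $\circuit$ whose hyperplanes bound $\nef(\Delta)$ are precisely the $\circuit(\tau)$ associated to inner walls $\tau$ of $\Delta$, and any other circuit $\circuit'$ either has $H_{\circuit'} \supseteq$ a face of $\nef(\Delta)$ of higher codimension (so does not cut the interior) or is disjoint from $\nef(\Delta)$ entirely. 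The combinatorics of a complete fan in $N_\Q \cong \Q^2$ is rigid enough — the rays are circularly ordered — that one can check directly that a circuit not of the form $\circuit(\tau)$ for an inner wall cannot have its hyperplane meeting the relative interior of $\nef(\Delta)$.

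The main obstacle I anticipate is precisely this last combinatorial verification: ruling out that some ``extra'' discriminantal hyperplane $H_{\circuit'}$, coming from a circuit among the $l_i$ that is not realized as an inner wall of $\Delta$, slices through the interior of the nef cone. The clean way to handle it is to use the separating property of $H_{\circuit'}$ together with the circular order: for a complete surface fan, a circuit is a set of three or more rays that is minimally dependent, and its associated partition $\ocircuit^+ \coprod \ocircuit^-$ interlaces around the circle; one then checks that the nef cone, being the closure of the cone of strictly convex support functions on $\Delta$, lies strictly on one side of every such $H_{\circuit'}$ unless $\circuit'$ corresponds to an actual wall, in which case $H_{\circuit'}$ supports a facet. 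I would phrase this via the support-function description of nef divisors: $D = \sum c_i D_i$ is in the interior of $\nef(X)$ iff the piecewise-linear function determined by the $c_i$ is strictly convex across every wall of $\Delta$, and strict convexity across every wall is an open condition defining exactly one chamber of the refinement, which is $\overline{S}$ for the stratum $S$ of all strictly convex such $D$.
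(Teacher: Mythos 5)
Your overall strategy is the same as the paper's: reduce to showing that no discriminantal hyperplane $H_\circuit$ meets the interior of the full-dimensional cone $\nef(X)$ (full-dimensionality being justified, as you note, by the classical fact that complete toric surfaces are projective). But the decisive step is exactly the one you flag as ``the main obstacle'' and then never actually carry out. Your closing argument --- that strict convexity of the support function across every wall ``is an open condition defining exactly one chamber of the refinement'' --- is circular: whether the open cone of strictly convex support functions is a \emph{single} chamber of the discriminantal arrangement, rather than a union of several chambers separated by hyperplanes $H_{\circuit'}$ coming from circuits not realized as walls of $\Delta$, is precisely the content of the proposition. Your intermediate dichotomy is also flawed: a hyperplane containing a face of $\nef(X)$ of codimension $\geq 2$ can perfectly well pass through the interior of the cone (think of $x=y$ through the origin of the first quadrant), so ``$H_{\circuit'}$ contains a higher-codimension face'' does not imply ``$H_{\circuit'}$ misses the interior.'' A smaller slip: circuits in a rank-two configuration have cardinality $2$ or $3$, not ``three or more''; the cardinality-two (fibrational) circuits $\{p,q\}$ with $l_p=-l_q$ need separate treatment.

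The paper closes the gap with a concrete two-case argument that you would need to supply. Fibrational circuits are disposed of by Proposition \ref{orthantboundary}: their hyperplanes support facets of the effective cone $C_\emptyset$, which contains $\nef(X)$, so they cannot cut its interior. For a non-fibrational circuit $\circuit$ of cardinality three with orientation $\ocircuit$, $\vert\ocircuit^+\vert=2$, one argues on the polytope $P_D$: if $D$ lay in the open half-space $H_{-\ocircuit}$, there would exist $m\in M_\Q$ whose signature contains $\ocircuit^+$, which forces the hyperplane $H_i^{\uc}$ for the single index $i\in\ocircuit^-$ to miss $P_D$ entirely --- impossible for a nef divisor, since every $H_i^{\uc}$ must support $P_D$. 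Hence $\nef(X)\subset H_\ocircuit$ for every circuit, which is the statement you needed. Your ``interlacing around the circle'' idea points in the right direction but would still have to be turned into an argument of this kind.
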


\begin{proof}
$X$ is simplicial and projective and therefore $\nef(X)$ is a cone of maximal dimension
in $A_1(X)_\Q$.
We show that no hyperplane $H_\circuit$ intersects the interior of $\nef(X)$.
By Proposition \ref{orthantboundary} we can at once exclude fibrational circuits.
This leaves us with non-fibrational circuits
$\circuit$ with cardinality three, having orientation $\ocircuit$ with $|\ocircuit^+| = 2$.
Assume that $D$ is contained in the interior of $H_{-\ocircuit}$. Then there exists
$m \in M_\Q$ such that $\ocircuit^+ \subset \mathfrak{s}^D(m)$, which implies that
the hyperplane $H_i^\uc$ for $\{i\} = \ocircuit^-$ does not intersect $P_D$, and thus $D$
 cannot be nef. It follows that $\nef(X) \subset H_\ocircuit$.
\end{proof}

Now assume there exist $p, q \in \on$ such that $l_q = -l_p$, i.e. $l_p$ and $l_q$
represent a one-dimensional fibrational circuit of $L$. Then for any nef divisors $D$
which is contained in $H_{p, q}$, the associated polytope $P_D$ is one-dimensional.
The only possible variation for $P_D$ is its length in terms of lattice units. So we can
conclude that $\nef(X) \cap H_{p, q}$ is a one-dimensional face of $\nef(X)$.

\begin{definition}
Let $X$ be a complete toric surface and $\circuit = \{p, q\}$ such that $l_p = -l_q$.
Then we denote $S_{p, q}$ the relative interior of $-\nef(X) \cap H_\circuit$.
Moreover, we denote $\mathfrak{A}_{p, q}$ the arithmetic core of $S_{p, q}$.
\end{definition}

Our aim in this subsection is to prove the following:

\begin{theorem}
\label{surfaceclassification}
Let $X$ be a complete toric surface. Then there are only finitely many divisors
$D$ with $H^i\big(X, \sh{O}(D)\big) = 0$ for all $i > 0$ which are not contained
in $\mathfrak{A}_\nef \cup \bigcup \mathfrak{A}_{p, q}$, where the union
ranges over all pairs $\{p, q\}$ such that $l_p = -l_q$.
\end{theorem}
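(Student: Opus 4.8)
\emph{The plan.} Reduce to the combinatorics of an $n$-cycle, isolate the ``$\mathbb{Q}$-generic'' part of the vanishing locus with the help of Proposition~\ref{surfacenefstratum}, cover its arithmetic thickenings by Theorems~\ref{nefvanishing} and~\ref{projectiveantinef}, and then show that whatever is left over is confined to a bounded subset of $A_{d-1}(X)$.

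\emph{Step 1 (surface cohomology).} Since $X$ is a complete toric surface, $\hat{\Delta}$ is the simplicial model of a complete planar fan, hence homeomorphic to $S^1$: combinatorially an $n$-cycle on $l_1,\dots,l_n$ in circular order. For $I\subsetneq\on$ the full subcomplex $\hat{\Delta}_I$ is a disjoint union of ``arcs'', so $H^j(\hat{\Delta}_I;k)=0$ unless $j=-1$ with $I=\emptyset$, or $j=0$ and $I$ decomposes into two or more cyclic arcs; and $\hat{\Delta}_\on\cong S^1$. By Theorem~\ref{cohomtheorem} (with $V=X$), for $D=\sum_{i\in\on}c_iD_i$ one gets: $H^2\big(X,\sh{O}(D)\big)\neq 0$ iff some $m\in M$ has $l_i(m)<-c_i$ for all $i$, and $H^1\big(X,\sh{O}(D)\big)\neq 0$ iff some $m\in M$ has signature $I(m)$ a disjoint union of $\geq 2$ cyclic arcs. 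Call such $I$ (together with $I=\on$) \emph{bad}. Passing to the Gale dual picture of Section~\ref{arithmeticaspects}, this reads: $H^i\big(X,\sh{O}(D)\big)=0$ for all $i>0$ exactly when, for every bad $I$ with $D\in C_I$, the chamber $P^I_{\underline c}$ contains no lattice point.

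\emph{Step 2 (the $\mathbb{Q}$-skeleton).} Using Proposition~\ref{orthantboundary} together with the fact (Proposition~\ref{surfacenefstratum}) that $\nef(X)=\overline{S}$ for a single maximal stratum, one checks that the complement of $\bigcup\{C_I^{\circ}\mid I\text{ bad}\}$ inside $A_{d-1}(X)_{\mathbb{Q}}$ is precisely $\nef(X)$ together with the rays $\overline{S_{p,q}}=-\nef(X)\cap H_{\{p,q\}}$ for opposite ray pairs $l_p=-l_q$: the obstruction $C_\on=-\eff(X)$ does not touch this complement, and the hyperplane of a bad two-arc circuit meets $\nef(X)$ or an $\overline{S_{p,q}}$ only when that circuit is the opposite-ray circuit $\{p,q\}$. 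Now Theorem~\ref{nefvanishing} yields $H^{>0}\big(X,\sh{O}(D)\big)=0$ for all $D\in\mathfrak{A}_\nef$, and Theorem~\ref{projectiveantinef} applied to $F=\nef(X)\cap H_{\{p,q\}}$ (which contains the Iitaka-dimension-$1$ classes $\pi_{p,q}^{*}\sh{O}_{\mathbb{P}^1}(k)$, $k>0$) yields $H^{>0}\big(X,\sh{O}(D)\big)=0$ for all $D\in\mathfrak{A}_{-F}=\mathfrak{A}_{p,q}$. So $\mathfrak{A}_\nef\cup\bigcup_{p,q}\mathfrak{A}_{p,q}$ is contained in the vanishing locus.

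\emph{Step 3 (confining the exceptions --- the main obstacle).} Suppose $D$ has $H^i\big(X,\sh{O}(D)\big)=0$ for all $i>0$ but $D\notin\mathfrak{A}_\nef\cup\bigcup_{p,q}\mathfrak{A}_{p,q}$. By Step~2 some bad $I$ has $D\in C_I$, and by Step~1 that $P^I_{\underline c}$ --- and $P^J_{\underline c}$ for every other bad $J$ in the signature of $D$ --- is lattice-point-free. Work stratum by stratum in the (finite) discriminantal arrangement. The recession cone of $P^I_{\underline c}$ depends only on $I$, so boundedness of $P^I_{\underline c}$ is a property of $I$; if some bad $P^I_{\underline c}$ is bounded of positive dimension, a high multiple $k\underline c$ forces $kP^I_{\underline c}$ to meet $M$ (positivity of the leading Ehrhart term, cf.\ Remark~\ref{frobeniusremark}), whence $H^1$ or $H^2$ of $\sh{O}(kD)$, and therefore of $\sh{O}(D)$, is nonzero --- contradiction, unless $D$ ranges only over a bounded subset of $C_I$. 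Hence escape to infinity must run through an \emph{unbounded} bad chamber; on a surface such a $P^I_{\underline c}$ is a strip between two parallel lines $H^{\underline c}_p,H^{\underline c}_q$ with $l_p=-l_q$, i.e.\ the two arcs of $I$ are cut apart by an opposite ray pair. Lattice-point-freeness of that strip is exactly the one-dimensional diophantine Frobenius condition defining $F_{\{p,q\}}$, so $D$ already lies in $\mathfrak{A}_{p,q}$; along the residual directions parallel to the faces of $\nef(X)$ the relevant obstruction is the full-cycle $H^2$-obstruction, which is absorbed into $\mathfrak{A}_\nef$. Either way $D$ is forced into $\mathfrak{A}_\nef\cup\bigcup_{p,q}\mathfrak{A}_{p,q}$ outside a bounded set, leaving only finitely many exceptional classes. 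The genuinely delicate point is the last matching: identifying ``$P^I_{\underline c}$ contains no lattice point'' with membership in the precise collection of $F_{\ocircuit}$'s that cut out $\mathfrak{A}_{p,q}$ and $\mathfrak{A}_\nef$ --- i.e.\ the exact bookkeeping between these arithmetic cores and the lattice-point conditions produced by Theorem~\ref{cohomtheorem}.
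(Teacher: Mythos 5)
Your architecture is the same as the paper's: Steps 1 and 2 correspond to Propositions \ref{surfacenefstratum} and \ref{surfacecores} together with Theorems \ref{nefvanishing} and \ref{projectiveantinef}, and that part is sound. The gaps are both in Step 3, and they sit exactly where the paper does its real work. First, your finiteness argument for bounded bad chambers rests on a false implication: from ``$kP^I_{\uc}$ meets $M$ for $k\gg 0$'' you get $H^i\big(X,\sh{O}(kD)\big)\neq 0$ and then conclude ``therefore $H^i\big(X,\sh{O}(D)\big)\neq 0$.'' Nonvanishing for a multiple says nothing about $D$ itself (on $\mathbb{P}^2$ one has $H^2\big(\sh{O}(-2)\big)=0$ while $H^2\big(\sh{O}(-4)\big)\neq 0$). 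The correct finiteness statement (Proposition \ref{finitezeroresiduals}) is different: if the inequalities cutting out $P^I_\uc$ are irredundant, there are only finitely many lattice-point-free polygons of that fixed combinatorial type with integrally translated facets; if they are redundant, $D$ can be translated along a nonzero direction in $\bigcap_{\circuit\in\circuit(L_J)}H_\circuit$ keeping the chamber lattice-point-free, which sends $D$ into the residual analysis of a positive-dimensional stratum rather than into a bounded set. Your Ehrhart argument establishes neither alternative.

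Second --- and you flag this yourself --- the ``delicate matching'' you defer is the actual content of the theorem. Lattice-point-freeness of the unbounded strip between $H^\uc_p$ and $H^\uc_q$ only gives $D\in F_{\{p,q\}}$, i.e.\ $c_p+c_q=-1$; but $\mathfrak{A}_{p,q}$ is the intersection of $F_\ocircuit$ over \emph{every} oriented circuit in the discriminantal hull of $S_{p,q}$, in particular over all circuits inside $A^1_{p,q}\cup\{p,q\}$ and $A^2_{p,q}\cup\{p,q\}$, and likewise $\mathfrak{A}_\nef$ involves all circuits of the hull of $\nef(X)$, not just the full-cycle $H^2$-obstruction. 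The paper's Propositions \ref{nosurfaceantinefresiduals} and \ref{nosurfacenefresiduals} close precisely this gap: they show that a cohomology-free divisor satisfying the asymptotic property along $D_{p,q}$ (resp.\ along a face of $\nef(X)$) must lie in each such $F_\ocircuit$ up to finitely many exceptions, by adding large multiples of $D_{p,q}$ and tracking which signatures acquire a second arc. Without that signature bookkeeping the inclusion ``cohomology-free outside a finite set $\Rightarrow$ $D\in\mathfrak{A}_\nef\cup\bigcup\mathfrak{A}_{p,q}$'' remains unproved, so the proposal as written does not yet establish the theorem.
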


We will prove this theorem in several steps. First we show that the interiors of the
$C_I$ such that $H^0(\hat{\Delta}_I; k) \neq 0$ cover all of $A_1(X)_\Q$ except
$\nef(X)$ and $-\nef(X)$.

\begin{proposition}\label{surfacecores}
Let $D = \sum_{i \in \on} c_i D_i$ be a Weil divisor which is not contained in
$\nef(X)$ or $-\nef(X)$, then the corresponding arrangement $H^\uc_i$ in
$M_\Q$ has a two-dimensional chamber $P_\uc^I$ such that
complex $\hat{\Delta}_I$ has at least two components.
\end{proposition}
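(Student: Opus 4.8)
The plan is to turn the statement into a combinatorial fact about the chamber complex of the affine line arrangement $\mathcal A^\uc = \{H_i^\uc\}_{i \in \on}$ in $M_\Q$, read against the cyclic order of the rays. First I would record the topological reduction: for a complete toric surface $\hat\Delta = \Delta$ is the simplicial circle on $\on$ (vertices $1,\dots,n$, edges $\{i,i+1\}$, $i \in \Z/n\Z$), so for $I \subseteq \on$ the subcomplex $\hat\Delta_I$ is the full subgraph on $I$, whose connected components are the maximal blocks of cyclically consecutive indices contained in $I$. Hence $H^0(\hat\Delta_I; k) \neq 0$ — equivalently $\hat\Delta_I$ has at least two components — exactly when $I$ is neither $\emptyset$, nor $\on$, nor a single cyclic interval. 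So it suffices to produce a two-dimensional chamber $P^I_\uc$ whose signature $I$ fails to be a cyclic interval (and is $\neq \emptyset, \on$). Such a chamber is automatically bounded: an unbounded chamber with recession direction $w$ has signature $\{i : l_i(w) < 0\}$ enlarged by at most the rays with $l_i(w) = 0$ (these form at most one antipodal pair $l_p = -l_q$), and since the $l_i$ are cyclically ordered a short check shows this set is again a cyclic interval. Bounded chambers do exist, since $X$ is complete: the $l_i$ span $N_\Q$, and there are at least three distinct line directions except in the easily handled case where the rays form two antipodal pairs.

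Next I would use the description of nefness underlying the proof of Proposition \ref{surfacenefstratum}, together with Theorems \ref{nefcone} and \ref{moriconetheorem}: on a complete toric surface $D$ is nef iff the polygon $P_D = P^\emptyset_\uc$ meets every line $H_i^\uc$ (equivalently $D \cdot V(l_i) \geq 0$ for all $i$, as the Mori cone is generated by the classes $[V(l_i)]$); dually, applying the reflection $m \mapsto -m$, $\uc \mapsto -\uc$, the divisor $-D$ is nef iff the opposite polytope $P^{\on}_\uc = \{m : l_i(m) \leq -c_i \text{ for all } i\}$ meets every $H_i^\uc$.

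Now assume $D \notin \nef(X) \cup (-\nef(X))$. If $D$ is effective there is an index $a$ with $P^\emptyset_\uc \cap H_a^\uc = \emptyset$, so the whole open half-plane $\{l_a(m) < -c_a\}$ is disjoint from $P_D$; if $-D$ is effective there is an index $b$ with $P^{\on}_\uc \cap H_b^\uc = \emptyset$; and if neither is effective, $D$ lies outside both orthants $C_\emptyset$ and $C_{\on}$ and a point separating the corresponding (empty) polytopes plays the same role. In each case I would argue as follows. Consider the closed polygonal loop $\gamma$ whose $i$-th edge lies on $H_i^\uc$ and whose consecutive vertices are $v_i = H_i^\uc \cap H_{i+1}^\uc$ (genuine points, since cyclically adjacent rays of a complete fan are never antipodal). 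If $D$ were nef this loop would bound a convex polygon traversed once; the failure of nefness (say $D\cdot V(l_a) < 0$) forces $\gamma$ to reverse along edge $a$, so $\gamma$ is not convex. Using the cyclic order of the lines $H_i^\uc$, the presence of a line entirely avoiding $P^\emptyset_\uc$ (resp. $P^{\on}_\uc$), and completeness of $\Delta$ to ``wrap around'' and reach a good edge $b$ with $D\cdot V(l_b) > 0$, I would locate a point $m$ lying on the negative side of two lines $H_p^\uc, H_r^\uc$ with $p, r$ cyclically separated, while lying on the positive side of a line $H_q^\uc$ between them; then $p, r \in I(m)$ but $q \notin I(m)$, so $I(m)$ is not a cyclic interval, and the chamber containing $m$ is two-dimensional. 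The second hypothesis ($-D$ not nef) is exactly what prevents this configuration from being ``shielded'': by the $m \mapsto -m$ symmetry, if every bounded chamber on the negative side of the offending line still carried an interval signature, the same analysis applied to $-D$ would force $-D$ to be nef.

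The main obstacle is this last step — manufacturing, from the qualitative hypothesis alone, an explicit point whose signature splits into at least two cyclic blocks rather than a single long interval. I expect the clean route is to localize around the reversed edge $a$, follow a path crossing the arrangement and track wall by wall (in cyclic order) how its signature evolves, and observe that the non-interval chamber must appear at the moment the blocks contributed from the two ends of the path first overlap, with completeness of $\Delta$ closing the path up. Beyond this bookkeeping and a few degenerate cases (antipodal ray pairs, $D$ and $-D$ both non-effective, and $P_D$ of Iitaka dimension less than two), no input beyond the results already established in the paper should be needed.
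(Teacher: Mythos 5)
Your reduction is sound and matches the combinatorial heart of the statement: for a complete toric surface $\hat\Delta$ is the $n$-cycle on $\on$, so $\hat\Delta_I$ is disconnected exactly when $I$ is neither empty, nor all of $\on$, nor a single cyclic interval; and your auxiliary observations -- that a chamber with a non-interval signature is necessarily bounded, and that $D$ is nef iff $P_D$ meets every line $H_i^{\uc}$ -- are correct and consistent with what the paper itself uses. The problem is that everything after this reduction is a plan rather than a proof. The entire content of the proposition is the implication ``$D \notin \nef(X) \cup -\nef(X)$ implies some two-dimensional chamber has a non-interval signature,'' and at precisely this point you write that ``the main obstacle is this last step'' and describe what you \emph{expect} and \emph{would} do (localize at the reversed edge, follow a path, track the signature wall by wall, defer the degenerate cases). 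No point $m$ is produced, no induction or propagation scheme is set up, and the mechanism by which the two hypotheses combine is only gestured at (``the second hypothesis is exactly what prevents this configuration from being shielded''). Since $D$ can fail to be nef at one wall while $-D$ fails to be nef at an entirely different wall, this interaction is exactly where the work lies, and it is absent.

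For comparison, the paper proves the contrapositive by a propagation argument over the extremal blow-down circuits $\{i-1,i,i+1\}$: assuming every two-dimensional chamber has a cyclic-interval signature, the bounded chamber of each three-line subarrangement $H_{i-1}^{\uc}, H_i^{\uc}, H_{i+1}^{\uc}$ is forced to have full signature exactly $\{i\}$ (or its complement, on the other side), and compatibility with the neighbouring circuits $\{i-2,i-1,i\}$ and $\{i,i+1,i+2\}$ then forces $D$ onto the same side $H_\ocircuit$ (resp.\ $H_{-\ocircuit}$) of \emph{every} wall simultaneously, i.e.\ $D \in \nef(X)$ or $D \in -\nef(X)$. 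Your ``track the signature along a path'' idea could plausibly be developed into such an argument, but as written the decisive combinatorial step is missing, so the proposal does not yet constitute a proof.
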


\begin{proof}
Recall that $\nef(X) = \bigcap H_\ocircuit$, where the intersection runs over all
oriented circuits which are associated to extremal curves of $X$. As the statement
is well-known for the case where $X$ is either a $1$-circuit toric variety or a
Hirzebruch surface, we can assume without loss of generality,
that the extremal curves belong to blow-downs, i.e. the associated oriented
circuits are of the form $\ocircuit^+ = \{i - 1, i + 1\}$, $\ocircuit^- = \{i\}$
for any $i \in \on$. Now assume that $D$ is in the interior of $H_\ocircuit$
for such an oriented
circuit $\ocircuit$. Then there exists a bounded chamber $P_\uc^I$ in $M_\Q$
such that $\ocircuit^- = \circuit \cap \mathfrak{s}^\uc(m)$.
In order for $\hat{\Delta}_{\mathfrak{s}^\uc(m)}$ to be acyclic, it is
necessary that $\mathfrak{s}^\uc(m) \cap (\on \setminus \circuit) = \emptyset$.
Let $\{j, k, l\} =: \mathcal{D} \subset \on$ represent any other circuit such that
$\mathfrak{D}^+ = \{j, l\}$ for some orientation $\mathfrak{D}$ of $\mathcal{D}$.
The hyperplane arrangement given by the three hyperplanes $H^{\underline{c}}_j,
H^{\underline{c}}_k, H^{\underline{c}}_l$ has six unbounded regions, whose
signatures contain any subset of $\{j, k, l\}$ except $\{j, l\}$ and $\{k\}$.
In the cases $j = i - 2, k = i - 1, l = 1$ or $j = i, k = i + 1, l = i + 2$, $P_\uc^I$
must be contained in the region with signature $\{i\}$.
In every other case $P_\uc^I$ must be contained in the region with signature
$\emptyset$.
In the case, say, $\{j, k, l\} = \{i - 2, i - 1, i\}$, the hyperplane
$H^{\underline{c}}_{i - 2}$ should not cross the bounded chamber related
to the subarrangement given by the hyperplanes $H_{i -1}^\uc$, $H_i^\uc$,
$H_{i + 1}^\uc$, as else we obtain a chamber whose signature contains
$\{i - 1, i + 1\}$, but not $\{i - 2, i\}$. Then the associated subcomplex of
$\hat{\Delta}$ can never be acyclic.
This implies that, if $D$ is in the interior of  $H_\ocircuit$, then
$D \in H_{\mathfrak{D}}$, where either $\mathcal{D} = \{i - 2, i - 1, i\}$
or $\mathcal{D} = \{i, i + 1, i + 2\}$.
By iterating for every extremal (i.e. every invariant) curve, we conclude that
$D \in \bigcap_{i \in \on} H_\ocircuit = \nef(X)$. Analogously, we conclude
for $D \in H_{-\ocircuit}$ that $D \in -\nef(X)$, and the statement follows.
\end{proof}

Let $\{p, q\} \subset \on$ such that $l_p = - l_q$. Then these two primitive
vectors span a $1$-dimensional subvector space of $N_\Q$, which naturally
separates the set $\on \setminus \{p, q\}$ into two subsets.

\begin{definition}
Let $\{p, q\} \subset \on$ such that $l_p = - l_q$. Then we denote $A_{p, q}^1, A_{p, q}^2
\subset \on$ the two subsets of $\on \setminus \{p, q\}$ separated by the line
spanned by $l_p, l_q$.
\end{definition}

For some fibrational circuit $\{p, q\}$, the
closure $\overline{S}_{p, q}$ is a one-dimensional
cone in $A_1(X)_\Q$ which has a unique primitive vector:

\begin{definition}
Consider$\{p, q\}$ as before. Then the closure $\overline{S}_{p, q}$ is a one-dimensional
cone with primitive lattice vector $D_{p, q} := \sum_{i \in A_{p, q}^1} l_i(m) D_i$,
where $m \in M$ the unique
primitive vector on the ray in $M_\Q$ with $l_p(m) = l_q(m) = 0$ and
$l_i(m) < 0$ for $i \in A_{p, q}^1$.
\end{definition}

\begin{proposition}\label{nosurfaceantinefresiduals}
Let $X$ be a complete toric surface. Then every $\mathfrak{A}_{p, q}$-residual divisor
on $X$ is either contained in $\mathfrak{A}_\nef$, or in some $\mathfrak{A}_{p, q}$, or
is $\mathfrak{A}_\nef$-residual.
\end{proposition}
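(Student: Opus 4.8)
The plan is to analyze what it means for a divisor $D$ to be $\mathfrak{A}_{p,q}$-residual and show that this forces $D$ into one of the listed sets. Recall that $D$ being $\mathfrak{A}_{p,q}$-residual means the asymptotic property of Proposition \ref{asymptoticproperty} holds along the nonzero face $\overline{S}_{p,q}$, but $D \notin \mathfrak{A}_{p,q}$. Since $\overline{S}_{p,q}$ is one-dimensional with primitive generator $D_{p,q}$, the asymptotic property says precisely that the half line $D + r D_{p,q}$, $r \in \Q_{\geq 0}$, meets $F_{\{p,q\}}$ in infinitely many integral classes. First I would observe that, because $\{p,q\}$ is a one-dimensional (fibrational) circuit, the group $A_{\{p,q\}}$ is $\Z$ and $F_{\ocircuit_{p,q}}$, $F_{-\ocircuit_{p,q}}$ are (shifted) half lines in this $\Z$, so $F_{\{p,q\}} = F_{\ocircuit_{p,q}} \cap F_{-\ocircuit_{p,q}}$ is the preimage in $A_1(X)$ of a bounded segment together with the two ``Frobenius'' thickenings at its ends. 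Hence membership in $F_{\{p,q\}}$ is essentially a condition on the $\eta_{\{p,q\}}$-image of $D$, and the half line $D + r D_{p,q}$ maps to a single point under $\eta_{\{p,q\}}$ (since $D_{p,q} \in \overline{S}_{p,q} = \ker \eta_{\{p,q\}} \otimes \Q$ lifted integrally); the infinitely-many condition is then automatic once that image lies in the relevant bounded-plus-thickened region.

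The next step is to use Proposition \ref{surfacecores}: if $D$ is contained neither in $\nef(X)$ nor in $-\nef(X)$, then some two-dimensional chamber $P^I_{\uc}$ has $\hat{\Delta}_I$ with at least two components, hence $H^0(\hat{\Delta}_I;k) \neq 0$, and by Theorem \ref{cohomtheorem} we would get $H^1\big(X, \sh{O}(D)\big) \neq 0$ unless that chamber is lattice-point free. So if we want $H^i\big(X,\sh{O}(D)\big) = 0$ for all $i>0$ — which is the situation the classification theorem is about, and which I would incorporate as the standing hypothesis — then every such disconnecting chamber must be lattice-point free. Combining with the previous paragraph: either $D$ maps into $\nef(X)$, in which case the residual condition and the arithmetic constraints push $D$ into $\mathfrak{A}_\nef$; or $D$ maps into $-\nef(X) = \bigcup_{p,q}$-related faces, and the one-dimensional face along which the asymptotic property holds must be exactly one of the $\overline{S}_{p,q}$, forcing the $\eta_{\{p,q\}}$-image of $D$ to lie in the finite ``gap'' set of the Frobenius condition — which is precisely the statement that $D$ is either in $\mathfrak{A}_{p,q}$ or is $\mathfrak{A}_\nef$-residual. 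More carefully: I would show that if $D$ is $\mathfrak{A}_{p,q}$-residual and $H^i\big(X,\sh{O}(D)\big)=0$ for $i>0$, then the signatures realized by $D$ are constrained so that the only circuits whose half-spaces can be violated are $\{p,q\}$ itself and the fibrational circuits cutting out larger faces of $-\nef(X)$, and the latter either place $D$ in $\mathfrak{A}_\nef$ (if the violated half-spaces are on the nef side) or leave $D$ $\mathfrak{A}_\nef$-residual.

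The main obstacle I anticipate is the bookkeeping in the final paragraph: disentangling, for a surface, exactly which oriented circuits $\ocircuit \in \mathfrak{F}_{S_{p,q}}$ can have $D \notin F_{\ocircuit}$ while still keeping all higher cohomology zero. This requires re-running the local analysis of Proposition \ref{surfacecores} (the six-unbounded-region picture for a triple of hyperplanes) but now relative to the fibrational circuit $\{p,q\}$, checking that any disconnecting chamber $P^I_{\uc}$ forces $D$ into the discriminantal hull of some extremal circuit on the nef side — and then invoking lattice-point-freeness to absorb the finitely many exceptional classes into the ``residual'' or ``finitely many'' language of Theorem \ref{surfaceclassification}. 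I would handle this by a case split on whether $A^1_{p,q}$ or $A^2_{p,q}$ supports the disconnection, using the circular order on $\on$ and the fact that $\overline{S}_{p,q}$ is generated by $D_{p,q} = \sum_{i \in A^1_{p,q}} l_i(m) D_i$ to pin down which $l_i(m) < -c_i$ are forced. The rational skeleton of the argument — residual $\Rightarrow$ asymptotic along $\overline{S}_{p,q}$ $\Rightarrow$ ($D$ rationally in $\nef$ or $-\nef$) by Proposition \ref{surfacecores} $\Rightarrow$ the three stated alternatives — is short; the technical weight is entirely in verifying that no \emph{other} face of $-\nef(X)$ can be the one witnessing the asymptotic property, which follows because for a surface the only fibrational circuits are the one-dimensional ones $\{p,q\}$ with $l_p = -l_q$.
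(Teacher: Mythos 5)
There is a genuine gap: your proposal is an outline that explicitly defers the one step that actually constitutes the proof. The paper's argument runs entirely through the half line $D + rD_{p,q}$: assuming infinitely many cohomology-free integral points on it, one first shows $c_p + c_q = -1$ (if $c_p + c_q > -1$ there is a lattice point $m$ with $p,q \notin \mathfrak{s}^{\uc}(m)$, and adding large multiples of $D_{p,q}$ translates the remaining hyperplanes until $A^1_{p,q} \cup A^2_{p,q} \subset \mathfrak{s}^{\uc'}(m)$, forcing $H^1 \neq 0$; the case $c_p+c_q<-1$ gives $H^2 \neq 0$), and then, for every circuit $\circuit \subset A^1_{p,q} \cup \{p,q\}$ or $\circuit \subset A^2_{p,q} \cup \{p,q\}$, shows $D \in F_{\ocircuit}$ by the same translation trick: a lattice point witnessing $D \notin F_{\ocircuit}$ acquires, after adding a suitable multiple of $D_{p,q}$, a signature with two separated arcs, so $\hat{\Delta}_{\mathfrak{s}(m)}$ is disconnected and $H^1 \neq 0$ at infinitely many points of the half line. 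This persistence-under-translation mechanism is absent from your write-up; you acknowledge it as "the main obstacle" and propose to handle it by re-running Proposition \ref{surfacecores}, but that proposition only constrains the rational class of a single divisor and cannot deliver the arithmetic conclusion $D \in F_{\ocircuit}$ for all $\ocircuit$ in the discriminantal hull, which is what membership in $\mathfrak{A}_{p,q}$ means.

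Three further points. First, you conflate $F_{\{p,q\}}$ with $\mathfrak{A}_{p,q}$: the "infinitely-many" condition is about meeting $\mathfrak{A}_{p,q} = \bigcap_{\ocircuit \in \mathfrak{F}_{S_{p,q}}} F_{\ocircuit}$, which involves all circuits parallel to $S_{p,q}$, not just the pair $\{p,q\}$; it is not "automatic" from the image of $D$ in $A_{\{p,q\}}$ landing in a bounded region. Second, you import a standing hypothesis $H^i\big(X,\sh{O}(D)\big) = 0$ for $i>0$ on $D$ itself; the proposition does not assume this, and the paper's proof does not need it — it only uses vanishing at infinitely many points $D + rD_{p,q}$. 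Third, the claim that the only fibrational circuits on a surface are the rank-one pairs $\{p,q\}$ with $l_p = -l_q$ is false (the circuit of $\mathbb{P}^2$, or any triple with all coefficients positive, is fibrational of cardinality three); what is true, and what the paper uses elsewhere, is that only the rank-one fibrational circuits cut out positive-dimensional faces of $\nef(X)$ parallel to $H_{\circuit}$.
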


\begin{proof}
For any nef divisor $D \in -S_{p, q}$, the polytope $P_D$ is
a line segment such that all $H_i^\uc$ intersect this line segment in one of its two
end points, depending on whether $i \in A_{p, q}^1$ or $i \in A_{p, q}^2$.
This implies that the line spanned by $S_{p, q}$ is the intersection of all
$H_\circuit$, where $\circuit \subset A_{p, q}^1 \cup \{p, q\}$ or $\circuit \subset
A_{p, q}^2 \cup \{p, q\}$. Let $D$ be $\mathfrak{A}_{p, q}$-residual and
assume that $H^i\big(X, \sh{O}(D + r D_{p, q})\big) = 0$ for all $i$ and for
infinitely many $r$. We first show that $D \in F_{\{p, q\}}$, i.e. that $c_p + c_q = -1$
for any torus invariant representative $D = \sum_{i \in \on} c_i D_i$.
Assume that $c_p + c_q > -1$. Then there exists $m \in M$ such that
$p, q \notin \mathfrak{s}^\uc(m)$. By adding sufficiently high multiples
of $D_{p, q}$ such that $D + rD_{p, q} = \sum c'_i D_i$, we can even find
such an $m$ such that $A_1 \cup A_2
\subset \mathfrak{s}^{\uc'}(m)$, hence $H^1\big(X, \sh{O}(D + r D_{p, q})\big)
\neq 0$ for large $r$ and thus $D$ is not $\mathfrak{A}_{p, q}$-residual.
If $c_p + c_q < -1$, there is an $m \in M$ with
$\{p, q\} \subset \mathfrak{s}^\uc(m)$, and by the same argument, we
get $H^2\big(X, \sh{O}(D + r D_{p, q})\big) \neq 0$ for large $r$. Hence
$c_p + c_q = -1$, i.e. $D \in F_{\{p, q\}}$.
This implies that for every $m \in M$ either $p \in \mathfrak{s}^\uc(m)$
and $q \notin \mathfrak{s}^\uc(m)$, or $q \in \mathfrak{s}^\uc(m)$
and $p \notin \mathfrak{s}^\uc(m)$.
Now assume that $D \notin \mathcal{F}_\circuit$ for
some $\circuit = \{i, j, k\} \subset A_1 \cup \{p, q\}$ such that
$\ocircuit^+ = \{i, k\}$ for some orientation.
Then there exists some $m \in M$ with $\{i, k\} \subset \mathfrak{s}^\uc(m)$
or $\{j\} \subset \mathfrak{s}^\uc(m)$. In the first case, as before we can
simply add some multiple of $D_{p, q}$ such that
$i \in \mathfrak{s}^{\uc'}(m)$ and $i \in A_2$, hence $\mathfrak{s}^{\uc'}(m)$
contains at least two $-$-intervals.
In the second case, we have either $p \notin \mathfrak{s}^\uc(m)$ or
$q \notin \mathfrak{s}(m)$, thus at least two $-$-intervals, too. Hence
$D \in \mathfrak{A}_{p, q}$ and the assertion follows.
\end{proof}

\begin{proposition}\label{nosurfacenefresiduals}
Let $X$ be a complete toric surface. Then $X$ has only a finite number of
$\mathfrak{A}_\nef$-residual divisors.
\end{proposition}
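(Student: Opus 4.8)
We may assume that $X$ is not a weighted projective plane, since in that case $X$ is a toric $1$-circuit variety and the complete answer is given by Proposition~\ref{circuitglobalcohom}; thus $\nef(X)$ is a full-dimensional pointed cone in $A_1(X)_\Q$, with extremal rays $R_1,\dots,R_p$ ($p\ge 2$) and primitive generators $E_1,\dots,E_p$. For a circuit $\circuit$ write $\eta_\circuit\colon A_1(X)\to A_\circuit\cong\Z$ for the projection and $\bar D_\circuit:=\eta_\circuit(D)$. The plan is to turn ``$D$ is $\mathfrak{A}_\nef$-residual'' into a purely combinatorial condition and then to show that the resulting set is covered by finitely many bounded polyhedra.

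First I would reduce the asymptotic property of Proposition~\ref{asymptoticproperty} to the one-dimensional faces of $\nef(X)$: every nonzero face of a pointed cone contains one of the $R_j$, and the divisor $D'$ attached to a face may be taken to generate an extremal ray inside that face, so the property holds for $D$ exactly when, for every $j$, the half-line $D+rE_j$ ($r\in\Q_{\ge0}$) meets $\mathfrak{A}_\nef$ in infinitely many integral classes. I would then analyse this componentwise. Orient $A_\circuit\cong\Z$ so that $\eta_\circuit(\nef(X))\subseteq\Z_{\ge 0}$; for $\ocircuit\in\mathfrak{F}_\nef$ this is possible because $\nef(X)\subseteq H_\ocircuit$, and then $\eta_\circuit(E_j)\ge 0$, with equality precisely when $E_j\in H_\circuit$. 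From the definition of $F_\ocircuit$ as the complement of the semigroup of classes $\sum_{i\in\ocircuit^-}c_iD_i-\sum_{i\in\ocircuit^+}c_iD_i$ ($c_i\ge0$, and $c_i>0$ on $\ocircuit^+$), its image $\eta_\circuit(F_\ocircuit)$ contains all integers above a threshold $K_\ocircuit$ and meets $\{n\le K_\ocircuit\}$ in only finitely many points (classical Frobenius). Hence, along $D+rE_j$, each $\ocircuit$-component either drifts monotonically into the cofinite part of $F_\ocircuit$ (when $\eta_\circuit(E_j)>0$) or stays constant (when $\eta_\circuit(E_j)=0$); since $\circuit(L)$ is finite, a single threshold in $r$ serves for all components of the first kind. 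Therefore $D+rE_j\in\mathfrak{A}_\nef$ for infinitely many $r$ iff $\bar D_\ocircuit\in F_\ocircuit$ for every $\ocircuit\in\mathfrak{F}_\nef$ with $E_j\in H_\circuit$. Writing $\mathfrak{G}$ for the set of $\ocircuit\in\mathfrak{F}_\nef$ whose hyperplane $H_\circuit$ contains an extremal ray of $\nef(X)$, we conclude: $D$ is $\mathfrak{A}_\nef$-residual iff $\bar D_\ocircuit\in F_\ocircuit$ for all $\ocircuit\in\mathfrak{G}$ but $D\notin\mathfrak{A}_\nef$.

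Next comes the boundedness argument. By Proposition~\ref{surfacenefstratum}, $\nef(X)=\overline S$ for a maximal stratum $S$, so each facet of $\nef(X)$ lies on some discriminantal hyperplane $H_\circuit$; such a facet, being a face of positive dimension, contains an extremal ray, so the corresponding $\ocircuit$ lies in $\mathfrak{G}$. Since a full-dimensional pointed cone is the intersection of the closed half-spaces supporting its facets, $\bigcap_{\ocircuit\in\mathfrak{G}}H_\ocircuit=\nef(X)$. Moreover $F_\ocircuit\subseteq\{D\mid\bar D_\ocircuit\ge K_\ocircuit-c_\ocircuit\}$ for a constant $c_\ocircuit$ depending only on the defining semigroup (its finitely many Frobenius exceptions lie within distance $c_\ocircuit$ of $K_\ocircuit$), and this half-space has recession cone $H_\ocircuit$; so $\bigcap_{\ocircuit\in\mathfrak{G}}F_\ocircuit$ is contained in a polyhedron $P$ with recession cone $\bigcap_{\ocircuit\in\mathfrak{G}}H_\ocircuit=\nef(X)$. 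Now let $D$ be $\mathfrak{A}_\nef$-residual. Since $\mathfrak{A}_\nef=\bigcap_{\ocircuit\in\mathfrak{F}_\nef}F_\ocircuit$ while $\bar D_\ocircuit\in F_\ocircuit$ for all $\ocircuit\in\mathfrak{G}$, there is some $\ocircuit'\in\mathfrak{F}_\nef\setminus\mathfrak{G}$ with $D\notin F_{\ocircuit'}$, so $\bar D_{\ocircuit'}$ lies in the defining semigroup and in particular $\bar D_{\ocircuit'}\le K_{\ocircuit'}$. For $\ocircuit'\notin\mathfrak{G}$ the supporting hyperplane $H_{\circuit'}$ meets $\nef(X)$ in a face containing no extremal ray, hence in $\{0\}$. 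Consequently $P\cap\{D\mid\bar D_{\ocircuit'}\le K_{\ocircuit'}\}$ is a polyhedron whose recession cone is contained in $\nef(X)\cap H_{-\ocircuit'}\subseteq\nef(X)\cap H_{\circuit'}=\{0\}$, hence is bounded and contains only finitely many lattice points. As there are finitely many circuits, the set of $\mathfrak{A}_\nef$-residual divisors is a finite union of such finite sets.

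The step I expect to be the main obstacle is the equivalence of the first paragraph: one must keep the orientations in the various $A_\circuit\cong\Z$ straight, show that ``infinitely many $r$'' genuinely forces the constant-component condition (using that the defining semigroup of $F_\ocircuit$ contains an entire ray, so $F_\ocircuit$ is cofinite past $K_\ocircuit$), and legitimately pass from all nonzero faces of $\nef(X)$ to its extremal rays; the degenerate case of a one-dimensional nef cone must be handled separately at the start, as above. Everything afterwards is routine convex geometry together with the classical finiteness of the gap set of a numerical semigroup.
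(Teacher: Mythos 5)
Your proof is correct in substance and follows the same two--step strategy as the paper's: first show that a residual divisor must lie in $F_\ocircuit$ for every $\ocircuit\in\mathfrak{F}_\nef$ whose hyperplane supports a positive-dimensional face of $\nef(X)$, then show that the locus where the remaining conditions $F_{\ocircuit'}$ fail is bounded because $H_{\circuit'}\cap\nef(X)=\{0\}$ for those circuits. The execution differs in both halves, and in both cases yours is tighter. For the first half the paper argues cohomologically: if $D\notin F_\ocircuit$ for a circuit corresponding to an extremal curve, a chamber with critical signature persists under translation along $H_\circuit$ toward $\nef(X)$, and some other hyperplane must eventually cross it and ``create cohomology''; you instead observe that $\eta_\circuit$ is constant along $R_j\subset H_\circuit$ and that $F_\ocircuit$ is cofinite past the Frobenius threshold, which yields the same necessary condition without any reference to simplicial complexes and, incidentally, without the paper's case split into extremal, non-fibrational, and fibrational circuits of cardinality two and three. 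For the second half the paper merely asserts that ``there might be a finite number of divisors'' outside $F_\mathfrak{D}$ when $H_\mathcal{D}$ is not parallel to a nonzero face of $\nef(X)$; your recession-cone computation ($\operatorname{rec}(P\cap\{\bar D_{\ocircuit'}\le K_{\ocircuit'}\})\subseteq\nef(X)\cap H_{\circuit'}=\{0\}$) is the honest proof of that assertion. Your dichotomy $\mathfrak{G}$ versus $\mathfrak{F}_\nef\setminus\mathfrak{G}$ also handles Hirzebruch surfaces uniformly, which the paper excludes without comment.

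Two caveats. First, the Picard-rank-one case: under the literal reading of Definition of residuality via Proposition \ref{asymptoticproperty}, every divisor class $\le K_X$ on $\mathbb{P}^2$ satisfies the asymptotic property (the unique nonzero face is the whole nef cone, along which every class eventually enters $\mathfrak{A}_\nef$), so the proposition would be false there; finiteness only holds if residual divisors are implicitly required to be cohomology-free, as the usage in Theorem \ref{surfaceclassification} suggests. Your appeal to Proposition \ref{circuitglobalcohom} resolves the case only under that reading, so you should state explicitly which reading you use — though the paper itself excludes $\mathbb{P}^2$ with no more justification than you give. Second, when $A_\circuit$ has torsion, $E_j\in H_\circuit$ only forces $\eta_\circuit(E_j)$ to be a torsion element, not zero, so your ``exactly when'' equivalence needs the small adjustment you yourself anticipate; but since only the free part of $\eta_\circuit(D)$ enters the definition of the polyhedron $P$, the finiteness conclusion is unaffected.
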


\begin{proof}
We can assume without loss of generality that $X$ is not $\mathbb{P}^2$ nor
a Hirzebruch surface.
Assume there is $D \in A_1(X)$ which is not contained in $F_\ocircuit$ for some
circuit $\circuit = \{i - 1, i, i + 1\}$ corresponding to an extremal curve on $X$.
Then there exists a chamber in the corresponding arrangement
whose signature contains $\{i - 1, i + 1\}$. To have this signature to
correspond to an acyclic subcomplex of $\hat{\Delta}$,
the rest of the signature must
contain $\on \setminus \circuit$. Now assume we have some integral vector $D_\circuit \in
H_\circuit$, then we can add a multiple of $D_\circuit$ to $D$ such that $D$ is
parallel translated to $\nef(X)$. In this process necessarily at least one
hyperplane passes the critical chamber and thus creates cohomology.

Now, $D$ might be outside of $F_\mathfrak{D}$ for some $\mathfrak{D} \in \ocircuit(L)$
not corresponding to an extremal curve. If the underlying circuit is not fibrational,
then $D$ being outside $F_\mathfrak{D}$ implies $F_\mathfrak{C}$ for
some extremal circuit $\ocircuit$. If $\mathfrak{D}$ is fibrational and
$\mathcal{D} = \{p, q\}$, then we argue as in Proposition \ref{nosurfaceantinefresiduals} that
$D$ has cohomology. If $\mathcal{D}$ is fibrational of cardinality three,
the corresponding hypersurface $H_\mathcal{D}$ is not parallel to any
nonzero face of $\nef(X)$ and there might be a finite number of
divisors lying outside $F_\mathfrak{D}$ but in the intersection of all
$F_\mathfrak{C}$, where $\mathcal{C}$ corresponds to an extremal
curve.
\end{proof}

\begin{proposition}\label{finitezeroresiduals}
Let $X$ be a complete toric surface. Then $X$ has only a finite number of
0-residual divisors.
\end{proposition}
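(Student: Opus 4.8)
The plan is to show that a $0$-residual divisor on a complete toric surface $X$ is confined to a bounded region of $A_1(X)$, by combining Propositions~\ref{surfacecores}, \ref{nosurfaceantinefresiduals} and~\ref{nosurfacenefresiduals} with a boundedness estimate for lattice-point-free chambers. Recall that a $0$-residual divisor $D=\sum_{i\in\on}c_iD_i$ satisfies $H^i\big(X,\sh{O}_X(D)\big)=0$ for all $i>0$, lies outside $\bigcap_{\ocircuit\in\ocircuit(L)}F_\ocircuit$, is contained in neither $\mathfrak{A}_\nef$ nor any $\mathfrak{A}_{p,q}$, and is neither $\mathfrak{A}_\nef$-residual nor $\mathfrak{A}_{p,q}$-residual; in particular, by the meaning of ``residual'', the property of Proposition~\ref{asymptoticproperty} fails for $D$ relative to every nonzero face of $\nef(X)$ and of every $S_{p,q}$.

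First I would dispose of the nef and anti-nef cones, since Proposition~\ref{surfacecores} applies only outside them. If $D\in\nef(X)$ then $D\in\nef(X)\subset\mathfrak{A}_\nef$. If $D\in-\nef(X)$, write $D=-E$ with $E$ nef; by Proposition~\ref{antinefcohomology}, $H^i\big(X,\sh{O}_X(-E)\big)=0$ for $i\neq\kappa(E)$, so vanishing of all higher cohomology forces either $\kappa(E)=0$ (then $E$ is trivial or torsion in $A_1(X)$ and $D\in\mathfrak{A}_\nef$), or $\kappa(E)=1$ (then $-E$ lies in some $-F$ for a fibrational face $F$ and, exactly as in Proposition~\ref{nosurfaceantinefresiduals}, is contained in an $\mathfrak{A}_{p,q}$ or is $\mathfrak{A}_{p,q}$-residual), or $\kappa(E)=2$ with $P_E$ free of interior lattice points (then, by the interior-lattice-point count of Remark~\ref{nefremark}, $P_E$ has lattice width one, so $E$ lies in a fibrational hyperplane $H_{\{p,q\}}$ with $l_p=-l_q$ and $-E$ is again caught by an $\mathfrak{A}_{p,q}$). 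In every case $D$ fails to be $0$-residual, so we may assume $D\notin\nef(X)\cup(-\nef(X))$. Then Proposition~\ref{surfacecores} gives a two-dimensional chamber $P^I_\uc$ of the arrangement $\{H^\uc_i\}$ in $M_\Q$ whose complex $\hat{\Delta}_I$ has at least two components, and $H^1\big(X,\sh{O}_X(D)\big)=0$ forces $P^I_\uc\cap M=\emptyset$.

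Next I would bound the lattice-point-free polygon $\overline{P^I_\uc}\subset M_\Q$. If it is bounded, a lattice-point-free planar convex body has lattice width bounded by an absolute constant, and together with the fact that $D$ determines the arrangement $\{H^\uc_i\}$ only up to translation by $L(M)$ this pins $D$ into a bounded subset of $A_1(X)$, leaving finitely many classes. If it is unbounded, its recession cone is nonzero, and I would translate $D$ by large multiples of a divisor class $D'$ spanning a ray ``dual'' to that recession cone: for all large $r$ the arrangement of $D+rD'$ still has a chamber of type $I$ carrying a disconnected complex and no lattice point, so $H^{i}\big(X,\sh{O}_X(D+rD')\big)=0$ for all $i>0$. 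For a complete toric surface one checks, using the circular order of the $l_i$ as in the proofs of Propositions~\ref{nosurfaceantinefresiduals} and~\ref{nosurfacenefresiduals}, that every recession direction of such a chamber is parallel to a nonzero face of $\nef(X)$ or to some $\overline{S}_{p,q}$; hence $D$ has the property of Proposition~\ref{asymptoticproperty} relative to that face, contradicting $0$-residuality. Therefore $\overline{P^I_\uc}$ is bounded, and the finiteness follows.

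The step I expect to be the main obstacle is precisely this last verification: that on a complete toric surface an unbounded lattice-point-free chamber of $\{H^\uc_i\}$ can only recede in a direction already accounted for by a face of $\nef(X)$ or by a fibrational pair $\{p,q\}$, so that no ``stray'' unbounded configuration escapes all the arithmetic cores. This is the surface incarnation of the phenomenon behind Proposition~\ref{nosurfacenefresiduals} --- where fibrational circuits of cardinality three contribute only finitely many exceptions because their hyperplanes $H_\mathcal{D}$ are not parallel to any nonzero face of $\nef(X)$ --- and the present argument needs that bookkeeping carried through for every relevant chamber rather than only for the extremal ones.
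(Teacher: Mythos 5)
Your reduction to $D\notin\nef(X)\cup(-\nef(X))$ and your unbounded case are essentially sound: an unbounded lattice-point-free chamber of a plane arrangement must have a one-dimensional recession cone whose two unbounded edges are parallel, which forces a fibrational pair $l_p=-l_q$ among the active normals, and translating along $D_{p,q}$ then exhibits the asymptotic property of Proposition \ref{asymptoticproperty} relative to $S_{p,q}$, contradicting $0$-residuality. The genuine gap is in your bounded case. First, bounded lattice width does not bound the diameter of a lattice-point-free polygon: it can be arbitrarily long and thin inside a lattice-point-free strip $a<l_p(m)<a+1$, and with the edge normals fixed among the $\pm l_i$ this elongation again requires a fibrational pair $\{p,q\}$ --- a configuration your bounded branch never detects and which produces an infinite family of classes, each with the same bounded-but-growing lattice-point-free chamber. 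Second, and more seriously, even a completely rigid critical chamber $P^I_{\uc}$ only constrains those $c_i$ whose hyperplanes $H^{\uc}_i$ actually touch its closure; the remaining $c_i$ are free parameters of the class in $A_1(X)$, so ``the chamber is pinned down up to translation'' does not imply ``$D$ lies in a bounded subset of $A_1(X)$.'' Pushing a redundant hyperplane off to infinity gives a family $D+rD_i$ with the identical lattice-point-free chamber, and such a $D$ must be excluded by showing it is residual relative to a nonzero stratum (the ray through $D_i$ equals $\bigcap_{i\notin\circuit}H_\circuit$, hence is a union of strata), not by a width estimate.

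The paper closes both escapes with a single device that your argument is missing: for the lattice-point-free chamber it takes $J\subseteq\on$ \emph{minimal} (the paper writes ``maximal,'' evidently a slip) such that the region $P_{D,J}$ cut out by the inequalities indexed by $J$ alone is already lattice point free. If $J\neq\on$, then $D$ may be translated along a nonzero $D'$ in $H_J=\operatorname{span}\{D_i\}_{i\in\on\setminus J}=\bigcap_{\circuit\in\circuit(L_J)}H_\circuit$ while $P_{D,J}$, and hence the chamber inside it, remains lattice point free; so $D$ has the asymptotic property relative to a nonzero union of strata and is not $0$-residual. This branch uniformly covers the elongated-strip configurations and the redundant-hyperplane configurations. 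Only when $J=\on$ is every single hyperplane needed to exclude lattice points; then the combinatorial type is rigid and the finiteness count is immediate. You need an analogue of this $J\neq\on$ step to repair the bounded case; the flatness theorem alone does not do the work you assign to it.
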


\begin{proof}
Let us consider some vector partition function $\operatorname{VP}(L, I) :
\mathbb{O}_I \longrightarrow \N$, for $I$ such that $C_I$ does not contain a nonzero
subvector space. Let $D = \sum_{i \in \on} c_i D_i \in \Omega(L, I)$ and let $P_D$
the polytope in $M_\Q$ such that $m \in M_\Q$ is in $P_D$ iff $l_i(m) < -c_i$ for
$i \in I$ and $l_i(m) \geq -c_i$ for $i \in \on \setminus I$. For any $J \subset \on$
we denote $P_{D, J}$ the polytope defined by the same inequalities, but only for
$i \in J$. Clearly, $P_D \subset P_{D, J}$. Let $J \subset \on$ be maximal with respect
to the property that $P_{D, J}$ does not contain any lattice points. If $J \neq \on$,
then we can freely move the hyperplanes given by $l_i(m) = -c_i$ for $i \in \on
\setminus I$ such that $P_{D, J}$ remains constant and thus lattice point free.
This is equivalent to say that there exists a nonzero $D' \in \bigcap_{\circuit
\in \circuit(L_J)} H_\circuit$ and for every such $D'$ the polytope $P_{D + jD'}$
does not contain any lattice point for any $j \in \Q_{> 0}$.

Now assume that $J = \on$. This implies that the defining inequalities of $P_D$
are irredundant and thus there exists a unique maximal chamber in $C_I$ which contains
$D$ (if $I = \emptyset$ this would be the nef cone by \ref{surfacenefstratum})
and thus the combinatorial type of $P_D$ is fixed. Now, clearly, the number of
polygons of shape $P_D$ with parallel faces given by integral linear inequalities
and which do not contain a lattice point is finite.

By applying this to all (and in fact finitely many) cones $\mathbb{O}_I$ such that
$C_I$ does not contain a nontrivial subvector space of $A_\Q$, we see that there
are only finitely many divisors $D$ which are not contained in $\mathfrak{A}_\nef$ or
$\mathfrak{A}_{p, q}$.
\end{proof}

\begin{proof}[Proof of theorem \ref{surfaceclassification}.]
By \ref{surfacecores}, $\nef(X)$ and the $S_{p, q}$ are indeed the only relevant
strata, which by \ref{nosurfaceantinefresiduals} and \ref{nosurfacenefresiduals} admit
only finitely many residual elements. Hence, we are left with the $0$-residuals, of
which exist only finitely many by \ref{finitezeroresiduals}.
\end{proof}

\begin{example}
Figure \ref{hirzebruchcohom} shows the cohomology free divisors on the Hirzebruch surface
$\mathbb{F}_3$ which is given by four rays, say $l_1 = (1, 0)$, $l_2 = (0, 1)$, $l_3 = (-1, 3)$,
$l_4 = (0, -1)$ in some coordinates for $N$. In $\pic(\mathbb{F}_3) \cong \Z^2$ there are
two cones such that $H^1\big(X, \sh{O}(D)\big) \neq 0$ for every $D$ which is contained
in one of these cones. Moreover,
there is one cone such that $H^2\big(X, \sh{O}(D)\big) \neq 0$ for every $D$; its tip is
sitting at $K_{\mathbb{F}_3}$. The nef cone is indicated by the dashed lines.
\begin{figure}[htbp]
\begin{center}
\includegraphics[width=12cm]{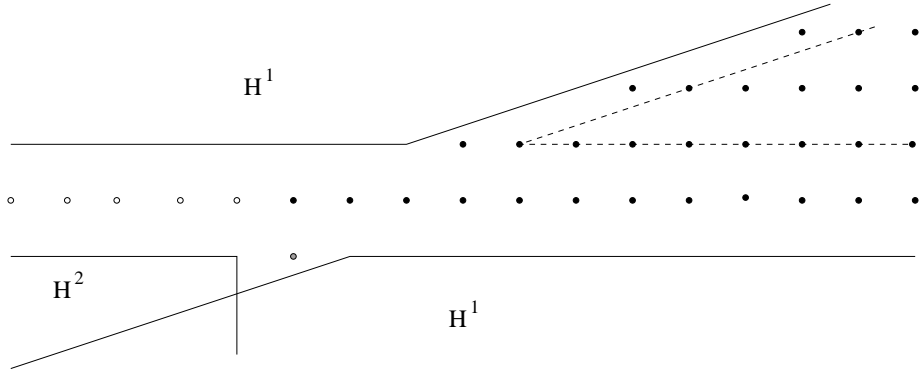}
\end{center}
\caption{Cohomology free divisors on $\mathbb{F}_3$.}\label{hirzebruchcohom}
\end{figure}
\end{example}
The picture shows the divisors contained in $\mathfrak{A}_\nef$ as black dots. The
white dots indicate the divisors in $\mathfrak{A}_{2, 4}$. There is one $0$-residual
divisor indicated by the grey dot.

The classification of smooth complete toric surfaces implies that every such surface
which is not
$\mathbb{P}^2$, has a fibrational circuit of rank one. Thus the theorem implies that
on every such surface there exist families of line bundles with vanishing
cohomology along the inverse nef cone. For a given toric surface $X$, these
families can be explicitly computed by checking for every $\circuit \subset
A_1 \cup \{p, q\}$ and every $\circuit \subset A_2 \cup \{p, q\}$, respectively,
whether the inequalities
\begin{equation*}
c_i + l_i(m)
\begin{cases}
\geq 0 & \text{ for } i \in \ocircuit^+\\
< 0 & \text{ for } i \in \ocircuit^-,
\end{cases}
\qquad
c_i + l_i(m)
\begin{cases}
\geq 0 & \text{ for } i \in -\ocircuit^+\\
< 0 & \text{ for } i \in -\ocircuit^-
\end{cases}
\end{equation*}
have solutions $m \in M$ for at least one of the two orientations
$\ocircuit$, $-\ocircuit$ of $\circuit$. This requires to deal with
$\binom{\vert A_1 \vert + 2}{3} + \binom{\vert A_2 \vert + 2}{3}$,
i.e. of order $\sim n^3$, linear inequalities. We can reduce this
number to order $\sim n^2$ as a corollary from our considerations
above:

\begin{corollary}
Let $\circuit \in A_i$ for $i = 1$ or $i = 2$. Then there exist $\{i, j\}
\subset \circuit$ such that $F_{\{p, q\}} \cap F_\ocircuit \supset F_{\{p, q\}} \cap
F_{\{i, j, p\}} \cap F_{\{i, j, q\}}$.
\end{corollary}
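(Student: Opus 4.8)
The plan is to prove the equivalent contrapositive statement: if $D\in F_{\{p,q\}}$ but $D\notin F_\ocircuit$, then $D\notin F_{\{i,j,p\}}$ or $D\notin F_{\{i,j,q\}}$ for a suitable $\{i,j\}\subset\circuit$. First I would fix notation for the geometry. Since $\circuit\subset A^i_{p,q}$ lies strictly on one side of the line $\R l_p=\R l_q$, the circuit is not fibrational; writing $\circuit=\{a,b,c\}$ I use the convention that $l_b$ is the \emph{middle} ray, i.e. $l_b$ lies in the cone spanned by $l_a,l_c$ (equivalently, in the relation $\alpha_al_a+\alpha_bl_b+\alpha_cl_c=0$ the sign of $\alpha_b$ is opposite to that of $\alpha_a,\alpha_c$). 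Thus the two orientations are $\ocircuit$ with $\ocircuit^+=\{a,c\}$, $\ocircuit^-=\{b\}$, and $-\ocircuit$ with $(-\ocircuit)^+=\{b\}$; the natural candidate is $\{i,j\}=\ocircuit^+=\{a,c\}$ (the two outer rays), and by the symmetric argument the same pair also handles $-\ocircuit$, so it suffices to treat $\ocircuit$.

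Next I would record the two arithmetic inputs. Unwinding Definition \ref{fcdef} together with the identification $A_\circuit\cong\Z$ coming from the Gale data, $D=\sum_k c_kD_k$ fails to lie in $F_\ocircuit$ precisely when the triangle $T=\{m\in M_\Q\mid l_a(m)<-c_a,\ l_c(m)<-c_c,\ l_b(m)\ge -c_b\}$ contains a lattice point; this is a bounded region because $l_b$ is a positive combination of $l_a,l_c$. Secondly, by the computation in the proof of Proposition \ref{nosurfaceantinefresiduals}, $D\in F_{\{p,q\}}$ is equivalent to $c_p+c_q=-1$, and since $l_q=-l_p$ is primitive this forces a clean dichotomy: for every $m\in M$ exactly one of $p,q$ lies in $\mathfrak s^{\uc}(m)$, the lattice being split by the two half-spaces $\{l_p(m)\le -c_p-1\}$ (the \emph{$p$-side}) and $\{l_p(m)\ge -c_p\}$ (the \emph{$q$-side}), with no lattice point in between.

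Now assume $D\in F_{\{p,q\}}\setminus F_\ocircuit$ and pick a lattice point $m_0\in T$. The key observation is that $\R l_p=\R l_q=\ker\big(N\to N/(\R l_p\cap N)\big)$ is the subspace defining the $\mathbb P^1$-fibration attached to $\{p,q\}$, and since $\circuit\subset A^i_{p,q}$ all of $l_a,l_b,l_c$ pair nontrivially with its fibre direction in $M_\Q$; hence moving $m$ parallel to that fibre changes $l_a(m),l_b(m),l_c(m)$ monotonically while leaving $l_p(m)$ — hence the $p$/$q$-side — unchanged. Sliding $m_0$ along its fibre until the first of $a,c$ leaves the signature produces a lattice point $m_1$ with $l_b(m_1)\ge -c_b$ still, with exactly one of $a,c$ in $\mathfrak s^{\uc}(m_1)$. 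I would then read this against the circuits $\{a,c,p\}$ and $\{a,c,q\}$, whose "bad" signatures — determined by their middle rays $a$ and $c$ respectively — are $\{p,c\},\{a\}$ for $\{a,c,p\}$ and $\{a,q\},\{c\}$ for $\{a,c,q\}$. If $m_1$ is on the $p$-side with $c\in\mathfrak s^{\uc}(m_1)$, $a\notin\mathfrak s^{\uc}(m_1)$, it realizes the signature $\{p,c\}$, so $D\notin F_{\{a,c,p\}}$; if $m_1$ is on the $q$-side with $a\in\mathfrak s^{\uc}(m_1)$, $c\notin\mathfrak s^{\uc}(m_1)$, it realizes $\{a,q\}$, so $D\notin F_{\{a,c,q\}}$; and using $c_p+c_q=-1$ the complementary combinations give either the $1$-element bad signatures $\{a\}$ or $\{c\}$, which again yield $D\notin F_{\{a,c,p\}}$ or $D\notin F_{\{a,c,q\}}$. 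This is exactly the contrapositive, so the inclusion follows.

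The step I expect to be the main obstacle is making the previous paragraph precise when the $p$/$q$-side of $m_1$ does \emph{not} already match the ray ($a$ or $c$) that survives in its signature. In that case one has to pass to a \emph{different} fibre — that is, change the base coordinate $l_p(m)$ — while keeping one of $a,c$ in the signature, and one must verify that such a lattice point on the correct side actually exists; this reduces to comparing the integer $c_p$ with $c_c-c_a$ and to a short (but genuinely Frobenius-flavoured) counting argument for lattice points in the relevant slabs, and it is precisely here that the hypothesis $\circuit\subset A^i_{p,q}$ — forcing $l_a,l_b,l_c$ all to the same vertex of the base $\mathbb P^1$ — is used essentially, together with the asymptotic structure of Proposition \ref{asymptoticproperty}. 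Once this bookkeeping is carried out for both orientations of $\circuit$, the two inclusions combine to give the stated reduction of the $F_{\{p,q\}}\cap F_\ocircuit$ to the type-$\{i,j,p\}$, $\{i,j,q\}$ conditions.
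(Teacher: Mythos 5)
Your overall strategy (pass to the contrapositive and exhibit a lattice point realizing one of the two special signatures of a circuit $\{i,j,p\}$ or $\{i,j,q\}$) is the same as the paper's, but your commitment to the fixed pair $\{i,j\}=\ocircuit^{+}=\{a,c\}$ is where the argument genuinely breaks, and the step you flag as ``the main obstacle'' cannot be repaired for that choice. Concretely, take $l_p=(1,0)$, $l_q=(-1,0)$, $l_a=(1,1)$, $l_b=(0,1)$, $l_c=(-1,1)$ (completed to a fan by $l_r=(0,-1)$), so $\circuit=\{a,b,c\}$ has relation $l_a-2l_b+l_c=0$ and $\ocircuit^{+}=\{a,c\}$. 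Choose $c_a=c_c=0$, $c_b=1$, $c_p=0$, $c_q=-1$. Then $c_p+c_q=-1$, so $D\in F_{\{p,q\}}$, and $m=(0,-1)$ satisfies $l_a(m)<0$, $l_c(m)<0$, $l_b(m)\geq -1$, so $D\notin F_{\ocircuit}$. However, the special signatures of $\{a,c,p\}$ (relation $l_a-l_c-2l_p=0$) are $\{a\}$ and $\{c,p\}$, and both of the corresponding systems $\{x+y<0,\ y\geq x,\ x\geq 0\}$ and $\{y\leq x-1,\ x\leq -1,\ x+y\geq 0\}$ are empty; the same happens for $\{a,c,q\}$ (special signatures $\{a,q\}$ and $\{c\}$). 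Hence $D\in F_{\{p,q\}}\cap F_{\{a,c,p\}}\cap F_{\{a,c,q\}}$ while $D\notin F_{\ocircuit}$, so the inclusion fails for your pair. (Your sliding device also degenerates here: moving $m$ along the fibre direction $(0,1)$ makes $a$ and $c$ leave the signature simultaneously, so the intermediate point $m_1$ you rely on need not exist.)

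What actually works, and what the paper does, is to choose the pair \emph{after} seeing the witness $m$, and the working pair in general contains the \emph{middle} ray $b$ rather than consisting of the two outer rays: in the example above, $m=(0,-1)$ has signature $\{a\}$ on the circuit $\{a,b,p\}$ (relation $l_a-l_b-l_p=0$, special signatures $\{a\}$ and $\{b,p\}$), so $D\notin F_{\{a,b,p\}}$; the reflected divisor with $c_p=-1$, $c_q=0$ instead requires the pair $\{b,c\}$. Since no single pair handles both divisors, the quantifier in the statement has to be read as the paper's subsequent discussion indicates (one ranges over \emph{all} pairs $\{i,j\}$ and only needs that, for each offending $D$, \emph{some} triple $\{i,j,p\}$ or $\{i,j,q\}$ detects it); your proof should be restructured accordingly: given the witness $m$ in the $\ocircuit^{+}$-chamber and the side of the lattice-point-free strip between $H_p^{\uc}$ and $H_q^{\uc}$ on which $m$ lies, one selects the two rays of $\circuit$ whose hyperplanes, together with $H_p^{\uc}$ or $H_q^{\uc}$, cut out a special chamber containing $m$ itself --- no sliding is needed, and no Frobenius-type counting in the slabs is required.
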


\begin{proof}
Assume first that there exists $m \in M$ which for the orientation $\ocircuit$
of $\circuit = \{i_1, i_2, i_3\}$ with $\ocircuit^+ = \{i_1, i_3\}$ which fulfills the
inequalities $l_{i_k}(m) + c_{i_k} \geq 0$ for $k = 1, 3$ and $l_{i_2}(m) + c_{i_2} < 0$.
This implies that $H^1\big(X, \sh{O}(D)\big) \neq 0$,
 independent of the configuration of the other hyperplanes,
as long as $c_p + c_q = -1$. It is easy to see that we can choose $i, j \in
\circuit$ such that $\{i, j, p\}$ and $\{i, j, q\}$ form circuits. We can
choose one of those such that $m$ is contained in the triangle, fulfilling
the respective inequalities, and which is not fibrational.
For the inverse orientation $-\ocircuit$, we can the same way replace one of the
elements of $\circuit$ by one of $p$, $q$. By adding a suitable positive
multiple of $D_{p, q}$, we can rearrange the hyperplanes such that
$H^1\big(X, \sh{O}(D + r D_{p, q})\big) \neq 0$.
\end{proof}

One should read the corollary the way that for any pair $i, j$ in $A_1$ or
in $A_2$, one has only to check whether a given divisor fulfills certain
inequalities for triples $\{i, j, q\}$ and $\{i, j, p\}$.
It seems that it is not possible to reduce further the number of equations
in general. However, there is a criterion which gives a good reduction
of cases for practical purposes:

\begin{corollary}\label{simplifiedconditions}
Let $X$ be a smooth and complete toric surface and
$D = \sum_{i \in \on}c_i D_i \in \mathfrak{A}_{p, q}$, then for every
$i \in A_1 \cup A_2$, we have:
\begin{equation*}
c_{i - 1} + c_{i + 1} - a_i c_i \in [-1, a_i - 1],
\end{equation*}
where the $a_i$ are the self-intersection numbers of the $D_i$.
\end{corollary}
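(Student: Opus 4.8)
The plan is to reduce the statement, for each fixed $i$, to a single circuit --- the one attached to the three consecutive rays around $l_i$ --- and then to read off the relevant arithmetic set from the one-circuit analysis of Section~\ref{onecircuitvarieties}. Fix $i\in A_1\cup A_2$, say $i$ lies on the arc $A_j$, and let $\circuit_i$ be the circuit supported on $\{i-1,i,i+1\}$ (a circuit whenever $l_{i-1}\neq -l_{i+1}$), with the convention that $\circuit_i:=\{i-1,i+1\}$, the rank-one fibrational circuit, in the degenerate case $l_{i-1}=-l_{i+1}$. Since the cyclic neighbours $i\pm1$ of $i$ lie in $A_j\cup\{p,q\}$, we have $\circuit_i\subseteq A_j\cup\{p,q\}$; by the description of $\overline{S}_{p,q}$ obtained in the proof of Proposition~\ref{nosurfaceantinefresiduals} --- that the line spanned by $S_{p,q}$ is the intersection of the $H_\circuit$ over all circuits contained in $A_1\cup\{p,q\}$ or in $A_2\cup\{p,q\}$ --- it follows that $S_{p,q}\subset H_{\circuit_i}$, hence that both orientations $\ocircuit_i$, $-\ocircuit_i$ of $\circuit_i$ lie in the oriented flat $\mathfrak{F}_{S_{p,q}}$. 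Therefore
\[
\mathfrak{A}_{p,q}=\bigcap_{\ocircuit\in\mathfrak{F}_{S_{p,q}}}F_\ocircuit\subseteq F_{\ocircuit_i}\cap F_{-\ocircuit_i}=F_{\circuit_i},
\]
and it is enough to identify $F_{\circuit_i}$ explicitly.

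Here smoothness is used. Any two consecutive primitive vectors form a $\Z$-basis of $N$, so $\circuit_i$ spans $N$, the twist $\xi$ is trivial, $A_{\circuit_i}\cong\Z$, and the two outer weights in the defining relation of $\circuit_i$ are equal to $1$. Writing that relation as $l_{i-1}+l_{i+1}=a_i l_i$ --- with $a_i$ the self-intersection number of $D_i$ in the sign convention of this subsection --- the Gale surjection $\eta_{\circuit_i}\colon A_1(X)\twoheadrightarrow A_{\circuit_i}=\Z$ sends the class of $D=\sum_k c_kD_k$ to $c_{i-1}+c_{i+1}-a_ic_i$, which is well defined on $A_1(X)$ precisely because of that relation. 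Evaluating the semigroup of Definition~\ref{fcdef} for each orientation of $\circuit_i$: the presence of the weight $1$ makes these semigroups gap-free, so $F_{\circuit_i}=F_{\ocircuit_i}\cap F_{-\ocircuit_i}$ is exactly the $\eta_{\circuit_i}$-preimage of the integers lying strictly between the minimal divisors $K_{\ocircuit_i}$ and $K_{-\ocircuit_i}$ of Section~\ref{onecircuitnefkawamataviehweg}. A short computation gives $\{K_{\ocircuit_i},K_{-\ocircuit_i}\}=\{-2,a_i\}$ when $a_i>0$ (where $\ocircuit_i^+=\{i-1,i+1\}$) and $=\{a_i-2,0\}$ when $a_i<0$ (where $\circuit_i$ is fibrational), so the admissible integers are exactly the $x$ with $-1\le x\le a_i-1$, to be read as $a_i-1\le x\le-1$ when $a_i<0$. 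Together with the formula for $\eta_{\circuit_i}$ this gives $c_{i-1}+c_{i+1}-a_ic_i\in[-1,a_i-1]$. In the degenerate case $l_{i-1}=-l_{i+1}$ one has $a_i=0$ and $F_{\circuit_i}=\{D:c_{i-1}+c_{i+1}=-1\}$ by the same reasoning as for $F_{\{p,q\}}$ in the proof of Proposition~\ref{nosurfaceantinefresiduals}, which is the collapsed interval $[-1,-1]$.

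The only conceptually substantive step is the first paragraph: that the single circuit $\circuit_i$ already belongs to $\mathfrak{F}_{S_{p,q}}$, so that $\mathfrak{A}_{p,q}\subseteq F_{\circuit_i}$. After that everything is the one-circuit dictionary, and the point requiring the most care is keeping the orientation conventions, the sign of $a_i$ in the defining relation, and the chosen generator of $A_{\circuit_i}\cong\Z$ mutually consistent, so that the interval emerges as $[-1,a_i-1]$ rather than its reflection about $-1$. One should also dispose of the boundary situations where $i-1$ or $i+1$ equals $p$ or $q$ --- harmless, since $\circuit_i$ still lies inside $A_j\cup\{p,q\}$ --- and the extreme case $A_j=\{i\}$, in which $l_{i-1}+l_{i+1}=l_p+l_q=0$ automatically forces $a_i=0$ and the claim reduces to $c_p+c_q=-1$, i.e. $D\in F_{\{p,q\}}$.
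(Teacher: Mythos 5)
Your argument is correct and is essentially the paper's own (very terse) proof written out in full: reduce to the single circuit $\circuit_i=\{i-1,i,i+1\}$, whose hyperplane contains $S_{p,q}$ so that both orientations lie in $\mathfrak{F}_{S_{p,q}}$ and hence $\mathfrak{A}_{p,q}\subseteq F_{\circuit_i}$, and then observe that two of the three weights equal $1$, so the Frobenius problem for this circuit is trivial and $F_{\circuit_i}$ is the preimage of the open interval between the two offsets $K_{\ocircuit_i}$ and $K_{-\ocircuit_i}$. You are also right to hedge on the sign of $a_i$: the relation consistent with $a_i=D_i\cdot D_i$ is $l_{i-1}+l_{i+1}+a_il_i=0$ (the one appearing in the paper's proof), under which the quantity would read $c_{i-1}+c_{i+1}+a_ic_i$ with interval $[-a_i-1,-1]$; the formula exactly as displayed holds for $a_i$ defined by $l_{i-1}+l_{i+1}=a_il_i$, which is the convention your computation uses.
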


\begin{proof}
The circuit $\circuit = \{i - 1, i, i + 1\}$ comes with the integral relation
$l_{i - 1} + l_{i + 1} + a_i l_i = 0$. So the Frobenius problem for these circuits
is trivial and we have only to consider the offset part.
\end{proof}

The following example shows that these equalities are necessary, but not sufficient
in general:

\begin{example}\label{conditioncounterexample}
We choose some coordinates on $N \cong \Z^2$ and consider the complete toric surface
defined by $8$ rays $l_1 = (0, -1)$, $l_2 = (1, -2)$, $l_3 = (1, -1)$, $l_4 = (1, 0)$,
$l_5 = (1, 1)$, $l_6 = (1, 2)$, $l_7 = (0, 1)$, $l_8 = (-1, 0)$. Then any divisor $D =
c_1 D_1 + \cdots + c_8 D_8$ with $\underline{c} = (-1, 1, 1, 0, 0, 1, 0, -k)$ for some
$k \gg 0$ has nontrivial $H^1$, though it fulfills the conditions of corollary
\ref{simplifiedconditions}.
\end{example}

An interesting and more restricting case is the additional requirement that also
$H^i\big(X, \sh{O}(-D)\big)$ $= 0$ for all $i > 0$. One may compare the following
with the classification of bundles of type $B$ in \cite{HillePerling06}.

\begin{corollary}
Let $X$ be a smooth and complete toric surface and
$D \in \mathfrak{A}_{p, q}$ such that $H^i\big(X, \sh{O}(D)\big) = H^i\big(X, \sh{O}(-D)\big) = 0$ for
all $i > 0$. Then for every $i \in A_1 \cup A_2$, we have:
\begin{equation*}
c_{i - 1} + c_{i + 1} - a_i c_i \in
\begin{cases}
\{\pm 1, 0\} & \text{ if } a_i < -1 \\
\{-1, 0\} & \text{ if } a_i = -1,
\end{cases}
\end{equation*}
where the $a_i$ are the self-intersection numbers of the $D_i$.
\end{corollary}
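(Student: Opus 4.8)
The statement sharpens Corollary \ref{simplifiedconditions}. That corollary already handles $D\in\mathfrak{A}_{p,q}$ --- note that such a $D$ automatically has $H^i\big(X,\sh{O}(D)\big)=0$ for \emph{all} $i$, by Theorem \ref{projectiveantinef} applied to the face $F=\nef(X)\cap H_{p,q}$ --- and pins $c_{i-1}+c_{i+1}-a_ic_i$ into a short integer interval for each $i\in A_1\cup A_2$; the interval is clean because the circuit $\circuit=\{i-1,i,i+1\}$ carries the unit relation $l_{i-1}+l_{i+1}+a_il_i=0$, so the Frobenius obstruction attached to $\circuit$ is trivial and $D\in F_\circuit=F_\ocircuit\cap F_{-\ocircuit}$ amounts to a pair of offset inequalities (cf.\ Lemma \ref{movedawaylemma} and Proposition \ref{circuitglobalcohom}). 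The plan is to squeeze a second, complementary constraint out of the extra hypothesis $H^i\big(X,\sh{O}(-D)\big)=0$ for $i>0$.

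Fix $i\in A_1\cup A_2$ with $a_i<0$ and let $\ocircuit$ be the orientation of $\circuit=\{i-1,i,i+1\}$ with $\ocircuit^+=\{i-1,i+1\}$ and $\ocircuit^-=\{i\}$. The key step is to prove that $-D\in F_\circuit$, which --- the Frobenius part of $\circuit$ being trivial --- is the reflection under $D\mapsto -D$ of the interval condition of Corollary \ref{simplifiedconditions} on $c_{i-1}+c_{i+1}-a_ic_i$. This I would do by rerunning the chamber bookkeeping of Propositions \ref{surfacecores}, \ref{nosurfaceantinefresiduals} and \ref{nosurfacenefresiduals} for $-D$ in place of $D$. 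If $-D$ violated the first of the two offset inequalities cutting out $F_\circuit$, the hyperplane arrangement of $-D$ in $M_\Q$ would acquire a two-dimensional chamber of signature $\ocircuit^+=\{i-1,i+1\}$ on $\circuit$ and empty on $\on\setminus\circuit$ --- the latter forced exactly as for the extremal curves in Proposition \ref{nosurfacenefresiduals} --- so that $\hat{\Delta}_I$ would be the disconnected two-point complex on the non-adjacent rays $i-1,i+1$ and $H^1\big(X,\sh{O}(-D)\big)_m\neq0$ by Theorem \ref{cohomtheorem}; if this chamber is not yet wide enough, one first replaces $-D$ by $-D-rD_{p,q}$ for $r\gg0$, which changes no cohomology and stays in the good locus because $D$ lies in the relative interior $S_{p,q}$, so that only the finitely many residual divisors of Proposition \ref{nosurfacenefresiduals}, excluded here by hypothesis, could interfere. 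Violation of the second offset inequality would instead make $K_X+D$ effective, contradicting $H^2\big(X,\sh{O}(-D)\big)\cong H^0\big(X,\sh{O}(K_X+D)\big)\check{\ }=0$ --- Serre duality on the surface $X$, where every divisorial sheaf is maximal Cohen-Macaulay. Hence $-D\in F_\circuit$.

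Intersecting the interval of Corollary \ref{simplifiedconditions} with this reflected interval leaves only a handful of integer values of $c_{i-1}+c_{i+1}-a_ic_i$. For $a_i<-1$ the two intervals are wide enough on both sides that their common integer points are precisely $\{-1,0,1\}$, so the value lies in $\{\pm1,0\}$. For $a_i=-1$ the curve $D_i$ is a $(-1)$-curve and $\circuit$ is a blow-down circuit; there $\ocircuit^-=\{i\}$ is a single ray and $K_{-\ocircuit}=-D_i$, one of the two offset inequalities defining $F_\circuit$ degenerates, the extreme value $+1$ drops out, and only $\{-1,0\}$ survives. This degeneration is precisely what produces the asymmetry between the two cases of the statement.

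The main obstacle is the middle step: one cannot simply invoke Corollary \ref{simplifiedconditions} for $-D$, because $\mathfrak{A}_{p,q}$ is not stable under $D\mapsto -D$ --- the class $-D$ sits only asymptotically in the arithmetic core of the opposite face $\nef(X)\cap H_{p,q}$ --- so the complementary bound must be re-derived from Theorem \ref{cohomtheorem} through the explicit chamber analysis of the surface propositions, with careful control of which translates by $D_{p,q}$ preserve the cohomology-vanishing locus and of the degenerate behaviour of the offset inequalities at the blow-down circuits. Once that bound is secured, the remainder is routine integer-interval arithmetic together with the case split on $a_i$.
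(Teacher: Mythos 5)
Your overall strategy --- keep the interval of Corollary \ref{simplifiedconditions} for $D$ and extract a complementary bound on $c_{i-1}+c_{i+1}-a_ic_i$ from $H^1\big(X,\sh{O}(-D)\big)=0$ --- is the same as the paper's, but the execution has genuine gaps. The central one is that your key step aims at the wrong target. You set out to prove $-D\in F_\circuit=F_\ocircuit\cap F_{-\ocircuit}$ for $\circuit=\{i-1,i,i+1\}$, but only the half $-D\in F_\ocircuit$ (the orientation with $\ocircuit^+=\{i-1,i+1\}$) is needed, and only that half is obtainable: a lattice point whose $(-\uc)$-signature meets $\circuit$ in $\ocircuit^-=\{i\}$ contributes a locally \emph{connected} subcomplex and creates no cohomology by itself, so the hypothesis $H^{>0}(-D)=0$ gives no control over the $F_{-\ocircuit}$ inequality. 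Your attempt to get it anyway via Serre duality is a non sequitur: violating that one local offset inequality on the circuit $\{i-1,i,i+1\}$ produces a lattice point satisfying three inequalities, not a global section of $\sh{O}(K_X+D)$, so it does not make $K_X+D$ effective. Moreover, if both halves of $-D\in F_\circuit$ were available, your own interval arithmetic would give $\{0\}$ rather than $\{-1,0\}$ in the case $a_i=-1$; the vague appeal to a ``degenerating'' inequality there is covering for the fact that the $F_{-\ocircuit}$ constraint on $-D$ is simply not part of the argument. The correct bookkeeping is: $D\in\mathfrak{A}_{p,q}$ gives $v:=c_{i-1}+c_{i+1}-a_ic_i\in[-1,\,-a_i-1]$, and $H^1(-D)=0$ gives $v\le 1$; intersecting yields $\{-1,0\}$ for $a_i=-1$ and $\{-1,0,1\}$ for $a_i<-1$.

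Two further points. First, the translation step ``replace $-D$ by $-D-rD_{p,q}$, which changes no cohomology'' is unjustified: the hypothesis asserts vanishing only for $\pm D$, not for any translate of $-D$, and $\mathfrak{A}_{p,q}$ is not stable under $D\mapsto-D$, so you cannot re-run Proposition \ref{nosurfaceantinefresiduals} on the other side. It is also unnecessary: since the Frobenius problem for a circuit with two unit coefficients is trivial, $-D\notin F_\ocircuit$ already \emph{means} the relevant chamber contains a lattice point. Second, a lattice point with signature containing $\{i-1,i+1\}$ and missing $i$ disconnects $\hat\Delta_{I(m)}$ unless the signature is all of $\on\setminus\{i\}$; this is where $c_p+c_q=-1$ (so that no signature of $-D$ contains both $p$ and $q$) must be invoked, not Proposition \ref{nosurfacenefresiduals}, whose argument runs in the opposite direction. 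The paper's own proof avoids all of this by a single observation you did not use: the involution $m\mapsto-m$ carries a lattice point with $\mathfrak{s}^{\uc}(m)\cap\circuit=\ocircuit^-$ to one with $\mathfrak{s}^{-\uc}(-m)\cap\circuit=\ocircuit^+$, so $H^1(-D)=0$ directly forbids such points and yields $v\le 1$.
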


\begin{proof}
For $-D$, we have $c_p + c_q = 1$. Assume that there exists a circuit
circuit $\circuit$ with orientation $\ocircuit$ and $\ocircuit^+ = \{i, j\}$
and $\ocircuit^- = \{ k\}$, and morover, some lattice point
$m$ such that $\mathfrak{s}^\uc(m) \cap \circuit = \ocircuit^-$. Then we
get $\mathfrak{s}^{-\uc}(-m) \cap \circuit = \ocircuit^+$.
this implies that $H^1\big(X, \sh{O}(-D)\big) \neq 0$. This implies the
restriction $c_{i - 1} + c_{i + 1} - a_i c_i \in [-1, \min\{1, a_i - 1\}]$.
\end{proof}

Note that example \ref{conditioncounterexample} also fulfills these more restrictive
conditions.

\subsection{Maximal Cohen-Macaulay Modules of Rank One}
\label{mcmsection}

The classification of maximal Cohen-Macaulay modules can sometimes be related
to resolution of singularities, the most famous example for this being the
McKay correspondence in the case of certain surface singularities
(\cite{GonzalezSprinbergVerdier}, \cite{ArtinVerdier}, see also
\cite{EsnaultKnoerrer}). In the toric case, in general one cannot expect to
arrive at such a nice picture, as there does not exist a
canonical way to construct resolutions. However, there is a natural set of
preferred partial resolutions, which is parameterized by the secondary fan.

Let $X$ be a $d$-dimensional affine toric variety whose associated convex
polyhedral cone $\sigma$ has dimension $d$. Denote $x \in X$ torus fixed
point. For any Weil divisor $D$ on $X$, the sheaf $\sh{O}_X(D)$ is MCM if
and only if $H^i_x\big(X, \sh{O}_X(D)\big) $ for all $i < d$. It was shown
in \cite{BrunsGubeladze} (see also \cite{BrunsGubeladze2}) that there exists
only a finite number of such modules.

 Now let
$\tilde{X}$ be a toric variety given by some triangulation of
$\sigma$. The natural map $\pi: \tilde{X} \longrightarrow X$ is a partial
resolution of the singularities of $X$ which is an isomorphism in codimension
two and has at most quotient singularities.
In particular, the map of fans is induced by the identity on $N$ and,
in turn, induces a bijection on the set of torus invariant Weil divisors.
This bijection induces a natural isomorphism $\pi^{-1}: A_{d - 1}(X) \longrightarrow
A_{d - 1} (\tilde{X})$ which can be represented by the identity morphism on the
invariant divisor group $\Z^n$. This allows us to identify a
torus invariant divisor $D$ on $X$ with its strict transform $\pi^{-1} D$ on $\tilde{X}$.
Moreover, there are the natural isomorphisms
\begin{equation*}
\pi_*\sh{O}_{\tilde{X}}(\pi^{-1}D) \cong \sh{O}_X(D)\quad \text{ and } \quad
\sh{O}_{\tilde{X}}(\pi^{-1}D) \cong \big(\pi^*\sh{O}_{X}(D)\big)\check{\ }\check{\ }.
\end{equation*}

Our aim is to compare local cohomology and global cohomology, i.e.
$H^i_x\big(X, \sh{O}_X(D)\big)$ and $H^i\big(\tilde{X}, \sh{O}_{\tilde{X}}(D)\big)$.
In general, we have the following easy statement about general (i.e. non-regular)
triangulations:
\begin{theorem}
Let $X$ be an affine toric variety of dimension $d$ and $D \in A_{d - 1}(X)$. If
$D$ is $0$-essential, then
$R^i\pi_* \sh{O}_{\tilde{X}}(\pi^*D)
= 0$ for every triangulation $\pi: \tilde{X} \longrightarrow X$.
\end{theorem}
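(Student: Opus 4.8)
The plan is to recast the statement through Theorem~\ref{cohomtheorem} and then read the conclusion off the definition of $0$-residual. Throughout I take ``$D$ is $0$-essential'' to mean that $D$ is not $0$-residual, which, by the remark following Definition~\ref{vanishingcoredefinition} together with the fact that the zero stratum of the (central) discriminantal arrangement lies in every half-space $H_\ocircuit$ --- so that $\mathfrak{F}_{\{0\}}=\ocircuit(L)$ --- amounts to $D\in\mathfrak{A}_{\{0\}}=\bigcap_{\ocircuit\in\ocircuit(L)}F_\ocircuit$. First I would make the standard reduction: since $X$ is affine and $\pi$ proper, each $R^i\pi_*\sh{O}_{\tilde X}(\pi^*D)$ is coherent on $X$; the Leray spectral sequence and the vanishing of higher cohomology of coherent sheaves on the affine $X$ give $\Gamma\big(X, R^i\pi_*\sh{O}_{\tilde X}(\pi^*D)\big)\cong H^i\big(\tilde X, \sh{O}_{\tilde X}(\pi^*D)\big)$, and a coherent sheaf on an affine scheme vanishes once its global sections do. So it suffices to prove $H^i\big(\tilde X, \sh{O}_{\tilde X}(\pi^*D)\big)=0$ for all $i>0$ and every triangulation $\pi:\tilde X\to X$ of $\sigma$. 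As recalled in Section~\ref{mcmsection}, such a $\pi$ is an isomorphism in codimension one induced by the identity on $N$, so the invariant divisor group and the divisor class group of $\tilde X$ are identified with those of $X$ via the identity on $\Z^\on$, and $\pi^*D$ is the strict transform, represented by the same $\uc=(c_1,\dots,c_n)$.

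Next, by Theorem~\ref{cohomtheorem} with $V=\tilde X$ (which, as noted there, reduces to Musta\c t\v a's formula), for every $m\in M$
\begin{equation*}
H^i\big(\tilde X, \sh{O}_{\tilde X}(\pi^*D)\big)_m\;\cong\;\widetilde H^{i-1}\big((\tilde\Delta)_m;k\big),
\end{equation*}
where $(\tilde\Delta)_m$ is the full subcomplex of the simplicial fan $\tilde\Delta$ spanned by $I(m)=\{\,i\in\on\mid l_i(m)<-c_i\,\}$ and $\widetilde H$ is reduced cohomology. Thus the theorem is equivalent to the assertion that for every triangulation $\tilde\Delta$ of $\sigma$ and every $m\in M$ the complex $(\tilde\Delta)_m$ is empty or $k$-acyclic.

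Now I bring in $0$-essentiality. By Definition~\ref{fcdef} (and the discussion of signatures preceding it), $D\in F_\ocircuit$ says exactly that there is no $m\in M$ with $\ocircuit^-\subseteq I(m)$ and $\ocircuit^+\cap I(m)=\emptyset$; hence $0$-essentiality of $D$ says that for \emph{no} $\ocircuit\in\ocircuit(L)$ does such an $m$ exist. The decisive input is the combinatorial fact --- the $1$-circuit computation of Lemma~\ref{circtoplemma} propagated through the discriminantal description of Section~\ref{discriminantalarrangementsandsecfans} (Lemma~\ref{intersectionproperty}, Proposition~\ref{chamberprop}) --- that if a full subcomplex $(\tilde\Delta)_I$ of a simplicial fan supported on $\sigma$ is nonempty and not $k$-acyclic, then there is an oriented circuit $\ocircuit\in\ocircuit(L)$ with $\ocircuit^-\subseteq I$ and $\ocircuit^+\cap I=\emptyset$ (equivalently $I\cap\circuit=\ocircuit^-$). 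Granting this: if some $(\tilde\Delta)_m$ were nonempty and not $k$-acyclic for a triangulation $\tilde\Delta$ and $m\in M$, we would obtain $\ocircuit\in\ocircuit(L)$ with $\ocircuit^-\subseteq I(m)$ and $\ocircuit^+\cap I(m)=\emptyset$, so $m$ would witness $D\notin F_\ocircuit$, contradicting $0$-essentiality. Therefore every $(\tilde\Delta)_m$ is empty or $k$-acyclic, and the second step completes the proof; note that nothing here requires $\tilde\Delta$ to be regular.

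The step I expect to be the real obstacle is this combinatorial fact: non-acyclicity of a full subcomplex of an \emph{arbitrary} triangulation of $\sigma$ must be detected by a single oriented circuit. I would argue it by taking a nonzero reduced (co)cycle on $(\tilde\Delta)_I$ of minimal support $I'\subseteq I$ and using that $|(\tilde\Delta)_{I'}|$ is the union of the simplices of $\tilde\Delta$ contained in the convex polyhedron $\{\,n\in|\tilde\Delta|\mid l_i(n)<-c_i\ \text{for all}\ i\in I'\,\}$; a failure of acyclicity then forces a Radon-type affine dependence among the rays occurring in the minimal configuration, whose positive and negative parts constitute an oriented circuit $\ocircuit$ positioned relative to $I$ as required. (That the ``bad side'' may be taken to be $\ocircuit^-$ is harmless, since $\ocircuit(L)$ contains both orientations of each circuit; alternatively one interchanges the roles of $I$ and $\on\setminus I$ by Alexander duality in the ball $|\tilde\Delta|$, as in the proof of Proposition~\ref{antinefcohomology}.) A shorter route that avoids the general combinatorics but only covers \emph{regular} triangulations is to observe that the zero stratum lies in the closure of every discriminantal chamber $S$ of $\nef(\tilde X)$, whence $\mathfrak{A}_{\{0\}}\subseteq\mathfrak{A}_S$, and to apply Theorem~\ref{integralvanishingtheorem} to each such $S$, using that nef divisors on $\tilde X$ have no higher cohomology.
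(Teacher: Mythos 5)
Your overall route is the paper's own: read ``$0$-essential'' as membership in $\bigcap_{\ocircuit\in\ocircuit(L)}F_\ocircuit$ (which is also how the paper's proof opens, since ``$0$-essential'' is never formally defined there), pass to $H^i\big(\tilde X,\sh{O}_{\tilde X}(\pi^*D)\big)$ via Leray over the affine base, apply Theorem \ref{cohomtheorem}, and observe that a lattice point $m$ for which $(\tilde\Delta)_{m}$ is non-acyclic would itself be the witness showing $D\notin F_\ocircuit$ for a suitable oriented circuit. The paper compresses all of this into the single clause ``thus it represents a cohomology-free divisor,'' so your write-up is an expansion rather than a divergence, and your isolation of the crucial combinatorial lemma --- a nonempty, non-acyclic full subcomplex $(\tilde\Delta)_I$ forces a circuit $\circuit$ with $I\cap\circuit\in\{\ocircuit^+,\ocircuit^-\}$ --- is exactly where the content lies.

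The one weak spot is your justification of that lemma. The identification of $|(\tilde\Delta)_{I'}|$ with ``the union of simplices contained in $\{n\in|\tilde\Delta|\mid l_i(n)<-c_i\}$'' does not parse (the $l_i$ pair with $M$, not with $N$, and a full subcomplex is cut out by a condition on vertices, not by containment of simplices in a region), and the minimal-support-cocycle/Radon argument is too vague to check. The lemma is nevertheless true, and here is a clean proof in the contrapositive form you need. A circuit with $I\cap\circuit\in\{\ocircuit^+,\ocircuit^-\}$ exists if and only if $\operatorname{pos}\{l_i\}_{i\in I}\cap\operatorname{pos}\{l_j\}_{j\notin I}\neq\{0\}$: a common nonzero vector gives a linear dependence that is nonnegative on $I$ and nonpositive on $\on\setminus I$, and any circuit conformal to it has the required sign pattern; the converse is the circuit relation itself. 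So if no such circuit exists, Farkas' lemma (using that all $l_i$ lie in the strictly convex cone $\sigma$, so no nontrivial nonnegative dependence exists on either side) produces $w\in M_\Q$ with $l_i(w)<0$ for $i\in I$ and $l_j(w)>0$ for $j\notin I$. Then $(\tilde\Delta)_I$ is the full subcomplex spanned by the vertices lying in the open half-space $\{v\in N_\Q\mid v(w)<0\}$, and the full subcomplex of a geometric simplicial complex on the vertices in an open half-space is a deformation retract of the intersection of its support with that half-space; taking a cross-section of $\sigma$, that intersection is convex, hence empty or contractible. This closes the gap; the rest of your argument, including the observation that regularity of the triangulation is never used, is correct.
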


\begin{proof}
If $D$ is $0$-essential, then it is contained in the intersection of all $F_\circuit$,
where $\circuit \in \circuit(L)$, thus it represents a cohomology-free divisor.
\end{proof}

Note that the statement does hold for any triangulation and not only for regular
triangulations.
We have a refined statement for affine toric varieties whose associated cone
$\sigma$ has simplicial facets:

\begin{theorem}\label{mcmtheorem}
Let $X$ be a $d$-dimensional affine toric variety whose associated cone $\sigma$
has simplicial facets and let $D \in A_{d - 1}(X)$. If
$R^i\pi_* \sh{O}_{\tilde{X}}(\pi^*D) = 0$ for every regular triangulation
$\pi: \tilde{X} \longrightarrow X$  then $\sh{O}_X(D)$ is MCM. For $d = 3$
the converse is also true.
\end{theorem}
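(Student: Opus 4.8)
The plan is to translate everything into the combinatorial cohomology description of Theorem \ref{cohomtheorem} and to compare, for each character $m \in M$, the simplicial complexes governing the local cohomology $H^i_x\big(X,\sh{O}_X(D)\big)_m$ with those governing $R^j\pi_*\sh{O}_{\tilde{X}}(\pi^*D)$ for the various regular triangulations $\pi$. Recall that $\sh{O}_X(D)$ is MCM iff $H^i_x\big(X,\sh{O}_X(D)\big)=0$ for all $i<d$, and by Corollary \ref{localcohomcorollary} this eigenspace is $H^{i-2}(\hat\sigma_{x,m};k)$, where $\hat\sigma_x$ is the simplicial model of the fan of $X\setminus\{x\}$, i.e. the simplicial cover of the subfan of $\sigma$ generated by its facets. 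On the other side, for a regular triangulation $\pi:\tilde X\to X$, by the standard argument (as in the proof of the circuit case in Section \ref{onecircuitvarieties}) one has $\Gamma\big(X,R^j\pi_*\sh{O}_{\tilde X}(\pi^*D)\big)\cong H^j\big(\tilde X,\sh{O}_{\tilde X}(\pi^*D)\big)$, whose $m$-eigenspace is $H^{j-1}\big((\hat{\tilde\Delta})_m;k\big)$ by Theorem \ref{cohomtheorem}. So the statement to prove is a purely combinatorial comparison between the reduced cohomology of $\hat\sigma_{x,m}$ and the reduced cohomologies of the full subcomplexes $(\hat{\tilde\Delta})_m$ as $\tilde\Delta$ ranges over all regular triangulations of $\sigma$.

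For the first (sufficiency) direction: fix $m$ with $\hat\sigma_{x,m}$ not acyclic, i.e. $H^{i-2}(\hat\sigma_{x,m};k)\neq 0$ for some $i<d$. I want to produce a regular triangulation $\tilde\Delta$ and a character $m'$ (which may be taken equal to $m$, or a translate, after multiplying $D$ by a positive integer so that the relevant polytope chamber contains a lattice point — exactly the trick used in the proof of Theorem \ref{integralvanishingtheorem}) for which $(\hat{\tilde\Delta})_{m'}$ has nonvanishing reduced cohomology in degree $i-1$, forcing $R^{i}\pi_*\sh{O}_{\tilde X}(\pi^*D)\neq 0$. The key point is that $\hat\sigma_{x,m}$ is the full subcomplex of $\partial\sigma$ (viewed as a simplicial complex via the simplicial cover) supported on the rays $l_j$ with $l_j(m)<-c_j$; since the facets of $\sigma$ are simplicial, $\partial\sigma$ is itself a simplicial complex (a triangulated $(d-1)$-sphere), and any regular triangulation $\tilde\Delta$ of $\sigma$ restricts on $\partial\sigma$ to a refinement of this boundary complex. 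One then has to argue that one can choose the triangulation of the interior so that the full subcomplex $(\hat{\tilde\Delta})_m$ deformation retracts onto (or is at least cohomologically equivalent to) $\hat\sigma_{x,m}$ — concretely, so that no new ray added in the interior lies in the "negative" region, or that adding such rays does not kill the relevant homology. The nonvanishing of $H^{i-2}(\hat\sigma_{x,m})$ with $i<d$ then propagates to nonvanishing of $H^{i-1}$ of the larger complex.

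For the converse when $d=3$: here every facet of the (automatically $3$-dimensional) cone $\sigma$ is simplicial (a triangle), and the regular triangulations of $\sigma$ are parametrized by the chambers of $\secfan(L,\emptyset)$ meeting the relative interior of $C_\emptyset$; moreover each such $\tilde X\to X$ is a small-ish resolution which one can analyze using the $1$-circuit building blocks of Section \ref{onecircuitvarieties} (each interior wall $\tau$ of $\tilde\Delta$ carries an oriented circuit $\circuit(\tau)$, and $R^j\pi_*$ is controlled locally by Proposition \ref{circlocprop}). The strategy is: assume $\sh{O}_X(D)$ is MCM, so all $\hat\sigma_{x,m}$ are acyclic; then for each regular triangulation, decompose the relevant complexes along the stars of the interior rays/walls and use a Mayer–Vietoris / long-exact-sequence argument, invoking the $1$-circuit vanishing results, to deduce $H^j\big(\tilde X,\sh{O}_{\tilde X}(\pi^*D)\big)=0$ for $j>0$, hence $R^j\pi_*\sh{O}_{\tilde X}(\pi^*D)=0$. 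The main obstacle I anticipate is exactly this converse direction: controlling the interaction between the global combinatorics of an arbitrary regular triangulation of a $3$-dimensional cone and the local $1$-circuit data at each wall, and showing that acyclicity of all the boundary subcomplexes $\hat\sigma_{x,m}$ is strong enough to force acyclicity of every $(\hat{\tilde\Delta})_m$ — this is where the dimension restriction $d=3$ is genuinely used, since in higher dimensions the interior of a triangulation can have more complicated topology and the converse is expected to fail.
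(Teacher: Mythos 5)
Your framework is the right one (eigenspace decomposition, Theorem \ref{cohomtheorem} / Corollary \ref{localcohomcorollary} for $H^i_x$, and $\Gamma(X,R^j\pi_*\sh{O}_{\tilde X}(\pi^{-1}D))\cong H^j(\tilde X,\sh{O}_{\tilde X}(\pi^{-1}D))$ over the affine base), but both directions stop exactly at the point where the actual work happens, and in the forward direction your substitute heuristic is based on a misreading of the setup.

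For sufficiency, the triangulations parametrized by the secondary fan are supported on the \emph{given} rays $l_1,\dots,l_n$; no new interior rays are introduced, so the question is not whether added rays lie in the ``negative'' region but which subsets $J$ of the existing rays become faces of $\tilde\Delta$ versus faces of the boundary complex $\hat\sigma_V$. The paper's key step, which you leave as ``one has to argue,'' is an explicit construction: for each $I\subsetneq\on$ pick $i\in\on\setminus I$ and take the pulling (coning) triangulation whose maximal cones are $\rho_i+\text{(facet of }\sigma)$ --- realized as the normal fan of $\check\sigma\cap(m+\check\rho_i)$, hence regular. Because $i\notin I$, a subset $J\subset I$ is a face of this $\tilde\Delta$ iff it lies in a facet of $\sigma$, so $\tilde\Delta_I=\hat\sigma_{V,I}$ on the nose (no rescaling of $D$ or translation of $m$ is needed). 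This is also precisely where the simplicial-facets hypothesis enters: it guarantees the cones $\rho_i+\text{facet}$ are simplicial. Without this construction the degree comparison $H^i_x(X,\sh{O}(D))_m\neq 0\Rightarrow H^{i-1}(\tilde X,\sh{O}(\pi^{-1}D))_m\neq0$ has no basis.

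For the converse at $d=3$, your Mayer--Vietoris/$1$-circuit plan is not carried out and I do not see how it would close: the $1$-circuit vanishing statements (Propositions \ref{circuitglobalcohom}, \ref{circlocprop}) are arithmetic conditions ($D\in F_\ocircuit$) at each interior wall, and you give no argument that acyclicity of all the boundary complexes $\hat\sigma_{V,m}$ forces these conditions; in fact the needed statement is purely topological and holds for a fixed $m$. The paper argues directly on a planar cross-section: if $\hat\sigma_{V,m}$ is acyclic it is an interval of boundary edges of the polygon, and for \emph{any} triangulation the full subcomplex $\tilde\Delta_m$ on those vertices is a union of triangulated convex hulls of chords together with boundary segments, hence contractible, giving $H^0(\tilde\Delta_m)=H^1(\tilde\Delta_m)=0$ and so $R^j\pi_*=0$ for $j>0$. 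You would need either to supply this topological argument or to make the wall-by-wall induction precise; as written, the converse is only a statement of intent.
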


\begin{proof}
Recall that $H^i_x\big(X, \sh{O}(D)\big)_m =$ $H^{i - 2}(\hat{\sigma}_{V, m}; k)$
for some $m \in M$ and $D \in A$.
We are going to show that for every subset $I \subsetneq \on$
there exists a regular triangulation $\tilde{\Delta}$ of $\sigma$ such that the
simplicial complexes $\hat{\sigma}_{V, I}$ and $\tilde{\Delta}_I$ coincide.
This implies that if $H^i_x\big(X, \sh{O}_X(D)\big)_m \neq 0$ for some
$m \in M$, then also $H^{i + 1}\big(\tilde{X}, \sh{O}_{\tilde{X}}(D)\big)_m \neq 0$,
i.e. if
$\sh{O}_X(D)$ is not MCM, then $H^i\big(\tilde{X}, \sh{O}_{\tilde{X}}(D)\big) \neq
0$ for some $i > 0$.

For given $I \subset \on$ we get such a triangulation as follows. Let $i \in \on
\setminus I$ and consider the dual cone $\check{\sigma}$. Denote $\rho_i := \Q_{\geq 0}
l_i$ and recall that $\check{\rho}_i$ is a halfspace which contains $\check{\sigma}$
and which defines a facet of $\check{\sigma}$ given by $\rho^\bot \cap \check{\sigma}$.
Now we move $\check{\rho}_i$ to $m + \check{\rho}$, where $l_i(m) > 0$.
So we obtain a new polytope $P := \check{\sigma} \cap (m + \check{\rho})$. As
$\rho^\bot$  is not parallel to any face of $\check{\sigma}$, the hyperplane
$m + \rho^\bot$ intersects every face of $\check{\sigma}$. This way the inner
normal fan of $P$ is
a triangulation $\tilde{\Delta}$ of $\sigma$ which has the property that
every maximal cone is spanned by $\rho_i$ and some facet of $\sigma$.
This implies $\tilde{\Delta}_I = \hat{\sigma}_{V, I}$ and the first assertion follows.

For $d = 3$, a sheaf $\sh{O}(D)$ is MCM iff $H^2_x\big(X, \sh{O}(D)\big) = 0$,
i.e. $H^0(\sigma_{V, m}; k) = 0$ for every $m \in M$. The latter is only possible
if $\sigma_{V, m}$ represents an interval on $S^1$. To compare
this with $H^2\big(\tilde{X}, \sh{O}(D)\big)$ for some regular triangulation
$\tilde{X}$, we must show that $H^1(\tilde{\Delta}_m; k) = 0$ for the corresponding
complex $\tilde{\Delta}_m$. To see this, we consider some cross-section
$\sigma \cap H$, where $H \subset N \otimes_Z \R$ is some hyperplane which intersects
$\sigma$ nontrivially and is not parallel to any of its faces. Then this cross-section
can be considered as a planar polygon and $\sigma_{V, m}$ as some connected sequence
of faces of this polygon. Now with respect to the triangulation $\tilde{\Delta}$ of
this polygon, we can consider two vertices $p, q \in \sigma_{V, m}$ which are
connected by a line belonging to the triangulation and going through the interior
of the polygon. We assume that $p$ and $q$ have maximal distance in $\sigma_{V, m}$
with this
property. Then it is easy to see that the triangulation of $\sigma$ induces a
triangulation of the convex hull of the line segments connecting $p$ and $q$. Then
$\tilde{\Delta}_m$ is just the union of this convex hulls with respect all such pairs
$p, q$ and the remaining line
segments and thus has trivial topology. Hence $H^2_x\big(X, \sh{O}(D)\big) = 0$
implies $H^2\big(\tilde{X}, \sh{O}(D)\big) = 0$ for every triangulation
$\tilde{\Delta}$ of $\sigma$.
\end{proof}

\begin{example}\label{mcmexampleone}
Consider the $3$-dimensional cone spanned over the primitive vectors $l_1 = (1,0,1)$,
$l_2 = (0, 1, 1)$, $l_3 = (-1, 0, 1)$, $l_4 = (-1, -1, 1)$, $l_5 = (1, -1, 1)$.
The corresponding toric variety $X$ is Gorenstein and its divisor class group is
torsion free. For $A_2(X) \cong \Z^2$ we choose the basis $D_1 + D_2 + D_5$, $D_5$.
In this basis, the Gale duals of the $l_i$ are $D_1 = (-1, -1)$, $D_2 = (2, 0)$,
$D_3 = (-3, 1)$, $D_4 = (2, -1)$, $D_5 = (0, 1)$. Figure \ref{mcmpic} shows the
set of MCM modules in $A_2(X)$ which are indicated by circles which are supposed to
sit on
the lattice $A_2(X) \cong \Z^2$. The picture also indicates the cones $C_I$ with
vertices $-e_I$, where $I \in \{\{1, 3\}, \{1, 4\}, \{2, 4\}, \{2, 5\}, \{3, 5\},
\{1, 2, 5\}, \{1, 3, 4\}, \{1, 3, 5\}, \{2, 3, 5\}, \{2, 4, 5\}\}$.
Note that the picture has a reflection symmetry, due to the fact that $X$ is
Gorenstein. Altogether,
there are $19$ MCM modules of rank one, all of which are $0$-essential.
For $\circuit = \{l_1, l_3, l_4, l_5\}$, the group $A_2(X)_\circuit \cong
\Z \oplus \Z / 2 \Z$ has torsion.
The two white circles indicate modules are contained in the $\Q$-hyperplanes
$D_1 + D_4 + H_\circuit$ and $D_2 + D_3 + D_5 + H_\circuit$, respectively,
but not in the sets $D_1 + D_4 +
Z_\circuit$ and $D_2 + D_3 + D_5 + Z_\circuit$, respectively.
Some of the $\mathbb{O}_I$ are not saturated; however, every divisor which is
contained in some $(-e_I + C_I) \cap \Omega(L, I)$ is also contained in some
$\mathbb{O}_{I'} \setminus
\Omega(L, I')$ for some other
$I' \neq I$. So for this example, the Frobenius arrangement gives a full description
of MCM modules of rank one.
\begin{figure}[htbp]
\begin{center}
\includegraphics[width=10cm]{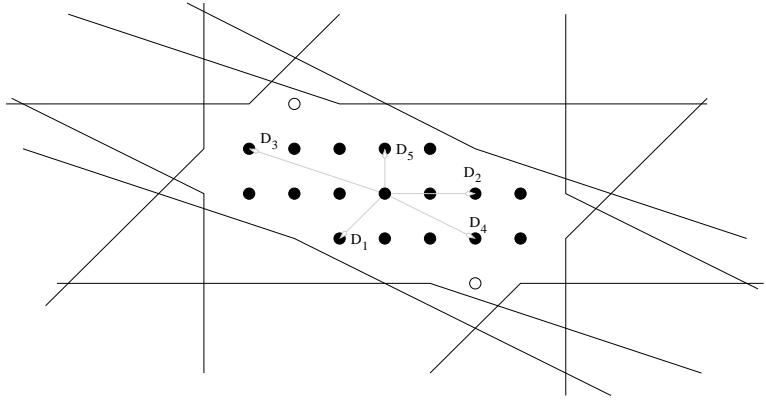}
\end{center}
\caption{The 19 MCM modules of example \ref{mcmexampleone}.}\label{mcmpic}
\end{figure}
\end{example}

\begin{example}\label{mcmexampletwo}
To give a counterexample to the reverse direction of theorem \ref{mcmtheorem}
for $d > 3$, we consider the four-dimensional cone spanned over the primitive
vectors $l_1 =
(0, -1, -1, 1)$, $l_2 = (-1, 0, 1, 1)$, $l_3 = (0, 1, 0, 1)$, $l_4 = (-1, 0, 0, 1)$,
$l_5 = (-1, -1, 0, 1)$, $l_6 = (1, 0, 0, 1)$. The corresponding variety $X$ has
31 MCM modules of rank one which are shown in figure \ref{mcm2pic}. Here, with
basis $D_1$ and $D_6$, we have $D_1 = (1, 0)$, $D_2 = (1, 0)$, $D_3 = (-1, -2)$,
$D_4 = (3, 1)$, $D_5 = (-2, -2)$, $D_6 = (0, 1)$. There are six cohomology cones
corresponding to $I \in \big\{\{1, 2\}, \{3, 5\}, \{4, 6\}, \{1, 2, 3, 5\},
\{1, 2, 4, 6\}, \{3, 4, 5, 6\}\big\}$.

\begin{figure}[htbp]
\begin{center}
\includegraphics[width=10cm]{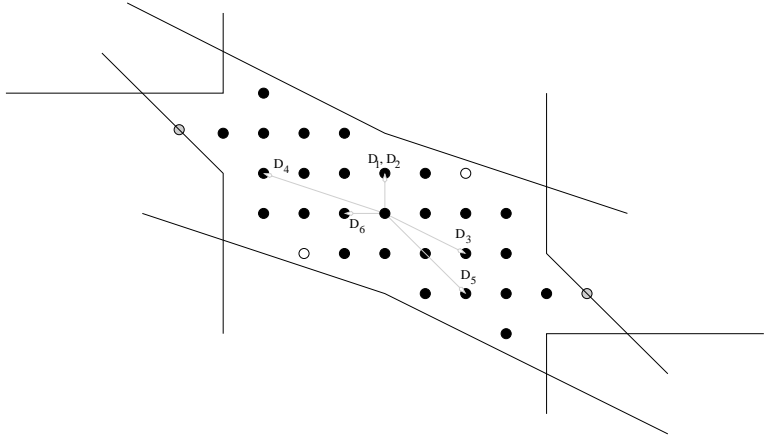}
\caption{The 31 MCM modules of example \ref{mcmexampletwo}.}\label{mcm2pic}
\end{center}
\end{figure}

The example features two modules
which are not $0$-essential, indicated by the grey dots sitting on the boundary
of the cones $-e_I + C_I$, where $I \in \big\{\{4, 6\}, \{1, 2, 3, 5\}\big\}$.
The white dots denote MCM divisors $D, -D$ such that there exists a triangulation of
the cone of $X$ such that on the associated variety $\tilde{X}$ we have
$H^i\big(\tilde{X}, \sh{O}(\pm D)\big) \neq 0$ for some $i > 0$. Namely, we consider
the triangulation which is given by the maximal cones spanned by
$\{l_1, l_2, l_4, l_5\}$, $\{l_1, l_2, l_4, l_6\}$, $\{l_1, l_2, l_5, l_6\}$,
$\{l_1, l_3, l_4, l_6\}$, $\{l_2, l_3, l_4, l_6\}$. Figure \ref{counterpic} indicates
the two-dimensional faces of this triangulation via a three-dimensional cross-section
of the cone.

\begin{figure}[htbp]
\begin{center}
\includegraphics[height=4cm]{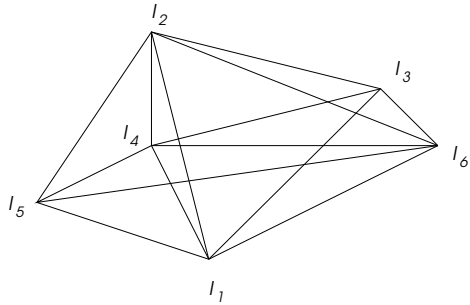}\label{counterpic}
\caption{The triangulation for $\tilde{X}$ in example \ref{mcmexampletwo}.}
\end{center}
\end{figure}

We find that we have six cohomology cones corresponding to $I \in \big\{
\{1, 2\}, \{3, 5\}$, $\{1, 2, 3\}$, $\{4, 5, 6\}$, $\{1, 2, 3, 5\},
\{3, 4, 5, 6\}\big\}$. In particular, we have non-vanishing $H^1$ for the points
$-D_1 - D_2 - D_3$ and for $-D_4 - D_5 - D_6$, which correspond to $D$ and $-D$.
\end{example}

\newcommand{\etalchar}[1]{$^{#1}$}

\end{document}